\theoremstyle{plain}
\newtheorem{thm}{Theorem}[section]
\newtheorem{thmintro}{Theorem}
\newtheorem{lem}[thm]{Lemma}
\newtheorem{prop}[thm]{Proposition}
\theoremstyle{definition}
\newtheorem{defn}[thm]{Definition}
\newtheorem{rmk}[thm]{Remark}
\newtheorem{rmks}[thm]{Remarks}
\newtheorem{example}[thm]{Example}
\newtheorem{examples}[thm]{Examples}
\numberwithin{equation}{section}
\newcommand{\Gm}{\mathbf{G}_\mathrm{m}}
\newcommand{\tors}[2]{{\vphantom{#2}}_{#1}{#2}}
\newcommand{\isoto}{\myxrightarrow{\,\sim\,}}
\def\myrightarrow{{\setbox\z@\hbox{$\rightarrow$}\dimen0\ht\z@\multiply\dimen0 6\divide\dimen0 10\ht\z@\dimen0\box\z@}}
\def\myrightarrowfill@{\arrowfill@\relbar\relbar\myrightarrow}
\newcommand{\myxrightarrow}[2][]{\ext@arrow 0359\myrightarrowfill@{#1}{#2}}
\def\myleftarrow{{\setbox\z@\hbox{$\leftarrow$}\dimen0\ht\z@\multiply\dimen0 6\divide\dimen0 10\ht\z@\dimen0\box\z@}}
\def\myleftarrowfill@{\arrowfill@\myleftarrow\relbar\relbar}
\newcommand{\myxleftarrow}[2][]{\ext@arrow 3095\myleftarrowfill@{#1}{#2}}
\newcommand{\sXtilde}{\mkern4.5mu\widetilde{\mkern-4.5mu\sX}}
\newcommand{\sC}{{\mathscr C}}
\newcommand{\sL}{{\mathscr L}}
\newcommand{\sE}{{\mathscr E}}
\newcommand{\sF}{{\mathscr F}}
\newcommand{\sO}{{\mathscr O}}
\newcommand{\sV}{{\mathscr V}}
\newcommand{\sX}{{\mathscr X}}
\newcommand{\sY}{{\mathscr Y}}
\newcommand{\A}{{\mathbf A}}
\renewcommand{\C}{{\mathbf C}}
\newcommand{\F}{{\mathbf F}}
\renewcommand{\Im}{{\mathrm{Im}}}
\newcommand{\I}{{\mathrm I}}
\renewcommand{\P}{{\mathbf P}}
\newcommand{\Q}{{\mathbf Q}}
\newcommand{\Z}{{\mathbf Z}}
\newcommand{\Zl}{{\Z_{\ell}}}
\newcommand{\Ql}{{\Q_{\ell}}}
\newcommand{\Qp}{{\Q_{p}}}
\newcommand{\CH}{\mathrm{CH}}
\newcommand{\nr}{\mathrm{nr}}
\newcommand{\NS}{\mathrm{NS}}
\newcommand{\Pic}{\mathrm{Pic}}
\newcommand{\Br}{\mathrm{Br}}
\newcommand{\Spec}{\mathrm{Spec}}
\newcommand{\Div}{\mathrm{Div}}
\renewcommand{\phi}{\varphi}
\renewcommand{\emptyset}{\varnothing}
\newcommand{\red}{{\mathrm{red}}}
\newcommand{\Ker}{{\mathrm{Ker}}}
\newcommand{\Coker}{{\mathrm{Coker}}}
\newcommand{\otimeshat}{{\widehat{\otimes}}}
\newcommand{\mmu}{\boldsymbol{\mu}}
\newcommand{\Homrond}{\mathscr{H}\mkern-4muom}
\newcommand{\eq}[2]{\begin{equation}\label{#1}#2 \end{equation}}
\newcommand{\surj}{\twoheadrightarrow}
\newcommand{\inj}{\hookrightarrow}
\newcommand{\et}{\text{ét}}
\let\@wraptoccontribs\wraptoccontribs\makeatother
\date{June 22, 2013; revised on February~19, 2014}
\title[On the cycle class map for zero-cycles over local fields]{On the cycle class map for zero-cycles\\over local fields}
\author{H{\'e}l{\`e}ne Esnault}
\address{Freie Universit{\"a}t Berlin, Mathematik und Informatik,
Arnimallee~3, 14195 Berlin, Germany}
\email{esnault@math.fu-berlin.de}
\author{Olivier Wittenberg}
\address{D\'epartement de math\'ematiques et applications, \'Ecole normale sup\'erieure, 45~rue d'Ulm, 75230 Paris Cedex 05, France}
\email{wittenberg@dma.ens.fr}
\address{5765 S. Blackstone Ave., Chicago, IL 60637, USA}
\email{spencer\_bloch@yahoo.com}
\subjclass[2010]{Primary 14C15; Secondary 14G20, 14J28, 14D06}
\thanks{The first author is supported by the the ERC Advanced Grant 226257, the Chaire d'Excellence 2011 of the Fondation Sciences Math\'ematiques de Paris and the Einstein Foundation.}
\begin{document}

\begin{abstract}
We study the Chow group of $0$\nobreakdash-cycles of smooth projective varieties over local and strictly local fields.
We prove in particular the injectivity of the cycle class map to integral $\ell$\nobreakdash-adic cohomology for a large class of
surfaces with positive geometric genus, over local fields of residue characteristic $\neq \ell$.
The same statement holds for semistable $K3$ surfaces defined over~$\C((t))$, but does not hold in general for surfaces over strictly local fields.
\end{abstract}

\maketitle

\section{Introduction}

Let~$X$ be a smooth projective variety over a field~$K$,
let $\CH_0(X)$ denote the Chow group of $0$\nobreakdash-cycles on~$X$ up to rational equivalence
and let $A_0(X) \subset \CH_0(X)$ be the subgroup of cycle classes of degree~$0$.

When~$K$ is algebraically closed, the group $A_0(X)$ is divisible and its structure as an abelian group is, conjecturally, rather well understood, thanks to
Roitman's theorem and to the Bloch--Beilinson--Murre conjectures.
A central tool for the study of~$A_0(X)$ over other types of fields is the cycle class map
\begin{align*}
\psi:\CH_0(X)/n\CH_0(X) \to H^{2d}_\et(X,\Z/n\Z(d))
\end{align*}
to \'etale cohomology,
where~$n$ denotes an integer invertible in~$K$ and $d=\dim(X)$.
The group $H^{2d}_\et(X,\Z/n\Z(d))$ is easier to understand, thanks to the Hochschild--Serre spectral sequence;
for instance, if~$K$ has cohomological dimension~$\leq 1$ and~$X$ is simply connected, then $H^{2d}_\et(X,\Z/n\Z(d))=\Z/n\Z$ and~$\psi$ may be interpreted as the degree map.

According to one of the main results of higher-dimensional unramified class field theory, due
to Kato and Saito~\cite{katosaito}, if~$K$ is a finite field, the group~$A_0(X)$
is finite and~$\psi$ is an isomorphism.
More recently, Saito and Sato~\cite{saitosato} have shown that
if~$K$ is
the quotient field of an excellent henselian discrete valuation ring with finite or
separably closed residue field,
the group~$A_0(X)$ is the direct sum of a finite group of order prime to~$p$
and a group divisible by all integers prime to~$p$
(see also \cite[Th\'eor\`eme~3.25]{ctbourbakisaitosato}).
In this case, however, the map~$\psi$ need not be either injective or surjective.
What Saito and Sato prove, instead, is the bijectivity of the analogous cycle class map for cycles of dimension~$1$ on regular models of~$X$ over
the ring of integers of~$K$ (see \cite[Theorem~1.16]{saitosato}).

Following a method initiated by Bloch~\cite{blochtorsion},
one may approach the torsion subgroup of~$A_0(X)$, as well as the kernel of~$\psi$,
with the help of algebraic K\nobreakdash-theory,
when~$X$ is a surface.
We refer to~\cite{ctcime} for a detailed account of this circle of ideas.
Strong results were obtained in this way for rational surfaces, and more generally for surfaces with geometric genus zero, over number fields, $p$\nobreakdash-adic fields, and
fields
of characteristic~$0$
and cohomological dimension~$1$ (see \cite{blochlectures}, \cite{blochrational}, \cite{ctsansequel}, \cite{ctk2}, \cite{ctraskindf}, \cite{saitocodim2}).
We note that over algebraically closed fields of characteristic~$0$, surfaces with geometric genus zero are those surfaces for which the Chow group~$A_0(X)$
should be representable, according to Bloch's conjecture (see~\cite[\textsection1]{blochlectures}).

The first theorem of this paper establishes
the injectivity of~$\psi$ for a large class of surfaces over local fields, when~$n$ is divisible enough and prime to the residue characteristic,
without any assumption on the geometric genus.  In~principle, this theorem should be applicable to all simply connected surfaces,
a generality in which the injectivity of~$\psi$ may not have been expected.
Before we state it, we set up some notation.
Let~$\sX$ be a regular proper flat scheme over an
an excellent henselian discrete valuation ring~$R$.
Let~$X=\sX \otimes_R K$ and $A=\sX \otimes_R k$ denote the generic fiber and the special fiber, respectively.
We assume the reduced special fiber $A_\red$ has simple normal crossings,
and write $\CH_0(X) \otimeshat \Zl = \varprojlim \CH_0(X) /\ell^n \CH_0(X)$.

\begin{thmintro}[Theorem~\ref{th:kfinite} and Remark~\ref{rk:leftkernel}]
\label{thm:A}
Assume the residue field~$k$ is finite and~$X$ is a surface whose Albanese variety has potentially good reduction.
If the irreducible components of~$A$ satisfy the Tate conjecture,
then for any~$\ell$ invertible in~$k$, the cycle class map
\begin{align}
\CH_0(X)\otimeshat \Zl \to H^4_\et(X,\Zl(2))
\end{align}
is injective.
Equivalently, the natural pairing $\CH_0(X) \times \Br(X) \to \Q/\Z$
is non-degenerate on the left modulo the maximal $\ell$\nobreakdash-divisible subgroup of $\CH_0(X)$.
\end{thmintro}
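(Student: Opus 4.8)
The plan is to transfer the question to the regular model $\sX$, where the cycle class map for $1$\nobreakdash-cycles has been computed by Saito--Sato, and to compare it with the cycle class map on the generic fibre by means of the localization sequence; this reduces the theorem to a statement in \'etale cohomology about the simple normal crossings fibre $A$, which is then analysed by weight arguments using the two hypotheses.

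First I would set up the comparison. Writing $A_1,\dots,A_r$ for the irreducible components of $A$ and using that $A_\red$ has pure dimension $\dim X$, the localization sequence for the regular scheme $\sX$ and its closed subscheme $A$ reads
\[
\bigoplus_{i=1}^{r}\CH_1(A_i)\xrightarrow{\ i_*\ }\CH_1(\sX)\xrightarrow{\ j^*\ }\CH_0(X)\to 0 .
\]
Applying $-\otimeshat\Zl$ — which on this right-exact sequence is again right exact, all transition maps being surjective and the groups $\CH_0(X)/\ell^n$ being finite by Saito--Sato, so that the relevant $\varprojlim^1$ vanish — and using the Saito--Sato isomorphism $\mathrm{cl}_\sX\colon\CH_1(\sX)\otimeshat\Zl\isoto H^4_\et(\sX,\Zl(2))$ together with the compatibility of cycle class maps with $j^*$ (flat pullback to the open $X$) and with $i_*$ (Gysin along the regular immersions $A_i\hookrightarrow\sX$), the injectivity of the map in the statement becomes equivalent to: the kernel of $j^*\colon H^4_\et(\sX,\Zl(2))\to H^4_\et(X,\Zl(2))$ is spanned, as a $\Zl$\nobreakdash-submodule, by the cycle classes $\mathrm{cl}_\sX([C])$ of the curves $C$ contained in $A$. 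The equivalence with the non-degeneracy of the Brauer pairing stated at the end is then formal, from Poincar\'e duality over $K$ (note $\mathrm{cd}(K)=2$) and the Kummer sequence $0\to\Pic(X)/\ell^n\to H^2_\et(X,\mmu_{\ell^n})\to\tors{\ell^n}{\Br(X)}\to 0$.

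Next I would identify that kernel. By exactness of the localization sequence in \'etale cohomology it equals the image of the cohomology with supports $H^4_A(\sX,\Zl(2))$, and since $A_\red$ is a simple normal crossings divisor this group is computed by a Gysin spectral sequence whose $E_1$\nobreakdash-terms are the $H^{\ast}_\et(A_I,\Zl(\ast))$ over the strata $A_I=\bigcap_{i\in I}A_i$; in total degree $4$ with twist $2$ the only surviving contributions are $\bigoplus_i H^2_\et(A_i,\Zl(1))$, mapped to $H^4_\et(\sX,\Zl(2))$ by Gysin, and $\bigoplus_{i<j}\Zl\cdot[A_{ij}]$, the classes of the double curves. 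The latter are already classes of curves on $A$, so the point is that the part of $H^4_\et(\sX,\Zl(2))$ coming from the former is also spanned by classes of curves on $A$. Here the hypotheses enter. Since $k$ is finite, the cycle class map of the smooth projective $k$\nobreakdash-surface $A_i$ sits, after $\otimes\Ql$, in $\CH^1(A_i)\otimes\Ql\to H^2_\et(A_{i,\bar k},\Ql(1))^{G_k}$, which is surjective exactly because $A_i$ satisfies the Tate conjecture; comparison with $H^2_\et(A_i,\Zl(1))$ through the Hochschild--Serre spectral sequence for $G_k$ — whose defect $H^1(G_k,H^1(A_{i,\bar k},\Zl(1)))$ is finite, $H^1(A_{i,\bar k},\Zl(1))$ being pure of nonzero weight — shows that, up to finite groups, the classes from $A_i$ that can land nontrivially in $H^4_\et(\sX,\Zl(2))$ are the Frobenius-invariant ones, hence algebraic. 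To control the remaining weight components and the specialization to the generic fibre I would invoke the potentially good reduction of $\mathrm{Alb}(X)$: it forces the monodromy operator $N$ to vanish on $H^1_\et(X_{\bar K},\Ql)$, hence on $H^3_\et(X_{\bar K},\Ql)$, so that in the weight spectral sequence of Rapoport--Zink/Steenbrink type attached to $\sX$ these groups are pure; this pins down the weight-graded pieces of $H^4_\et(X,\Zl(2))$ (whose geometric part is merely $\Zl$, the fundamental class) and confines $\Ker(j^*)$ to the weights where algebraicity has just been established. Combining the two analyses yields that $\Ker(j^*)$ is spanned by classes of curves on $A$, as required.

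The main obstacle is precisely this last step: the simultaneous bookkeeping of the Gysin, Hochschild--Serre and weight spectral sequences, and above all the passage from the rational statement supplied by the Tate conjecture to the integral one ($-\otimeshat\Zl$) that is actually needed. The finite discrepancies — between $\CH^1(A_i)\otimeshat\Zl$ and the Frobenius-invariants of $H^2_\et(A_{i,\bar k},\Zl(1))$, and between $\ell$\nobreakdash-adic and mod\nobreakdash-$\ell^n$ coefficients — are exactly what compel the theorem to be phrased with $\CH_0(X)\otimeshat\Zl$ and ``modulo the maximal $\ell$\nobreakdash-divisible subgroup''. A further technical nuisance is that the extension of $K$ over which $\mathrm{Alb}(X)$ acquires good reduction, or over which a semistable model becomes available, need not have degree prime to $\ell$, so one cannot simply descend an easier statement and must argue with quasi-unipotence of the inertia action and the vanishing of $N$ directly over $K$.
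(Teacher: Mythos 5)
Your overall strategy coincides with the paper's: reduce, via the Saito--Sato isomorphism $\CH_1(\sX)\otimeshat\Zl\isoto H^4(\sX,\Zl(2))$ and the localization sequences, to showing that the image of $H^4_A(\sX,\Zl(2))$ in $H^4(\sX,\Zl(2))$ is generated by cycle classes of curves on $A$, and then analyse that image stratum by stratum using the Tate conjecture and weight arguments. The paper carries this out on the Pontrjagin-dual side (Theorem~\ref{th:criterionfinite}): the kernel of the cycle class map is the dual of the homology of the complex $H^2(X,\Ql/\Zl(1))\to H^3_A(\sX,\Ql/\Zl(1))\to\bigoplus_i H^3(A_i,\Ql/\Zl(1))/\CH_1(A_i)^\perp$. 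But there is a genuine gap at exactly the point you flag as ``the main obstacle'', and your closing remarks indicate a misconception about it. By the criterion just quoted, the kernel you must kill is a \emph{finite} group; consequently an argument that works ``up to finite groups'', or only after tensoring with $\Ql$, proves nothing. The phrasing with $\CH_0(X)\otimeshat\Zl$ and ``modulo the maximal $\ell$\nobreakdash-divisible subgroup'' does not absorb these discrepancies: passing to $\otimeshat\Zl$ only kills the divisible part of $\CH_0(X)$, which is invisible to finite-coefficient cohomology in any case, so the finite discrepancies \emph{are} the content of the theorem. Concretely, your Gysin/Mayer--Vietoris computation of $H^4_A(\sX,\Zl(2))$ is only an assertion about associated graded pieces after inverting $\ell$; at the integral level the $E_1$\nobreakdash-page, the differentials and the extensions contribute exactly the kind of finite pieces that could constitute a nonzero kernel, and your weight analysis cannot see them.

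The paper closes this gap with two integral inputs that your sketch lacks. First, for a surface over a finite field satisfying the Tate conjecture, the cycle class map $\CH_1(A_i)\otimeshat\Zl\to H^2(A_i,\Zl(1))$ is surjective \emph{integrally} (Tate's Proposition~4.3; this is Lemma~\ref{lem:tatech1} and gives $\CH_1(A_i)^\perp=0$ on the nose, not up to finite error). Second, and more seriously, one needs the injectivity of the restriction map $H^3(A,\Ql/\Zl(1))\to\bigoplus_i H^3(A_i,\Ql/\Zl(1))$ with \emph{divisible torsion} coefficients, whereas the weight arguments (Lemmas~\ref{lem:injh1h2} and~\ref{lem:albgoodred}, which use partial semisimplicity of Frobenius, the Rapoport--Zink local invariant cycle theorem for $H^2$, and the weight~$-1$ purity of the Tate module of an abelian variety with good reduction) only yield the statement with $\Ql$\nobreakdash-coefficients. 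The passage from $\Ql$ to $\Ql/\Zl$ is Lemma~\ref{lem:QtoQ/Z}, a torsion-freeness argument special to surfaces with simple normal crossings; it is the reason the theorem requires $d=2$ and fails in higher dimension, and it is precisely the ``bookkeeping'' step you identify but do not supply. Your use of the potentially-good-reduction hypothesis (vanishing of $N$ on $H^1$, purity in the weight spectral sequence) is in the right spirit and is essentially Lemma~\ref{lem:albgoodred} in disguise, but without the two integral steps above the argument only bounds the kernel by a finite group, which is where it starts.
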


The assumption on the irreducible components of~$A$ holds
as soon as~$X$ has geometric genus zero, as well as in many examples
of nontrivial degenerations of surfaces with nonzero geometric genus (see~\textsection\ref{sec:surfacesoverpadic}).
Theorem~\ref{thm:A} is due to Saito~\cite{saitocodim2} when~$X$ is a surface with geometric genus zero over a $p$\nobreakdash-adic field.
An example of Parimala and Suresh~\cite{parimalasuresh} shows that the assumption on the Albanese variety cannot be removed.
Finally, we note that Theorem~\ref{thm:A} may be viewed as a higher-dimensional generalization
of Lichtenbaum--Tate duality for curves,
according to which
the natural pairing $\CH_0(X) \times \Br(X) \to \Q/\Z$
is non-degenerate if~$X$ is a smooth proper curve over a $p$\nobreakdash-adic field
(see~\cite{lichtenbaumduality}).

Our starting point for the proof of Theorem~\ref{thm:A} is the theorem of Saito and Sato alluded to above
about the cycle class map for $1$\nobreakdash-cycles on~$\sX$
\cite[Theorem~1.16]{saitosato}, which allows us to express the kernel of~$\psi$ purely in terms of the scheme~$A$ and of the cohomology of~$X$,
when~$k$ is either finite or separably closed
(Theorem~\ref{th:criterionsepclosed} and Theorem~\ref{th:criterionfinite}).
The dimension of~$X$ plays no role in this part of the argument;
an application to the study of $0$\nobreakdash-cycles on a cubic threefold over~$\Qp$ may be found in
Example~\ref{eq:cubicthreefold}.
Theorem~\ref{thm:A} is then obtained by analysing the various cohomology groups which appear in the resulting expression for~$\Ker(\psi)$.
More precisely,
in the situation of Theorem~\ref{thm:A}, we prove the stronger assertion that
the $1$\nobreakdash-dimensional cycle class map
$\psi_{1,A}:\CH_1(A) \otimeshat \Zl \to H^4_A(\sX,\Zl(2))$
to integral $\ell$\nobreakdash-adic \'etale homology is surjective.
This provides a geometric explanation for the assumption
that the Albanese variety of~$X$ have potentially good reduction,
a condition which first appeared in~\cite{saitocodim2}
and which turns out to be essential for the surjectivity of~$\psi_{1,A}$ to hold
(see~Lemma~\ref{lem:albgoodred} and~\textsection\ref{subsec:aremarkpsi1A}).

When the residue field~$k$ is separably closed instead of finite,
the arguments used in the proof of Theorem~\ref{thm:A} fail in several places.  They still lead to the following statement, which
may also be deduced from results of Colliot-Th\'el\`ene and Raskind~\cite{ctraskind} (see~\textsection\ref{sec:surfacesoverC((t))} for comments on this point).

\begin{thmintro}[Theorem~\ref{th:H2alg-ksepclosed} and Remark~\ref{rk:unramifiedh3}]
\label{thm:B}
Assume~$k$ is separably closed and~$K$ has characteristic~$0$.  If~$X$ is a surface with geometric genus zero,
then for any~$\ell$ invertible in~$k$, the cycle class map
\begin{align}
\label{eq:cycleclassmap}
\CH_0(X)\otimeshat \Zl \to H^4_\et(X,\Zl(2))
\end{align}
is injective.  If in addition~$X$ is simply connected, then $A_0(X)$ is divisible by~$\ell$
and the unramified cohomology group $H^3_\nr(X,\Ql/\Zl(2))$ vanishes.
\end{thmintro}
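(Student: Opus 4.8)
The plan is to obtain the injectivity of $\psi$ from the criterion for separably closed residue field established earlier (Theorem~\ref{th:criterionsepclosed}), which expresses $\Ker(\psi)$ purely in terms of the special fibre $A$ and the cohomology of $X$; concretely this reduces the injectivity to the surjectivity of the $1$\nobreakdash-dimensional cycle class map $\psi_{1,A}\colon\CH_1(A)\otimeshat\Zl\to H^4_A(\sX,\Zl(2))$. Stratifying $H^4_A(\sX,\Zl(2))$ along the simple normal crossings configuration $A_\red=\bigcup_iA_i$, one sees that, up to torsion, it is a subgroup of $\bigoplus_iH^2(A_i,\Zl(1))$ (the only other contributions, the $H^0(A_i\cap A_j,\Zl)$ attached to the double curves, being manifestly classes of curves), so the surjectivity comes down to the statement that each $H^2(A_i,\Ql(1))$ is spanned by cycle classes. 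This is where the hypothesis is used: since $X$ has geometric genus zero and $K$ has characteristic~$0$, the Lefschetz theorem on $(1,1)$\nobreakdash-classes gives $H^2_\et(\bar X,\Ql(1))=\NS(\bar X)\otimes\Ql$, and through the Rapoport--Zink weight spectral sequence of the nearby cycle complex --- whose $E_1$\nobreakdash-terms are assembled from the cohomology of the strata $A_I=\bigcap_{i\in I}A_i$ and which degenerates at $E_2$ --- the part of $\bigoplus_iH^2(A_i,\Ql(1))$ not coming from $\bigoplus_i\NS(A_i)\otimes\Ql$ injects into a subquotient of $H^2_\et(\bar X,\Ql(1))$; the latter being algebraic, that part must vanish. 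I expect the passage from this rational statement to the integral surjectivity of $\psi_{1,A}$ --- handling torsion in the $H^2(A_i,\Zl(1))$ and the precise structure of $H^4_A(\sX,\Zl(2))$ --- to be the main technical obstacle; alternatively, for $k$ separably closed one may invoke the results of Colliot-Th\'el\`ene and Raskind~\cite{ctraskind} at this point.

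Assume now in addition that $X$ is simply connected. First I would record that $H^4_\et(X,\Zl(2))=\Zl$: simple connectedness gives $H^1_\et(\bar X,\Zl)=H^3_\et(\bar X,\Zl)=0$ and the torsion\nobreakdash-freeness of $H^\bullet_\et(\bar X,\Zl)$, so, using $\mathrm{cd}_\ell(K)\le1$ (which holds since $\ell$ is invertible in $k$), the Hochschild--Serre spectral sequence collapses to an isomorphism $H^4_\et(X,\Zl(2))=H^0(K,H^4_\et(\bar X,\Zl(2)))=\Zl$, realised by the trace map. By the structure theorem of Saito and Sato~\cite{saitosato}, $A_0(X)$ is the direct sum of a finite group of order prime to the residue characteristic and a divisible group, so $\CH_0(X)\otimeshat\Zl\cong\Zl\oplus(A_0(X)\otimeshat\Zl)$ with $A_0(X)\otimeshat\Zl$ finite (the $\ell$\nobreakdash-primary part of the finite summand). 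Since $\psi$ is injective by the first assertion and its target $\Zl$ is torsion\nobreakdash-free, the finite summand $A_0(X)\otimeshat\Zl$ must vanish; as the finite part of $A_0(X)$ then has order prime to~$\ell$, this says exactly that $A_0(X)$ is divisible by~$\ell$.

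For the vanishing of $H^3_\nr(X,\Ql/\Zl(2))$ I would run the Bloch--Ogus coniveau spectral sequence for the surface $X$ with $\Z/\ell^n\Z(2)$\nobreakdash-coefficients. For a smooth projective surface it yields $\Ker(\psi_n)=\mathrm{Im}\bigl(d_2\colon H^3_\nr(X,\Z/\ell^n\Z(2))\to\CH_0(X)/\ell^n\CH_0(X)\bigr)$, where $\psi_n$ is the mod\nobreakdash-$\ell^n$ cycle class map and $H^3_\nr(X,\Z/\ell^n\Z(2))=H^0(X,\mathscr H^3(\Z/\ell^n\Z(2)))$, while $\Ker(d_2)$ is the image $E^{0,3}_\infty$ of the edge map $H^3_\et(X,\Z/\ell^n\Z(2))\to H^0(X,\mathscr H^3(\Z/\ell^n\Z(2)))$, i.e. the coniveau\nobreakdash-zero quotient of $H^3_\et(X,\Z/\ell^n\Z(2))$. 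Now the Hochschild--Serre computation above together with $p_g(X)=0$ identifies $H^3_\et(X,\Z/\ell^n\Z(2))$ with $H^1(K,(\NS(\bar X)/\ell^n)(1))$, and every class there has coniveau $\ge1$: indeed $\NS(\bar X)$ is generated by the classes of a finite $G_K$\nobreakdash-stable family of curves $C_j\subset\bar X$, so --- via Shapiro's lemma applied to the $G_K$\nobreakdash-orbits --- each class is supported on the curve $C=\bigcup_jC_j$, which descends to a closed curve of $X$; hence $E^{0,3}_\infty=0$. On the other hand, as $A_0(X)$ is divisible by~$\ell$ one has $\CH_0(X)/\ell^n\CH_0(X)\cong\Z/\ell^n\Z$, generated by a zero\nobreakdash-cycle of minimal positive degree; using that $X$ carries a zero\nobreakdash-cycle of degree prime to~$\ell$ in the situation at hand (Hensel's lemma applied to a smooth closed point of $A$), $\psi_n$ is the trace isomorphism, so $\Ker(\psi_n)=0$ and therefore $d_2=0$. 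Since $d_2=0$, its kernel is all of $H^3_\nr(X,\Z/\ell^n\Z(2))$, which thus equals $E^{0,3}_\infty=0$; passing to the filtered colimit, $H^3_\nr(X,\Ql/\Zl(2))=\varinjlim_nH^3_\nr(X,\Z/\ell^n\Z(2))=0$.
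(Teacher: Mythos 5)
Your overall strategy for the injectivity --- reducing, via Theorem~\ref{th:criterionsepclosed}, to the surjectivity of $\psi_{1,A}$ and then to the algebraicity of the groups $H^2(A_i,\Ql(1))$ --- is the route the paper takes (cf.~\textsection\ref{subsec:aremarkpsi1A}), and your arguments for the divisibility of $A_0(X)$ and for $H^3_\nr(X,\Ql/\Zl(2))=0$ via the Bloch--Ogus spectral sequence are essentially sound (the paper instead quotes the Colliot-Th\'el\`ene--Voisin exact sequence together with Roitman's theorem, which repackages the same computation). The first part, however, contains a genuine gap. Your structural claim about $H^4_A(\sX,\Zl(2))$ is wrong: the Mayer--Vietoris sequence for supports reads
$$\textstyle\bigoplus H^4_{A_i\cap A_j}(\sX,\Zl(2)) \to \bigoplus_i H^4_{A_i}(\sX,\Zl(2)) \to H^4_A(\sX,\Zl(2)) \to \bigoplus H^5_{A_i\cap A_j}(\sX,\Zl(2))\rlap{\text{,}}$$
and by purity the last term is $\bigoplus H^1(A_i\cap A_j,\Zl)$, not $H^0$. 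So $H^4_A(\sX,\Zl(2))$ is an extension of a subgroup of $\bigoplus H^1(A_i\cap A_j,\Zl)$ by a quotient of $\bigoplus_i H^2(A_i,\Zl(1))$, not a subgroup of the latter, and the degree-one cohomology of the double curves is a genuine obstruction to the surjectivity of $\psi_{1,A}$ that is invisible in your decomposition: for two rational surfaces glued along an elliptic curve, every $H^2(A_i,\Ql(1))$ is algebraic and yet $\psi_{1,A}$ is not surjective (this is exactly the type~II $K3$ situation of Lemma~\ref{lem:generallemmapsi}). Under the hypothesis $p_g(X)=0$ this contribution does die, but that requires a separate weight argument --- it is precisely the content of Lemma~\ref{lem:weights}, which shows that the kernel of $H^2(A,\Ql(1))\to H^2(\coprod_i A_i',\Ql(1))$, dual to this $H^1$ piece, has negative weights and therefore cannot meet the weight-zero algebraic group $H^2(A,\Ql(1))$.

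A second, related weakness is the deduction ``the transcendental part of $\bigoplus_i H^2(A_i,\Ql(1))$ injects into a subquotient of $H^2(\bar X,\Ql(1))$, which is algebraic, hence it vanishes'': an injection of a nonzero space into an algebraic group does not force it to vanish. To conclude one needs, in addition, that the algebraic classes of $\bar X$ actually exhaust that graded piece after specialization --- this is the surjectivity of the specialization map, i.e.\ the local invariant cycle theorem for $H^2$, which the paper proves and exploits in Lemmas~\ref{lem:locinvcycle}, \ref{lem:invcycleeq} and~\ref{lem:H2algXA} --- together with the nondegeneracy of the intersection form on $\NS(A_i)\otimes_\Z\Ql$, so that a class which is simultaneously algebraic and orthogonal to all curves on $A_i$ must vanish (Lemma~\ref{lem:H2algAAi}). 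Over $\C$ one can short-circuit this with the additivity of $h^{2,0}$ across the weight graded pieces (Clemens--Schmid), but here $k$ may be $\bar\F_p$, so the $\ell$\nobreakdash-adic version of the argument is needed. Finally, you correctly identify the passage from $\Ql$- to $\Ql/\Zl$-coefficients as the remaining obstacle but do not carry it out; the paper resolves it in Lemma~\ref{lem:QtoQ/Z} by showing that the relevant obstruction group $H^2\big(A,a_{0*}\Zl(1)/\Zl(1)\big)$ is torsion-free for a surface with simple normal crossings.
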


This leaves open the question of the injectivity of the cycle class map~\eqref{eq:cycleclassmap} when~$k$ is separably closed
and~$X$ is a surface with positive geometric genus
over~$K$.  In this situation, the $1$\nobreakdash-dimensional cycle class map~$\psi_{1,A}$ is far from being surjective.
Building on the work of Kulikov, Persson, Pinkham
\cite{kulikov} \cite{perssonpinkham},
 and of Miranda and Morrison
\cite{mirandamorrison}, we nevertheless give a positive answer for semistable~$K3$ surfaces over $\C((t))$.

\begin{thmintro}[Theorem~\ref{th:semiK3}]
\label{thm:C}
Let~$X$ be a~$K3$ surface over $\C((t))$.
If~$X$ has semistable reduction, the group $A_0(X)$ is divisible.
\end{thmintro}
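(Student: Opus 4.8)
The plan is to reduce the statement to the injectivity of the cycle class map~$\psi$ for every prime~$\ell$, to rewrite the resulting obstruction in terms of the special fibre of a Kulikov model of~$X$, and then to annihilate that obstruction using the Kulikov--Persson--Pinkham classification together with the lattice computations of Miranda and Morrison. First, an abelian group is divisible if and only if it is divisible by every prime, and since the groups $A_0(X)/\ell^nA_0(X)$ form a tower of surjections (hence satisfy the Mittag--Leffler condition) this is equivalent to the vanishing of $A_0(X)\otimeshat\Zl$ for every~$\ell$. Moreover, because~$X$ is a $K3$ surface we have $H^3_\et(\overline{X},\Zl)=0$ and $H^4_\et(\overline{X},\Zl(2))=\Zl$ with trivial Galois action, so the Hochschild--Serre spectral sequence over the field $\C((t))$, which has cohomological dimension~$1$, gives $H^4_\et(X,\Zl(2))=\Zl$ and identifies~$\psi$ with the degree map; hence $\Ker(\psi)=A_0(X)\otimeshat\Zl$, and it suffices to prove that~$\psi$ is injective, for each fixed~$\ell$.

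Next, since~$X$ has semistable reduction, some regular proper flat $R$-model of~$X$ has reduced special fibre with simple normal crossings and the monodromy on $H^2_\et(\overline{X},\Zl)$ is unipotent. By the theorem of Kulikov, Persson and Pinkham (\cite{kulikov}, \cite{perssonpinkham}), after a sequence of modifications one may arrange that the model~$\sX$ satisfies $\omega_{\sX/R}\simeq\sO_\sX$ and that its special fibre~$A$ is of Type~I (a smooth $K3$ surface, with trivial monodromy), of Type~II (a chain of surfaces whose ends are rational and whose interior members are ruled surfaces over a fixed elliptic curve, all double curves being isomorphic to that curve, with monodromy $N\neq 0=N^2$), or of Type~III (a union of rational surfaces whose double curves are rational and whose dual complex is a triangulation of~$S^2$, with $N^2\neq 0=N^3$). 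As $A_0(X)$ depends only on~$X$, we work with this model.

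Then I would feed this into Theorem~\ref{th:criterionsepclosed}, which applies since $k=\C$ is separably closed: building on the Saito--Sato theorem on $1$-cycles on~$\sX$, it identifies $\Ker(\psi)=A_0(X)\otimeshat\Zl$ with the cokernel of the map $\CH_1(A)\otimeshat\Zl\oplus H^3_\et(X,\Zl(2))\to H^4_A(\sX,\Zl(2))$ given by $\psi_{1,A}$ on the first summand and by the boundary of the localization sequence on the second; so the task is to show that $H^4_A(\sX,\Zl(2))$ --- the target of the integral $\ell$-adic $1$-cycle class map --- is generated by the classes of algebraic $1$-cycles on~$A$ together with the image of $H^3_\et(X,\Zl(2))$. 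Here $\psi_{1,A}$ is \emph{not} surjective --- its cokernel is fed, in Type~II, by the $H^1$ of the elliptic double curves and, in Type~III, by the homology of the dual complex, and the positivity of the geometric genus obstructs surjectivity even though the Albanese variety of~$X$ vanishes --- so, in contrast with the proof of Theorem~\ref{thm:A}, the contribution of $H^3_\et(X,\Zl(2))$ is indispensable, and what must be shown is that, through the Clemens--Schmid exact sequence and the monodromy weight filtration on $H^2_\et(\overline{X},\Zl)$, this contribution exactly makes up the deficit of $\psi_{1,A}$.

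Finally I would verify this type by type. Type~I is the good-reduction case: $N=0$, a smooth model is available and the relevant comparison is an isomorphism, so the cokernel vanishes --- this also follows from the results of Colliot-Th\'el\`ene and Raskind \cite{ctraskind}. In Types~II and~III one uses that, since $\omega_{\sX/R}$ is trivial, every component~$A_i$ of~$A$ has $h^{2,0}=0$, so that $H^2_\et(A_i,\Zl(1))$ is spanned by divisor classes and the only non-algebraic contributions to $H^4_A(\sX,\Zl(2))$ come from the $H^1$ of the elliptic double curves (Type~II) or from combinatorial classes on the triangulated sphere (Type~III); the lattice-theoretic analysis of Miranda and Morrison \cite{mirandamorrison} --- of the Picard lattices of the components, of the restriction maps to the double curves, and of the admissible nilpotent operators~$N$ --- then shows that these contributions are accounted for by the image of $H^3_\et(X,\Zl(2))$, while the remaining classes in $H^4_A(\sX,\Zl(2))$ are classes of rulings of elliptic ruled components, of components of double curves, or of pullbacks of divisors from rational components. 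Hence the cokernel vanishes and~$\psi$ is injective. I expect the main difficulty to lie in Type~III, where~$A$ has the most components, the dual complex is a genuine triangulated $2$-sphere and the nilpotent monodromy has rank~$2$: reconciling the combinatorics of the sphere with the weight filtration on $H^2_\et(\overline{X},\Zl)$, and ruling out a class of $H^4_A(\sX,\Zl(2))$ that escapes both $\psi_{1,A}$ and the boundary of~$H^3_\et(X,\Zl(2))$, is precisely where one needs the fine structure of Kulikov models and the Miranda--Morrison bookkeeping, rather than a soft argument; Type~II is a lighter version of the same problem, with the elliptic double curves requiring particular care.
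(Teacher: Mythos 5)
Your overall architecture matches the paper's: reduce to the injectivity of $\psi$ for each $\ell$ via the criterion of \textsection\ref{sec:criterion}, replace the model by a Kulikov model using Kulikov--Persson--Pinkham and Miranda--Morrison, and argue type by type, with the contribution of $H^1(X,\Ql/\Zl(1))$ (equivalently, of $H^3(X,\Zl(2))$ on the dual side) being essential because $\psi_{1,A}$ is never surjective here. But the mechanism you propose for the decisive step does not work, and the actual idea is missing. You say that ``through the Clemens--Schmid exact sequence and the monodromy weight filtration'' the image of $H^3_\et(X,\Zl(2))$ exactly makes up the deficit of $\psi_{1,A}$. That is a statement with $\Ql$\nobreakdash-coefficients: rationally it amounts to the local invariant cycle theorem and is true for \emph{every} semistable degeneration of surfaces, including the one constructed in \textsection\ref{sec:counterexample}, where the conclusion of the theorem fails. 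The obstruction measured by Theorem~\ref{th:criterionsepclosed} lives in torsion. Via Remark~\ref{rk:complexsepclosed}~(ii)--(iii) it reduces to the entirely concrete assertion that a tuple $(\lambda_i)\in(\Q/\Z)^I$ killed by $\delta$ is constant, i.e.\ to an \emph{integrality} statement about the intersection numbers of curves on the components $A_i$ with the classes $\sO_\sX(A_j)|_{A_i}$. Clemens--Schmid and weights cannot see this: what is needed is the production, on enough components, of explicit curves meeting a single double curve transversally in one point. This is where the minus-one-form enters (every double curve component has self-intersection $-1$ on the normalized component), together with Looijenga's classification of anticanonical pairs of length $<6$ (to produce an exceptional curve meeting exactly one component of the anticanonical cycle, with multiplicity one) and Euler's formula $\sum_i(6-n_i)=12$ on the triangulated sphere (to guarantee that some component has an anticanonical cycle of length $<6$); one then propagates the resulting equality $\lambda_i=\lambda_j$ across the dual complex. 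In type~II the analogous inputs are the triple point formula, the fact that a minimal rational surface has $K^2\in\{8,9\}$ (forcing one end of the chain to be non-minimal, hence to carry an exceptional curve), and the fibres of the rulings. None of this is in your proposal, and ``the lattice-theoretic analysis of Miranda and Morrison'' does not supply it; what Miranda--Morrison actually contribute is the existence of a model in minus-one-form.

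A secondary gap: ``as $A_0(X)$ depends only on $X$, we work with this model'' is not sufficient to justify passing to a Kulikov model. Kulikov's theorem produces an \emph{analytic} modification over a disk, not a regular projective model of $X$ over $\C[[t]]$, so Theorem~\ref{th:criterionsepclosed} does not apply to it directly. The paper must first spread $X$ out to an algebraic family over a complex curve, prove that the homology of the complex~\eqref{eq:complexsepclosedanalytic} is invariant under blow-ups with smooth centres and invoke the weak factorization theorem for bimeromorphic maps to compare the Kulikov model with the algebraic one, and finally use Remark~\ref{rk:complexsepclosed}~(iv) (the obstruction depends only on the special fibre) to return to the original $X$ over $\C((t))$. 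This layer of the argument also needs to be supplied.
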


The proof of Theorem~\ref{thm:C} hinges on the precise knowledge of the combinatorial structure of a degeneration of~$X$.
It would go through over the maximal unramified extension of a $p$\nobreakdash-adic field, as far as prime-to\nobreakdash-$p$ divisibility is concerned,
if similar knowledge were available.  This is in marked contrast with the situation over $p$\nobreakdash-adic fields,
where such knowledge is not necessary for the proof of Theorem~\ref{thm:A}.

In the final section of this paper, with the help of Ogg--Shafarevich theory and of a construction due to Persson,
we show that the hope for a statement analogous to Theorem~\ref{thm:A} over the quotient field of a strictly henselian excellent discrete valuation ring
is in fact too optimistic, even over the maximal unramified extension of a $p$\nobreakdash-adic field.

\begin{thmintro}[Theorem~\ref{th:counterexample}]
\label{thm:D}
There exists a simply connected smooth projective surface~$X$ over $\C((t))$ such that $A_0(X)/2A_0(X)=\Z/2\Z$.
For infinitely many prime numbers~$p$, there exists a
simply connected smooth projective surface~$X$ over the maximal unramified extension of a $p$\nobreakdash-adic field
such that $A_0(X)/2A_0(X)=\Z/2\Z$.
\end{thmintro}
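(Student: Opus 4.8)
The plan is to reduce, by the criterion already proved in the paper (Theorem~\ref{th:criterionsepclosed}), to the construction of a single elliptic surface over $R$ with a carefully chosen degeneration; to build such a surface following Persson; and to compute the resulting obstruction by Ogg--Shafarevich theory. First I would record the shape of the reduction. If $X$ is a surface over a strictly local field $K$ of residue characteristic $p$ (with $p$ odd or $0$) admitting a regular proper flat model $\sX/R$ with $A_\red$ a simple normal crossings divisor, then by Theorem~\ref{th:criterionsepclosed} the kernel of $\CH_0(X)\otimeshat\Zl\to H^4_\et(X,\Zl(2))$ at any $\ell\neq p$ is controlled by the cokernel of $\psi_{1,A}\colon\CH_1(A)\otimeshat\Zl\to H^4_A(\sX,\Zl(2))$; and if moreover $X$ is geometrically simply connected one has $H^4_\et(X,\Zl(2))\cong\Zl$ (the cyclotomic character being trivial on $\Zl(1)$ for $\ell\neq p$ and $H^3$ of the geometric fibre vanishing), so that this kernel is exactly the $\ell$-primary part of the (finite, by Saito--Sato) torsion subgroup of $A_0(X)$, and $A_0(X)/\ell A_0(X)$ is read off from $\Coker(\psi_{1,A})$. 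It therefore suffices to construct a simply connected smooth projective surface $X$ over $K$ (equal to $\C((t))$, resp.\ to the maximal unramified extension $F^\nr$ of a $p$-adic field $F$ with $p$ odd) admitting such a model for which $\Coker(\psi_{1,A})$ equals $\Z/2\Z$ at $\ell=2$ and vanishes at odd $\ell\neq p$. Note that by Theorem~\ref{th:semiK3} such an $X$ cannot be a semistable $K3$ surface, so the degeneration must involve additive fibres, a non-reduced special fibre, or a surface of a different type.

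Next I would take $\sX\to\P^1_R$ to be an elliptic fibration with section whose generic fibre $X\to\P^1_K$ is relatively minimal, and choose its singular fibres so that, after a controlled sequence of blow-ups, $\sX$ is regular with $A_\red$ simple normal crossings; simple connectivity of $X$ is then ensured by including a fibre of type $\mathrm{I}_1$ (a section precludes multiple fibres), and the degeneration $\sX/R$ is arranged so that the monodromy on $H^2$ of the geometric generic fibre --- described through Shioda--Tate by the zero section, a general fibre and the components of the reducible fibres --- has coinvariant lattice with $2$-primary torsion exactly $\Z/2\Z$, the factor $2$ coming either from a reducible fibre produced in the degeneration or from a multiplicity-$2$ component of $A$. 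Ogg--Shafarevich theory then identifies and computes the obstruction: through the fibration, $\Coker(\psi_{1,A})$ --- equivalently the relevant unramified cohomology group of $X$, hence the torsion of $A_0(X)$ --- is a Tate--Shafarevich-type group of the elliptic fibration, expressed in terms of the local invariants $c_v$ at the singular fibres and of the monodromy by the Ogg--Shafarevich formula. Persson's construction of elliptic surfaces with prescribed Kodaira fibres, and of one-parameter degenerations realising prescribed monodromy, should supply a family for which this group is $\Z/2\Z$ at $2$ and trivial at the other primes; combined with the reduction, this yields $A_0(X)/2A_0(X)=\Z/2\Z$, a generator being represented, say, by a difference of two points in a reducible fibre pairing non-trivially with the $2$-torsion Brauer class so obtained.

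For the $p$-adic assertion I would perform the whole construction over $\Z[1/N][[t]]$: base change along $\Z[1/N]\to\C$, $t\mapsto 0$, produces the family over $\C[[t]]$, while the ring homomorphism $\Z[1/N][[t]]\to W(\overline{\F}_p)$, $t\mapsto p$ --- well defined for $p\nmid N$, since a power series with coefficients in $\Z[1/N]\subset\Z_p$ converges $p$-adically at $t=p$ --- produces, for every $p\nmid 2N$, a regular proper flat model over $W(\overline{\F}_p)$, the ring of integers of the completion of a maximal unramified extension, whose special fibre $A\otimes\overline{\F}_p$ has the same combinatorial structure as $A\otimes\C$; hence the Ogg--Shafarevich computation at $\ell=2$ and the geometric simple connectivity of the generic fibre are unaffected, and one descends the generic fibre to $F^\nr$ by a routine approximation argument. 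The main obstacle is the construction itself: one must produce a single elliptic surface whose singular fibres are tame enough that every cohomology group entering Theorem~\ref{th:criterionsepclosed} can be computed and the answer pinned down to exactly $\Z/2\Z$, yet whose configuration is rich enough to force a non-trivial $2$-primary Tate--Shafarevich contribution, all the while keeping the generic fibre simply connected and the total space regular with simple normal crossings reduction --- it is for this simultaneous control that Persson's explicit construction and the precision of the Ogg--Shafarevich formula are needed.
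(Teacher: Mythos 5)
Your overall reduction to Theorem~\ref{th:criterionsepclosed} is the right starting point and matches the paper, but the proposal has two genuine gaps, one at the heart of the construction and one in the passage to mixed characteristic. First, the construction itself is not carried out, and the structural choices you make point away from a configuration that can work. The criterion of Theorem~\ref{th:criterionsepclosed} (in the form of Remark~\ref{rk:complexsepclosed}~(ii) and Example~\ref{ex:firstex}~(iii)) asks for a special fibre $A=A_1\cup A_2$ such that the intersection number of \emph{every} curve on $A_i$ with the double locus $D$ is even; it is not computed by the torsion of a monodromy coinvariant lattice, nor by an Ogg--Shafarevich formula for a Shafarevich--Tate group of an elliptic fibration on $X$ — you are conflating the fibration direction over $\P^1$ with the degeneration direction over $R$. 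The paper's mechanism is to make each component of $A$ a simply connected surface $V$ carrying a genus-$1$ pencil $f$ of \emph{index $4$} (a period-$4$ torsor under an elliptic $K3$, produced by Ogg--Shafarevich theory) with a single multiplicity-$2$ fibre $f^{-1}(0)=2D$; then $(C\cdot D)=\tfrac12(C\cdot f^{-1}(0))\in 2\Z$ for every curve $C$ on $V$, and Persson's double-cover degeneration glues two copies of $V$ along $D$. Your insistence on an elliptic fibration \emph{with a section} is actively incompatible with this: a section is a curve meeting every fibre with multiplicity one, which destroys any $2$-divisibility of intersection numbers coming from the fibration, and "a section precludes multiple fibres" removes precisely the feature (the multiplicity-$2$ fibre, i.e.\ the $2$-torsion normal bundle of $D$) that produces the obstruction. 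The phrase "Persson's construction should supply a family for which this group is $\Z/2\Z$" is exactly the statement to be proved, not an input.

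Second, the $p$-adic descent is not a matter of preserving "the same combinatorial structure" of the special fibre. The condition to be preserved under reduction modulo $v$ is that every curve on $V\otimes\overline{\F}_v$ meets $D$ with even multiplicity, and this involves the full N\'eron--Severi group of the reduction, which can be strictly larger than that of the characteristic-$0$ fibre (the Picard number can jump); new curve classes in characteristic $p$ can violate the divisibility. Concretely, the order of the Brauer class $\alpha\in\Br(E\otimes_F\overline{\Q})$ controlling the index of the genus-$1$ pencil may drop upon specialization. The paper needs a dedicated argument (Lemma~\ref{lem:existsalpha}, resting on the theorem of Bogomolov and Zarhin that the geometric Picard number of the reductions of a $K3$ surface cannot equal $22$ at almost all places) to guarantee that an order-$4$ class survives reduction at infinitely many $v$ — which is also why the statement is only claimed for \emph{infinitely many} $p$ rather than almost all $p$. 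Your spreading-out over $\Z[1/N][[t]]$ gives the model, but without this step the key divisibility property (3) of Proposition~\ref{prop:existsalphabeta} is not known to persist modulo $p$.
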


By a theorem of Kato and Saito~\cite[\textsection10]{katosaito},
the group~$A_0(X)$
is finite
for any smooth projective variety~$X$ over a finite field
and it vanishes if~$X$ is simply connected.
Theorem~\ref{thm:D} answers in the negative
the question
whether the same result might also
hold over the quasi-finite field~$\C((t))$,
for the quotient of~$A_0(X)$ by its maximal divisible subgroup.

In higher dimension, the statement of Theorem~\ref{thm:A} is known to fail for rationally connected threefolds over local fields (see~\cite[Proposition~6.2]{parimalasuresh};
this example is even unirational over~$\Q_3$, by \cite[Corollary~1.8]{kollarlocal}).
It would be interesting to find an example of a rationally connected threefold~$X$ over~$\C((t))$ such that $A_0(X)\neq 0$.
Some constraints on the possible degenerations of such an~$X$ may be gathered from the proof of Theorem~\ref{thm:B} given below.

\bigskip
\emph{Acknowledgements.}
We are grateful to Spencer Bloch for his interest and for
contributing to our article in the form of an appendix, in which he simplifies the proof of~\cite[Theorem~1.16]{saitosato}.
We~thank Pierre Berthelot, Jean-Louis Colliot-Th\'el\`ene,
Bruno Kahn, Shuji Saito, and Takeshi Saito for discussions on this or related topics.
In addition we~thank the referees for their work and their helpful comments.

\bigskip
\emph{Notation.}
If~$M$ is an abelian group and~$\ell$ is a prime number, we write
$M\otimeshat\Zl$ for the $\Zl$\nobreakdash-module $\varprojlim M/\ell^nM$.  We say that~$M$ is \emph{divisible by~$\ell$} if $M/\ell M=0$,
and that it is \emph{divisible} if it is divisible by all prime numbers.
Unless otherwise specified, all cohomology groups are Galois or \'etale cohomology groups.
Cohomology with coefficients in~$\Zl$ stands for the inverse limit of the corresponding cohomology groups with coefficients in~$\Z/\ell^n\Z$,
and will only be considered in situations in which the latter groups are all finite.
If~$V$ is a proper variety over a field~$k$, we denote by $\NS(V)$ the quotient of $\Pic(V)$ by algebraic equivalence,
and we denote by~$A_0(V)$ the group of $0$\nobreakdash-cycles of degree~$0$ on~$V$ up to rational equivalence, \emph{i.e.}, the kernel
of the degree map $\CH_0(V)\to \Z$.  We let $\bar V=V \otimes_k \bar k$, where~$\bar k$ denotes a separable closure of~$k$.
Finally, we say that a variety~$V$ over a field~$k$ has \emph{simple normal crossings} if it is reduced
and if for any finite collection of pairwise distinct irreducible components $V_1,\dots,V_n$ of~$V$,
the scheme-theoretic intersection $\bigcap_{1\leq i\leq n} V_i$ is smooth over~$k$ and has codimension $n-1$ in~$V$ at every point.

\section{A criterion for the injectivity of the \texorpdfstring{$\ell$}{ℓ}-adic cycle class map}
\label{sec:criterion}

Let~$\sX$ denote an irreducible regular projective flat scheme over an excellent henselian discrete valuation ring~$R$, with special fiber~$A$ and generic fiber~$X$.
Let~$\ell$ be a prime number invertible in~$R$. The goal of this section is to establish
a criterion, in terms of~$A$ and of the cohomology of~$X$, for the $\ell$\nobreakdash-divisibility, up to rational equivalence,
of homologically trivial zero-cycles on~$X$, when~$k$ is either finite or separably closed.
This criterion is derived from the main theorem of Saito and Sato~\cite{saitosato} (see also Appendix~\ref{appendix}).

We denote by~$K$ the quotient field of~$R$.
Let $d=\dim(X)$ and let $(A_i)_{i \in I}$ denote the family of irreducible components of~$A$.

\begin{thm}
\label{th:criterionsepclosed}
Assume~$k$ is separably closed.
The kernel of the cycle class map
$$\CH_0(X) \otimeshat \Zl \longrightarrow H^{2d}(X,\Zl(d))$$
is a finite group, canonically Pontrjagin dual to the homology group of the complex
\begin{align}
\label{eq:complexsepclosed}
\xymatrix{
H^1(X, \Ql/\Zl(1)) \ar[r] & H^2_A(\sX,\Ql/\Zl(1)) \ar[r] & \displaystyle\bigoplus_{i \in I} \frac{H^2(A_i,\Ql/\Zl(1))}{\CH_1(A_i)^\perp}\rlap{\text{.}}
}
\end{align}
\end{thm}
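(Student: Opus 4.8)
The plan is to deduce the statement from Saito and Sato's bijectivity theorem for the cycle class map on $1$\nobreakdash-cycles of a regular model (\cite[Theorem~1.16]{saitosato}; see also Appendix~\ref{appendix}), which I use in the form that
$\lambda_\sX\colon\CH_1(\sX)\otimeshat\Zl\to H^{2d}(\sX,\Zl(d))$ is an isomorphism, so that $\CH_1(\sX)/\ell^n$, hence its quotient $\CH_0(X)/\ell^n$, is finite. The other tools are the localization sequences in $\ell$\nobreakdash-adic cohomology and Poincar\'e--Lefschetz duality.

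First I would describe $\Ker(\psi)$ with $\Zl(d)$\nobreakdash-coefficients. Write $j\colon X\inj\sX$ and $i\colon A\inj\sX$. Since the groups modulo $\ell^n$ are finite (hence Mittag--Leffler), the localization sequence for Chow groups stays exact after $\otimeshat\Zl$,
\[
\CH_1(A)\otimeshat\Zl \xrightarrow{\,i_*\,} \CH_1(\sX)\otimeshat\Zl \xrightarrow{\,r\,} \CH_0(X)\otimeshat\Zl \to 0\text{,}
\]
where $r$ sends a horizontal $1$\nobreakdash-cycle on $\sX$ to the corresponding $0$\nobreakdash-cycle on the generic fiber, and the \'etale localization sequence for $A\inj\sX\hookleftarrow X$ in degrees $2d-1,2d$ with $\Zl(d)$\nobreakdash-coefficients reads
\[
H^{2d-1}(X,\Zl(d)) \xrightarrow{\,\partial\,} H^{2d}_A(\sX,\Zl(d)) \xrightarrow{\,\beta\,} H^{2d}(\sX,\Zl(d)) \xrightarrow{\,j^*\,} H^{2d}(X,\Zl(d))\text{.}
\]
The cycle class maps fit into commuting squares $\beta\circ\psi_{1,A}=\lambda_\sX\circ i_*$, where $\psi_{1,A}\colon\CH_1(A)\otimeshat\Zl\to H^{2d}_A(\sX,\Zl(d))$ is the class map (landing in the group with supports, by the support condition), and $j^*\circ\lambda_\sX=\psi\circ r$. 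A diagram chase using the bijectivity of $\lambda_\sX$ then yields a canonical isomorphism
\[
\Ker(\psi) \isoto \Coker\!\Big(\CH_1(A)\otimeshat\Zl \xrightarrow{\,\beta\circ\psi_{1,A}\,} \Ker\big(j^*\colon H^{2d}(\sX,\Zl(d))\to H^{2d}(X,\Zl(d))\big)\Big)\text{:}
\]
a class in $\Ker(\psi)$ is lifted along $r$, its image under $\lambda_\sX$ lies in $\Ker(j^*)$ and is well defined modulo $\lambda_\sX(\Im i_*)$, and one checks the resulting map is bijective. Finiteness of $\Ker(\psi)$ follows from \cite{saitosato}: $A_0(X)$ is the sum of a finite group and a divisible group, so $\CH_0(X)\otimeshat\Zl$ is a finitely generated $\Zl$\nobreakdash-module on which $\psi$ is injective after inverting $\ell$.

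Second I would dualize. Poincar\'e--Lefschetz duality for the regular scheme $\sX$, proper and flat over the strictly henselian ring $R$, provides perfect pairings $H^{2d}(\sX,\Zl(d))\times H^2_A(\sX,\Ql/\Zl(1))\to\Ql/\Zl$ and $H^{2d}_A(\sX,\Zl(d))\times H^2(\sX,\Ql/\Zl(1))\to\Ql/\Zl$, under which the two forget\nobreakdash-supports maps are dual to one another and which are compatible with the localization sequences (and with Poincar\'e duality on $X$); and for each component $A_i$, proper of dimension $d$ over the separably closed residue field, Poincar\'e duality gives a perfect pairing $H^{2d-2}(A_i,\Zl(d-1))\times H^2(A_i,\Ql/\Zl(1))\to\Ql/\Zl$, with respect to which $\CH_1(A_i)^\perp$ is the annihilator of the image of the cycle class map $\CH_1(A_i)\otimeshat\Zl\to H^{2d-2}(A_i,\Zl(d-1))$, so that $H^2(A_i,\Ql/\Zl(1))/\CH_1(A_i)^\perp$ is canonically the Pontrjagin dual of that image. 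Taking Pontrjagin duals of the displayed isomorphism: the annihilator of $\Ker(j^*)=\Im(\beta)$ in $H^{2d}(\sX,\Zl(d))^\vee=H^2_A(\sX,\Ql/\Zl(1))$ equals the kernel of the forget\nobreakdash-supports map, namely $\Im\big(H^1(X,\Ql/\Zl(1))\to H^2_A(\sX,\Ql/\Zl(1))\big)$, the image of the first arrow of the complex; and the transpose of $\beta\circ\psi_{1,A}$, a homomorphism $H^2_A(\sX,\Ql/\Zl(1))\to(\CH_1(A)\otimeshat\Zl)^\vee$, has — by the projection formula for the Gysin maps $H^{2d-2}(A_i,\Zl(d-1))\to H^{2d}_A(\sX,\Zl(d))$ together with the surjectivity of $\bigoplus_i\CH_1(A_i)\surj\CH_1(A)$ (each irreducible $2$\nobreakdash-dimensional subvariety of $A$ lying on a single component) — the same kernel as the second arrow $H^2_A(\sX,\Ql/\Zl(1))\to\bigoplus_i H^2(A_i,\Ql/\Zl(1))/\CH_1(A_i)^\perp$ of the complex, given by restriction of classes to the $A_i$ followed by the projections. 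As that second arrow annihilates the image of the first, $\Ker(\psi)^\vee$ is precisely the homology of the complex, and all the identifications being canonical, this proves the statement.

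The step I expect to be the main obstacle is this last duality input, in the precision required: Poincar\'e--Lefschetz duality for the merely regular (not $R$\nobreakdash-smooth) scheme $\sX$ with supports along the special fiber, together with the compatibilities invoked above — the self\nobreakdash-duality of the forget\nobreakdash-supports maps, the compatibility of $\partial$ with restriction, and the compatibility of $\psi_{1,A}$ with the cycle class and Gysin maps of the possibly singular components $A_i$ — and, when $K$ has positive characteristic, the verification that $X$ is smooth enough over $K$ for Poincar\'e duality to apply. The diagram chase of the second step and the Mittag--Leffler bookkeeping needed to pass to $\ell$\nobreakdash-adic completions are routine by comparison.
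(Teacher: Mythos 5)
Your proposal follows the paper's own route almost exactly: the commutative diagram combining the localization sequences for Chow groups and for \'etale cohomology with the various cycle class maps, the Saito--Sato isomorphism for $1$\nobreakdash-cycles on~$\sX$, the identification of $\Ker(\psi_{0,X})$ with the homology of the extracted three-term complex, and then Pontrjagin duality using the compatibilities of the pairings with the localization and Hochschild--Serre maps (this is \textsection\ref{subsec:proofcriterion} together with Remark~\ref{rk:pairingscompatible}; working mod $\ell^n$ and passing to limits at the end, versus your completed groups plus Mittag--Leffler, is only a difference of bookkeeping).

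The one substantive divergence is also the one step that, as written, would fail. You dualize the contribution of each component by invoking Poincar\'e duality on~$A_i$ itself, as a perfect pairing $H^{2d-2}(A_i,\Zl(d-1))\times H^2(A_i,\Ql/\Zl(1))\to\Ql/\Zl$ in which the cycle class of a curve on~$A_i$ lives in $H^{2d-2}(A_i,\Zl(d-1))$. But Theorem~\ref{th:criterionsepclosed} imposes no normal crossings or smoothness hypothesis on the irreducible components of the special fiber; for a singular proper~$A_i$ this pairing is not perfect, and the cycle class map for $1$\nobreakdash-cycles does not land in ordinary cohomology of~$A_i$. The paper's Proposition~\ref{prop:dualityAsX} replaces it by the perfect pairing $H^2(A_i,\Lambda(1))\times H^{2d}_{A_i}(\sX,\Lambda(d))\to\Lambda$, obtained from absolute purity and duality over~$k$ applied to the closed immersion of~$A_i$ into the \emph{regular} ambient scheme~$\sX$; the Saito--Sato cycle class of a $1$\nobreakdash-cycle on~$A_i$ naturally lives in $H^{2d}_{A_i}(\sX,\Zl(d))$, and this is precisely how $\CH_1(A_i)^\perp$ is defined in the statement. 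You half-anticipate the issue (``the possibly singular components~$A_i$''), but the remedy is not a refinement of Poincar\'e duality on~$A_i$: it is a different duality, with supports in~$\sX$. With that substitution --- and the analogous pairing of $H^{2d}(\sX,\Zl(d))\cong H^{2d}(A,\Zl(d))$ with $H^2_A(\sX,\Ql/\Zl(1))$, which you already use --- the remainder of your argument (adjointness of the forget-supports and boundary maps, the surjection $\bigoplus_{i\in I}\CH_1(A_i)\surj\CH_1(A)$, the limit over~$n$) coincides with the paper's proof.
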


\begin{thm}
\label{th:criterionfinite}
Assume~$k$ is finite.
The kernel of the cycle class map $$\CH_0(X) \otimeshat \Zl \longrightarrow H^{2d}(X,\Zl(d))$$ is a finite group, canonically Pontrjagin dual to the homology group of the complex
\begin{align}
\label{eq:complexfinite}
\xymatrix{
H^2(X, \Ql/\Zl(1)) \ar[r] & H^3_A(\sX,\Ql/\Zl(1)) \ar[r] & \displaystyle\bigoplus_{i \in I} \frac{H^3(A_i,\Ql/\Zl(1))}{\CH_1(A_i)^\perp}\rlap{\text{.}}
}
\end{align}
\end{thm}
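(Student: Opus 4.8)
The plan is to derive both criteria in parallel from the Saito--Sato isomorphism for the regular model~$\sX$, combined with the localization sequences in Chow groups and in \'etale cohomology with supports and with arithmetic Poincar\'e--Pontrjagin duality over the henselian trait; the finite and separably closed cases will differ only by a degree shift reflecting $\mathrm{cd}(k)$. A $1$\nobreakdash-cycle on~$\sX$ is either vertical (supported on~$A$) or horizontal (restricting to a $0$\nobreakdash-cycle on~$X$), which gives a right-exact localization sequence $\CH_1(A)\to\CH_1(\sX)\to\CH_0(X)\to 0$; applying $\otimeshat\Zl$ (a routine limit argument, resting on the finiteness of the mod-$\ell^n$ Chow groups in play, shows exactness is preserved) and invoking \cite[Theorem~1.16]{saitosato} in the form ``$\CH_1(\sX)\otimeshat\Zl\to H^{2d}(\sX,\Zl(d))$ is an isomorphism'' yields a commutative diagram with this sequence on top, the Saito--Sato isomorphism in the middle, the localization sequence $H^{2d}_A(\sX,\Zl(d))\xrightarrow{\ \iota\ }H^{2d}(\sX,\Zl(d))\xrightarrow{\ \rho\ }H^{2d}(X,\Zl(d))$ on the bottom, $\psi_{1,A}$ on the left and the cycle class map~$\psi$ on the right. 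A short diagram chase identifies $\Ker(\psi)$ with the subquotient $Q/R$ of $H^{2d}(\sX,\Zl(d))$, where $Q=\Ker(\rho)=\iota\big(H^{2d}_A(\sX,\Zl(d))\big)$ and $R$ is the image of the cycle class map $\CH_1(A)\otimeshat\Zl\to H^{2d}(\sX,\Zl(d))$; in particular $\Ker(\psi)$ is a quotient of $\Coker(\psi_{1,A})$, it is finite, and surjectivity of~$\psi_{1,A}$ would already force $\psi$ injective.

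Next I would dualize. Since~$\sX$ is regular, proper and flat of dimension~$d+1$ over the henselian discrete valuation ring~$R$, arithmetic Poincar\'e duality gives a perfect pairing $H^j_A(\sX,\Z/\ell^n(i))\times H^{j'}(\sX,\Z/\ell^n(d{+}1{-}i))\to H^{j+j'}_A(\sX,\Z/\ell^n(d{+}1))\cong\Z/\ell^n$ with $j+j'=2d+3$ ($k$ finite) or $2d+2$ ($k$ separably closed), together with Poincar\'e duality for~$X$ over the local field~$K$ and for the smooth proper~$A_i$ over~$k$. In the limit over~$n$ this gives $H^{2d}(\sX,\Zl(d))^\vee\cong H^3_A(\sX,\Ql/\Zl(1))$ (resp.\ $H^2_A$), $H^{2d}(X,\Zl(d))^\vee\cong H^2(X,\Ql/\Zl(1))$ (resp.\ $H^1$), and $H^{2d-2}(A_i,\Zl(d{-}1))^\vee\cong H^3(A_i,\Ql/\Zl(1))$ (resp.\ $H^2$). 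Using that the transpose of a restriction map is a boundary map and that the transpose of ``forget supports'' is again ``forget supports'', the annihilator $Q^\perp$ inside $H^3_A(\sX,\Ql/\Zl(1))$ becomes the image of $H^2(X,\Ql/\Zl(1))$ under the boundary map, i.e.\ $\Ker\big(H^3_A(\sX,\Ql/\Zl(1))\to H^3(\sX,\Ql/\Zl(1))\big)$ --- which is precisely the image of the first map of~\eqref{eq:complexfinite} (resp.~\eqref{eq:complexsepclosed}).

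For $R^\perp$: via proper base change $H^3(\sX,\Ql/\Zl(1))\cong H^3(A,\Ql/\Zl(1))$, purity for the codimension-one regular closed immersions $A_i\hookrightarrow\sX$ (so that a cycle supported on~$A_i$ pairs with $\gamma\in H^3(A,\Ql/\Zl(1))$ through $\gamma|_{A_i}$, by the projection formula), and the surjection $\bigoplus_i\CH_1(A_i)\twoheadrightarrow\CH_1(A)$, one computes that the annihilator in $H^3(A,\Ql/\Zl(1))$ of the image of $\CH_1(A)\otimeshat\Zl\to H^{2d}_A(\sX,\Zl(d))$ equals $\Ker\big(H^3(A,\Ql/\Zl(1))\to\bigoplus_i H^3(A_i,\Ql/\Zl(1))/\CH_1(A_i)^\perp\big)$, the map being restriction to the components followed by the projection. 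Pulling this back along ``forget supports'' yields $R^\perp$, and the resulting composite $H^3_A(\sX,\Ql/\Zl(1))\to\bigoplus_i H^3(A_i,\Ql/\Zl(1))/\CH_1(A_i)^\perp$ is the second map of the complex. Therefore $\Ker(\psi)$ is canonically Pontrjagin dual, via $R^\perp/Q^\perp$, to the homology of~\eqref{eq:complexfinite} (resp.~\eqref{eq:complexsepclosed}), so the pairing is a perfect pairing of finite groups.

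The conceptual content --- that $\Ker(\psi)$ measures the failure of surjectivity of~$\psi_{1,A}$, cut down by the part of $H^{2d}_A(\sX,\Zl(d))$ that dies in $H^{2d}(\sX,\Zl(d))$ --- is immediate from Saito--Sato. I expect the main obstacle to be the duality bookkeeping: setting up arithmetic Poincar\'e duality over the henselian trait in precisely the right form, with the correct twists and the correct degree shift between the two cases, and --- most delicately --- verifying that after dualization the boundary and ``forget supports'' maps match the maps of the displayed complexes \emph{on the nose}, not merely up to sign or up to the ambiguity inherent in a duality pairing. Threading the limit over~$n$ (finiteness of the $\Z/\ell^n$\nobreakdash-level groups, hence vanishing of the relevant $\varprojlim^1$) through each step is a further, more routine, layer of care.
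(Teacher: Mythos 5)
Your proposal follows essentially the same route as the paper's own proof: the localization sequences for Chow groups and for \'etale cohomology with supports, the Saito--Sato isomorphism $\CH_1(\sX)/\ell^n \isoto H^{2d}(\sX,\Z/\ell^n(d))$, and Pontrjagin duality over the trait (Propositions~\ref{prop:dualityoverfield} and~\ref{prop:dualityAsX}) to convert the extracted three-term complex into~\eqref{eq:complexfinite}, with only cosmetic differences (you dualize after passing to the limit and work with $\CH_1(A)$ rather than $\bigoplus_i\CH_1(A_i)$). The one point to correct in your duality bookkeeping is the $A_i$-term: the components $A_i$ are not assumed smooth in Theorem~\ref{th:criterionfinite}, so the relevant pairing is $H^3(A_i,\Ql/\Zl(1))\times H^{2d}_{A_i}(\sX,\Zl(d))\to\Ql/\Zl$ coming from duality with supports in the regular scheme~$\sX$ (Proposition~\ref{prop:dualityAsX} applied to the closed subscheme~$A_i$), not Poincar\'e duality on the ``smooth proper $A_i$'' itself; with that substitution your computation of $R^\perp$ goes through unchanged.
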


Let us explain how the complexes~\eqref{eq:complexsepclosed} and~\eqref{eq:complexfinite} are defined.

For any $i \in I$,
the cycle class map $\CH_1(A_i)\to H^{2d}_{A_i}(\sX,\Zl(d))$
and cup-product
give rise together to
a natural pairing
$H^m(A_i,\Ql/\Zl(1)) \times \CH_1(A_i) \to \Ql/\Zl$,
where $m=2$ if~$k$ is separably closed and $m=3$ if~$k$ is finite
(see Proposition~\ref{prop:dualityAsX} below,
and see~\cite[\textsection1]{saitosato} for the definition of the cycle class map).
The notation $\CH_1(A_i)^\perp$ refers to the left kernel of this pairing.  Concretely
$\CH_1(A_i)^\perp$ is the group of those classes in $H^m(A_i,\Ql/\Zl(1))$ whose restriction to $H^m(C,\Ql/\Zl(1))$ vanishes for any irreducible curve~$C$ lying on~$A_i$.

In both~\eqref{eq:complexsepclosed} and~\eqref{eq:complexfinite}, the first map is the natural map, while the second map is the composition
of the natural map $H^m_A(\sX,\Ql/\Zl(1)) \to H^m(\sX,\Ql/\Zl(1))$ with the sum of the restriction maps $H^m(\sX,\Ql/\Zl(1)) \to H^m(A_i,\Ql/\Zl(1))$
and the quotient maps $H^m(A_i,\Ql/\Zl(1)) \to H^m(A_i,\Ql/\Zl(1))/\CH_1(A_i)^\perp$.

\begin{rmks}
\label{rk:complexsepclosed}
(i)
It follows from purity in \'etale cohomology
(see~\cite{fujiwara}) that
there is a canonical isomorphism $H^2_A(\sX,\Lambda(1)) = \Lambda^I$
for $\Lambda=\Z/n\Z$ with~$n$ invertible in~$R$.
For the reader's convenience and for lack of an adequate reference, we provide an elementary proof.
Let $\iota:A \hookrightarrow \sX$ and $j:X\hookrightarrow \sX$
denote the canonical immersions.
It~is a general fact that $\iota_*R^2\iota^!\Lambda=R^1j_*\Lambda$ (see \cite[p.~107]{freitagkiehl}).
On the other hand, the Kummer exact sequence and the vanishing of~$R^1j_*\Gm$ (see \cite[Chapter~III, Proposition~4.9]{milne})
imply together that $R^1 j_* \Lambda(1) = (j_*\Gm)\otimes_\Z \Lambda$.
Finally, we have $(j_*\Gm)\otimes_\Z \Lambda=D \otimes_\Z \Lambda$,
where~$D$ denotes the sheaf of Cartier divisors on~$\sX$ supported on~$A$ (see \cite[Exp.~IX, \textsection3.3.1]{sga43}).
Thus $H^2_A(\sX,\Lambda(1))=H^0(\sX,\iota_*R^2\iota^!\Lambda(1))=H^0(\sX,D \otimes_\Z \Lambda)=\Lambda^I$.

(ii)
Assume~$k$ is separably closed.
In this case~$K$ has cohomological dimension~$1$
and the Hochschild--Serre spectral sequence presents the group
$H^1(X,\Ql/\Zl(1))$ as an extension of $H^0(K,H^1(\bar X,\Ql/\Zl(1)))$
by $H^1(K,H^0(\bar X,\Ql/\Zl(1)))=\Ql/\Zl$,
where~$\bar X = X \otimes_K \bar K$ and where~$\bar K$ is a separable closure of~$K$.
On the other hand, 
by the localization exact sequence,
the first map of~\eqref{eq:complexsepclosed} vanishes on the image of $H^1(\sX,\Ql/\Zl(1))$
in $H^1(X,\Ql/\Zl(1))$.
Lastly, we have $H^1(\sX,\Ql/\Zl(1))=H^1(A,\Ql/\Zl(1))$ according to the proper base change theorem.
These three facts, combined with Remark~\ref{rk:complexsepclosed}~(i),
show that
in the statement of Theorem~\ref{th:criterionsepclosed},
the complex~\eqref{eq:complexsepclosed} may be replaced
with the more explicit complex
\begin{align}
\label{eq:complexsepclosedbis}
\xymatrix{
\Phi \ar[r] & \displaystyle \Coker\Big(\Ql/\Zl \xrightarrow{\;\alpha\;} (\Ql/\Zl)^I\Big) \ar[r]^(.56)\delta & \displaystyle\bigoplus_{i\in I} \frac{\NS(A_i) \otimes_\Z \Ql/\Zl}{\CH_1(A_i)^\perp}\rlap{\text{,}}
}
\end{align}
where $\Phi=\Coker\big(H^1(A,\Ql/\Zl(1))\to H^0(K,H^1(\bar X,\Ql/\Zl(1)))\big)$ is a finite group which measures the ``defect of the local invariant cycle theorem with
torsion coefficients'',
where
for any $i\in I$,
the map~$\alpha$,
on the $i$th component, is multiplication by the multiplicity of~$A_i$ in~$A$,
and where for any $i \in I$
and any $\lambda=(\lambda_j)_{j \in I} \in (\Ql/\Zl)^I$,
the $i$th component of $\delta(\lambda)$
is the class of~$\sO_{\sX}(A_j)|_{A_i} \otimes \lambda_j$.
We note that~$\Phi$ vanishes as soon as~$H^1(\bar X,\Z/\ell\Z)=0$.

(iii)
When~$k$ is separably closed and $H^1(\bar X,\Z/\ell\Z)=0$, the Hochschild--Serre spectral sequence implies that
the kernel of the cycle class map $\CH_0(X)\otimeshat \Zl \to H^{2d}(X,\Zl(d))$ coincides with $A_0(X) \otimeshat\Zl$,
where $A_0(X)=\Ker(\deg:\CH_0(X)\to \Z)$.
Hence, in this case, Theorem~\ref{th:criterionsepclosed} states that the finite group $A_0(X) \otimeshat \Zl$ is canonically Pontrjagin
dual to the kernel of~$\delta$.

(iv)
Assume~$k$ is separably closed.
Since $\sO_{\sX}(A_i)|_{A_j}=\sO_{A_j}(A_i \cap A_j)$ whenever $i,j\in I$ are distinct
and since, for any~$i$, any element of $\Coker(\alpha)$ may be represented by a family $(\lambda_j)_{j \in I} \in (\Ql/\Zl)^I$ with $\lambda_i=0$,
the explicit description of the map~$\delta$ given above makes it clear that the group $\Ker(\delta)$ depends only on the scheme~$A$.
Thus, as a consequence of Theorem~\ref{th:criterionsepclosed}, if~$\sX'$ is another proper regular flat $R$\nobreakdash-scheme whose special fiber is isomorphic to~$A$,
and if we assume that $H^1(\bar X,\Z/\ell\Z)=H^1(\bar X',\Z/\ell\Z)=0$,
where $X'=\sX'\otimes_R K$,
then
the groups $A_0(X) \otimeshat \Zl$ and $A_0(X') \otimeshat \Zl$
are isomorphic.
\end{rmks}

\begin{examples}
\label{ex:firstex}
(i) If $\sX$ is smooth over~$R$, 
the natural map
\begin{align*}
H^m_A(\sX,\Ql/\Zl(1)) \to H^m(\sX,\Ql/\Zl(1))
\end{align*}
identically
vanishes for any $m \geq 0$, since
its composition with the
purity isomorphism
$H^{m-2}(A,\Ql/\Zl) \isoto H^m_A(\sX,\Ql/\Zl(1))$
(see \cite[Theorem~2.1.1]{fujiwara})
may be interpreted as cup-product with the cycle class of the divisor~$A$ on~$\sX$.
It follows that the leftmost maps of~\eqref{eq:complexsepclosed} and~\eqref{eq:complexfinite} are surjective in this case.
Thus, when~$X$ has good reduction and the prime number~$\ell$ is invertible in~$R$,
the group $A_0(X)$ is divisible by~$\ell$
if~$k$ is separably closed
and
the cycle class map $\CH_0(X) \otimeshat \Zl \to H^{2d}(X,\Zl(d))$ is injective
if~$k$ is finite.
For separably closed~$k$ this was noted already in \cite[Corollary~0.10]{saitosato}.

(ii) If~$\sX$ is smooth over~$R$ and~$k$ is algebraically closed of characteristic $p>0$,
the group~$A_0(X)$ need not, however, be divisible by~$p$.  An example of this phenomenon is given by any elliptic curve with
good ordinary reduction.
Indeed, let~$E$ be such an elliptic curve, let~$\sE$ denote its smooth proper model over~$R$,
let $\pi:\sE \to \sX$ denote the quotient by the unique subgroup scheme isomorphic to~$\mmu_p$ over~$R$,
and let $X=\sX \otimes_R K$.
The group $A_0(X)/pA_0(X)$ is then infinite, as it surjects onto $H^1_{\mathrm{fppf}}(R,\mmu_p)=R^*/R^{*p}$ via the boundary map associated to~$\pi$,
according to \cite[Th\'eor\`eme~11.7]{grothendieckbr3}.

(iii) Assume that~$k$ is separably closed and that~$A$ is reduced and has two irreducible components~$A_1$ and~$A_2$, which are smooth and meet transversally along a smooth irreducible
subvariety~$D$ of dimension~$d-1$.
Assume moreover that $H^1(\bar X,\Z/\ell\Z)=0$ (\emph{e.g.}, that~$X$ is simply connected).
Then, according to Theorem~\ref{th:criterionsepclosed} and to Remarks~\ref{rk:complexsepclosed}~(ii) and~(iii), the dimension of $A_0(X)/\ell A_0(X)$
as a vector space over~$\Z/\ell\Z$ is~$\leq 1$, and it is~$1$ if and only if
for any $i \in \{1,2\}$ and any irreducible curve~$C$ lying on~$A_i$, the intersection number $(C\cdot D)$, computed on~$A_i$, is divisible by~$\ell$.
\end{examples}

\subsection{Reminders on duality}
\label{subsec:duality}

For lack of an adequate reference, we gather in this paragraph various duality results that will be used in the proof
of Theorems~\ref{th:criterionsepclosed} and~\ref{th:criterionfinite}.

\begin{prop}
\label{prop:dualityoverfield}
Let~$V$ be an irreducible, proper and smooth variety over a field~$F$, and let $d=\dim(V)$.
Let~$\sF$ be a constructible \'etale sheaf of $\Z/n\Z$\nobreakdash-modules on~$V$,
where~$n$ is an integer invertible in~$F$, and let $\sF^\vee(j)=\Homrond(\sF,\Z/n\Z(j))$ for $j\in \Z$.
Let $i \in \Z$.
\begin{enumerate}
\item If~$F$ is separably closed, then $H^i(V,\sF)$ is canonically dual to
$H^{2d-i}(V,\sF^\vee(d))$.
\smallskip
\item If~$F$ is finite, then
$H^i(V,\sF)$
is canonically dual to
$H^{2d+1-i}(V,\sF^\vee(d))$.
\smallskip
\item If~$F$ is the quotient field of a strictly henselian discrete valuation ring in which~$n$ is invertible,
then $H^i(V,\sF)$ is canonically dual to $H^{2d+1-i}(V,\sF^\vee(d+1))$.
\smallskip
\item If~$F$ is the quotient field of a henselian discrete valuation ring~$R$ in which~$n$ is invertible
and if the residue field of~$R$ is finite,
then
$H^i(V,\sF)$ is canonically dual to $H^{2d+2-i}(V,\sF^\vee(d+1))$.
\end{enumerate}
\end{prop}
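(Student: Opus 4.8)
The plan is to deduce all four statements from Poincaré--Verdier duality on~$V$ combined with the appropriate duality theorem for the base field~$F$, using the Leray spectral sequence for the structure morphism $f\colon V\to\Spec F$. The starting point, common to all four cases, is absolute purity and Poincaré duality for the smooth proper $F$\nobreakdash-scheme~$V$ of dimension~$d$: for a constructible sheaf $\sF$ of $\Z/n\Z$\nobreakdash-modules with $n$ invertible in~$F$, one has $Rf_!\,\sF^\vee(d)[2d] \cong R\mathscr{H}\mkern-4muom(Rf_!\sF, \Z/n\Z)$ in the derived category over~$\Spec F$ — equivalently, $Rf_*\sF^\vee(d)[2d]$ is the Verdier dual of $Rf_!\sF = Rf_*\sF$ (the two agree since~$f$ is proper), with dualizing complex $\Z/n\Z$ on the point. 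Concretely this gives, for each $q$, a canonical isomorphism of $\mathrm{Gal}(\bar F/F)$\nobreakdash-modules (or simply $\Z/n\Z$\nobreakdash-modules, when $F$ is separably closed)
\begin{align}
\label{eq:poincareoverF}
H^q(V_{\bar F},\sF^\vee(d)) \;\cong\; \Hom\big(H^{2d-q}(V_{\bar F},\sF),\Z/n\Z\big),
\end{align}
where $V_{\bar F}=V\otimes_F\bar F$. Case~(1) is then immediate: when $F$ is separably closed, $H^i(V,\sF)=H^i(V_{\bar F},\sF)$ and \eqref{eq:poincareoverF} with $q=2d-i$ is exactly the assertion.

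For the remaining three cases the strategy is to feed \eqref{eq:poincareoverF} into the duality formalism for~$F$. First I would record that, because all the groups $H^q(V_{\bar F},\sF)$ are finite, Poincaré duality \eqref{eq:poincareoverF} is a perfect pairing of finite Galois modules, so the Pontrjagin dual of $H^q(V_{\bar F},\sF^\vee(d))$ is canonically $H^{2d-q}(V_{\bar F},\sF)(0)$ — more precisely, dualizing a finite Galois module $M$ and twisting, one gets $M^\vee = \Hom(M,\Z/n\Z)$ and $\eqref{eq:poincareoverF}$ identifies $H^q(V_{\bar F},\sF^\vee(d))$ with $H^{2d-q}(V_{\bar F},\sF)^\vee$ as Galois modules. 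Now invoke the Leray (Hochschild--Serre) spectral sequence $E_2^{p,q}=H^p(F,H^q(V_{\bar F},\sF^\vee(d)))\Rightarrow H^{p+q}(V,\sF^\vee(d))$, and the analogous one for~$\sF$. The duality theorem for~$F$ — Tate local duality in case~(3), its finite-residue-field refinement (the theorem of Saito, or the two-dimensional local duality of Deninger, giving $H^p(F,M)\cong H^{3-p}(F,M^\vee)^\vee$) in case~(4), and classical duality for finite fields $H^p(F,M)\cong H^{1-p}(F,M^\vee)^\vee$ in case~(2) — together with the Poincaré duality on the geometric fibre, matches $E_2^{p,q}$ for $\sF^\vee(d)$ with the Pontrjagin dual of $E_2^{p',q'}$ for~$\sF$, where $(p',q')=(c-p,\,2d-q)$ and $c$ is the cohomological dimension of~$F$ ($c=1$ in cases (1'),(2), $c=2$ in (3), $c=3$ in (4)). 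One checks that this identification is compatible with the differentials (they get transposed), so it induces a duality on the abutments: $H^i(V,\sF^\vee(d))\cong H^{c+2d-i}(V,\sF)^\vee$. Reindexing with $\sF$ in place of $\sF^\vee(d)$ (note $(\sF^\vee(d))^\vee(d)=\sF$ since $\sF$ is a sheaf of $\Z/n\Z$\nobreakdash-modules and $\Z/n\Z(d)$ is invertible) yields $H^i(V,\sF)\cong H^{c+2d-i}(V,\sF^\vee(d))^\vee$; absorbing one Tate twist to convert between $\sF^\vee(d)$ and $\sF^\vee(d+1)$ in cases (3) and (4) (where the local duality pairing naturally lands in $H^{c}(F,\Z/n\Z(1))$ rather than $H^c(F,\Z/n\Z)$) accounts for the extra $(1)$ and matches the stated twists $d$ in cases (1),(2) versus $d+1$ in cases (3),(4), and the cohomological degrees $2d-i$, $2d+1-i$, $2d+1-i$, $2d+2-i$ respectively.

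The main obstacle is the last compatibility claim: that the spectral-sequence identification is \emph{canonical} and commutes with all differentials, so that it descends to a well-defined duality on the abutment rather than merely on the associated graded. This is where one must be careful to package everything at the level of the derived category of $\mathrm{Gal}(\bar F/F)$\nobreakdash-modules — i.e.\ to say that $R\Gamma(F,-)$ applied to the Verdier-duality isomorphism over~$F$, combined with the duality for the group $\mathrm{Gal}(\bar F/F)$ itself (Tate--Nakayama / Poitou--Tate style, or the explicit complexes of Saito and Deninger for the finite-residue case), gives an isomorphism $R\Gamma(V,\sF^\vee(d)) \cong R\Hom(R\Gamma(V,\sF),\Z/n\Z[-c])$ in the derived category, from which the numerical statements follow by taking cohomology. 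Writing this cleanly, rather than chasing the spectral sequences by hand, is the part that requires the most care; everything else is bookkeeping with twists and degrees. I expect to cite \cite{milne} for the Tate local duality input in case~(3), the relevant paper of Saito for case~(4), and Deligne's exposé in SGA~4$\tfrac12$ (or \cite{fujiwara}) for absolute purity underlying~\eqref{eq:poincareoverF}.
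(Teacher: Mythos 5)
Your proposal is correct and follows essentially the same route as the paper, which itself gives no detailed argument: it records that (1) is Poincar\'e duality and that (2)--(4) follow by combining Poincar\'e duality with the standard duality theorems for the Galois cohomology of~$F$, citing Saito's Lemma~2.9 for case~(4) (that reference carries out precisely the derived-category packaging via $R\Gamma(F,-)$ that you identify as the delicate point). One minor internal slip: the cohomological dimension of~$F$ for prime-to-residue-characteristic coefficients is $c=1$ in case~(3) (the tame quotient is $\widehat{\Z}{}'(1)$) and $c=2$ in case~(4), not $2$ and~$3$ as you wrote; your final degrees $2d+1-i$ and $2d+2-i$ are nevertheless the correct ones.
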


The first statement of Proposition~\ref{prop:dualityoverfield} is Poincar\'e duality.  The other three statements are obtained by combining Poincar\'e duality with
well-known duality theorems for the Galois cohomology of the fields under consideration.   A proof of~(4) may be found
in \cite[Lemma~2.9]{saitolakelouise}.  Over finite or strictly local fields, the same arguments lead to~(2) and~(3).

\begin{prop}
\label{prop:dualityAsX}
Let~$\sX$, $R$, $k$, $A$, $X$ be as at the beginning of~\textsection\ref{sec:criterion}.
Let $\Lambda=\Z/n\Z$ where~$n$ is invertible in~$R$.  Assume~$k$ is either separably closed or finite.
Let $m=2$ if~$k$ is separably closed and $m=3$ if~$k$ is finite.
There is a canonical isomorphism $H^{2d+m}_A(\sX,\Lambda(d+1))=\Lambda$. Together with cup-product it induces a perfect pairing
of finite abelian groups
\begin{align}
H^i(A,\Lambda(j)) \times H^{2d+m-i}_A(\sX,\Lambda(d+1-j)) \to \Lambda
\end{align}
for any $i,j\in \Z$.
\end{prop}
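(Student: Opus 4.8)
The plan is to compute the complex $R\iota^!\Lambda_\sX$ governing the local cohomology groups $H^n_A(\sX,\Lambda(\cdot))$ --- recall that $R\Gamma_A(\sX,\mathcal F)\cong R\Gamma(A,R\iota^!\mathcal F)$ for the closed immersion $\iota\colon A\hookrightarrow\sX$ --- to identify it, up to a shift and a twist, with the dualizing complex of the (possibly singular) proper $k$\nobreakdash-scheme~$A$, and then to conclude by Poincar\'e--Verdier duality over~$k$ combined with the duality theorem for the Galois cohomology of~$k$, much as in the proof of Proposition~\ref{prop:dualityoverfield}. The one substantial external input --- and the reason the regularity hypothesis on~$\sX$ is used --- is Gabber's absolute purity theorem for excellent regular schemes (see~\cite{fujiwara}).

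First I would establish that $Rf^!\Lambda\cong\Lambda_\sX(d)[2d]$, where $f\colon\sX\to\Spec R$ is the structure morphism. Since~$\sX$ is projective over~$R$, choose a closed immersion $g\colon\sX\hookrightarrow\P^N_R$; as~$\sX$ and~$\P^N_R$ are both regular, $g$ is a regular immersion, necessarily of codimension $N-d$, so absolute purity gives $Rg^!\Lambda\cong\Lambda_\sX(d-N)[2(d-N)]$, and composing with purity for the smooth morphism $\P^N_R\to\Spec R$ yields the claim. Writing $a\colon A\to\Spec k$ for the structure morphism and $i_s\colon\Spec k\hookrightarrow\Spec R$ for the inclusion of the closed point, the factorization $f\circ\iota=i_s\circ a$ together with the identification $Ri_s^!\Lambda\cong\Lambda(-1)[-2]$ (absolute purity for the regular trait~$\Spec R$, or a direct computation via the localization sequence) then gives
\[(R\iota^!\Lambda_\sX)(d)[2d]\;\cong\;R\iota^!Rf^!\Lambda\;\cong\;Ra^!Ri_s^!\Lambda\;\cong\;(Ra^!\Lambda)(-1)[-2],\]
whence $R\iota^!\Lambda_\sX\cong(Ra^!\Lambda)(-d-1)[-2d-2]$. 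Twisting and applying $R\Gamma(A,-)$ gives, for every~$j$, an isomorphism $R\Gamma_A(\sX,\Lambda(d+1-j))\cong R\Gamma\big(A,(Ra^!\Lambda)(-j)\big)[-2d-2]$.

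It remains to feed in duality over~$k$. For the proper $k$\nobreakdash-scheme~$A$, Poincar\'e--Verdier duality --- with dualizing complex $Ra^!\Lambda$ if~$k$ is separably closed and $Ra^!\Lambda[1]$ if~$k$ is finite --- combined with the duality theorem for the cohomology of $\mathrm{Gal}(\bar k/k)$ (trivial when~$k$ is separably closed; Tate duality for~$\hat\Z$ when~$k$ is finite) produces a canonical isomorphism $R\Gamma\big(A,(Ra^!\Lambda)(-j)\big)\cong R\Gamma(A,\Lambda(j))^\vee[2-m]$, where $(-)^\vee=\Hom_\Lambda(-,\Lambda)$ and~$m$ is as in the statement. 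Combining this with the previous isomorphism yields $R\Gamma_A(\sX,\Lambda(d+1-j))\cong R\Gamma(A,\Lambda(j))^\vee[-(2d+m)]$, i.e.\ $H^{2d+m-i}_A(\sX,\Lambda(d+1-j))\cong H^i(A,\Lambda(j))^\vee$ for all $i,j$; these groups are finite since~$A$ is proper over~$k$ and~$\Lambda$ is finite. Taking $i=j=0$, and using that~$A$ is connected --- which holds because~$\sX$ is irreducible and normal, by Stein factorization --- gives the canonical isomorphism $H^{2d+m}_A(\sX,\Lambda(d+1))\cong H^0(A,\Lambda)^\vee=\Lambda$. Finally, one verifies that the isomorphism obtained above is the one induced by cup-product into $H^{2d+m}_A(\sX,\Lambda(d+1))$: this is so because absolute purity, purity for smooth morphisms, Verdier duality and Tate duality are each realized by cup- or cap-product with fundamental and trace classes that are mutually compatible. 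I expect the main difficulty to lie not in any isolated conceptual point but in the systematic bookkeeping of Tate twists and cohomological shifts throughout, together with this last verification that the abstract duality isomorphism coincides with the cup-product pairing; the sole deep ingredient, absolute purity, is available off the shelf.
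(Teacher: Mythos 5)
Your proposal is correct and follows essentially the same route as the paper: both compute $R\iota^!\Lambda$ via the factorization through $\P^N_R$ using Gabber's absolute purity for the closed immersion, Poincar\'e duality for $\P^N_R/R$, and purity for the closed point of the trait, arriving at the identification $R\iota^!\Lambda \cong (Ra^!\Lambda)(-d-1)[-2d-2]$ (the paper's $Rg^!\Lambda = R\iota^!\Lambda(d+1)[2d+2]$), and then conclude by duality over~$k$ (the paper packages your Verdier-plus-Galois-duality step as the single statement that $H^i(k,\sF)$ is dual to $H^{m-2-i}(k,R\Homrond(\sF,\Lambda))$ for bounded complexes on $\Spec(k)$, citing Milne's \emph{Arithmetic Duality Theorems}). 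The compatibility with cup-product, which you flag as the remaining bookkeeping, is likewise left implicit in the paper.
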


\begin{proof}
Let us fix an embedding $e:\sX \hookrightarrow \P^N_R$ and consider the commutative diagram
\begin{align}
\begin{aligned}
\xymatrix{
A \ar[r]^\iota \ar[d]^(.45)g & \sX \ar[d]^(.45)f \ar[r]^e & \P^N_R \ar[dl]^(.4){\mkern-3muq} \\
\Spec(k) \ar[r]^h & \Spec(R)\rlap{\text{.}}
}
\end{aligned}
\end{align}
Let $c=N-d$.
In the derived categories of \'etale sheaves of $\Lambda$\nobreakdash-modules
on~$\Spec(k)$, on~$\sX$ and on~$\P^N_R$,
there are canonical isomorphisms $Rh^!\Lambda=\Lambda(-1)[-2]$,
$Re^!\Lambda=\Lambda(-c)[-2c]$
and $Rq^!\Lambda=\Lambda(N)[2N]$,
by purity and Poincar\'e duality
(see \cite[Theorem~2.1.1]{fujiwara} and \cite[Exp.~XVIII, Th\'eor\`eme~3.2.5]{sga43} respectively).
As $q\circ e\circ \iota=h\circ g$, we deduce that $Rg^!\Lambda=R\iota^!\Lambda(d+1)[2d+2]$,
and hence that
\begin{align}
\label{eq:Rgdual}
R\Homrond(Rg_*\Lambda,\Lambda)=Rg_*R\iota^!\Lambda(d+1)[2d+2]\rlap{\text{.}}
\end{align}
Proposition~\ref{prop:dualityAsX} follows from this remark and from the fact that $H^i(k,\sF)$ is canonically dual
to $H^{m-2-i}(k,R\Homrond(\sF,\Lambda))$ for any bounded complex~$\sF$ of \'etale sheaves of $\Lambda$\nobreakdash-modules on~$\Spec(k)$
(see \cite[Chapter~I, Example~1.10]{milneadt}).
\end{proof}

\begin{rmk}
\label{rk:pairingscompatible}
The pairings
which appear in the statements of
Proposition~\ref{prop:dualityoverfield}
and Proposition~\ref{prop:dualityAsX}
are compatible with the maps stemming from the localization exact sequence.
In particular, in the situation of Proposition~\ref{prop:dualityAsX},
the natural maps
\begin{align*}
H^i(A,\Lambda(j))=H^i(\sX,\Lambda(j)) \to H^i(X,\Lambda(j))
\end{align*}
(where the first equality comes from the proper base change theorem)
and
\begin{align*}
H^{2d+m-i-1}(X,\Lambda(d+1-j))\to
H^{2d+m-i}_A(\sX,\Lambda(d+1-j))
\end{align*}
form a pair of adjoint maps.
So do the natural maps
$$H^i(X,\Lambda(j)) \to H^0(K,H^i(\bar X,\Lambda(j)))$$
and
$$H^1(K,H^{2d-i}(\bar X,\Lambda(d+1-j))) \to H^{2d+1-i}(X,\Lambda(d+1-j))$$
coming from the Hochschild--Serre spectral sequence
when~$k$ is separably closed.
These two compatibilities follow immediately from the definitions of the pairings involved.
\end{rmk}

\subsection{Proof of Theorems~\ref{th:criterionsepclosed} and~\ref{th:criterionfinite}}
\label{subsec:proofcriterion}

We assume~$k$ is either finite or separably closed.
The localization exact sequences for Chow groups and for \'etale cohomology, the cycle class maps for $0$\nobreakdash-cycles on~$X$, for $1$\nobreakdash-cycles
on~$\sX$, for $1$\nobreakdash-cycles on~$A$ and for $1$\nobreakdash-cycles on the $A_i$'s fit together in a commutative diagram
\begin{align}
\begin{aligned}
\label{cd:largediagram}
\xymatrix@C=-0.5em{
\smash[b]{\displaystyle\bigoplus_{i \in I} \CH_1(A_i)}/\ell^n \ar@{->>}[dr] \ar[rr]^(.45){\bigoplus \psi_{1,A_i}} && \smash[b]{\displaystyle\bigoplus_{i\in I} H^{2d}_{A_i}}(\sX,\Z/\ell^n(d)) \ar[dr] \\
&\CH_1(A)/\ell^n \ar[rr]^(.45){\psi_{1,A}} \ar[d] && H^{2d}_A(\sX,\Z/\ell^n(d)) \ar[d] \\
&\CH_1(\sX)/\ell^n \ar[rr]_(.45)\sim^(.45){\psi_{1,\sX}} \ar@{->>}[d] && H^{2d}(\sX,\Z/\ell^n(d)) \ar[d] \\
&\CH_0(X)/\ell^n \ar[rr]^(.45){\psi_{0,X}} && H^{2d}(X,\Z/\ell^n(d))
}
\end{aligned}
\end{align}
for each $n \geq 1$
(see
\cite[\textsection20.3]{fulton},
\cite[\textsection1]{saitosato}).
According to the main theorem of Saito and Sato~\cite{saitosato}, the map $\psi_{1,\sX}$ is an isomorphism
(see Theorem~\ref{app:thm1} and Remark~\ref{app:rmk}).
In addition, the bottom left vertical map of~(\ref{cd:largediagram}) is onto.
Thus, the kernel of $\psi_{0,X}$ identifies with the homology group of the complex
\begin{align}
\label{eq:complexextracted}
\xymatrix{
\displaystyle\bigoplus_{i \in I} \CH_1(A_i)/\ell^n \ar[r] & H^{2d}(\sX,\Z/\ell^n(d)) \ar[r] & H^{2d}(X,\Z/\ell^n(d))
}
\end{align}
extracted from this diagram.
Let $m=2$ if~$k$ is separably closed and $m=3$ if~$k$ is finite.
As the first map of~\eqref{eq:complexextracted} factors through $\bigoplus H^{2d}_{A_i}(\sX,\Z/\ell^n(d))$ via the cycle class map $\bigoplus \psi_{1,A_i}$,
applying Pontrjagin duality to~\eqref{eq:complexextracted} yields the complex
\begin{align}
\label{eq:complexextractdual}
\xymatrix{
H^{m-1}(X,\Z/\ell^n(1)) \ar[r] & H^m_A(\sX,\Z/\ell^n(1)) \ar[r] & \displaystyle\bigoplus_{i \in I} \frac{H^m(A_i,\Z/\ell^n(1))}{\CH_1(A_i)^\perp}
}
\end{align}
(see Propositions~\ref{prop:dualityoverfield} and~\ref{prop:dualityAsX}).
The complexes~\eqref{eq:complexsepclosed} and~\eqref{eq:complexfinite} are obtained from~\eqref{eq:complexextractdual} by passing to the direct limit over~$n$.
On the other hand, the kernel of the cycle class map $\CH_0(X) \otimeshat \Zl \to H^{2d}(X,\Zl(d))$ is the inverse limit, over~$n$, of $\Ker(\psi_{0,X})$.
This concludes the proof of Theorems~\ref{th:criterionsepclosed} and~\ref{th:criterionfinite}.

\subsection{Further examples}

In \textsection\textsection\ref{sec:surfacesoverpadic}--\ref{sec:counterexample},
we shall apply Theorems~\ref{th:criterionsepclosed} and~\ref{th:criterionfinite} to establish various results on the injectivity of the cycle class map
for zero-cycles on varieties defined over local or strictly local fields.  The next three examples do not fall into the scope of these results.
We include them to illustrate the applicability of the above criteria.

\begin{example}
\label{ex:isotrivialK3}
Let $K=\C((t))$ and let $X\subset \P^3_K$ denote the~$K3$ surface defined by
\begin{align}
x_0^4+x_1^4+t^2(x_2^4+x_3^4)=0\rlap{\text{.}}
\end{align}
Let $\sX_0 \subset \P^3_R$ denote the projective scheme over $R=\C[[t]]$ defined by the same
equation, and let $\sX$ be the blow-up of~$\sX_0$ along the closed subscheme defined by the homogeneous ideal
$(x_0^3, x_1^3, x_0^2x_1, x_0x_1^2, x_0t, x_1t, t^2)$.  One checks that~$\sX$ is regular, and that
its special fiber~$A$ may be written,
as a divisor on~$\sX$, as $A=2S+P_1+\dots+P_4+Q_1+\dots+Q_4$, where the~$P_i$'s are pairwise disjoint
projective planes, the $Q_i$'s are pairwise disjoint smooth projective rational surfaces, and~$S$ is
a smooth~$K3$ surface endowed with an elliptic fibration $\pi:S \to \P^1_\C$ with four singular fibers,
each of type~$\I_0^*$ in Kodaira's notation~\cite{kodaira}.  Moreover,
the schemes $P_i \cap S$ (resp.~$Q_i \cap S$) are smooth irreducible rational curves
with self-intersection~$1$ on~$P_i$ (resp.~on~$Q_i$),
and we have $P_i \cap Q_j=\emptyset$ for all $i,j$.
The curves $P_i \cap S$ are the four $2$\nobreakdash-torsion sections of~$\pi$, and the curves~$Q_i \cap S$ are the irreducible
components of multiplicity~$2$ of the singular fibers of~$\pi$.
With this geometric description of the special fiber of a regular model of~$X$ in hand, it is now an exercise to check the exactness of the complex~\eqref{eq:complexsepclosedbis} for any prime number~$\ell$.
Note that $\Phi=0$ since~$X$ is simply connected.
By Theorem~\ref{th:criterionsepclosed} and Remarks~\ref{rk:complexsepclosed}~(ii) and~(iii), it follows that the group $A_0(X)$ is divisible.
\end{example}

\begin{example}
\label{ex:isotrivialdegn}
Let $K=\C((t))$
and let $X\subset \P^3_K$ denote the simply connected isotrivial surface defined by
\begin{align}
x_0^n+tx_1^n+t^2x_2^n+t^3x_3^n=0\rlap{\text{.}}
\end{align}

Let $K_e=\C((t^{1/e}))$.
For $n \leq 3$, the surface~$X$ is (geometrically) rational, which implies that $A_0(X)=0$ according to \cite[Theorem~A~(iv)]{ctk2}
(see also Example~\ref{ex:n=3} below).
For $n=4$, the index of~$X$ over~$k$,
\emph{i.e.}, the greatest common divisor of the degrees of the closed points of~$X$, is equal to~$2$ (see \cite[Proposition~4.4]{ELW}).
As every finite extension of~$K$ is cyclic, it follows that~$K_2$ embeds into the residue field of any closed point of~$X$. Therefore
the norm map
$A_0(X \otimes_K K_2) \to A_0(X)$ is surjective.   On the other hand,
the surface $X \otimes_K K_2$ over~$K_2$ is, after an obvious change of notation, the one considered in Example~\ref{ex:isotrivialK3};
thus $A_0(X \otimes_K K_2)$ is divisible. We conclude that for $n=4$, the group $A_0(X)$ itself is divisible.
Finally, let us note that a similar (though simpler) argument shows that
for $n=5$, or more generally for any~$n$ prime to~$6$, the group $A_0(X)$ is again divisible.
Indeed, on the one hand, if~$n$ is prime to~$6$, then,
by \cite[Proposition~4.4]{ELW}, the index of~$X$ over~$K$ is equal to~$n$;
and on the other hand, the group $A_0(X \otimes_K K_n)$ is divisible,
as the surface $X  \otimes_K K_n$ has good reduction over~$K_n$.

All these observations support the guess that the group $A_0(X)$ may in fact be divisible for any value of~$n$.
\end{example}

\begin{example}
\label{ex:cubicthreefold}
Let $K=\Qp$ for some prime number $p \neq 3$ and let $X \subset \P^4_K$ denote the cubic threefold defined by
\begin{align}
\label{eq:cubicthreefold}
x_0^3 + x_1^3 + x_2^3 + px_3^3 + p^2x_4^3 = 0\rlap{\text{.}}
\end{align}
Theorem~\ref{th:criterionfinite} may be used to show that $A_0(X)=0$. This answers
a question of Colliot-Th\'el\`ene \cite[\textsection12.6.2]{ctdegenerescences}.

We briefly sketch the structure of the argument.
The smooth cubic hypersurface~$X$ acquires good reduction over $K'=\Qp(\sqrt[3]p)$.
By a theorem of Koll\'ar and Szab\'o, it follows that $A_0(X \otimes_KK')=0$
(see \cite[Theorem~5]{kollarszabo}, \cite[Example~4.5]{derenthalkollar}).
In particular $A_0(X)$ is a $3$\nobreakdash-torsion group.
On the other hand, the group $H^6(X,\Z_3(3))$ is torsion-free, by the weak Lefschetz theorem.
Thus the vanishing of $A_0(X)$ is equivalent to the injectivity of the cycle class map
$\CH_0(X) \otimeshat \Z_3 \to H^6(X,\Z_3(3))$, which in turn is equivalent to the exactness
of the complex~\eqref{eq:complexfinite}.
Starting from the hypersurface defined by~\eqref{eq:cubicthreefold} in~$\P^4_{\Z_p}$,
one obtains a proper regular model~$\sX$ of~$X$ by three successive blow-ups of the singular locus.
The exactness of~\eqref{eq:complexfinite} may then be proved by a careful analysis of the geometry and the cohomology of the irreducible components of the special fiber
of~$\sX$.

It remains an open question whether the Chow group of zero-cycles of degree~$0$ vanishes for any smooth cubic hypersurface of dimension~$\geq 3$ over a $p$\nobreakdash-adic field.
We recall that for a cubic surface over a local field,
the Brauer group may be responsible for an obstruction
to the vanishing of~$A_0(X)$ (see, \emph{e.g.}, \cite[Exemple~2.8]{ctsaito}),
whereas
cubic hypersurfaces of dimension~$\geq 3$, on the other hand, satisfy $\Br(X)=\Br(K)$ (see~\cite[Appendix~A]{poonenvoloch}).
\end{example}

\section{Surfaces over \texorpdfstring{$p$-adic}{𝑝-adic} fields}
\label{sec:surfacesoverpadic}

A theorem of Saito~\cite[Theorem~A]{saitocodim2} asserts that if~$X$ is a smooth projective surface over a $p$\nobreakdash-adic field,
if $H^2(X,\sO_X)=0$,
and
if the Albanese variety of~$X$ has potentially good reduction, then for any prime~$\ell$,
the cycle class map $\CH_0(X) \otimeshat \Zl \to H^4(X, \Zl(2))$ is injective on the torsion of $\CH_0(X) \otimeshat \Zl$.
In this section, we prove that the assumption that $H^2(X,\sO_X)=0$ may be dropped from Saito's theorem,
when $\ell \neq p$
and the irreducible components of the special fiber of a regular model of~$X$
satisfy the Tate conjecture.  It~should be noted that
the assumption on the Albanese variety, on the other hand, cannot be dispensed with:
Parimala and Suresh
have given an example of a smooth proper surface~$X$ over~$\Q_3$, with $H^2(X,\sO_X)=0$,
such that the cycle class map $\CH_0(X) \otimeshat \Z_2 \to H^4(X,\Z_2(2))$ is not injective (see \cite[Example~8.2 and Proposition~7.5]{parimalasuresh}).

\begin{thm}
\label{th:kfinite}
With the notation of \textsection\ref{sec:criterion}, assume that $d=2$,
that~$k$ is finite,
that the reduced special fiber $A_\red$ of~$\sX$ has simple normal crossings, and that the surfaces~$A_i$
satisfy the Tate conjecture.
Let~$\ell$ be a prime number invertible in~$k$.
If the Albanese variety of~$X$ has potentially good reduction,
then the cycle class map $\CH_0(X) \otimeshat \Zl \to H^{2d}(X, \Zl(d))$ is injective.
\end{thm}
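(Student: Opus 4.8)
The plan is to deduce the theorem from the criterion of Theorem~\ref{th:criterionfinite}. By that theorem and its proof (see diagram~\eqref{cd:largediagram}), the kernel of the cycle class map is Pontrjagin dual to the homology of~\eqref{eq:complexfinite} at its middle term, and a routine limit argument shows that this homology vanishes as soon as the one\nobreakdash-dimensional cycle class map
\[
\psi_{1,A}\colon \CH_1(A)\otimeshat\Zl \longrightarrow H^4_A(\sX,\Zl(2))
\]
is surjective. Indeed, the first arrow of~\eqref{eq:complexextracted} factors through $H^4_A(\sX,\Z/\ell^n(2))$ via $\psi_{1,A}$ and the surjection $\bigoplus_{i\in I}\CH_1(A_i)\surj\CH_1(A)$, so surjectivity of~$\psi_{1,A}$ makes the image of~\eqref{eq:complexextracted} in $H^4(\sX,\Z/\ell^n(2))$ equal to the image of $H^4_A(\sX,\Z/\ell^n(2))$, which is the kernel of restriction to $H^4(X,\Z/\ell^n(2))$ by the localization sequence. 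The task is thus to prove that $H^4_A(\sX,\Zl(2))$ is generated by the cycle classes of curves lying on~$A$.

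The next step is to describe $H^4_A(\sX,\Zl(2))$ in terms of the geometry of the components of~$A$ and their intersections. Since $A_\red$ has simple normal crossings, the strata $A_S=\bigcap_{i\in S}A_i$ are smooth and proper over~$k$ of codimension $|S|$ in the regular scheme~$\sX$, so absolute purity gives $H^q_{A_S}(\sX,\Zl(r))=H^{q-2|S|}(A_S,\Zl(r-|S|))$; filtering~$A$ by the loci where at least~$p$ of the components meet produces a spectral sequence built from these groups. For $q=2$, $r=1$ this recovers the identification $H^2_A(\sX,\Zl(1))=\Zl^I$ of Remark~\ref{rk:complexsepclosed}~(i); it is equivalent, and often more convenient, to dualise and describe $H^3(A,\Ql/\Zl(1))$ by the spectral sequence of the closed cover $A=\bigcup_i A_i$. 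For $q=4$, $r=2$ one finds, up to a bounded contribution supported at the points where three components meet, that $H^4_A(\sX,\Zl(2))$ is an extension of $\Coker\big(\bigoplus_{i<j}H^0(A_i\cap A_j,\Zl)\to\bigoplus_i H^2(A_i,\Zl(1))\big)$ by $\Ker\big(\bigoplus_{i<j}H^1(A_i\cap A_j,\Zl)\to\bigoplus_i H^3(A_i,\Zl(1))\big)$, the maps being Gysin maps.

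The third step matches these pieces with images of cycles. The Gysin images of the classes of the double curves $A_i\cap A_j$, together with divisor classes on the surfaces~$A_i$, account for the ``algebraic'' part: for the latter, the Tate conjecture for~$A_i$ over the finite field~$k$ is exactly what makes $\psi_{1,A_i}\colon\CH_1(A_i)\otimeshat\Zl\to H^2(A_i,\Zl(1))$ surject onto the Galois\nobreakdash-invariant subquotient $H^0(k,H^2(\bar A_i,\Zl(1)))$ of $H^2(A_i,\Zl(1))$, the finite discrepancy between~$\NS$ and cohomology being absorbed by $\otimeshat\Zl$. What is not reached in this way are the ``weight\nobreakdash-one'' contributions: the subquotient $H^1(k,H^1(\bar A_i,\Zl(1)))$ of $H^2(A_i,\Zl(1))$, which is governed by $\Pic^0(A_i)$, the corresponding part of $H^3(A_i,\Zl(1))$, and the non\nobreakdash-constant parts of the $H^1(A_i\cap A_j,\Zl)$.

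Controlling these weight\nobreakdash-one contributions is the main obstacle, and it is here that the hypothesis on the Albanese variety is essential. Through the weight (Rapoport--Zink) spectral sequence of the degeneration $\sX/R$, these contributions are governed by $H^1(\bar X,\Zl)$ and by the local monodromy acting on it; potentially good reduction of the Albanese variety of~$X$ means precisely that this monodromy becomes trivial after a finite base change, i.e.\ that $H^1(\bar X,\Ql)$ is pure of weight~$1$. The role of Lemma~\ref{lem:albgoodred} is to convert this purity into the vanishing of the weight\nobreakdash-one contributions inside $H^4_A(\sX,\Zl(2))$ --- equivalently, into their lying in the image of~$\psi_{1,A}$ --- and hence into the surjectivity of~$\psi_{1,A}$; that the hypothesis cannot be relaxed is the point of~\textsection\ref{subsec:aremarkpsi1A}. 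A further, pervasive difficulty is to keep all of this integral rather than rational: the Tate conjecture and the weight arguments are naturally statements over~$\Ql$, so one must check that the finite ``defect'' groups that appear --- of the same nature as the group~$\Phi$ of Remark~\ref{rk:complexsepclosed}~(ii) --- do not obstruct surjectivity after $\otimeshat\Zl$, and that the relevant cohomology groups are finitely generated, which is what licenses the limit arguments used above.
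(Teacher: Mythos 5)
Your overall strategy coincides with the paper's: reduce via Theorem~\ref{th:criterionfinite} to the surjectivity of $\psi_{1,A}\colon\CH_1(A)\otimeshat\Zl\to H^4_A(\sX,\Zl(2))$ (equivalently, by \textsection\ref{subsec:aremarkpsi1A}, to the injectivity of the dual restriction map $H^3(A,\Ql/\Zl(1))\to\bigoplus_i H^3(A_i,\Ql/\Zl(1))/\CH_1(A_i)^\perp$), use the Tate conjecture for the $A_i$ to handle the ``algebraic'' part, and use potential good reduction of the Albanese, through weights, for the rest. You have correctly located where each hypothesis enters. But as written this is a plan rather than a proof: the three steps that constitute the actual content of the argument are asserted or flagged as ``difficulties'' without being carried out.

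Concretely: (a) the weight argument is missing. You say that potential good reduction of the Albanese ``converts into the vanishing of the weight-one contributions,'' but the paper's Lemma~\ref{lem:albgoodred} requires a genuine chain of inputs --- the local invariant cycle theorem for $H^2(\bar X,\Ql)$ over the strict henselization (Rapoport--Zink), the resulting exact sequence \eqref{se:weightsh3}, the identification of $H^3(\bar X,\Ql(2))$ with the Tate module of the Albanese, Weil's purity for abelian varieties with good reduction, and the Mayer--Vietoris weight bound on the kernel of restriction --- none of which appears in your text. (b) You never address the $H^1(k,H^2(\bar A_i,\Ql(1)))$ layer of the Hochschild--Serre filtration: the paper's Lemma~\ref{lem:injh1h2} handles it via the \emph{partial semisimplicity} of Frobenius on $H^2(\bar A_i,\Ql(1))$, which is itself a consequence of the Tate conjecture (Tate, Kleiman); this is a second, independent use of the Tate hypothesis beyond the surjectivity of $\CH_1(A_i)\otimeshat\Zl\to H^2(A_i,\Zl(1))$ that you do invoke. (c) The passage from rational to integral (or divisible torsion) coefficients, which you acknowledge as a ``pervasive difficulty,'' is exactly Lemma~\ref{lem:QtoQ/Z}; its proof is where $d=2$ is used in an essential way (torsion-freeness of $H^m\big(V,a_{0*}\Zl(1)/\Zl(1)\big)$ via $H^m(C,\Zl(1))=\Zl$ for curves and cohomological dimension of the triple locus), and it cannot be dismissed as bookkeeping --- the statement fails in dimension $\geq 3$. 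Finally, your proposed integral description of $H^4_A(\sX,\Zl(2))$ by a stratification spectral sequence, including its degeneration and extension structure over $\Zl$, is itself unproved; the paper sidesteps this entirely by working on the Pontrjagin-dual side with the ordinary Mayer--Vietoris sequence for the closed cover $A=\bigcup_i A_i$, which is why the dual formulation is not merely ``more convenient'' but is the form in which the argument actually closes.
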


The hypothesis that the surfaces~$A_i$ satisfy the Tate conjecture automatically holds if~$K$ has characteristic~$0$ and $H^2(X,\sO_X)=0$ (see Lemma~\ref{lem:H2algXA} and Lemma~\ref{lem:H2algAAi}).
Even when $H^2(X,\sO_X)\neq 0$,
it often happens that the surfaces~$A_i$ are all birationally ruled and therefore satisfy the Tate conjecture.
This phenomenon is illustrated by the~$K3$ surfaces of Example~\ref{ex:sato} below.
We refer the reader to~\textsection\ref{sec:counterexample}
for an example of a simply connected surface~$X$ with positive geometric genus, over a $p$\nobreakdash-adic field,
which satisfies the assumptions of Theorem~\ref{th:kfinite} even though the~$A_i$'s are not birationally
ruled
(see Remark~\ref{rk:nonuniruledai}).

We note that Theorem~\ref{th:kfinite} should be applicable to any surface with $H^1(X,\sO_X)=0$;
for instance, to any~$K3$ surface.
As far as we are aware,
Theorem~\ref{th:kfinite} is the first general result about the kernel of
the cycle class map for $0$\nobreakdash-cycles on surfaces defined over a
$p$\nobreakdash-adic field which goes beyond the assumption $H^2(X,\sO_X)=0$
of Saito's theorem.
See~\cite{yamazaki} and the references therein for a discussion of this point and for a positive
result in the case of surfaces of the shape $C \times C'$ where~$C$ and~$C'$ are Mumford curves
over a $p$\nobreakdash-adic field.

In the proof of Theorem~\ref{th:kfinite} given below,
the Tate conjecture is used in Lemma~\ref{lem:tatech1}
and in the application of Lemma~\ref{lem:injh1h2},
while the hypothesis that the Albanese variety of~$X$ should have potentially good reduction
plays a role only in Lemma~\ref{lem:albgoodred}.  It would be interesting to understand 
the kernel of the cycle class map for $0$\nobreakdash-cycles on surfaces defined
over a $p$\nobreakdash-adic field when the Albanese variety of~$X$ does not have potentially good reduction.
See~\cite{satoinjectivity} for some work in this direction under the assumption that $H^2(X,\sO_X)=0$.

\begin{rmk}
\label{rk:leftkernel}
When~$X$ is a proper variety over a $p$\nobreakdash-adic field,
the natural pairing
\begin{align}
\label{eq:pairingmanin}
\CH_0(X) \times \Br(X) \to \Q/\Z
\end{align}
defined by Manin~\cite{manin} has been used as a tool to study the Chow group of $0$\nobreakdash-cycles of~$X$
(see \cite{ctsaito}, \cite{saitosatopadic}).
Theorem~\ref{th:kfinite} may be reinterpreted as asserting that for the surfaces~$X$ which appear in its statement,
the left kernel of~\eqref{eq:pairingmanin}
is divisible by~$\ell$.
(See,~\emph{e.g.}, \cite[p.~41]{ctcime} for this reformulation.)
This answers, for the prime-to\nobreakdash-$p$ part and conditionally on the Tate conjecture, the question
raised in \cite[p.~85]{parimalasuresh}, under the assumption that the Albanese variety of~$X$ has potentially good reduction.
\end{rmk}

\begin{example}
\label{ex:sato}
The~$K3$ surfaces over~$\Q_p$ considered by Sato in~\cite{satonondivisible}
possess a regular model over~$\Z_p$ whose special fiber is a union of smooth rational surfaces, with normal crossings
(see \cite[Example~3.7 and Corollary~A.7]{satonondivisible}).
As a consequence, they satisfy the assumptions of Theorem~\ref{th:kfinite}.
In particular, for these~$K3$ surfaces, the left kernel of the natural pairing
$\CH_0(X) \times \Br(X) \to \Q/\Z$
is divisible by~$\ell$ for any $\ell \neq p$.
\end{example}

\subsection{Local invariant cycle theorem}

In both~\textsection\ref{sec:surfacesoverpadic} and~\textsection\ref{sec:surfacesoverC((t))} we shall
apply some of the arguments of \cite[(3.6)]{deligneweil2} in an unequal characteristic situation.
For ease of reference we spell out the relevant fact in Lemma~\ref{lem:invcycleeq} below.
Let us take up the notation of~\textsection\ref{sec:criterion}
and assume~$k$ is separably closed.
Recall that for $i \geq 0$, one defines
a specialization map
\begin{align}
\label{eq:spmap}
H^i(A, \Q_\ell)\longrightarrow H^0(K,H^i(\bar X, \Q_\ell))
\end{align}
by composing the inverse of the proper base change isomorphism
$H^i(\sX,\Q_\ell) \isoto H^i(A,\Q_\ell)$
with the natural maps $H^i(\sX,\Q_\ell)\to H^i(X,\Q_\ell) \to H^0(K,H^i(\bar X,\Q_\ell))$
(see \cite[p.~256]{sga412}).
The term ``local invariant cycle theorem'' refers to the conjecture
that the map~\eqref{eq:spmap}
is surjective for any $i \geq 0$.
This conjecture holds in equal characteristic
(see \cite[Theorem~5.12]{steenbrink}, \cite{gna}, \cite[Th\'eor\`eme~3.6.1]{deligneweil2}, \cite{ito}, \cite[\textsection3.9]{illusiemonodromie}).  It~is also known to hold in mixed characteristic,
when~$k$ is an algebraic closure of a finite field
and either $d \leq 2$ or $i\leq 2$ (see \cite[Satz~2.13]{rapoportzink}, \cite[Lemma~3.9]{saitoweight}).
The~local invariant cycle theorem for any~$i$ and any separably closed~$k$ would follow from the monodromy-weight conjecture
(see \cite[Corollaire~3.11]{illusiemonodromie}).
We note that even though some of the above references assume that~$A$ is a simple normal crossings divisor in~$\sX$,
the general case reduces to this one thanks to de~Jong's theorem on the existence of alterations~\cite{dejong} and a trace argument.

\begin{lem}
\label{lem:invcycleeq}
Let $i \geq 0$. Under the assumptions of Theorem~\ref{th:criterionsepclosed},
the specialization map~\eqref{eq:spmap}
is surjective
if and only if the sequence
\begin{align}
\label{eq:seqinvcycle}
\xymatrix{
H^m_A(\sX,\Ql) \ar[r] & H^m(\sX,\Ql) \ar[r] & H^0(K,H^m(\bar X,\Ql))
}
\end{align}
is exact for $m=2d+1-i$.
\end{lem}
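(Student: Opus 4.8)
The plan is to relate the two conditions through Poincaré duality on $\bar X$ together with the adjunction compatibilities recorded in Remark~\ref{rk:pairingscompatible}. First I would observe that, by Proposition~\ref{prop:dualityAsX} applied with $\Lambda=\Z/\ell^n\Z$ (and then passing to the limit and tensoring with $\Ql$, which is harmless since all the groups in sight are finite at each finite level), the cohomology with supports $H^m_A(\sX,\Ql)$ is canonically dual to $H^{2d+2-m}(A,\Ql(d))$; and by Proposition~\ref{prop:dualityoverfield}(3), applied over the strictly henselian valuation ring $R$ to the smooth proper variety $X$, the group $H^m(X,\Ql)$ is canonically dual to $H^{2d+1-m}(X,\Ql(d))$. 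Finally, under the Hochschild--Serre spectral sequence for $X\to \Spec K$, and using that $K$ has cohomological dimension~$1$ with $k$ separably closed, the edge maps $H^m(X,\Ql)\to H^0(K,H^m(\bar X,\Ql))$ and $H^1(K,H^{2d-m}(\bar X,\Ql(d)))\to H^{2d+1-m}(X,\Ql(d))$ are adjoint to one another; moreover, since $\Ql$ is a field of characteristic~$0$ and the Galois action factors through $\hat{\Z}$, the group $H^1(K,H^{2d-m}(\bar X,\Ql(d)))$ is precisely the coinvariants, equivalently (for $\Ql$-coefficients) the invariants, $H^0(K,H^{2d-m}(\bar X,\Ql(d)))$, which by Poincaré duality on $\bar X$ is dual to $H^0(K,H^{m}(\bar X,\Ql))$. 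Thus the edge map $H^m(X,\Ql)\to H^0(K,H^m(\bar X,\Ql))$ is, up to these identifications, dual to itself in the appropriate twist.

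Next I would assemble the two long exact localization sequences for $(\sX,A,X)$ — one with coefficients $\Ql$ in degree $m$, the other with coefficients $\Ql(d)$ in degree $2d+1-m$ — and note, via Remark~\ref{rk:pairingscompatible}, that they are exchanged under the pairings above: the map $H^m_A(\sX,\Ql)\to H^m(\sX,\Ql)$ is adjoint to $H^{2d+1-m-1}(X,\Ql(d))\to H^{2d+1-m}_A(\sX,\Ql(d))$ [with $2d+1-m-1 = 2d-m$, and $H^{2d-m}(X,\Ql(d))$ surjecting onto its fixed-part], and the composite $H^m(\sX,\Ql)=H^m(A,\Ql)\to H^0(K,H^m(\bar X,\Ql))$ is adjoint to $H^1(K,H^{2d-m}(\bar X,\Ql(d)))\to H^{2d+1-m}(X,\Ql(d))=H^{2d+1-m}(\sX,\Ql(d))^{\,\ast}$, the second map of that localization sequence. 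Hence the three-term sequence $H^m_A(\sX,\Ql)\to H^m(\sX,\Ql)\to H^0(K,H^m(\bar X,\Ql))$ is, as a complex, the $\Ql$-linear dual of the three-term sequence $H^{2d-m}(X,\Ql(d))\to H^{2d+1-m}_A(\sX,\Ql(d))\to H^{2d+1-m}(\sX,\Ql(d))$ — but read carefully one sees the reindexing turns this latter into precisely the localization three-term sequence computing whether the edge map in degree $2d-m$ (on $X$) surjects; a brief bookkeeping shows that the dual of "$(\ast)$ is exact at the middle" is "$(\ast)^{\vee}$ is exact at the middle", and $(\ast)^{\vee}$ reduces exactly to the surjectivity of the specialization map~\eqref{eq:spmap} in degree $i$. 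Working with $m=2d+1-i$ makes the twists match.

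The cleanest way to finish is to observe directly that exactness of~\eqref{eq:seqinvcycle} at the middle term is equivalent to: the image of $H^m(\sX,\Ql)$ in $H^0(K,H^m(\bar X,\Ql))$ has the same dimension as $H^m(\sX,\Ql)$ minus the dimension of the image of $H^m_A(\sX,\Ql)$. By the exact localization sequence $H^m_A(\sX,\Ql)\to H^m(\sX,\Ql)\to H^m(X,\Ql)$, the image of $H^m(\sX,\Ql)$ in $H^m(X,\Ql)$ is $H^m(\sX,\Ql)$ modulo the image of $H^m_A(\sX,\Ql)$; and by the Hochschild--Serre edge sequence the composite $H^m(\sX,\Ql)\to H^m(X,\Ql)\to H^0(K,H^m(\bar X,\Ql))$ equals~\eqref{eq:spmap} under the proper base change identification $H^m(\sX,\Ql)=H^m(A,\Ql)$. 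So~\eqref{eq:seqinvcycle} is exact iff $H^m(X,\Ql)\to H^0(K,H^m(\bar X,\Ql))$ is injective on the image of $H^m(\sX,\Ql)$; and by cohomological dimension one, the edge map $H^m(X,\Ql)\to H^0(K,H^m(\bar X,\Ql))$ is always surjective and its kernel is a quotient of $H^1(K,H^{m-1}(\bar X,\Ql))$, which is the image of $H^m_A(\sX,\Ql)$ via exactly the same localization-sequence analysis applied in the other range — making the two conditions coincide. I expect the main obstacle to be purely bookkeeping: keeping the twists, the shifts by one coming from the localization connecting maps, and the direction of the duality straight, so that "$m=2d+1-i$" falls out rather than "$m=2d-i$" or "$m=2d+2-i$"; the mathematical content is entirely in Propositions~\ref{prop:dualityoverfield} and~\ref{prop:dualityAsX} and the compatibilities of Remark~\ref{rk:pairingscompatible}, all of which are already available.
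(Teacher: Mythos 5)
Your overall strategy---combining the localization sequence for $(\sX,A,X)$ with the Hochschild--Serre edge sequence in degree $m$ and invoking the adjunctions of Remark~\ref{rk:pairingscompatible}---is exactly the paper's, and your third paragraph correctly reduces the exactness of~\eqref{eq:seqinvcycle} at the middle term to the condition that the image of $H^m(\sX,\Ql)$ in $H^m(X,\Ql)$ meet $H^1(K,H^{m-1}(\bar X,\Ql))=\Ker\big(H^m(X,\Ql)\to H^0(K,H^m(\bar X,\Ql))\big)$ trivially. The step that closes the argument, however, is not right in either of your two attempts at it. In the second paragraph you dualize the three-term complex~\eqref{eq:seqinvcycle} itself; since its second arrow \emph{is} the specialization map in degree $m$, what you obtain is the tautology that a complex of finite-dimensional $\Ql$-vector spaces is exact iff its dual is---this says nothing about the specialization map in degree $i=2d+1-m$, which is the one in the statement. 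In the third paragraph, the sentence asserting that the kernel of the edge map ``is the image of $H^m_A(\sX,\Ql)$'' is false: that kernel equals $H^1(K,H^{m-1}(\bar X,\Ql))$ exactly, and the image of $H^m_A(\sX,\Ql)$ in $H^m(X,\Ql)$ is zero (consecutive maps of the localization sequence compose to zero), so the two conditions do not ``coincide'' for the reason you give.

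The missing step is the following. Since $\Im\big(H^m(\sX,\Ql)\to H^m(X,\Ql)\big)=\Ker\big(H^m(X,\Ql)\to H^{m+1}_A(\sX,\Ql)\big)$, the condition you reached is equivalent to the injectivity of the composite $H^1(K,H^{m-1}(\bar X,\Ql))\hookrightarrow H^m(X,\Ql)\to H^{m+1}_A(\sX,\Ql)$. By the two adjunctions of Remark~\ref{rk:pairingscompatible}, applied with $2d-i=m-1$ and $2d+2-i=m+1$, this composite is dual, up to a twist, to the specialization map $H^i(A,\Ql)\to H^0(K,H^i(\bar X,\Ql))$ in degree $i=2d+1-m$; injectivity of a map of finite-dimensional $\Ql$-vector spaces being equivalent to surjectivity of its dual, the lemma follows. (This is precisely the paper's diagram chase.) Your twist bookkeeping should also be corrected---for instance $H^m_A(\sX,\Ql)$ is dual to $H^{2d+2-m}(A,\Ql(d+1))$, not to $H^{2d+2-m}(A,\Ql(d))$---though with $k$ separably closed and $\Ql$-coefficients this is harmless for the dimension counts.
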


\begin{proof}
The localization exact sequence for \'etale cohomology and the Hochschild--Serre spectral sequence
give rise to a commutative diagram
\begin{align*}
\xymatrix@C=1.5em{
&& H^0(K,H^m(\bar X,\Ql)) \\
H^m_A(\sX,\Ql) \ar[r] & H^m(\sX,\Ql) \ar[r]\ar[ur] & H^m(X,\Ql) \ar[u]\ar[r] & H^{m+1}_A(\sX,\Ql) \\
&& H^1(K,H^{m-1}(\bar X,\Ql)) \ar@{_{(}->}[u] \ar[ur]
}
\end{align*}
with exact row and column.
The bottom slanted arrow is dual, up to a twist, to the specialization map~\eqref{eq:spmap}
(see \textsection\ref{subsec:duality}
and Remark~\ref{rk:pairingscompatible}),
therefore it is injective if and only if~\eqref{eq:spmap} is surjective. The lemma follows by a diagram chase.
\end{proof}

\subsection{From rational to divisible torsion coefficients}

The following lemma will play a crucial role both in the proof of Theorem~\ref{th:kfinite} and in~\textsection\ref{sec:surfacesoverC((t))}.
It fails for varieties of dimension~$\geq 3$, both in the case of separably closed
fields and in that of finite fields.
This is the main reason why Theorem~\ref{th:kfinite} requires the assumption $d=2$.

\begin{lem}
\label{lem:QtoQ/Z}
Let~$V$ be a proper surface with simple normal crossings over a field~$k$.
Assume~$k$ is either finite or separably closed.
Denote by $(V_i)_{i \in I}$ the family of irreducible components of~$V$,
and let $m=2$ if~$k$ is separably closed, $m=3$ if~$k$ is finite.
Let~$\ell$ be a prime number invertible in~$k$.
If the restriction map
\begin{align}
\label{eq:restmapQtoQ/Z}
H^m(V, \Ql(1))\to\bigoplus_{i\in I} H^m(V_i, \Ql(1))
\end{align}
is injective,
then so is the restriction map
\begin{align}
H^m(V, \Ql/\Zl(1))\to\bigoplus_{i\in I} H^m(V_i, \Ql/\Zl(1))\rlap{\text{.}}
\end{align}
\end{lem}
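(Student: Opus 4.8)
The plan is to exploit the long exact cohomology sequences attached to the Kummer sequence
\begin{align*}
0 \to \Zl(1) \to \Ql(1) \to \Ql/\Zl(1) \to 0
\end{align*}
on $V$ and on each $V_i$, and to feed in the finiteness of the relevant cohomology groups (granted by the conventions of the paper, since all the $\Z/\ell^n\Z$-cohomology groups in sight are finite). Concretely, for a proper surface $W$ with simple normal crossings over $k$ (finite or separably closed) and $\ell$ invertible in $k$, the group $H^{m-1}(W,\Ql/\Zl(1))$ is the torsion part of $H^m(W,\Zl(1))$ up to an issue of corank, and $H^m(W,\Ql/\Zl(1))$ fits into an exact sequence
\begin{align*}
0 \to H^m(W,\Zl(1))\otimes_{\Zl}\Ql/\Zl \to H^m(W,\Ql/\Zl(1)) \to H^{m+1}(W,\Zl(1))_{\mathrm{tors}} \to 0.
\end{align*}
First I would run this for $W=V$ and for $W=V_i$ and assemble a commutative diagram of these short exact sequences linked by the restriction maps, so that injectivity of the middle restriction map is controlled by injectivity on the divisible quotients of $H^m(\cdot,\Zl(1))$ and on the torsion of $H^{m+1}(\cdot,\Zl(1))$.

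Next I would identify the divisible-quotient part with the hypothesis. Since $H^m(V,\Zl(1))\otimes_{\Zl}\Ql = H^m(V,\Ql(1))$ and likewise for $V_i$, the injectivity of~\eqref{eq:restmapQtoQ/Z} says precisely that $H^m(V,\Zl(1))\otimes\Ql \to \bigoplus_i H^m(V_i,\Zl(1))\otimes\Ql$ is injective; since the targets are torsion-free $\Ql$-vector spaces, this forces the kernel of $H^m(V,\Zl(1)) \to \bigoplus_i H^m(V_i,\Zl(1))$ to be torsion, hence (by finiteness) finite, hence the induced map on divisible quotients $H^m(V,\Zl(1))\otimes\Ql/\Zl \to \bigoplus_i H^m(V_i,\Zl(1))\otimes\Ql/\Zl$ has kernel of finite corank $0$, i.e.\ is injective. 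So the "left half" of the diagram poses no trouble: any class in $H^m(V,\Ql/\Zl(1))$ coming from $H^m(V,\Zl(1))\otimes\Ql/\Zl$ and restricting to zero on all $V_i$ must already be zero.

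The remaining point — and the one requiring real input — is to handle a class $\xi \in H^m(V,\Ql/\Zl(1))$ whose restrictions to all $V_i$ vanish but whose image in $H^{m+1}(V,\Zl(1))_{\mathrm{tors}}$ is nonzero; one must show this cannot happen, i.e.\ that $H^{m+1}(V,\Zl(1))_{\mathrm{tors}} \to \bigoplus_i H^{m+1}(V_i,\Zl(1))_{\mathrm{tors}}$ is injective on the image of $\xi$. Here is where I expect the main obstacle to lie, and where the combinatorial structure of a simple normal crossings surface enters: the Mayer--Vietoris / Čech-type spectral sequence computing $H^*(V,-)$ from the $V_i$, the double intersections $V_i\cap V_j$ (curves), and the triple intersections (points). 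Because $V$ has dimension $2$, this spectral sequence has only three columns, and $H^{m+1}(V,\Zl(1))$ for $m=2$ or $m=3$ is assembled from $H^{\leq m+1}$ of the strata of decreasing dimension; the $E_1$-terms involving curves and points have no torsion in the relevant degrees for weight reasons (over a finite field, via the Weil conjectures; over a separably closed field, because $H^{\geq 1}$ of a smooth projective curve with $\Zl(1)$-coefficients, and $H^{\geq 1}$ of a point, are torsion-free in the relevant range), so the torsion in $H^{m+1}(V,\Zl(1))$ is pinned down by the $H^{m+1}(V_i,\Zl(1))$ together with differentials, and the restriction map to $\bigoplus_i H^{m+1}(V_i,\Zl(1))$ is injective on torsion. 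I would make this precise by a careful bookkeeping of the three-column weight spectral sequence, treating the finite and separably closed cases in parallel, and concluding that no such $\xi$ exists; this completes the proof.
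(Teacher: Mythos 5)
Your strategy---splitting $H^m(\cdot,\Ql/\Zl(1))$ via the coefficient sequence into the divisible part $H^m(\cdot,\Zl(1))\otimes_{\Zl}\Ql/\Zl$ and the torsion part $H^{m+1}(\cdot,\Zl(1))_{\mathrm{tors}}$---is a reasonable starting point, but both halves of the argument have genuine gaps, and the first is the more serious because you declare it to pose ``no trouble''. For a map $f\colon M\to N$ of finitely generated $\Zl$-modules, injectivity of $f\otimes\Ql$ does \emph{not} imply injectivity of $f\otimes\Ql/\Zl$: multiplication by $\ell$ on $\Zl$ is injective rationally, yet induces a map on $\Ql/\Zl$ with kernel $\Z/\ell\Z$. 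A subgroup of $(\Ql/\Zl)^r$ of corank $0$ is finite, not zero, so ``kernel of finite corank $0$, i.e.\ injective'' is a non sequitur. What you actually need is that the image of $H^m(V,\Zl(1))/\mathrm{tors}$ in $\bigoplus_i H^m(V_i,\Zl(1))/\mathrm{tors}$ be \emph{saturated} (torsion-free cokernel); this integral refinement does not follow from the rational hypothesis by pure algebra, and it is essentially the content of the lemma rather than a formality. Your second step has a parallel defect: in the Mayer--Vietoris spectral sequence the kernel of $H^{m+1}(V,\Zl(1))\to\bigoplus_i H^{m+1}(V_i,\Zl(1))$ is $E_2^{1,m}$, which (say for $m=2$) is the quotient of the torsion-free module $\bigoplus H^2(V_i\cap V_j,\Zl(1))\cong\Zl^N$ by the image of the restrictions from the components. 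Torsion-freeness of $E_1$-terms does not pass to such quotients: if all intersection numbers of divisors on the $V_i$ with the double curves are divisible by $\ell$, this quotient has $\ell$-torsion---the same integral phenomenon that the counterexamples of Section~6 of the paper exploit. So neither half of your diagram chase is established.

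The paper's proof takes a different decomposition precisely to localize all torsion issues in one group that can be shown torsion-free directly. It resolves the constant sheaf by the pushforwards from the strata of the normal crossings surface, $0\to\Lambda\to a_{0*}\Lambda\to a_{1*}\Lambda\to a_{2*}\Lambda\to 0$, so that the restriction to the components becomes the second arrow of a three-term exact row, and the whole lemma reduces (by a diagram chase using the hypothesis only at the last step) to the surjectivity of $H^{m-1}\big(V,a_{0*}\Ql(1)/\Ql(1)\big)\to H^{m-1}\big(V,a_{0*}(\Ql/\Zl)(1)/(\Ql/\Zl)(1)\big)$. That surjectivity is equivalent to the torsion-freeness of $H^m\big(V,a_{0*}\Zl(1)/\Zl(1)\big)$, which is checked from the strata sequence because $H^m(C,\Zl(1))=\Zl$ for every double curve $C$ and $H^{m-1}$ of the triple points vanishes. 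If you wanted to salvage your route you would have to prove the saturation statement above, and at that point you would in effect be reconstructing this argument.
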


\begin{proof}
Let us fix a total ordering on the finite set~$I$.  For $n \geq 0$, set
\begin{align*}
V^{(n)}=\coprod_{i_1 < \dots < i_{n+1}} \big(V_{i_1} \cap \dots \cap V_{i_{n+1}}\big)
\end{align*}
and let $a_n:V^{(n)}\to V$ denote the natural finite map.  For $n=0$, this is the normalization map.
The maps appearing in the statement of Lemma~\ref{lem:QtoQ/Z} fit into the following commutative diagram, whose rows are exact:
\begin{align}
\label{diag:qqz}
\begin{aligned}
\xymatrix{
\displaystyle H^{m-1}\bigg(V,\frac{a_{0*}\Ql(1)}{\Ql(1)}\bigg) \ar[d] \ar[r] & H^m(V,\Ql(1)) \ar[d] \ar[r] & H^m(V,a_{0*}\Ql(1)) \ar[d] \\
\displaystyle H^{m-1}\bigg(V,\frac{a_{0*}(\Ql/\Zl)(1)}{\Ql/\Zl(1)}\bigg) \ar[r] & H^m(V,\Ql/\Zl(1)) \ar[r] & H^m(V,a_{0*}\Ql/\Zl(1))\rlap{\text{.}} \\
}
\end{aligned}
\end{align}
In order to establish Lemma~\ref{lem:QtoQ/Z}, it suffices to prove that the leftmost vertical arrow of this diagram is onto.

As~$V$ is a surface with simple normal crossings, we have $\dim\mkern-2mu\big(V^{(n)}\big)\leq 2-n$ for any~$n$,
and
the choice of the ordering on~$I$ determines a natural exact sequence of $\Zl$\nobreakdash-sheaves
\begin{align}
\label{eq:a0a1a2}
\xymatrix{
0 \ar[r] & \Zl(1) \ar[r] & a_{0*} \Zl(1) \ar[r] & a_{1*} \Zl(1) \ar[r] & a_{2*} \Zl(1) \ar[r] & 0\rlap{\text{.}}
}
\end{align}
For any irreducible curve~$C$ proper over~$k$, we have $H^m(C,\Zl(1))=\Zl$.
Therefore the group
$H^m(V,a_{1*}\Zl(1))=\bigoplus_{i<j} H^m(V_i \cap V_j,\Zl(1))$ is torsion-free.
Similarly, we have $H^{m-1}(V,a_{2*}\Zl(1))=0$ for reasons of cohomological dimension.
As~$k$ is finite or separably closed, short exact sequences of
$\Zl$\nobreakdash-sheaves give rise to long exact sequences of cohomology groups;
hence from the previous remarks and from~\eqref{eq:a0a1a2} we deduce that
the group $H^m\bigg(V,\displaystyle\frac{a_{0*}\Zl(1)}{\Zl(1)}\bigg)$ is torsion-free.
In view of the exact sequence
$$
\xymatrix{
0 \ar[r] & \displaystyle\frac{a_{0*}\Zl(1)}{\Zl(1)} \ar[r] &
\displaystyle \frac{a_{0*}\Ql(1)}{\Ql(1)} \ar[r] & 
\displaystyle \frac{a_{0*}(\Ql/\Zl)(1)}{\Ql/\Zl(1)} \ar[r] & 0\rlap{\text{,}}
}
$$
we conclude that the leftmost vertical arrow of~\eqref{diag:qqz} is indeed onto.
\end{proof}

\subsection{Two lemmas on cohomology with rational coefficients}

In the next two lemmas, we study the kernel of~\eqref{eq:restmapQtoQ/Z}
when~$k$ is a finite field.  Following Tate~\cite[p.~72]{tateseattle}, we say that an endomorphism~$\phi$
of a finite-dimensional vector space is \emph{partially semisimple} if the map $\Ker(\phi-1) \to \Coker(\phi-1)$ induced by the identity
is bijective (equivalently, if~$1$ is not a multiple root of the minimal polynomial of~$\phi$).

\begin{lem}
\label{lem:injh1h2}
Let~$V$ be a proper variety with simple normal crossings over a finite field~$k$.
Denote by $(V_i)_{i \in I}$ the family of irreducible components of~$V$.
Let~$\ell$ be a prime number invertible in~$k$.
If the Frobenius endomorphism of $H^2(\bar V_i,\Ql(1))$ is partially semisimple for all~$i \in I$,
the restriction map
$$
H^1(k,H^2(\bar V,\Ql(1))) \longrightarrow \bigoplus_{i \in I} H^1(k,H^2(\bar V_i,\Ql(1)))
$$
is injective.
\end{lem}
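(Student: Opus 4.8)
Since $k$ is finite, let $\phi$ denote the Frobenius of $\mathrm{Gal}(\bar k/k)$; for any finite-dimensional continuous $\Ql$\nobreakdash-representation $W$ of $\mathrm{Gal}(\bar k/k)$ one has $H^0(k,W)=W^\phi=\Ker(\phi-1)$, $H^1(k,W)=W_\phi=\Coker(\phi-1)$ and $H^i(k,W)=0$ for $i\geq 2$. Write $M=H^2(\bar V,\Ql(1))$, $N=\bigoplus_{i\in I}H^2(\bar V_i,\Ql(1))$ and $r\colon M\to N$ for the restriction map, so that the arrow to be studied is $r_*\colon M_\phi\to N_\phi$. The plan is to factor it as
\[
M_\phi\;\isoto\;\Im(r)_\phi\;\hookrightarrow\;N_\phi
\]
and to prove each of these two assertions in turn.

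For the isomorphism it is enough to show that Frobenius acts on $\Ker(r)$ without the eigenvalue $1$; then $\Ker(r)^\phi=\Ker(r)_\phi=0$, and the six-term exact sequence of Galois cohomology attached to $0\to\Ker(r)\to M\to\Im(r)\to 0$ gives $M_\phi\isoto\Im(r)_\phi$. To get hold of $\Ker(r)$ I would use the exact resolution $0\to\Ql(1)\to a_{0*}\Ql(1)\to a_{1*}\Ql(1)\to a_{2*}\Ql(1)\to\cdots$ of \'etale sheaves on $\bar V$, obtained as in the proof of Lemma~\ref{lem:QtoQ/Z}, where $a_p\colon\bar V^{(p)}\to\bar V$ is the finite map from the disjoint union of the $(p+1)$\nobreakdash-fold intersections of the $\bar V_i$ (this resolution is infinite in general, as $V$ need not be a surface). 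The long exact sequence for $0\to\Ql(1)\to a_{0*}\Ql(1)\to a_{0*}\Ql(1)/\Ql(1)\to 0$ presents $\Ker(r)$ as a quotient of $H^1(\bar V,a_{0*}\Ql(1)/\Ql(1))$, and the hypercohomology spectral sequence of the complex $[a_{1*}\Ql(1)\to a_{2*}\Ql(1)\to\cdots]$, which resolves $a_{0*}\Ql(1)/\Ql(1)$, endows this latter group with a finite filtration whose graded pieces are subquotients of $\bigoplus_{i<j}H^1(\bar V_i\cap\bar V_j,\Ql(1))$ and of $\bigoplus_{i<j<k}H^0(\bar V_i\cap\bar V_j\cap\bar V_k,\Ql(1))$. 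By the simple normal crossings hypothesis all the intersections occurring here are smooth and proper over $\bar k$, so these groups are pure of Frobenius weight $-1$, respectively $-2$; hence Frobenius acts on them, and therefore on $H^1(\bar V,a_{0*}\Ql(1)/\Ql(1))$ and on its quotient $\Ker(r)$, without the eigenvalue $1$. (Conceptually, $\Ker(r)$ is the Tate twist of the subspace $W_1H^2(\bar V,\Ql)$ of weight $\leq 1$, so it is of weights $\leq -1$.)

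For the inclusion $\Im(r)_\phi\hookrightarrow N_\phi$, set $C=N/\Im(r)$. The segment $N^\phi\to C^\phi\xrightarrow{\,\partial\,}\Im(r)_\phi\to N_\phi$ of the Galois cohomology sequence of $0\to\Im(r)\to N\to C\to 0$ reduces us to showing that $N^\phi\to C^\phi$ is surjective, i.e.\ that the connecting map $\partial$ vanishes; this is the one place where the partial semisimplicity hypothesis enters. Recall that an automorphism of a finite-dimensional $\Ql$\nobreakdash-vector space is partially semisimple exactly when it restricts to the identity on its generalized $1$\nobreakdash-eigenspace, a property inherited by direct sums and by stable subquotients; when it holds, the space of invariants coincides with the generalized $1$\nobreakdash-eigenspace. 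Applying this to Frobenius on each $H^2(\bar V_i,\Ql(1))$, hence on $N$, hence on the quotient $C$, one gets $N^\phi=N_1$ and $C^\phi=C_1$ (generalized $1$\nobreakdash-eigenspaces), and the map $N^\phi\to C^\phi$ becomes the canonical surjection $N_1\surj N_1/(N_1\cap\Im(r))$. Thus $\partial=0$, $\Im(r)_\phi\hookrightarrow N_\phi$ holds, and composing with the isomorphism of the previous paragraph yields the injectivity of $r_*\colon M_\phi\to N_\phi$.

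I expect the main obstacle to be the control of $\Ker(r)$ in the second paragraph: one has to see that restricting to the irreducible components kills precisely the subspace $W_1H^2(\bar V,\Ql)$, so that after the Tate twist the relevant Frobenius eigenvalues are Weil numbers of negative weight and cannot equal $1$. I would obtain this through the resolution by the $a_{p*}\Ql(1)$ as indicated, which has the advantage of not requiring the degeneration of the weight spectral sequence. Everything else is formal; the sole function of the partial semisimplicity hypothesis is to force the connecting map $\partial$ to vanish, and without it the conclusion genuinely fails.
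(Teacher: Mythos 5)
Your proof is correct and follows essentially the same route as the paper's: you bound the weights of $\Ker(r)$ by $\leq -1$ via the Mayer--Vietoris (\v{C}ech) resolution by the $a_{p*}\Ql(1)$ together with purity for the smooth proper strata, which kills $H^1(k,\Ker(r))$, and you then use partial semisimplicity (inherited by $\Im(r)$ and its cokernel in $N$) to transfer the injectivity of $\Im(r)^\phi \hookrightarrow N^\phi$ to coinvariants. The paper phrases this second step as a commutative square identifying $H^0$ with $H^1$ for $\Im(r)$ and for $N$ rather than via the connecting map to $N/\Im(r)$, but this is the same argument, and note that for the first step you only need the inclusion of $\Ker(r)$ in the part of weight $\leq -1$, not the equality with $W_1H^2(\bar V,\Ql)(1)$ mentioned in your closing remark.
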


\begin{proof}
As~$k$ is finite, we may consider the weights of Frobenius acting on $H^2(\bar V,\Ql(1))$ and on
$M=\bigoplus_{i \in I} H^2(\bar V_i,\Ql(1))$.
Let $r:H^2(\bar V,\Ql(1)) \to M$ denote the restriction map.
According to the Mayer--Vietoris spectral sequence associated with the finite covering $V=\bigcup_{i \in I} V_i$
and to Deligne's theorem
\cite[Th\'eor\`eme~1]{deligneweil2},
the weights of $\Ker(r)$ are~$\leq -1$.
Therefore $H^1(k,\Ker(r))=0$ and hence the map $H^1(k,H^2(\bar V,\Ql(1))) \to H^1(k,\Im(r))$ induced by~$r$ is injective.
On the other hand, since the Frobenius endomorphism acts partially semisimply on~$M$,
it also acts partially semisimply on $\Im(r)$. We may thus identify the groups $H^i(k,\Im(r))$ and $H^i(k,M)$ for $i=0$ with the same groups for $i=1$.
As $H^0(k,\Im(r))$ injects into $H^0(k,M)$, it follows that
$H^1(k,\Im(r))$ injects into $H^1(k,M)$.
\end{proof}

\begin{lem}
\label{lem:albgoodred}
With the notation of \textsection\ref{sec:criterion}, assume that $d=2$,
that~$k$ is finite, and that
the Albanese variety of~$X$ has potentially good reduction.   Then the restriction map
$H^3(\bar A,\Ql(1)) \to \bigoplus_{i\in I} H^3(\bar A_i,\Ql(1))$ is injective.
\end{lem}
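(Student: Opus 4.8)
The plan is to reduce the assertion, via the Mayer--Vietoris resolution used in the proof of Lemma~\ref{lem:QtoQ/Z}, to the surjectivity of a restriction map between the second cohomology groups of the components and of the double curves, and then to deduce that surjectivity from the triple-point formula for~$\sX$ together with a combinatorial reformulation of the hypothesis on the Albanese variety.

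As étale cohomology does not see nilpotents, we may replace~$A$ by~$A_\red$ and take up the notation of the proof of Lemma~\ref{lem:QtoQ/Z} with $V=\bar A$: thus $a_p\colon\bar A^{(p)}\to\bar A$ is the canonical finite map, $\bar A^{(0)}=\coprod_i\bar A_i$, $\bar A^{(1)}=\coprod_{\bar C}\bar C$ is the disjoint union of the irreducible double curves~$\bar C$, and~$\bar A^{(2)}$ is the finite set of triple points. Because $d=2$, the exact sequence~\eqref{eq:a0a1a2} (tensored with~$\Q$) together with the vanishing of $H^i(\bar A^{(2)},\Ql(1))$ for $i\geq1$ yields an exact sequence
$$\bigoplus_i H^2(\bar A_i,\Ql(1))\xrightarrow{\ \partial\ }\bigoplus_{\bar C}H^2(\bar C,\Ql(1))\longrightarrow H^3(\bar A,\Ql(1))\longrightarrow\bigoplus_i H^3(\bar A_i,\Ql(1))\rlap{\text{,}}$$
$\partial$ being the alternating sum of the restriction maps; hence Lemma~\ref{lem:albgoodred} is equivalent to the surjectivity of~$\partial$. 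By Poincaré duality on each smooth projective surface~$\bar A_i$ (under which $H^2(\bar A_i,\Ql(1))$ is self-dual) and on each smooth projective curve~$\bar C$ (under which $H^2(\bar C,\Ql(1))^\vee=H^0(\bar C,\Ql)=\Ql$), the surjectivity of~$\partial$ is equivalent to the injectivity of the Gysin map $g\colon\bigoplus_{\bar C}\Ql\to\bigoplus_i H^2(\bar A_i,\Ql(1))$ carrying the generator indexed by $\bar C\subset\bar A_a\cap\bar A_b$ to $\mathrm{cl}_{\bar A_a}(\bar C)-\mathrm{cl}_{\bar A_b}(\bar C)$. Given $\lambda=(\lambda_{\bar C})\in\Ker(g)$, the divisor $Z_i=\sum_{\bar C\subset\bar A_i}\varepsilon_{i,\bar C}\lambda_{\bar C}\bar C$ on~$\bar A_i$ (with $\varepsilon_{i,\bar C}=\pm1$ the \v{C}ech sign) then satisfies $\mathrm{cl}_{\bar A_i}(Z_i)=0$ for every~$i$, and one must conclude that $\lambda=0$.

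I would then view~$\lambda$ as a $1$-cochain on the dual complex~$\Delta$ of~$\bar A$ (vertices the~$\bar A_i$, edges the~$\bar C$, $2$-simplices the triple points). From $\mathrm{cl}_{\bar A_i}(Z_i)=0$ one gets $(Z_i^2)_{\bar A_i}=0$; summing over~$i$ and computing the self-intersections by means of the triple-point formula for the regular scheme~$\sX$ — $(\bar C^2)_{\bar A_a}+(\bar C^2)_{\bar A_b}=-\#\{\text{triple points on }\bar C\}$, in its form adapted to the (possibly non-reduced) special fibre — while the cross terms count triple points as well, one arrives after a sign computation at the identity $\sum_i(Z_i^2)_{\bar A_i}=-\sum_\sigma\big((\delta^1\lambda)(\sigma)\big)^2$, the sum running over the $2$-simplices~$\sigma$ of~$\Delta$ and $\delta^1$ being the simplicial coboundary; hence $\delta^1\lambda=0$, i.e.\ $\lambda$ is a $1$-cocycle on~$\Delta$. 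On the other hand, the potential good reduction of the Albanese variety of~$X$ is equivalent (by the criterion of N\'eron--Ogg--Shafarevich and Grothendieck's monodromy theorem) to the vanishing of the monodromy operator~$N$ acting on $H^1(X_{\bar K},\Ql)$, where~$\bar K$ is a separable closure of the fraction field of a strict henselisation of~$R$ and $X_{\bar K}=X\otimes_K\bar K$; since in degree~$1$ the monodromy operator induces an isomorphism $\mathrm{gr}^W_2\isoto\mathrm{gr}^W_0$, the vanishing of~$N$ amounts to $\mathrm{gr}^W_0H^1(X_{\bar K},\Ql)=0$, and the Rapoport--Zink weight spectral sequence (see~\cite{rapoportzink}, \cite{illusiemonodromie}) identifies $\mathrm{gr}^W_0H^1(X_{\bar K},\Ql)$ with $H^1(\Delta,\Ql)$. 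Therefore $H^1(\Delta,\Ql)=0$, so our cocycle~$\lambda$ is a coboundary: $\lambda=\delta^0\mu$ for some function $\mu=(\mu_i)$ on the vertices of~$\Delta$.

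Finally I would check that~$\mu$ is constant. For $\lambda=\delta^0\mu$ a short computation gives $Z_i=\pm\sum_j(\mu_i-\mu_j)(\bar A_i\cap\bar A_j)$ on~$\bar A_i$, the sum running over the components~$\bar A_j$ meeting~$\bar A_i$. If~$\mu$ were not constant, then, $\Delta$ being connected, at a vertex~$i_0$ realising the maximum of~$\mu$ there would be an adjacent~$\bar A_j$ with $\mu_j<\mu_{i_0}$, so that~$Z_{i_0}$ would be, up to sign, a non-zero effective divisor on the projective surface~$\bar A_{i_0}$; then $\mathrm{cl}_{\bar A_{i_0}}(Z_{i_0})$ would have positive intersection with an ample class, contradicting $\mathrm{cl}_{\bar A_{i_0}}(Z_{i_0})=0$. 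Hence $\lambda=0$, the map~$g$ is injective, and the lemma follows. The one genuinely non-formal point, which I expect to be the main obstacle, is the triple-point-formula step: proving the triple-point formula for~$\sX$ in the non-reduced case and carrying out the sign bookkeeping that identifies $\sum_i(Z_i^2)_{\bar A_i}$ with $-\sum_\sigma((\delta^1\lambda)(\sigma))^2$; the reductions through the long exact sequence and Poincaré duality, the translation of the hypothesis via the weight spectral sequence, and the concluding maximum-principle argument are all routine.
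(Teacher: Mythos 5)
Your formal reductions are sound: breaking the exact sequence \eqref{eq:a0a1a2} (tensored with $\Q$) into short exact sequences and using that $\bar A^{(2)}$ is zero-dimensional does identify the kernel of $H^3(\bar A,\Ql(1))\to\bigoplus_i H^3(\bar A_i,\Ql(1))$ with the cokernel of $\partial\colon\bigoplus_i H^2(\bar A_i,\Ql(1))\to\bigoplus_{\bar C}H^2(\bar C,\Ql(1))$, Poincar\'e duality does convert this into the injectivity of the Gysin map~$g$, and in the \emph{reduced} case your sign computation leading to $\sum_i(Z_i^2)_{\bar A_i}=-\sum_\sigma\bigl((\delta^1\lambda)(\sigma)\bigr)^2$ is correct. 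This is, however, a very different and much heavier route than the paper's, which is a pure weight argument: the local invariant cycle theorem for $H^2(\bar X,\Ql)$ (Rapoport--Zink) gives the exact sequence $H^3_{\bar A}(\sX^\nr,\Ql(1))\to H^3(\bar A,\Ql(1))\to H^0(K^\nr,H^3(\bar X,\Ql(1)))$, Weil's purity for an abelian variety with good reduction shows the right-hand term is pure of weight~$1$ and Deligne's theorem shows the left-hand term has positive weights, so $H^3(\bar A,\Ql(1))$ has positive weights; on the other hand Mayer--Vietoris and Deligne show the kernel of the restriction map has weights $\leq 0$. That argument uses the finiteness of~$k$ essentially, needs no combinatorics, and is insensitive to the multiplicities of~$A$.

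The gaps in your version are genuine. (1)~The hypotheses only require $A_\red$ to have simple normal crossings; $A=\sum_i m_iA_i$ may be non-reduced (and is so in the applications). The triple-point formula then reads $\sum_j m_j\deg(\sO_\sX(A_j)|_{\bar C})=0$ and carries the multiplicities into every term, so your identity fails as stated and the relevant cochain complex on~$\Delta$ would have to be twisted by the~$m_i$; you flag this as the main obstacle but do not resolve it, and it is not a routine sign issue. (2)~The identification $\mathrm{gr}^W_0H^1(\bar X,\Ql)\cong H^1(|\Delta|,\Ql)$ via the Rapoport--Zink spectral sequence is only available for a strictly semistable model, i.e.\ again requires $A$ reduced; for the given model one would have to pass to a semistable model after a ramified base change, which alters the dual complex, so the vanishing of $H^1(|\Delta|,\Ql)$ for the dual complex of $\bar A_\red$ is not justified. (3)~A smaller, fixable point: ``a sum of squares vanishes, hence each term vanishes'' and the concluding maximum-principle argument require an ordered field, whereas your $\lambda$ has coordinates in~$\Ql$; you must first note that $\Ker(g)$ is obtained by extension of scalars from the kernel of the corresponding map $\Q^{\{\bar C\}}\to\bigoplus_i\NS(\bar A_i)\otimes_\Z\Q$ and argue over~$\Q$.
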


\begin{proof}
Let $R^\nr$ denote the strict henselization of~$R$, let $K^\nr$ denote its quotient field and let $\sX^\nr = \sX \otimes_R R^\nr$.
The local invariant cycle theorem holds for $H^2(\bar X,\Ql)$, according to Rapoport and Zink \cite[Satz~2.13]{rapoportzink}.
As a consequence, by Lemma~\ref{lem:invcycleeq}, we have an exact sequence of finite-dimensional vector spaces endowed with an action of Frobenius
\begin{align}
\label{se:weightsh3}
\xymatrix{
H^3_{\bar A}(\sX^\nr,\Ql(1)) \ar[r] & H^3(\bar A,\Ql(1)) \ar[r] & H^0(K^\nr,H^3(\bar X,\Ql(1)))\rlap{\text{.}}
}
\end{align}
Let~$T$ denote the $\ell$\nobreakdash-adic Tate module, with rational coefficients,
of the Albanese variety of~$X$.
Let $L^\nr/K^\nr$ be a finite field extension over which this abelian variety acquires good reduction.
The vector space $H^3(\bar X,\Ql(2))$ is canonically isomorphic to~$T$, since both spaces are canonically dual to $H^1(\bar X,\Ql)$.
As the Albanese variety of~$X$ has good reduction over $L^\nr$, we know, by Weil, that the action of Frobenius on $H^0(L^\nr,T)$, and therefore also on the subspace $H^0(K^\nr,T)$,
is pure of weight~$-1$
(see \cite[Exp.~I, \textsection6.4]{sga71}).
It~follows that $H^0(K^\nr,H^3(\bar X,\Ql(1)))$ is pure of weight~$1$.
On the other hand, the vector space $H^3_{\bar A}(\sX^\nr,\Ql(1))$ is dual to $H^3(\bar A,\Ql(2))$ (see Proposition~\ref{prop:dualityAsX}),
so that its weights are positive, by~\cite[Th\'eor\`eme~1]{deligneweil2}.
We conclude, thanks to~\eqref{se:weightsh3}, that $H^3(\bar A,\Ql(1))$ has positive weights (and is in fact pure of weight~$1$).
Now,
as in the proof of Lemma~\ref{lem:injh1h2},
the Mayer--Vietoris spectral sequence and~\cite{deligneweil2}
imply together that the kernel of the restriction map $H^3(\bar A,\Ql(1)) \to \bigoplus_{i\in I} H^3(\bar A_i,\Ql(1))$ has weights~$\leq 0$.
Hence the lemma.
\end{proof}

\subsection{Proof of Theorem~\ref{th:kfinite}}

Let us assume the hypotheses of Theorem~\ref{th:kfinite} are satisfied.
For $i \in I$, the Frobenius endomorphism of $H^2(\bar A_i,\Ql(1))$ is
partially semisimple
since the Tate conjecture holds for the surface~$A_i$
(see \cite[Proposition~2.6]{tateseattle}, which may be applied thanks to
\cite[Th\'eor\`eme~4.6]{kleimanfinitude}).  It thus follows from Lemma~\ref{lem:injh1h2}, Lemma~\ref{lem:albgoodred}, and from the Hochschild--Serre spectral sequence,
that the restriction map
\begin{align}
\label{eq:restmap}
H^3(A,\Ql(1)) \longrightarrow \bigoplus_{i \in I} H^3(A_i,\Ql(1))
\end{align}
is injective.

\begin{lem}
\label{lem:tatech1}
For all $i \in I$, the subgroup of $H^3(A_i,\Ql/\Zl(1))$ denoted $\CH_1(A_i)^\perp$ in~\eqref{eq:complexfinite} vanishes.
\end{lem}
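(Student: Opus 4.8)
The plan is to show that $\CH_1(A_i)^\perp$, which by definition is the left kernel of the pairing $H^3(A_i,\Ql/\Zl(1)) \times \CH_1(A_i) \to \Ql/\Zl$, vanishes because enough curves lie on $A_i$ to separate all cohomology classes. Concretely, recall that $\CH_1(A_i)^\perp$ consists of those classes $\xi \in H^3(A_i,\Ql/\Zl(1))$ whose restriction to $H^3(C,\Ql/\Zl(1))$ vanishes for every irreducible curve $C \subset A_i$. Since $A_i$ is a smooth projective surface over the finite field~$k$, the group $H^3(A_i,\Ql/\Zl(1))$ is, via the Hochschild--Serre spectral sequence and Poincar\'e duality, controlled by $H^1(\bar A_i,\Ql(1))$ and the one-dimensional cycle classes; the key point will be that these classes are detected by curves.

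First I would unwind the Hochschild--Serre spectral sequence for $A_i/k$. Since $k$ has cohomological dimension~$1$, we get that $H^3(A_i,\Ql/\Zl(1))$ is an extension of $H^1(k,H^2(\bar A_i,\Ql/\Zl(1)))$ by $H^0(k,H^3(\bar A_i,\Ql/\Zl(1)))$. By Poincar\'e duality on the surface $\bar A_i$, we have $H^3(\bar A_i,\Ql/\Zl(1))$ dual to $H^1(\bar A_i,\Zl)$, which is the Tate module of the Albanese of $\bar A_i$; and $H^2(\bar A_i,\Ql/\Zl(1))$ contains $\NS(\bar A_i)\otimes\Ql/\Zl$ as the piece coming from divisor classes, the quotient being dual to $H^2(\bar A_i,\sO)$-type transcendental part. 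The strategy is then twofold: (a) for the $H^1(k,H^2(\bar A_i,\Ql/\Zl(1)))$ part, the Tate conjecture for $A_i$ implies that the cycle classes of divisors span $H^2(\bar A_i,\Ql(1))^{\mathrm{Frob}}$ up to the relevant eigenspace, so that after taking $H^1(k,-)$ every class is represented by (a limit of) classes of curves on $A_i$, and restricting such a class to a suitable transverse curve $C$ detects it by the projection formula applied to intersection numbers; (b) for the $H^0(k,H^3(\bar A_i,\Ql/\Zl(1)))$ part, a class restricting to zero on every curve $C \subset A_i$ pulls back trivially along $C \hookrightarrow A_i$, hence pairs to zero with the class of $C$ in $\CH_1(A_i) = \CH^1(A_i)$; but since $A_i$ is a surface, curves on $A_i$ generate $\CH_1(A_i)$, and the pairing of $H^3(A_i,\Ql/\Zl(1))$ with $\CH_1(A_i)$ via the cycle class map into $H^4_{A_i}(\sX,\Zl(2))$ and cup product is, by Proposition~\ref{prop:dualityAsX} together with Remark~\ref{rk:pairingscompatible}, perfect enough to conclude vanishing once we know the divisor classes exhaust the relevant Frobenius-invariants.

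The main obstacle I anticipate is disentangling precisely which part of $H^3(A_i,\Ql/\Zl(1))$ is hit by curve classes: the Tate conjecture gives surjectivity of the cycle map on $\Ql$-coefficients onto the Frobenius-fixed part of $H^2(\bar A_i,\Ql(1))$, but one needs to transfer this from $\Ql$ to $\Ql/\Zl$ coefficients, and from $H^2$ of $\bar A_i$ to $H^3$ of $A_i$ over~$k$; this is where the Hochschild--Serre edge maps and the partial semisimplicity of Frobenius on $H^2(\bar A_i,\Ql(1))$ (which, as noted in the proof of Theorem~\ref{th:kfinite}, follows from the Tate conjecture via \cite[Proposition~2.6]{tateseattle}) enter, ensuring that $H^1(k,-)$ commutes appropriately with passing to the image of the cycle map. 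Once that identification is made, the statement that a class dying on all curves must be zero is formal: write $A_i$ as covered by very ample curves, use that any two such curves together with intersection theory on the surface $A_i$ determine a class, and invoke the perfectness of the duality pairing of Proposition~\ref{prop:dualityAsX} restricted to the $A_i$ factor.
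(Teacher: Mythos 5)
Your proposal is correct and follows essentially the same route as the paper: the left kernel vanishes because the cycle class map $\CH_1(A_i)\otimeshat\Zl\to H^2(A_i,\Zl(1))$ is surjective (a consequence of the Tate conjecture) and the cup-product pairing $H^3(A_i,\Ql/\Zl(1))\times H^2(A_i,\Zl(1))\to\Ql/\Zl$ is perfect by Proposition~\ref{prop:dualityoverfield}. The passage from $\Ql$-coefficients to integral ones, which you flag as the main obstacle and propose to handle by hand via Hochschild--Serre and partial semisimplicity, is exactly what the paper's citation of Tate's Proposition~4.3 packages, so its proof reduces to those two sentences.
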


\begin{proof}
As~$A_i$ is a surface over a finite field satisfying the Tate conjecture, the cycle class map
$\CH_1(A_i) \otimeshat \Zl \to H^2(A_i,\Zl(1))$ is surjective
(see \cite[Proposition~4.3]{tateseattle}).
On the other hand, the cup-product pairing $H^3(A_i,\Ql/\Zl(1)) \times H^2(A_i,\Zl(1)) \to \Ql/\Zl$
is perfect (see~\textsection\ref{subsec:duality}). The lemma follows.
\end{proof}

Putting together Lemma~\ref{lem:QtoQ/Z}, Lemma~\ref{lem:tatech1} and the injectivity of~\eqref{eq:restmap},
we deduce that
the natural map
\begin{align}
\label{eq:injh3proof}
H^3(A,\Ql/\Zl(1)) \longrightarrow \bigoplus_{i\in I} \frac{H^3(A_i,\Ql/\Zl(1))}{\CH_1(A_i)^\perp}
\end{align}
is injective.  In view of the localization exact sequence
\begin{align}
\xymatrix{
H^2(X, \Ql/\Zl(1)) \ar[r] & H^3_A(\sX,\Ql/\Zl(1)) \ar[r] & H^3(A,\Ql/\Zl(1))\rlap{\text{,}}
}
\end{align}
the injectivity of~\eqref{eq:injh3proof} implies, in turn, the exactness of the complex~\eqref{eq:complexfinite}.
Applying Theorem~\ref{th:criterionfinite} then concludes the proof.

\section{Surfaces with \texorpdfstring{$H^2(X,\sO_X)=0$}{H²(X,O)=0} over~\texorpdfstring{$K$}{K} with \texorpdfstring{$K=\C((t))$ or $[K:\Q_p^\nr]<\infty$}{K=ℂ((t)) or [K:ℚₚⁿʳ]<∞}}
\label{sec:surfacesoverC((t))}

If~$V$ is a finite type scheme over a separably closed or finite field,
we say that $H^2(V,\Ql(1))$ is \emph{algebraic} if the natural injection $\Pic(V) \otimeshat \Ql \hookrightarrow H^2(V,\Ql(1))$ is an isomorphism.
Recall that when~$V$ is smooth and projective (or proper) over an algebraically closed field of characteristic~$0$,
this condition is equivalent to $H^2(V,\sO_V)=0$, by Hodge theory.

In a series of papers originating with the work of Bloch~\cite{blochtorsion},
K\nobreakdash-theoretic methods were applied to the study of torsion cycles of codimension~$2$ on smooth projective varieties defined over various types of fields.
By combining a result stemming from this series,
namely, a theorem of Colliot-Th\'el\`ene and Raskind~\cite[Theorem~3.13]{ctraskind},
with the finiteness theorem of Saito and Sato \cite[Theorem~9.7]{saitosato}
and with Roitman's theorem,
it is not hard to deduce\footnote{Strictly speaking, the reference \cite[Theorem~3.13]{ctraskind} can only be applied
when~$K$ has characteristic~$0$ and $H^1(\bar X,\Q/\Z)=0$.  It is nevertheless possible to show, even though we do not do it here, that variants of the arguments contained in \emph{loc. cit.}
are sufficient
to deduce Theorem~\ref{th:H2alg-ksepclosed} in full generality by K\nobreakdash-theoretic methods, the main point being that
the finiteness result \cite[Theorem~9.7]{saitosato} implies the surjectivity
of the natural map from the $\ell$\nobreakdash-primary torsion subgroup of~$A_0(X)$
to $A_0(X)\otimeshat\Zl$.} the following statement.

\begin{thm}
\label{th:H2alg-ksepclosed}
With the notation of \textsection\ref{sec:criterion}, assume
that $d=2$, that~$k$ is separably closed,
and that the reduced special fiber $A_\red$ of~$\sX$ has simple normal crossings.
If $H^2(\bar X,\Ql(1))$ is algebraic, then the cycle class map $\CH_0(X) \otimeshat \Zl \to H^{2d}(X, \Zl(d))$ is injective.
If in addition $H^1(\bar X,\Z/\ell\Z)=0$, then
the group $A_0(X)$ is divisible by~$\ell$.
\end{thm}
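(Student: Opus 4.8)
The plan is to apply the criterion of Theorem~\ref{th:criterionsepclosed} exactly as in the proof of Theorem~\ref{th:kfinite}, only with $m=2$ in place of $m=3$: it suffices to prove that the complex~\eqref{eq:complexsepclosed} is exact at its middle term. By the localization exact sequence $H^1(X,\Ql/\Zl(1)) \to H^2_A(\sX,\Ql/\Zl(1)) \to H^2(\sX,\Ql/\Zl(1))$ and the proper base change isomorphism $H^2(\sX,\Ql/\Zl(1))=H^2(A,\Ql/\Zl(1))$, this amounts to showing that the image~$V$ of $H^2_A(\sX,\Ql/\Zl(1))$ in $H^2(A,\Ql/\Zl(1))$ meets $\Ker\big(H^2(A,\Ql/\Zl(1)) \to \bigoplus_{i\in I}H^2(A_i,\Ql/\Zl(1))/\CH_1(A_i)^\perp\big)$ only in~$0$.

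First I would show $\CH_1(A_i)^\perp=0$ for every~$i$. Since $H^2(\bar X,\Ql(1))$ is algebraic, so is $H^2(\bar A_i,\Ql(1))$ for each~$i$: algebraicity descends from $\bar X$ to $\bar A$ via the local invariant cycle theorem for $H^2(\bar X,\Ql)$ --- valid here because $d=2$ --- and then to the components by Mayer--Vietoris (see the two lemmas quoted in~\textsection\ref{sec:surfacesoverpadic}). Algebraicity of $H^2(\bar A_i,\Ql(1))$ gives $T_\ell\Br(A_i)=0$, hence $\Pic(A_i)\otimeshat\Zl \isoto H^2(A_i,\Zl(1))$, and, the cup-product pairing $H^2(A_i,\Ql/\Zl(1))\times H^2(A_i,\Zl(1))\to\Ql/\Zl$ being perfect (see~\textsection\ref{subsec:duality}), this forces $\CH_1(A_i)^\perp=0$. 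With this, and since $\Pic^0(A_i)$ is $\ell$\nobreakdash-divisible, the quotients $H^2(A_i,\Ql/\Zl(1))/\CH_1(A_i)^\perp$ reduce to $\NS(A_i)\otimes\Ql/\Zl$, so that~\eqref{eq:complexsepclosed} may be replaced by the explicit complex~\eqref{eq:complexsepclosedbis}, whose second map is the intersection-theoretic map~$\delta$.

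It then remains to prove that $\Ker(\delta)$ equals the image of the first map $\Phi\to\Coker(\alpha)$ of~\eqref{eq:complexsepclosedbis} --- equivalently, when $H^1(\bar X,\Z/\ell\Z)=0$, that $\delta$ is injective. This is the heart of the matter and the step I expect to be the main obstacle. Over a finite residue field (Theorem~\ref{th:kfinite}) the corresponding injectivity came from weights of Frobenius; over a separably closed field no weight argument is available, and one has to combine two facts special to this situation. First, a class in~$V$ is the first Chern class of a line bundle on~$A$, and $\Ker(\Pic(A)\to\bigoplus_i\Pic(A_i))$ is an extension of a finite group by a $k$\nobreakdash-torus; a torus over a separably closed field is $\ell$\nobreakdash-divisible, so its contribution vanishes after $\otimes\,\Ql/\Zl$, which takes care of the part of $\Ker(\delta)$ arising from the $H^1$ of the double curves of~$A$. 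Second, the remaining, purely combinatorial contribution --- built from the $H^2$ of the dual complex of~$A$ --- is exactly what the hypothesis that $H^2(\bar X,\Ql(1))$ be algebraic is meant to rule out: it constrains $H^2(\bar A,\Ql)$, through the monodromy weight filtration, so that no such class lies in the span of the classes of the components~$A_i$. Once this is carried through, Theorem~\ref{th:criterionsepclosed} yields the first assertion. (Alternatively, that assertion may be derived by Bloch's K\nobreakdash-theoretic method from the theorem of Colliot-Th\'el\`ene and Raskind~\cite[Theorem~3.13]{ctraskind}, the finiteness theorem~\cite[Theorem~9.7]{saitosato} and Roitman's theorem, using that~$K$ has cohomological dimension~$\leq1$; the obstacle there is that~\cite[Theorem~3.13]{ctraskind} is proved only for $\mathrm{char}(K)=0$ and $H^1(\bar X,\Q/\Z)=0$, so the general case needs a variant of that argument, the key point being that~\cite[Theorem~9.7]{saitosato} makes the $\ell$\nobreakdash-primary torsion of~$A_0(X)$ surject onto $A_0(X)\otimeshat\Zl$.)

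For the second assertion, assume in addition $H^1(\bar X,\Z/\ell\Z)=0$. By Remark~\ref{rk:complexsepclosed}~(iii) the kernel of the cycle class map then coincides with $A_0(X)\otimeshat\Zl$, which vanishes by the first assertion. Since the transition maps $A_0(X)/\ell^{n+1}A_0(X)\to A_0(X)/\ell^nA_0(X)$ are surjective, a vanishing inverse limit forces every $A_0(X)/\ell^nA_0(X)$ to vanish; in particular $A_0(X)/\ell A_0(X)=0$, i.e.\ $A_0(X)$ is divisible by~$\ell$.
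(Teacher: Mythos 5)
Your overall skeleton matches the paper's: reduce to the exactness of~\eqref{eq:complexsepclosed} at the middle term via Theorem~\ref{th:criterionsepclosed}, kill the groups $\CH_1(A_i)^\perp$ by propagating algebraicity from $H^2(\bar X,\Ql(1))$ to the components (Lemmas~\ref{lem:H2algXA} and~\ref{lem:H2algAAi}), and deduce divisibility from Remark~\ref{rk:complexsepclosed}~(iii). But the step you yourself flag as ``the heart of the matter'' --- the injectivity of the map $H^2(A,\Ql/\Zl(1))\to\bigoplus_i H^2(A_i,\Ql/\Zl(1))/\CH_1(A_i)^\perp$ on the relevant classes --- is not actually proved; you offer a two-part sketch (a torus-divisibility argument for the contribution of the double curves, and an appeal to the monodromy weight filtration for the ``combinatorial'' part) and leave it at ``once this is carried through''. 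That is precisely where the paper does its real work, and your sketch has two concrete problems. First, your assertion that ``over a separably closed field no weight argument is available'' is wrong: the paper's Lemma~\ref{lem:weights} spreads the normalization map $\coprod_i A_i\to A$ out over a finitely generated $\Z[1/\ell]$\nobreakdash-algebra and applies Deligne's Weil~II weight bounds there, using algebraicity of $H^2(A,\Ql(1))$ to make $R^2f_*\Ql(1)$ constant of weight~$0$ while the cone of $\Ql(1)\to R\pi_{0*}\Ql(1)$ has weights $\leq -2$; this is exactly the weight argument that yields injectivity of $H^2(A,\Ql(1))\to\bigoplus_i H^2(A_i,\Ql(1))$.

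Second, even granting injectivity with $\Ql$\nobreakdash-coefficients, the passage to $\Ql/\Zl$\nobreakdash-coefficients is a genuine issue that your ``a torus is $\ell$\nobreakdash-divisible'' remark does not address: the kernel of $\NS(A)\otimes\Ql/\Zl\to\bigoplus_i\NS(A_i)\otimes\Ql/\Zl$ can be nonzero even when $\NS(A)\to\bigoplus_i\NS(A_i)$ is injective, because of torsion in the cokernel (a $\mathrm{Tor}$ obstruction). The paper handles this with Lemma~\ref{lem:QtoQ/Z}, which uses the simple normal crossings hypothesis in an essential way ($\dim V^{(n)}\leq 2-n$ and torsion-freeness of $H^2(C,\Zl(1))=\Zl$ for the double curves) to show that $H^m\bigl(V,a_{0*}\Zl(1)/\Zl(1)\bigr)$ is torsion-free, whence the surjectivity needed in diagram~\eqref{diag:qqz}. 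This lemma is stated by the authors to fail in dimension $\geq 3$, so it is not a formality. Your proof is therefore incomplete at its central step; the rest (the reduction to $\CH_1(A_i)^\perp=0$, the perfect pairing argument, and the deduction of $\ell$\nobreakdash-divisibility from the vanishing of the inverse limit with surjective transition maps) is correct and agrees with the paper.
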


In this section, we give a new proof of Theorem~\ref{th:H2alg-ksepclosed},
based on the criterion established in \textsection\ref{sec:criterion}.
The argument proceeds by an analysis of the special
fiber of the model~$\sX$ and is thus quite different from
the K\nobreakdash-theoretic approach alluded to above.
We hope that it may shed some light on the problem at hand, especially in
the higher-dimensional case, where codimension~$2$ cycles are no longer relevant for the study of zero-cycles.

The general strategy for the proof of Theorem~\ref{th:H2alg-ksepclosed} follows the same lines as that for the proof of Theorem~\ref{th:kfinite}.
Two new difficulties arise, however.  First, as is clear from the statements of Theorems~\ref{th:criterionsepclosed} and~\ref{th:criterionfinite},
the relevant restriction map is no longer $H^3(A,\Ql(1)) \to \bigoplus_{i \in I} H^3(A_i,\Ql(1))$,
where~$A$ is a surface with simple normal crossings over a finite field,
but $H^2(A,\Ql(1)) \to \bigoplus_{i \in I} H^2(A_i,\Ql(1))$, where~$A$ is now defined over a separably closed field.
In contrast with the situation of~\textsection\ref{sec:surfacesoverpadic},
the latter map need not be injective, even if the Albanese variety of~$X$ is trivial.
The second difficulty is that
the groups denoted $\CH_1(A_i)^\perp$ in~\eqref{eq:complexsepclosed},
contrary to the groups denoted~$\CH_1(A_i)^\perp$ in~\eqref{eq:complexfinite},
need not vanish.
This phenomenon is related to the defect of unimodularity of the intersection pairing on~$\NS(A_i)$.
We prove in~\textsection\ref{subsec:algspec} that both of these difficulties disappear when $H^2(\bar X,\Ql(1))$ is algebraic.
We refer the reader
to~\textsection\textsection\ref{sec:semistableK3}--\ref{sec:counterexample} for results in two situations in
which $H^2(\bar X,\Ql(1))$ has a nontrivial transcendental quotient.

\begin{example}
\label{ex:n=3}
Theorem~\ref{th:H2alg-ksepclosed} applies to the surface considered in Example~\ref{ex:isotrivialdegn}
when $n \leq 3$.  More generally, if~$X$ is a (geometrically) rational surface over $K=\C((t))$,
then $A_0(X)$ has finite exponent (being killed by the degree of any extension of~$K$ over which~$X$ becomes rational), and is divisible
according to Theorem~\ref{th:H2alg-ksepclosed}, so that $A_0(X)=0$.
Thus we recover in this case the conclusion of \cite[Theorem~A~(iv)]{ctk2}.
\end{example}

\begin{rmk}
\label{rk:unramifiedh3}
Theorem~\ref{th:H2alg-ksepclosed} may be reinterpreted as a statement about the unramified cohomology of~$X$.
Let us assume that its hypotheses are satisfied and, for simplicity,
that~$K$ has characteristic~$0$ and that $H^1(\bar X,\Z/\ell\Z)=0$.
The group $A_0(X)$ is then divisible by~$\ell$
if and only if the unramified cohomology group
$H^3_\nr(X,\Ql/\Zl(2))$ vanishes.  Indeed, according to \cite[Th\'eor\`eme~8.7]{ctvoisin}, we have an exact sequence
\begin{align*}
\xymatrix{
0 \ar[r] & A_0(X) \ar[r] & H^0(K,A_0(\bar X)) \ar[r] & H^3_\nr(X,\Q/\Z(2)) \ar[r] & 0
}
\end{align*}
(noting that $H^3_\nr(\bar X,\Q/\Z(2))=0$ as~$X$ is a surface),
and Roitman's theorem implies that~$A_0(\bar X)$, and therefore also $H^0(K,A_0(\bar X))$, is a $\Q$\nobreakdash-vector space.
\end{rmk}

\subsection{Algebraicity of \texorpdfstring{$H^2$}{H²}, specialization, and normalization}
\label{subsec:algspec}

We keep the notation introduced at the beginning of \textsection\ref{sec:criterion}, and assume henceforth that~$k$ is separably closed.
The goal of \textsection\ref{subsec:algspec} is to establish the following three lemmas.

\begin{lem}
\label{lem:H2algXA}
If $H^2(\bar X,\Ql(1))$ is algebraic, then $H^2(A,\Ql(1))$ is algebraic.
\end{lem}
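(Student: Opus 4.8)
The plan is to move the question from the (possibly non-reduced) special fibre~$A$ to the generic fibre~$X$, where the algebraicity hypothesis lives, by means of proper base change and the localization exact sequence. By the proper base change theorem one has $H^2(\sX,\Ql(1))=H^2(A,\Ql(1))$, compatibly with the cycle class maps out of $\Pic(\sX)$ and $\Pic(A)$; since the cycle class map $\Pic(A)\otimeshat\Ql\to H^2(A,\Ql(1))$ is injective by definition of algebraicity, it therefore suffices to prove that $\Pic(\sX)\otimeshat\Ql\to H^2(\sX,\Ql(1))$ is surjective. The localization exact sequence for étale cohomology together with Remark~\ref{rk:complexsepclosed}~(i) identifies the kernel of the restriction map $\rho\colon H^2(\sX,\Ql(1))\to H^2(X,\Ql(1))$ with the $\Ql$\nobreakdash-span of the classes $c_1(\sO_\sX(A_i))$, $i\in I$ (the prime divisors $A_i$ are Cartier since $\sX$ is regular); these classes are algebraic. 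As $\sX$ is regular the restriction $\Pic(\sX)\to\Pic(X)$ is onto, so divisor classes on $X$ lift to $\sX$. Putting these two facts together, the lemma reduces to the assertion that $\Im(\rho)$ is contained in the image of the cycle class map $\Pic(X)\otimeshat\Ql\to H^2(X,\Ql(1))$.

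For the generic fibre I would use the Hochschild--Serre spectral sequence. Since $\mathrm{cd}(K)\le 1$, it gives a short exact sequence $0\to H^1(K,H^1(\bar X,\Ql(1)))\to H^2(X,\Ql(1))\xrightarrow{\,r\,} H^0(K,H^2(\bar X,\Ql(1)))\to 0$. The hypothesis that $H^2(\bar X,\Ql(1))$ is algebraic means $H^2(\bar X,\Ql(1))=\NS(\bar X)\otimes_\Z\Ql$ as a $G$\nobreakdash-module ($G=\mathrm{Gal}(\bar K/K)$), using that $\NS(\bar X)$ is finitely generated and $\Pic^0(\bar X)$ divisible; hence $H^0(K,H^2(\bar X,\Ql(1)))=\NS(\bar X)^G\otimes\Ql$. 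On the other hand the natural map $\Pic(X)\to\NS(\bar X)^G$ has torsion cokernel — it factors through $\Pic(\bar X)^G$ with cokernel in $\Br(K)=0$, and $\Pic(\bar X)^G\to\NS(\bar X)^G$ has cokernel inside $H^1(K,B)$, $B$ being the Picard variety of $X$, a torsion group. Therefore $r$ carries the image of $\Pic(X)$ in $H^2(X,\Ql(1))$ onto all of $H^0(K,H^2(\bar X,\Ql(1)))$. Given $\eta\in\Im(\rho)$, I can then pick $L\in\Pic(X)\otimes\Ql$ with $r(c_1(L))=r(\eta)$; since $c_1(L)$ again lies in $\Im(\rho)$ (lift $L$ to $\sX$), the whole lemma comes down to the vanishing $\Im(\rho)\cap\ker(r)=0$, for then $\eta=c_1(L)$ and we are done by the previous paragraph.

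The step I expect to be the only real obstacle is exactly this vanishing. Running the diagram chase of the proof of Lemma~\ref{lem:invcycleeq} — the localization exact sequence as one line, the Hochschild--Serre sequence above as the transversal one — identifies $\Im(\rho)\cap\ker(r)$ with the kernel of the natural map $H^1(K,H^1(\bar X,\Ql(1)))\to H^3_A(\sX,\Ql(1))$. By the duality recalled in~\textsection\ref{subsec:duality} this map is dual, up to a twist, to the specialization map~\eqref{eq:spmap} in degree $2d-1$, hence is injective as soon as that specialization map is surjective; equivalently, by Lemma~\ref{lem:invcycleeq}, as soon as the sequence~\eqref{eq:seqinvcycle} is exact for $m=2$. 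For $\dim X=2$ this is the local invariant cycle theorem in degree~$3$, which holds in the cases of interest: unconditionally in equal characteristic, and by Rapoport--Zink when the residue field is an algebraic closure of a finite field. Granting it, $\Im(\rho)\cap\ker(r)=0$, and the lemma follows. Note that the weight arguments used over finite residue fields in~\textsection\ref{sec:surfacesoverpadic} are not available here, as $k$ is only separably closed; this is why the local invariant cycle theorem is genuinely needed.
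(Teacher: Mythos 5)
Your reduction is, in substance, the paper's own: both arguments rest on (a) the fact that the image of $H^2_A(\sX,\Ql(1))$ in $H^2(\sX,\Ql(1))$ is the $\Ql$\nobreakdash-span of the algebraic classes $c_1(\sO_\sX(A_i))$, (b) the surjectivity of $\Pic(X)\otimes_\Z\Ql\to H^0(K,H^2(\bar X,\Ql(1)))$ granted the algebraicity hypothesis (this is Lemma~\ref{lem:piconto}; the paper proves it by essentially the same $\Br(K)=0$ and torsion-cokernel considerations you sketch), and (c) the exactness of the sequence~\eqref{eq:seqinvcycle} for $m=2$, equivalently the surjectivity of the specialization map~\eqref{eq:spmap} in degree $2d-1$. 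Whether one organizes the chase around $H^2(\sX,\Ql(1))$, as the paper does, or around $H^2(X,\Ql(1))$ and the Hochschild--Serre sequence, as you do, is immaterial: Lemma~\ref{lem:invcycleeq} is precisely the statement that these two packagings coincide. You correctly isolate (c) as the only nontrivial input.

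The gap is in how you dispose of (c). You invoke the local invariant cycle theorem in degree $2d-1$ as a known result, citing the equal-characteristic case and Rapoport--Zink for $k=\bar\F_p$ and $d\le 2$. But the lemma is stated, and used, for an arbitrary separably closed residue field and arbitrary $d$: Theorem~\ref{th:H2alg-ksepclosed} assumes nothing about $k$ beyond separable closedness, and the lemma is invoked with $d=3$ in the remark following Lemma~\ref{lem:weights}. In mixed characteristic with $k$ separably closed but not an algebraic closure of a finite field, or with $d\ge 3$, the references you have in mind do not apply, and the general local invariant cycle theorem is open (it would follow from the monodromy-weight conjecture). This is exactly why the paper includes Lemma~\ref{lem:locinvcycle}, which establishes the degree-$(2d-1)$ case unconditionally over any strictly henselian excellent discrete valuation ring: by Hard Lefschetz, the surjectivity of~\eqref{eq:spmap} in degree $2d-1$ for~$\sX$ follows from its surjectivity in degree~$1$ for a relative curve $\sY\subset\sX$ cut out by $d-1$ general hypersurface sections, where it reduces to the classical exactness of the complex given by the intersection matrix of the special fiber. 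Substituting that argument for your appeal to the general local invariant cycle theorem closes the gap and recovers the paper's proof.
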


\begin{lem}
\label{lem:H2algAAi}
Assume $d=2$ and~$A_i$ is smooth for every $i \in I$.
If $H^2(A,\Ql(1))$ is algebraic, 
then $H^2(A_i,\Ql(1))$ is algebraic for all $i\in I$.
\end{lem}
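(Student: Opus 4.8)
The statement asserts that if $H^2(A,\Ql(1))$ is algebraic and each $A_i$ is smooth, then $H^2(A_i,\Ql(1))$ is algebraic for every $i\in I$. The natural approach is to compare the Néron–Severi / Picard information of $A$ with that of its components via the Mayer–Vietoris exact sequence for the covering $A=\bigcup_{i\in I}A_i$. Concretely, with the notation $a_n:A^{(n)}\to A$ as in the proof of Lemma~\ref{lem:QtoQ/Z} (so $A^{(0)}=\coprod_i A_i$ is the normalization, $A^{(1)}=\coprod_{i<j}A_i\cap A_j$, etc.), the filtration of $\Zl(1)$ by the sheaves $a_{n*}\Zl(1)$ gives a spectral sequence, or in low degrees an exact sequence, relating $H^2(A,\Ql(1))$, $\bigoplus_i H^2(A_i,\Ql(1))$, and the $H^\bullet$ of the (curves and points) $A^{(n)}$ for $n\geq 1$. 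Since $A$ is a surface with simple normal crossings, $A^{(1)}$ is a disjoint union of smooth proper curves and $A^{(n)}=\emptyset$ for $n\geq 2$ except possibly finitely many points in $A^{(2)}$; in particular $H^2$ of $A^{(1)}$ is a sum of copies of $\Ql(1)$, hence algebraic, and similarly for $A^{(2)}$.

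First I would extract from this the exact sequence
\begin{align*}
\xymatrix{
H^1(A^{(1)},\Ql(1)) \ar[r] & H^2(A,\Ql(1)) \ar[r]^(.44){r} & \displaystyle\bigoplus_{i\in I} H^2(A_i,\Ql(1)) \ar[r] & H^2(A^{(1)},\Ql(1))
}
\end{align*}
(the terms to the left and right being governed by $A^{(1)}$ and $A^{(2)}$, whose cohomology is algebraic). The key observation is that each arrow here is a morphism of Galois modules — but $k$ is separably closed, so Galois-equivariance is vacuous; what matters instead is that each arrow is compatible with cycle classes, i.e.\ the restriction map $r$ carries $\Pic(A)\otimeshat\Ql$ into $\bigoplus_i\Pic(A_i)\otimeshat\Ql$ and, conversely, a class in $\bigoplus_i H^2(A_i,\Ql(1))$ lying in the image of $r$ differs from an algebraic class by something coming from $H^1(A^{(1)})$, which is itself spanned by classes pushed forward from curves — hence algebraic on each $A_i$ after restriction. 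The plan is: take $\xi\in H^2(A_i,\Ql(1))$; I want to show $\xi\in\Pic(A_i)\otimeshat\Ql$. Complete $\xi$ to a family $(\xi_j)_j$ mapping to zero in $H^2(A^{(1)},\Ql(1))$ (possible after modifying $\xi_j$ for $j\neq i$ by classes supported on the curves $A_i\cap A_j$, which are algebraic, using that $H^2$ of each curve is $\Ql(1)$ and is hit by the restriction of $H^2(A_j)$); then $(\xi_j)_j$ lifts to some $\eta\in H^2(A,\Ql(1))$, which is algebraic by hypothesis, so $\eta\in\Pic(A)\otimeshat\Ql$, and then $\xi_i=\eta|_{A_i}\in\Pic(A_i)\otimeshat\Ql$.

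The main obstacle — the one step deserving real care — is the surjectivity/exactness needed to lift $(\xi_j)_j$ to a class on $A$: this requires knowing that the obstruction in $H^2(A^{(1)},\Ql(1))$ (or more precisely in the relevant cokernel of the Mayer–Vietoris differential) can be killed by an \emph{algebraic} correction on the components, so that the lifted class $\eta$ is genuinely the image of something in $H^2(A,\Ql(1))$ rather than only in a graded piece. Here one uses that the Mayer–Vietoris differential $\bigoplus_i H^2(A_i,\Ql(1))\to H^2(A^{(1)},\Ql(1))=\bigoplus_{i<j}\Ql(1)$ records intersection numbers of divisors with the double curves $A_i\cap A_j$, which are algebraically computable, together with the compatibility (already invoked in Remark~\ref{rk:complexsepclosed}(iv)) that $\sO_{A_i}(A_i\cap A_j)$ is itself an algebraic class; so the correction terms can be chosen in $\Pic$. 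I would also need to check the routine fact that short exact sequences of $\Zl$-sheaves yield long exact sequences over $k$ (true since $k$ is separably closed, as used already in Lemma~\ref{lem:QtoQ/Z}), and tensor up to $\Ql$ throughout. The remaining verifications — that $H^1$ and $H^2$ of disjoint unions of smooth proper curves, and $H^0,H^1,H^2$ of finite sets of points, are algebraic — are immediate.
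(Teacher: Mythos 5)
Your overall strategy (compare $H^2(A,\Ql(1))$ with $\bigoplus_i H^2(A_i,\Ql(1))$ via Mayer--Vietoris and feed in the algebraicity of $H^2(A,\Ql(1))$ and of the cohomology of the double curves) is the same as the paper's, but the execution has a genuine gap at precisely the step you flag as the main obstacle: completing an \emph{arbitrary} $\xi\in H^2(A_i,\Ql(1))$ to a tuple in the kernel of the Mayer--Vietoris differential. Modifying the $\xi_j$ for $j\neq i$ by classes supported on $A_i\cap A_j$ only addresses the components of the differential indexed by pairs containing $i$; it does nothing for the compatibility conditions along the curves $A_j\cap A_{j'}$ with $j,j'\neq i$, and those conditions are coupled to the ones you are adjusting (changing $\xi_j$ changes its restriction to \emph{every} $A_j\cap A_{j'}$), so there is no reason the resulting system is solvable. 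Moreover, under the paper's convention a double curve $A_i\cap A_j$ is only required to be smooth, not irreducible, so $H^2(A_i\cap A_j,\Ql(1))\cong\Ql^{\,r}$ and the image of $H^2(A_j,\Ql(1))$ in it is only the span of the intersection vectors of divisors on $A_j$ with the $r$ components, which can be a proper subspace; your claim that this group ``is hit by the restriction of $H^2(A_j)$'' is therefore not justified. In effect, what your plan requires is the surjectivity of $H^2(A,\Ql(1))\to H^2(A_i,\Ql(1))$, which is essentially equivalent to the lemma being proved, so this step cannot be dispatched by the proposed corrections.

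The missing idea is to apply the lifting argument only to classes $\alpha\in H^2(A_i,\Ql(1))$ that are \emph{orthogonal} to $\NS(A_i)\otimes_\Z\Ql$ under cup product. For such an $\alpha$ the projection formula gives $\alpha|_C=\langle\alpha,[C]\rangle=0$ in $H^2(C,\Ql(1))$ for every irreducible curve $C\subset A_i$; in particular $\alpha$ restricts to zero on $A_i\cap\bigl(\bigcup_{j\neq i}A_j\bigr)$, so the tuple $(\alpha,0,\dots,0)$ already lies in the kernel of the Mayer--Vietoris differential and lifts to $H^2(A,\Ql(1))$ with no corrections at all. Algebraicity of $H^2(A,\Ql(1))$ then forces $\alpha\in\NS(A_i)\otimes_\Z\Ql$, and the non-degeneracy of the intersection pairing on $\NS(A_i)\otimes_\Z\Ql$ (Kleiman, as cited in the paper) yields $\alpha=0$; since the cup-product pairing on $H^2(A_i,\Ql(1))$ is perfect, the vanishing of the orthogonal complement of $\NS(A_i)\otimes_\Z\Ql$ is exactly the algebraicity of $H^2(A_i,\Ql(1))$. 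This duality step --- in particular the non-degeneracy of the pairing restricted to the algebraic part --- is absent from your plan and is indispensable. It is exactly how the paper argues, using the two-set covering $E=A_i$, $F=\bigcup_{j\neq i}A_j$, which also sidesteps all of the multi-component bookkeeping.
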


\begin{lem}
\label{lem:weights}
Let $\pi:W \to V$ be a surjective morphism between proper schemes over a separably closed field~$k$.
Let~$\ell$ be a prime number invertible in~$k$.
If $H^2(V,\Ql(1))$ is algebraic, the map $\pi^*:H^2(V,\Ql(1))\to H^2(W,\Ql(1))$ is injective.
\end{lem}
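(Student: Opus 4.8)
The plan is to prove the stronger statement that $\Ker(\pi^*)$ on $H^2(-,\Ql(1))$ consists of classes of strictly negative weight, which is incompatible with $H^2(V,\Ql(1))$ being algebraic. Since both the étale cohomology groups involved and the algebraicity hypothesis are insensitive to it, I would first reduce to the case where $k$ is an algebraic closure of a finite field: spread $\pi\colon W\to V$ out to a proper morphism over a finitely generated $\Z$\nobreakdash-subalgebra of~$k$, shrink the base until the relevant higher direct images are lisse, and specialize at a closed point; the compatibility of cycle class maps with specialization shows that $H^2$ of the special fiber of~$V$ is still algebraic. From now on $k=\overline{\F}_q$, so that every cohomology group carries a Frobenius action and one may speak of weights in the sense of~\cite{deligneweil2}.

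First I would record that $H^2(V,\Ql(1))$ is then pure of weight~$0$. Indeed, by hypothesis it is identified with $\Pic(V)\otimeshat\Ql=\NS(V)\otimes_\Z\Ql$; the group $\NS(V)$ is finitely generated, and each of a finite set of generators is represented by a line bundle defined over a finite subextension of~$k$, so Frobenius acts on $\NS(V)$ through a finite quotient modulo torsion, hence on $\NS(V)\otimes\Ql$ with eigenvalues that are roots of unity.

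Next I would bound the weights of $\Ker\big(\pi^*\colon H^2(V,\Ql)\to H^2(W,\Ql)\big)$, a general fact about pullback along a proper surjection. Consider the distinguished triangle $\Ql\to R\pi_*\Ql\to\mathscr C\to\Ql[1]$ in the derived category of $\Ql$\nobreakdash-sheaves on~$V$. As~$\pi$ is surjective the unit $\Ql\to\pi_*\Ql$ is injective, so $\mathscr H^0(\mathscr C)$ is a quotient of $\pi_*\Ql$, while $\mathscr H^j(\mathscr C)=R^j\pi_*\Ql$ for $j\geq 1$ and $\mathscr H^j(\mathscr C)=0$ for $j<0$. By Deligne's estimates~\cite{deligneweil2}, $\mathscr H^j(\mathscr C)$ has weights $\leq j$ for every~$j\geq 0$, hence $H^p(V,\mathscr H^j(\mathscr C))$ has weights $\leq p+j$ because $V$ is proper, and therefore $H^1(V,\mathscr C)$ has weights $\leq 1$. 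Since $R\Gamma(V,R\pi_*\Ql)=R\Gamma(W,\Ql)$, the long exact sequence of the triangle realizes $\Ker(\pi^*)$ in degree~$2$ as a quotient of $H^1(V,\mathscr C)$, so it has weights $\leq 1$; after the Tate twist, $\Ker\big(\pi^*\colon H^2(V,\Ql(1))\to H^2(W,\Ql(1))\big)$ has weights $\leq -1$. A subspace of a pure weight\nobreakdash-$0$ space having only weights $\leq -1$ must vanish, which gives the lemma.

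The step I expect to be most delicate is this weight bound on $\Ker(\pi^*)$: it forces one through the cone $\mathscr C$ (equivalently the Leray spectral sequence of~$\pi$) and requires Deligne's bounds both on the weights of the higher direct images $R^j\pi_*\Ql$ and on the weights of the cohomology of proper varieties, rather than anything self\nobreakdash-contained; and in equal characteristic~$0$ one must check with some care that the preliminary spreading\nobreakdash-out does not disturb the algebraicity hypothesis. If one prefers to avoid weights, there is the following alternative: using algebraicity one reduces to showing that a line bundle on~$V$ whose pullback to~$W$ is numerically trivial is already numerically trivial on~$V$ — which follows from the surjectivity of~$\pi$ by cutting $\pi^{-1}(C)$ down to a curve, after applying Chow's lemma to~$W$ — and that a numerically trivial line bundle has torsion class in $\NS(V)$; for possibly singular~$V$ the latter is obtained by pulling back along a de~Jong alteration $V'\to V$ with $V'$ smooth projective (using~\cite{dejong}), exploiting that pullback along such an alteration is injective on $H^2(-,\Ql(1))$ by a trace argument and that numerical and algebraic equivalence of divisors coincide up to torsion on a smooth projective variety.
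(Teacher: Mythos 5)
Your main argument is correct and is essentially the paper's own proof: both spread $\pi$ out over a finitely generated $\Z[1/\ell]$\nobreakdash-algebra, form the cone of $\Ql(1)\to R\pi_*\Ql(1)$, use Deligne's Weil~II bounds together with the surjectivity of $\pi$ (which forces the cone into non-negative degrees) to show the kernel of $\pi^*$ has weights $\leq -1$, and use algebraicity to see that $H^2(V,\Ql(1))$ is pure of weight~$0$; the only difference is that you specialize to a closed point over $\overline{\F}_q$ where the paper instead works with lisse sheaves over the arithmetic base and applies the relative weight estimate to $R^1f_*C$. One caveat on your optional weight-free aside: the claim that pullback along a de~Jong alteration is injective on $H^2(-,\Ql(1))$ ``by a trace argument'' is unjustified for singular proper $V$ (already $H^1$ of a nodal curve is not injected into $H^1$ of its normalization), but since your actual proof does not use this aside, the proposal stands.
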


Lemma~\ref{lem:weights} will be applied to $V=A_\red$ and to the normalization morphism $\pi:W\to V$.

The statement of Lemma~\ref{lem:H2algAAi} fails when $d>2$, as shown by the example of~$\P^3_R$ blown up
along a smooth quartic surface contained in the special fiber (in this example $H^2(A,\Ql(1))$ is algebraic by Lemma~\ref{lem:H2algXA}).

We note that when~$k$ has characteristic~$0$, Lemmas~\ref{lem:H2algXA} and~\ref{lem:H2algAAi} may be rephrased in terms of Hodge theory
and are then straightforward consequences of
Koll\'ar's torsion-freeness theorem~\cite{kollartorsionfreeness}
(see also~\cite{duboisjarraud}, \cite[Theorem~2.11]{steenbrinkoslo})
or of
the Clemens--Schmid exact sequence (see~\cite[Theorem~2.7.5]{persson}).
Similarly, when~$k$ has characteristic~$0$, the reduction to finite fields which appears in the proof of Lemma~\ref{lem:weights}
could be replaced with a reduction to $k=\C$ and the use of Hodge theory.

For lack of an adequate reference, we start with a proof of the local invariant cycle theorem for $H^{2d-1}$ over an arbitrary strictly henselian excellent discrete valuation ring.

\begin{lem}
\label{lem:locinvcycle}
The specialization map $H^{2d-1}(A,\Ql) \to H^0(K,H^{2d-1}(\bar X,\Ql))$ is surjective.
\end{lem}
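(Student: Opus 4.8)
The plan is to reduce the statement, \emph{via} Lemma~\ref{lem:invcycleeq} applied with $i=2d-1$ (so that $m=2d+1-i=2$), to the exactness of the sequence $H^2_A(\sX,\Ql)\to H^2(\sX,\Ql)\to H^0(K,H^2(\bar X,\Ql))$. First I would observe that the second map is induced by the composite $H^2(\sX,\Ql)=H^2(A,\Ql)\to H^2(X,\Ql)\to H^2(\bar X,\Ql)$, so that its kernel coincides with the kernel of the specialization map $H^2(\sX,\Ql)\to H^2(\bar X,\Ql)$, since $H^0(K,H^2(\bar X,\Ql))$ injects into $H^2(\bar X,\Ql)$; and that, by the localization exact sequence together with Remark~\ref{rk:complexsepclosed}~(i) (used with $\Ql$\nobreakdash-coefficients), the image of $H^2_A(\sX,\Ql)$ in $H^2(\sX,\Ql)$ is the $\Ql$\nobreakdash-span of the cycle classes $[A_i]\in H^2(\sX,\Ql(1))$ of the irreducible components $A_i$ of~$A$. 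Thus it suffices to prove that every class in $H^2(\sX,\Ql)$ which maps to~$0$ in $H^2(\bar X,\Ql)$ is a $\Ql$\nobreakdash-linear combination of the~$[A_i]$.

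To prove this, I would first reduce to the case in which $A_\red$ is a simple normal crossings divisor. Given a projective alteration $f\colon\sX'\to\sX$ with $\sX'$ regular and $\sX'_\red$ a simple normal crossings divisor, induced by a finite extension $R\subset R'$ of excellent strictly henselian discrete valuation rings (which exists by de~Jong's theorem), the pullback $f^*\colon H^2(\sX,\Ql)\to H^2(\sX',\Ql)$ is split injective by $(\deg f)^{-1}f_*$; as $f^*$ is compatible with restriction to the geometric generic fibres and carries each~$[A_i]$ to a $\Ql$\nobreakdash-combination of the component classes of~$\sX'_\red$, while $f_*$ carries component classes back into $\sum_i\Ql\,[A_i]$, the assertion for~$\sX$ follows from the one for~$\sX'$. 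Since the remaining statement concerns only $\Ql$\nobreakdash-étale cohomology and cycle classes of divisors, a standard spreading-out argument then reduces it to the case in which the residue field is $\overline{\F}_p$ (or, in equal characteristic~$0$, to $k=\C$ by Artin's comparison theorem). In that situation the Rapoport--Zink weight spectral sequence for the nearby cycles $R\psi\Ql$ on~$A$ --- respectively the Clemens--Schmid exact sequence --- identifies $\Ker\big(H^2(A,\Ql)\to H^2(\bar X,\Ql)\big)$ with the image of the étale homology group $H_{2d}(A,\Ql)$ under the cycle class map, that is, with $\sum_i\Ql\,[A_i]$; here one only needs the terms of the spectral sequence involving $H^{\leq 2}$ of the (smooth and proper) intersections of the components of~$A_\red$, which are controlled by Deligne's weight estimates.

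The main obstacle is the level of generality: one must treat an \emph{arbitrary} regular model over an \emph{arbitrary} excellent strictly henselian discrete valuation ring, with neither semistable reduction nor a good residue field assumed, so that the reductions above (to simple normal crossings and to $\overline{\F}_p$) carry the weight of the argument. The reason the proof goes through for $i=2d-1$, as opposed to the cases of the local invariant cycle theorem recalled above, is that $H^{2d-1}(\bar X,\Ql)$ is, as a Galois module and up to a Tate twist that is trivial because~$k$ is separably closed, the rational $\ell$\nobreakdash-adic Tate module of the Albanese variety of~$X$; hence the relevant part of the degeneration is governed by a Picard (or Albanese) variety, and the higher-weight phenomena that obstruct the local invariant cycle theorem in intermediate degrees do not intervene. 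Dually, one could instead organise the whole proof around Raynaud's theorem identifying $\Pic^0_{\sX/R}$ with the identity component of the Néron model of $\Pic^0_X$ (whence $H^1(A,\Ql)=H^1(\bar X,\Ql)^{G_K}$ after the relevant identifications), the work there being to reach a setting in which Raynaud's hypotheses are met.
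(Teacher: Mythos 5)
Your reduction of the lemma, via Lemma~\ref{lem:invcycleeq} with $i=2d-1$, to the statement that $\Ker\big(H^2(\sX,\Ql)\to H^2(\bar X,\Ql)\big)$ equals the $\Ql$\nobreakdash-span of the classes $[A_i]$ is correct and is implicitly how the paper proceeds as well, as is the preliminary reduction to the case where the special fiber is a (strictly semistable) normal crossings divisor via de~Jong and a trace argument. The gap lies in how you then propose to prove the key statement. Your plan is to specialize to residue field $\overline{\F}_p$ (or $\C$) and invoke the weight spectral sequence, respectively Clemens--Schmid. Over $\C$ this works, and in equal characteristic the local invariant cycle theorem is known in all degrees anyway. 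But in mixed characteristic there is no ``standard spreading-out argument'' that replaces an arbitrary excellent strictly henselian discrete valuation ring~$R$ with one having residue field $\overline{\F}_p$: the object to be descended is the ring~$R$ itself together with the model~$\sX$, not a variety over a field, and even if one descends $\sX$ to a model over a sub-DVR $R_0\subset R$, the group $H^0(K,H^{2d-1}(\bar X,\Ql))$ can strictly contain $H^0(K_0,\cdot)$ (ramified extensions enlarge the inertia invariants), so surjectivity does not transfer upward. This is precisely the point: the paper records in \textsection\ref{sec:surfacesoverpadic} that in mixed characteristic the local invariant cycle theorem is known only for $k=\overline{\F}_p$ and in restricted degrees, and is otherwise conditional on the monodromy--weight conjecture; the content of Lemma~\ref{lem:locinvcycle} is an \emph{unconditional} proof in degree $2d-1$ over an \emph{arbitrary} excellent strictly henselian base. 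Moreover, even over $\overline{\F}_p$, your appeal to Rapoport--Zink is to the wrong statement: their Satz~2.13 is the surjectivity of $H^2(A,\Ql)\to H^0(K,H^2(\bar X,\Ql))$ (the case $i=2$), whereas you need the kernel of $H^2(A,\Ql)\to H^2(\bar X,\Ql)$ in degree~$2$ (equivalent to the case $i=2d-1$); extracting that from the weight spectral sequence requires identifying a specific $E_2$\nobreakdash-term and is not a formality --- already for $d=1$ it amounts to the theorem that the intersection matrix of the special fiber has rank exactly $|I|-1$.

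The paper avoids weights altogether. After reducing to simple normal crossings, it intersects $\sX$ with $d-1$ general hypersurface sections lifted from the special fiber to obtain a regular relative curve $\sY\subset\sX$, and uses the Hard Lefschetz theorem on $\bar X$ (the composite $H^1(\bar X,\Ql(1))\to H^1(\bar Y,\Ql(1))\to H^{2d-1}(\bar X,\Ql(d))$ of restriction and Gysin is an isomorphism) to reduce the surjectivity statement for $H^{2d-1}(\bar X)$ to the case $d=1$. There, the complex~\eqref{eq:seqinvcycle} for $m=2$ becomes $\Ql^I\to\Ql^I\to\Ql$, given by the intersection matrix of $A_\red$ and the multiplicity vector, whose exactness is the classical fact about degenerations of curves over a discrete valuation ring. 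Your closing heuristic --- that $H^{2d-1}(\bar X,\Ql)$ is governed by the Albanese, i.e.\ essentially by $H^1$ --- is exactly the right intuition, and the hyperplane-section argument is its geometric implementation; but as written, your proof delegates the entire difficulty to a specialization step that is not available in the generality required.
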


\begin{proof}
For simplicity we assume that $A_\red$ has simple normal crossings.  The general case follows thanks to de~Jong's theorem~\cite{dejong}
and a trace argument.
Fix an embedding $\sX \hookrightarrow \P^N_R$ for some~$N$
and let~$\sY$ denote the intersection of~$\sX$ with $d-1$
degree~$M$ hypersurfaces of~$\P^N_R$ which lift general hypersurfaces of~$\P^N_k$.
By the Bertini theorem, if $M\gg 0$, the scheme~$\sY$ is irreducible, regular, flat over~$R$, of dimension~$2$
(see Lemma~\ref{app:lem_curve}).
Let $B = \sY \otimes_R k$ and $\bar Y = \sY \otimes_R \bar K$.
The specialization maps fit into a commutative diagram
\begin{align}
\begin{aligned}
\xymatrix@R=2em{
H^1(A, \Ql(1)) \ar[d] \ar[r] & H^0(K, H^1(\bar X, \Ql(1))) \ar[d] \\
H^1(B, \Ql(1)) \ar[d] \ar[r] & H^0(K, H^1(\bar Y, \Ql(1))) \ar[d] \\
H^{2d-1}(A, \Ql(d))   \ar[r] & H^0(K, H^{2d-1}(\bar X, \Ql(d)))
}
\end{aligned}
\end{align}
where the vertical arrows are the restriction maps and the Gysin maps associated with the embedding $\sY \hookrightarrow \sX$.
By the Hard Lefschetz theorem, the composition of the two right-hand side vertical arrows is an isomorphism.
It follows that the surjectivity of the middle horizontal arrow implies that of the bottom horizontal arrow.  In other words, we may replace~$A$ and~$\bar X$ with~$B$ and~$\bar Y$ and
thus assume $d=1$.
In this case, the complex~\eqref{eq:seqinvcycle} for $m=2$ identifies, up to a twist, with the complex
\begin{align*}
\xymatrix{
\displaystyle\bigoplus_{i\in I} \Ql \ar[r] & \displaystyle\bigoplus_{i\in I} \Ql \ar[r] & \Ql
}
\end{align*}
whose first arrow is given by the intersection matrix of the reduced special fiber~$A_\red$, and whose second arrow maps the $i$th
basis vector to the multiplicity of~$A_i$ in~$A$.
This complex is well known to be exact (see, \emph{e.g.}, \cite[Chapter~III, Remark~8.2.3]{silvermanadvanced}).
Applying Lemma~\ref{lem:invcycleeq} now concludes the proof.
\end{proof}

\begin{proof}[Proof of Lemma~\ref{lem:H2algXA}]
By Lemma~\ref{lem:locinvcycle} and Lemma~\ref{lem:invcycleeq}, we have an exact sequence
\begin{align}
\label{seq:h2}
\xymatrix{
H^2_A(\sX,\Ql(1)) \ar[r] & H^2(\sX,\Ql(1)) \ar[r] & H^0(K,H^2(\bar X,\Ql(1)))\rlap{\text{.}}
}
\end{align}
We have $H^2_A(\sX,\Ql(1)) = \bigoplus_{i \in I} \Ql$ (see Remark~\ref{rk:complexsepclosed}~(i)) and the first arrow of~\eqref{seq:h2} maps the $i$th basis vector to $c(\sO_\sX(A_i))$,
where~$c$ denotes the natural map
\begin{align*}
c:\Pic(\sX) \otimeshat \Ql \to H^2(\sX,\Ql(1))\rlap{\text{.}}
\end{align*}
Thus $c(\Pic(\sX)\otimeshat\Ql)$ contains the image of $H^2_A(\sX,\Ql(1))$.
On the other hand, as $H^2(\bar X,\Ql(1))$ is algebraic and~$\sX$ is regular, Lemma~\ref{lem:piconto} below implies that
$c(\Pic(\sX)\otimeshat\Ql)$ surjects onto $H^0(K,H^2(\bar X,\Ql(1)))$.
Hence~$c$ is surjective.
By the proper base change theorem, we conclude
that $H^2(A,\Ql(1))$ is algebraic.
\end{proof}

\begin{lem}
\label{lem:piconto}
For any variety~$X$ smooth and proper over a field~$K$,
the natural map $\Pic(X) \otimeshat \Ql \to H^0(K,\Pic(\bar X) \otimeshat \Ql)$ is onto.
\end{lem}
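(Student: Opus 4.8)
The plan is to replace the source and target of the map by vector spaces built out of N\'eron--Severi groups, where everything is finite-dimensional and the Galois action factors through a finite quotient, and then to produce the required preimages by a norm (restriction--corestriction) argument.

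First I would record that $\Pic(\bar X)\otimeshat\Zl$ is canonically isomorphic to $\NS(\bar X)\otimes\Zl$. Applying multiplication by~$\ell^n$ and the snake lemma to the exact sequence $0\to\Pic^0(\bar X)\to\Pic(\bar X)\to\NS(\bar X)\to0$ gives an isomorphism $\Pic(\bar X)/\ell^n\isoto\NS(\bar X)/\ell^n$ for every~$n$, because $\Pic^0(\bar X)$ is divisible by~$\ell$ (this holds whenever~$\ell$ is invertible in~$K$, as then~$\ell^n$ acts as an \'etale surjection on the connected group scheme $\Pic^0_{\bar X}$; this is the only case in which the lemma will be applied); passing to the limit over~$n$ gives the claim. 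Since~$\NS(\bar X)$ is finitely generated, $M:=\Pic(\bar X)\otimeshat\Ql=\NS(\bar X)\otimes\Ql$ is a finite-dimensional $\Ql$\nobreakdash-vector space on which $G=\mathrm{Gal}(\bar K/K)$ acts continuously, hence through $\mathrm{Gal}(L/K)$ for some finite Galois extension $L/K$, so that $H^0(K,M)=M^{\mathrm{Gal}(L/K)}$. Enlarging~$L$ if necessary, I may further assume that the natural homomorphism $\Pic(X_L)\to\NS(\bar X)$ is surjective, where $X_L=X\otimes_KL$; this is possible because $\NS(\bar X)$ is finitely generated and $\Pic(\bar X)=\varinjlim_{L'}\Pic(X_{L'})$ over the finite extensions $L'/K$.

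Next I would note that the natural map $\Pic(X_L)\otimeshat\Ql\to M$ is surjective: lifting a finite generating set of $\NS(\bar X)$ to $\Pic(X_L)$ (using the surjectivity just arranged) shows that $\Pic(X_L)\otimeshat\Zl\to\NS(\bar X)\otimes\Zl$, hence also $\Pic(X_L)\otimeshat\Ql\to M$, is onto. Now take $\beta\in H^0(K,M)=M^{\mathrm{Gal}(L/K)}$ and lift it to some $\gamma\in\Pic(X_L)\otimeshat\Ql$. Pushforward of divisors along the finite flat morphism $X_L\to X$ defines a norm homomorphism $N_{L/K}\colon\Pic(X_L)\to\Pic(X)$, whence a map $\Pic(X_L)\otimeshat\Ql\to\Pic(X)\otimeshat\Ql$. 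By the standard Galois-descent property of the norm, the image of $N_{L/K}(\gamma)$ in $M=\Pic(\bar X)\otimeshat\Ql$ equals $\sum_{\sigma\in\mathrm{Gal}(L/K)}\sigma(\beta)=[L:K]\,\beta$. Since the image of $\Pic(X)\otimeshat\Ql$ in~$M$ is a $\Ql$\nobreakdash-subspace and $[L:K]$ is invertible in~$\Ql$, it follows that $\beta$ lies in that image, which is exactly the assertion of the lemma.

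The points needing care are all routine bookkeeping: checking that $-\otimeshat\Zl$ interacts correctly with the short exact sequences involving $\Pic^0$ and $\NS$, so that $\Pic^0$ genuinely disappears and one lands in finitely generated groups, and that the norm map on Picard groups becomes $\sum_\sigma\sigma$ after base change to~$\bar K$. Neither is delicate; the substance of the argument is the observation that, once one has reduced to $\NS(\bar X)$, restriction--corestriction with the invertible factor $[L:K]$ converts $G$\nobreakdash-invariant classes into classes coming from $\Pic(X)$.
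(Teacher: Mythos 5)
Your argument is correct, but it follows a genuinely different route from the one in the paper. Both proofs begin with the same reduction, namely $\Pic(\bar X)\otimeshat\Ql=\NS(\bar X)\otimes_\Z\Ql$ via the divisibility of $\Pic^0(\bar X)$; after that they diverge. The paper observes that $\Div(\bar X)\otimes_\Z\Ql$ surjects onto $\Pic(\bar X)\otimeshat\Ql$ and that any surjection of discrete Galois modules which are $\Q$\nobreakdash-vector spaces remains surjective on Galois invariants (the kernel being uniquely divisible, its $H^1$ vanishes), so that $\Div(X)\otimes_\Z\Ql=(\Div(\bar X)\otimes_\Z\Ql)^{G}$, and hence $\Pic(X)\otimes_\Z\Ql$, already hits everything. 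You instead pass to a finite splitting field $L$ over which all of $\NS(\bar X)$ is realized by line bundles, lift an invariant class to $\Pic(X_L)\otimeshat\Ql$, and apply the norm $N_{L/K}$, using that corestriction followed by restriction is $\sum_\sigma\sigma$ and that $[L:K]$ is invertible in $\Ql$. The two arguments are cousins --- the vanishing of $H^1$ with uniquely divisible coefficients that the paper invokes is itself a restriction--corestriction fact --- but yours works entirely inside Picard groups at the cost of needing the norm map $\Pic(X_L)\to\Pic(X)$ and its Galois compatibility, whereas the paper's works with the permutation-like module $\Div(\bar X)$ and needs no splitting field. One small imprecision in your write-up: continuity of the $G$\nobreakdash-action on the finite-dimensional $\Ql$\nobreakdash-vector space $M$ does not by itself force the action to factor through a finite quotient (the cyclotomic character is a counterexample); the correct reason here is that the action preserves the finitely generated lattice $\NS(\bar X)$, each of whose elements is fixed by an open subgroup --- which is exactly the direct-limit description $\Pic(\bar X)=\varinjlim\Pic(X_{L'})$ that you use in the next sentence, so the fix is immediate.
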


\begin{proof}
We have $\Pic(\bar X)\otimeshat\Ql=\NS(\bar X)\otimeshat \Ql=\NS(\bar X)\otimes_\Z \Ql$
as $\Pic^0(\bar X)$ is divisible and $\NS(\bar X)$ is finitely generated.
It follows that the natural map $\Div(\bar X) \otimes_\Z \Ql \to \Pic(\bar X)\otimeshat\Ql$ is surjective.
On the other hand, any surjective map between discrete Galois modules which are $\Q$\nobreakdash-vector spaces induces
a surjection on Galois invariants.
Therefore $\Div(X) \otimes_\Z \Ql$, and thus also $\Pic(X) \otimes_\Z \Ql$,
surjects onto $H^0(K,\Pic(\bar X)\otimeshat\Ql)$.
\end{proof}

\begin{proof}[Proof of Lemma~\ref{lem:H2algAAi}]
Fix $i \in I$ and set $E=A_i$ and $F=\bigcup_{j \in I \setminus \{i\}} A_j$.
Note that~$E$ is a smooth and proper surface.
Let $\alpha \in H^2(E,\Ql(1))$ be orthogonal to $\NS(E) \otimes_\Z \Ql$ with respect to the cup-product pairing.
As~$\alpha$ is in particular orthogonal to the classes of the irreducible curves contained in~$E \cap F$, the
Mayer--Vietoris exact sequence
\begin{align}
\xymatrix{
H^2(A,\Ql(1)) \ar[r] & H^2(E,\Ql(1)) \oplus H^2(F,\Ql(1)) \ar[r] & H^2(E \cap F,\Ql(1))
}
\end{align}
shows that~$\alpha$ is the restriction of an element of $H^2(A,\Ql(1))$.
Assuming that $H^2(A,\Ql(1))$ is algebraic, it follows that $\alpha \in \NS(E) \otimes_\Z \Ql$.
On the other hand, the pairing on $\NS(E) \otimes_\Z \Ql$ induced by cup-product is non-degenerate (see \cite[Th\'eor\`eme~4.6]{kleimanfinitude}).
Hence~$\alpha=0$.  We have thus shown that the orthogonal subspace to $\NS(E) \otimes_\Z \Ql$ in $H^2(E,\Ql(1))$ is trivial,
in other words $H^2(E,\Ql(1))$ is algebraic.
\end{proof}

\begin{proof}[Proof of Lemma~\ref{lem:weights}]
Let us choose a finitely generated $\Z[1/\ell]$\nobreakdash-subalgebra $R_0 \subset k$ such that $\pi:W \to V$ descends to a surjective morphism $\pi_0:W_0\to V_0$
between proper $R_0$\nobreakdash-schemes.
Let $f:V_0 \to \Spec(R_0)$ denote
the structure morphism of~$V_0$
and let~$C$ be the cone of the natural morphism $\Ql(1) \to R\pi_{0*}\Ql(1)$.
As the group $\NS(V)$ is finitely generated (see \cite[Th\'eor\`eme~5.1]{kleimanfinitude}),
we may assume,
after enlarging~$R_0$, that every element of $\NS(V)$ is represented by the inverse image, on~$V$, of a line bundle on~$V_0$.
Enlarging~$R_0$ again allows us to assume, in addition,
that the constructible $\Ql$\nobreakdash-sheaves
appearing in the exact sequence
\begin{align}
\label{se:lissesheaves}
\xymatrix{
R^1f_*C \ar[r] & R^2f_*\Ql(1) \ar[r] & R^2f_*\left(R\pi_{0*}\Ql(1)\right)
}
\end{align}
are lisse (see \cite[Chapter~VI, Theorem~2.1]{milne}, \cite[Chapter~I, \textsection12, Proposition~12.10]{freitagkiehl}).

According to \cite[Th\'eor\`eme~1]{deligneweil2},
for any $i\geq 0$,
the $i$th cohomology sheaf of~$C$ is mixed of weights~$\leq i-2$.
Moreover, as~$\pi_0$ is surjective, the complex~$C$ is concentrated in non-negative degrees.
Thus~$C$ is a mixed complex of weights~$\leq -2$
in the sense of \cite[D\'efinition~6.2.2]{deligneweil2}.
It~follows that the
$\Ql$\nobreakdash-sheaf $R^1f_*C$ is mixed of negative weights
(see \emph{loc.\ cit.}, Variante~6.2.3).
On the other hand, our hypotheses imply that the global sections of the $\Ql$\nobreakdash-sheaf $R^2f_*\Ql(1)$ generate its geometric generic stalk.
Being a lisse sheaf, it must then be constant, and therefore, punctually pure of weight~$0$.
As a result, the first map of~\eqref{se:lissesheaves} must vanish, and the second map is an injection.
In particular it induces an injection on the geometric generic stalks.
\end{proof}

\subsection{Proof of Theorem \ref{th:H2alg-ksepclosed}}

We now assume the hypotheses of Theorem~\ref{th:H2alg-ksepclosed} are satisfied.
Lemma~\ref{lem:H2algXA} and Lemma~\ref{lem:H2algAAi} imply that $H^2(A_i,\Ql(1))$ is algebraic for all $i \in I$.
In particular, we have $\CH_1(A_i) \otimeshat \Zl = \Pic(A_i) \otimeshat \Zl = H^2(A_i,\Zl(1))$ for any~$i$, since the cokernel of
the natural injection $\Pic(A_i) \otimeshat \Zl \hookrightarrow H^2(A_i,\Zl(1))$ is torsion-free.
It follows that the group denoted $\CH_1(A_i)^\perp$ in~\eqref{eq:complexsepclosed} vanishes for all~$i\in I$,
as the cup-product pairing
\begin{align*}
H^2(A_i,\Ql/\Zl(1)) \times H^2(A_i,\Zl(1)) \to \Ql/\Zl
\end{align*}
is a perfect pairing.
On the other hand, 
Lemma~\ref{lem:H2algXA} enables us to apply
Lemma~\ref{lem:weights} and 
Lemma~\ref{lem:QtoQ/Z} to $V=A$ and $W=\coprod_{i \in I} A_i$.
From all of this, we deduce
that
the natural map
\begin{align}
H^2(A,\Ql/\Zl(1)) \longrightarrow \bigoplus_{i\in I} \frac{H^2(A_i,\Ql/\Zl(1))}{\CH_1(A_i)^\perp}
\end{align}
is injective, and hence, by the localization exact sequence, that the complex~\eqref{eq:complexsepclosed} is exact.
Theorem~\ref{th:H2alg-ksepclosed} now results from Theorem~\ref{th:criterionsepclosed} and from Remark~\ref{rk:complexsepclosed}~(iii).

\subsection{A remark on the cycle class map for \texorpdfstring{$1$-cycles}{1-cycles} on the special fiber}
\label{subsec:aremarkpsi1A}

In~the proofs of Theorem~\ref{th:kfinite} and Theorem~\ref{th:H2alg-ksepclosed}, we
have shown the exactness of the complexes~\eqref{eq:complexsepclosed}
and~\eqref{eq:complexfinite} by establishing a stronger property, namely
the injectivity of the natural map
\begin{align}
\label{eq:injdualpsi}
H^{m}(A,\Ql/\Zl(1)) \longrightarrow \bigoplus_{i\in I} \frac{H^{m}(A_i,\Ql/\Zl(1))}{\CH_1(A_i)^\perp}\rlap{\text{,}}
\end{align}
where $m=2$ if~$k$ is separably closed and $m=3$ if~$k$ is finite.
By arguments similar to those of~\textsection\ref{subsec:proofcriterion},
the injectivity of~\eqref{eq:injdualpsi} is equivalent to the
surjectivity of the
$\ell$\nobreakdash-adic cycle class
map
\begin{align}
\label{eq:mappsi}
\psi_{1,A}:\CH_1(A)\otimeshat\Zl \to H^{2d}_A(\sX,\Zl(d))
\end{align}
for $1$\nobreakdash-cycles on~$A$.

We remark that outside the scope of Theorems~\ref{th:kfinite} and~\ref{th:H2alg-ksepclosed},
these two equivalent conditions very often fail.  A trivial example is given
by any surface over~$\C((t))$ with positive geometric genus and good reduction.
More generally,
analysing
the group $H^{2d}_A(\sX,\Ql(d))$ 
with the help of the Mayer--Vietoris spectral sequence
yields
the following lemma.

\begin{lem}
\label{lem:generallemmapsi}
With the notation of \textsection\ref{sec:criterion}, assume
that $d=2$, that~$k$ is separably closed,
and that the reduced special fiber $A_\red$ of~$\sX$ has simple normal crossings.
If the map~\eqref{eq:mappsi} is surjective, then $H^2(A,\Ql(1))$ is algebraic.
In particular, if~$k=\C$ and at least one of the surfaces~$A_i$ has positive geometric genus, then~\eqref{eq:mappsi} cannot be surjective.
\end{lem}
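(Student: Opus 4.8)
The plan is to reformulate the surjectivity of~\eqref{eq:mappsi} as a statement about $H^2(A,\Ql(1))$ through the duality of Proposition~\ref{prop:dualityAsX}, and then to dissect $H^2(A,\Ql(1))$ by means of the Mayer--Vietoris spectral sequence associated with the covering of~$A_\red$ by its irreducible components (note that both~\eqref{eq:mappsi} and $H^2(A,\Ql(1))$ depend only on $A_\red$). First, I would observe that the surjectivity of~\eqref{eq:mappsi} over~$\Zl$ implies, by a density argument (since $H^{2d}_A(\sX,\Zl(d))$ is a finitely generated $\Zl$-module), that the cycle classes $[C]\in H^{2d}_A(\sX,\Ql(d))$ of the irreducible curves $C\subset A$ span $H^{2d}_A(\sX,\Ql(d))$ over~$\Ql$. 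By the $\Ql$-linear perfect pairing $H^2(A,\Ql(1))\times H^{2d}_A(\sX,\Ql(d))\to\Ql$ obtained from Proposition~\ref{prop:dualityAsX} by passing to the limit over~$n$, and by the compatibility $\langle\alpha,[C]\rangle=\deg(\alpha|_C)$ that underlies the definition of the groups $\CH_1(A_i)^\perp$ (see~\textsection\ref{sec:criterion}), this spanning property is equivalent to the assertion~$(\star)$: a class $\alpha\in H^2(A,\Ql(1))$ whose restriction to $H^2(C,\Ql(1))$ vanishes for every irreducible curve $C\subset A$ must be zero.

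Next I would feed~$(\star)$ into the spectral sequence $E_1^{p,q}=H^q(A^{(p)},\Ql(1))\Rightarrow H^{p+q}(A_\red,\Ql(1))$, where $A^{(p)}$ is the disjoint union of the $(p+1)$-fold intersections of the components~$A_i$. Since $d=2$ and $A_\red$ has simple normal crossings, $A^{(p)}$ has dimension $\leq 2-p$ and is empty for $p\geq 3$, so an inspection of the relevant terms shows that the restriction map $\rho\colon H^2(A,\Ql(1))\to\bigoplus_i H^2(A_i,\Ql(1))$ has image $\ker\bigl(\bigoplus_i H^2(A_i,\Ql(1))\to\bigoplus_{i<j}H^2(A_i\cap A_j,\Ql(1))\bigr)$, with kernel a subquotient of $H^1(A^{(1)},\Ql(1))\oplus H^0(A^{(2)},\Ql(1))$. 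The key point is that $\ker(\rho)$ is contained in the subspace figuring in~$(\star)$: an irreducible curve $C\subset A$ lies on some~$A_i$, so $\alpha|_C=(\alpha|_{A_i})|_C=0$ whenever $\alpha|_{A_i}=0$. Hence~$(\star)$ forces $\rho$ to be injective. Moreover, given~$i$ and $\beta\in H^2(A_i,\Ql(1))$ vanishing on every curve of~$A_i$, the tuple with $\beta$ in the $i$-th slot and zeros elsewhere lies in the above kernel (because $A_i\cap A_j$ is a curve on~$A_i$), hence equals $\rho(\tilde\beta)$ for some $\tilde\beta\in H^2(A,\Ql(1))$; this $\tilde\beta$ vanishes on every curve of~$A$, so $\tilde\beta=0$ by~$(\star)$ and thus $\beta=0$. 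As cup-product is a perfect pairing on $H^2(A_i,\Ql(1))$ (Poincar\'e duality for the smooth projective surface~$A_i$), this yields $H^2(A_i,\Ql(1))=\NS(A_i)\otimes_\Z\Ql$ for every~$i$; when $k=\C$ this amounts to $H^2(A_i,\sO_{A_i})=0$, which already establishes the ``in particular'' clause (one may alternatively invoke Lemma~\ref{lem:H2algAAi}).

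It remains to deduce that $H^2(A,\Ql(1))$ is algebraic, i.e.\ that the Kummer injection $\Pic(A)\otimeshat\Ql\hookrightarrow H^2(A,\Ql(1))$ is surjective. Here I would introduce the \v{C}ech resolution $1\to\Gm\to a_{0*}\Gm\to a_{1*}\Gm\to a_{2*}\Gm\to 1$ of~$\Gm$ on~$A_\red$ — the multiplicative analogue of~\eqref{eq:a0a1a2}, valid since $A_\red$ has simple normal crossings — and compare the resulting spectral sequence for $H^\bullet(A,\Gm)$ with the Mayer--Vietoris spectral sequence above via the Kummer sequence. Using that $H^2(A_i\cap A_j,\Ql(1))=\NS(A_i\cap A_j)\otimes_\Z\Ql$ for the smooth proper curve $A_i\cap A_j$, together with $\Pic(A_i)\otimeshat\Ql=\NS(A_i)\otimes_\Z\Ql$ from the previous step, this comparison would identify $\Pic(A)\otimeshat\Ql$ with $\ker\bigl(\bigoplus_i\NS(A_i)\otimes_\Z\Ql\to\bigoplus_{i<j}\NS(A_i\cap A_j)\otimes_\Z\Ql\bigr)$, which by the previous paragraph is exactly $H^2(A,\Ql(1))$; here the correction terms of both spectral sequences are assembled from the groups $H^0(A^{(p)},\Gm)$, which are finite sums of copies of~$k^*$ and hence have trivial $\otimeshat\Zl$ (as~$k^*$ is $\ell$-divisible). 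I expect this final comparison — matching the two spectral sequences and keeping track of the completions $\otimeshat\Zl$ and of the divisibility of~$k^*$ — to be the main obstacle; the first two paragraphs are of a soft, formal nature.
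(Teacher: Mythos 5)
Your route is the one the paper has in mind: the paper offers no written proof beyond pointing to the Mayer--Vietoris spectral sequence for $H^{2d}_A(\sX,\Ql(d))$, and you simply transport the question through the duality of Proposition~\ref{prop:dualityAsX} and run the spectral sequence on the dual group $H^2(A,\Ql(1))$ instead. Your first two paragraphs are correct: surjectivity of $\psi_{1,A}$ plus a density/Nakayama argument and the compatibility of the pairing with cycle classes give $(\star)$; the spectral sequence attached to the resolution~\eqref{eq:a0a1a2} does identify the image of $\rho$ with $\ker\bigl(\bigoplus_i H^2(A_i,\Ql(1))\to\bigoplus_{i<j}H^2(A_i\cap A_j,\Ql(1))\bigr)$ and places $\ker(\rho)$ in the $E_\infty^{1,1}$ and $E_\infty^{2,0}$ pieces; and your two applications of $(\star)$ correctly yield the injectivity of $\rho$ and the algebraicity of every $H^2(A_i,\Ql(1))$. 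That already settles the ``in particular'' clause.

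The gap is in your final paragraph, i.e.\ in passing from these two facts to the algebraicity of $H^2(A,\Ql(1))$ itself, which is the actual assertion of the lemma. Your claim that the correction terms of both spectral sequences ``are assembled from the groups $H^0(A^{(p)},\Gm)$'' and hence die after $\otimeshat\Zl$ is not accurate. On the multiplicative side the relevant $E_1$-differential is $\bigoplus_i\Pic(A_i)\to\bigoplus_{i<j}\Pic(A_i\cap A_j)$, while on the $\Ql(1)$-side it is $\bigoplus_i\NS(A_i)\otimes_\Z\Ql\to\bigoplus_{i<j}\NS(A_i\cap A_j)\otimes_\Z\Ql$; the discrepancy between the two kernels is governed by the groups $\Pic^0(A_i\cap A_j)$ of the double curves, which may have positive genus and are certainly not sums of copies of $k^*$, and the further $d_2$-obstruction lands in a subgroup of $\Homrond$-type groups built from the second cohomology of the dual complex of~$A$ with coefficients in~$k^*$, whose $\otimeshat\Zl$ does not vanish for free either (a subgroup of an $\ell$-divisible group need not be $\ell$-divisible). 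What rescues the comparison is precisely the injectivity of $\rho$ from your second paragraph: it is equivalent to the vanishing of $E_\infty^{1,1}$ and $E_\infty^{2,0}$ of the $\Ql(1)$-spectral sequence, i.e.\ to the surjectivity of $\bigoplus_i H^1(A_i,\Ql(1))\to\bigoplus_{i<j}H^1(A_i\cap A_j,\Ql(1))$ --- whence $\bigoplus_i\Pic^0(A_i)\to\bigoplus_{i<j}\Pic^0(A_i\cap A_j)$ is surjective up to isogeny, and the $\ell$-divisibility of $\Pic^0$ lets you absorb the gluing discrepancies modulo~$\ell^n$ --- and to the vanishing of the degree-two $\Ql$-cohomology of the dual complex, which makes the $d_2$-obstruction group finite. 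With these two inputs made explicit the argument closes, so the gap is fillable from results you have already established; but as written the stated reason for the vanishing of the correction terms is wrong, and the main clause of the lemma is not yet proved.
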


This lemma applies in particular to the situation considered in
Example~\ref{ex:isotrivialK3}, in which we have shown the injectivity of the cycle class map $\psi_{0,X}:\CH_0(X)\otimeshat\Zl\to H^4(X,\Zl(2))$ for a certain isotrivial~$K3$ surface~$X$ over~$\C((t))$:
the map~$\psi_{1,A}$ fails to be surjective in this example.
A larger source of examples of this kind is given by~$K3$ surfaces with semistable reduction over~$\C((t))$.
For such a surface, if~$A$ denotes the special fiber of a semistable model,
we have $H^2(A,\sO_A)\neq 0$
by~\cite{duboisjarraud}, so that $H^2(A,\Ql(1))$ is not algebraic.
Thus,
by Lemma~\ref{lem:generallemmapsi},
the map~$\psi_{1,A}$ is \emph{never} surjective in the case of a semistable~$K3$ surface
over~$\C((t))$.
We~shall prove in~\textsection\ref{sec:semistableK3} that~$\psi_{0,X}$ is
nonetheless injective for these surfaces.

\section{Semistable \texorpdfstring{$K3$}{K3} surfaces}
\label{sec:semistableK3}

As a first step towards the study of surfaces with nonzero geometric
genus over strictly local fields, we consider, in this section, semistable $K3$ surfaces, and prove:

\begin{thm}
\label{th:semiK3}
For any semistable $K3$ surface~$X$ over $\C((t))$, the group $A_0(X)$ is divisible.
\end{thm}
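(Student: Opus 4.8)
The plan is as follows. Since~$X$ is a $K3$ surface it is simply connected, so $H^1(\bar X,\Z/\ell\Z)=0$ for every prime number~$\ell$; hence the group~$\Phi$ of Remark~\ref{rk:complexsepclosed}~(ii) vanishes, and by parts~(ii) and~(iii) of that remark $A_0(X)\otimeshat\Zl$ is canonically Pontrjagin dual to $\Ker(\delta)$, where~$\delta$ is the map of the complex~\eqref{eq:complexsepclosedbis}, for every~$\ell$. Moreover $\C((t))$ is the quotient field of the excellent henselian discrete valuation ring $\C[[t]]$, whose residue field~$\C$ is separably closed; by the theorem of Saito and Sato~\cite{saitosato} recalled in the introduction, $A_0(X)$ is therefore the direct sum of a finite group and a divisible group (the residue characteristic being~$0$). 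Since the finite part of $A_0(X)$ is the direct sum of the groups $A_0(X)\otimeshat\Zl$, it follows that $A_0(X)$ is divisible if and only if $A_0(X)\otimeshat\Zl=0$ for every~$\ell$, that is, if and only if $\Ker(\delta)=0$ for every~$\ell$. The plan is thus to choose a convenient model of~$X$, describe~$\delta$ explicitly, and prove it is injective.

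First I would replace the given semistable model by a projective Kulikov model: by the work of Kulikov and of Persson and Pinkham~\cite{kulikov}, \cite{perssonpinkham}, in the form made precise by Miranda and Morrison~\cite{mirandamorrison}, $X$ admits a regular projective flat $\C[[t]]$\nobreakdash-scheme~$\sX$ with $\sO_{\sX}(A)\cong\sO_{\sX}$ whose special fiber $A=\bigcup_{i\in I}A_i$ is a reduced simple normal crossings divisor of one of three types: (I) $A$ is a smooth $K3$ surface; (II) $A$ is a chain whose two end components are rational surfaces carrying a smooth elliptic anticanonical curve and whose interior components are ruled over an elliptic curve; (III) every~$A_i$ is a rational surface, every double curve $D_{ij}=A_i\cap A_j$ is rational, and the dual complex~$\Gamma$ of~$A$ is a triangulation of the $2$\nobreakdash-sphere. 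In case~(I) the surface~$X$ has good reduction and $A_0(X)$ is divisible by Example~\ref{ex:firstex}~(i), so I assume we are in case~(II) or~(III) and apply Theorem~\ref{th:criterionsepclosed} to this~$\sX$. All multiplicities then equal~$1$, so~$\alpha$ is the diagonal inclusion and $\Coker(\alpha)=(\Ql/\Zl)^I/(\text{constant families})$; using $\bigotimes_{j\in I}\sO_{\sX}(A_j)|_{A_i}\cong\sO_{A_i}$ and $\sO_{\sX}(A_j)|_{A_i}\cong\sO_{A_i}(D_{ij})$ for $j\neq i$, the $i$th component of $\delta\big((\lambda_j)_{j\in I}\big)$ becomes the class of $\sum_{j\neq i}(\lambda_j-\lambda_i)[D_{ij}]$ in $\NS(A_i)\otimes\Ql/\Zl$ modulo $\CH_1(A_i)^\perp$. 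As each~$A_i$ is rational or ruled over a curve, $H^2(A_i,\sO_{A_i})=0$; hence $H^2(A_i,\Ql(1))$ is algebraic, $\CH_1(A_i)\otimeshat\Zl=\NS(A_i)\otimes\Zl$, and, since the cup-product pairing between $H^2(A_i,\Ql/\Zl(1))$ and $H^2(A_i,\Zl(1))$ is perfect, the group $\CH_1(A_i)^\perp$ is exactly the $\ell$\nobreakdash-primary part of the discriminant group $\NS(A_i)^\vee/\NS(A_i)$ of the intersection form on~$\NS(A_i)$ (which is non-degenerate by the Hodge index theorem).

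The previous description exhibits~$\delta$ as the composite of the simplicial coboundary $d\colon C^0(\Gamma,\Ql/\Zl)\to C^1(\Gamma,\Ql/\Zl)$, the ``weighting'' map $(c_{ij})\mapsto\big(\sum_{j\sim i}c_{ij}[D_{ij}]\big)_{i\in I}$ from $C^1(\Gamma,\Ql/\Zl)$ to $\bigoplus_{i\in I}\NS(A_i)\otimes\Ql/\Zl$, and the projection onto $\bigoplus_{i\in I}\big(\NS(A_i)\otimes\Ql/\Zl\big)/\CH_1(A_i)^\perp$. Since~$\Gamma$ is connected, $\Ker(d)$ is the group of constant families, which is trivial in $\Coker(\alpha)$; hence $\Ker(\delta)=0$ is equivalent to the injectivity, on the subgroup of coboundaries, of the weighting map followed by the projection. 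In other words, it remains to prove the following combinatorial statement: if $c=(c_{ij})\in C^1(\Gamma,\Ql/\Zl)$ is a coboundary and, for every~$i\in I$, the element $\sum_{j\sim i}c_{ij}[D_{ij}]$ of $\NS(A_i)\otimes\Ql/\Zl$ belongs to the $\ell$\nobreakdash-primary part of $\NS(A_i)^\vee/\NS(A_i)$, then $c=0$.

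Here I would bring in the precise combinatorial description of semistable degenerations of $K3$ surfaces established by Miranda and Morrison~\cite{mirandamorrison}: in case~(III), a triangulated $2$\nobreakdash-sphere together with, on each vertex~$A_i$, the anticanonical cycle of rational curves $\sum_{j\sim i}D_{ij}\in|{-K_{A_i}}|$ and the lattice~$\NS(A_i)$, glued along the double curves and subject to the incidence relations $(D_{ij}\cdot D_{ik})_{A_i}=1$ at each triple point and to the triple-point formula relating $(D_{ij}^2)_{A_i}$ and $(D_{ij}^2)_{A_j}$; and in case~(II), the analogous, much simpler, linear data, in which the interior components are elliptic ruled surfaces, for which $\NS(A_i)$ is unimodular, so that the congruence above is an equality at the interior vertices and can be transported along the chain to its two ends. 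I expect case~(II) to be comparatively routine and case~(III) to be the main obstacle: one has to show that the sublattices $\langle [D_{ij}] : j\sim i\rangle\subseteq\NS(A_i)$, together with the way they are glued around~$S^2$, leave no room for a nonzero coboundary satisfying the displayed congruences, and that this holds uniformly in~$\ell$, that is, with simultaneous control of all the finite discriminant groups $\NS(A_i)^\vee/\NS(A_i)$; the topological inputs are that~$\Gamma$ is connected and that $H^1(\Gamma,\Ql/\Zl)=0$. Once this is carried out, Theorem~\ref{th:criterionsepclosed} and the reduction of the first paragraph give $A_0(X)\otimeshat\Zl=0$ for every~$\ell$, whence $A_0(X)$ is divisible.
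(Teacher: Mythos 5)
Your overall strategy is the paper's: reduce, via Theorem~\ref{th:criterionsepclosed} and Remark~\ref{rk:complexsepclosed}~(ii)--(iii), to the injectivity of the map~$\delta$ for a Kulikov model in minus-one-form, and then argue combinatorially on the dual complex. However, there are two genuine gaps. The first is your opening reduction: you assert that $X$ admits a \emph{regular projective flat $\C[[t]]$-scheme} with trivial canonical bundle and special fiber of type I, II or III. The Kulikov--Persson--Pinkham and Miranda--Morrison constructions produce such models only as complex analytic degenerations over the disk, obtained by flops and modifications that in general destroy projectivity (and even algebraicity); moreover, a general semistable $K3$ over $\C((t))$ is not a priori the formal completion of an algebraic family over a curve, so one cannot even invoke these analytic results directly. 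Theorem~\ref{th:criterionsepclosed} requires a regular \emph{projective} flat $R$-scheme, so it cannot be applied to the Kulikov model as you propose. The paper circumvents this with a spreading-out argument, an analytic analogue~\eqref{eq:complexsepclosedanalytic} of the complex, the invariance of its homology under blow-ups with smooth centers (Lemma~\ref{lem:blowupinvariance}), the weak factorization theorem for bimeromorphic maps to compare the given algebraic model with the Kulikov model, and finally Remark~\ref{rk:complexsepclosed}~(iv) to descend from the geometric situation back to an arbitrary semistable model over $\C[[t]]$. None of this machinery is dispensable, and none of it appears in your proposal.

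The second gap is that the combinatorial statement you reduce to --- injectivity of~$\delta$ on coboundaries --- is precisely the heart of the proof, and you explicitly defer it (``I expect case~(II) to be comparatively routine and case~(III) to be the main obstacle''). Neither case is routine. In case~II, pairing with a fiber of the ruling at an interior component only yields $(\lambda_{i-1}-\lambda_i)+(\lambda_{i+1}-\lambda_i)=0$, and at a \emph{minimal} rational end component the class of the anticanonical double curve can be divisible in $\NS$ (e.g.\ $-K=3H$ on $\P^2$), so unimodularity alone gives nothing; the paper must combine adjunction, the triple point formula, and the fact that $(K^2)\in\{8,9\}$ for minimal rational surfaces to show that one end component is non-minimal, and then use an exceptional curve there to anchor the induction along the chain. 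In case~III the paper uses the minus-one-form normalization together with Euler's formula $\sum_{i}(6-n_i)=12$ to locate a component whose anticanonical cycle has fewer than six sides, the classification of such anticanonical pairs (Lemma~\ref{lem:anticanonical}) to produce exceptional curves meeting a single side, and a propagation argument (Lemmas~\ref{lem:consonantcriterion} and~\ref{lem:K3propagates}) to spread the resulting vanishing over the whole sphere. Your proposal correctly identifies what must be proved but supplies no argument for it, so as written it establishes neither case.
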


The proof is based on the criterion established in~\textsection\ref{sec:criterion}
and on the results of Kulikov, Persson, Pinkham,
and of Miranda and Morrison about degenerations of analytic~$K3$ surfaces over the punctured
unit disk (see \cite{kulikov}, \cite{perssonpinkham},
\cite{mirandamorrison}).  It is only in order to use these results that we need the semistability assumption.
Theorem~\ref{th:semiK3} and Example~\ref{ex:isotrivialK3} suggest that $A_0(X)$ may in fact be divisible for all~$K3$ surfaces~$X$ over~$\C((t))$.
Similarly, one might hope that $A_0(X)$ is divisible by~$\ell$ for any~$K3$ surface~$X$ over the maximal unramified extension of a $p$\nobreakdash-adic field,
and any $\ell \neq p$.

Before we start the proof of Theorem~\ref{th:semiK3}, let us recall the results we shall need on degenerations of analytic~$K3$ surfaces,
and set up the relevant terminology.
We say that a variety over~$\C((t))$ is \emph{semistable} if it admits a proper regular model over~$\C[[t]]$ whose special fiber is reduced with local normal crossings (we do not require
simple normal crossings).
Let $\Delta \subset \C$ denote the unit disk and $\Delta^*=\Delta\setminus\{0\}$.
A \emph{degeneration} is a proper, flat, holomorphic map $\pi : \sX \to \Delta$, where~$\sX$ is a complex analytic manifold,
such that the complex analytic space $\pi^{-1}(t)$ is a connected manifold for every $t\neq 0$.
The degeneration~$\pi$ is \emph{semistable} if $\pi^{-1}(0)$ is a reduced divisor with local normal crossings.
A~\emph{model} of $\pi:\sX\to\Delta$ is a degeneration $\pi':\sX'\to\Delta$
such that the complex analytic manifolds $\pi^{-1}(\Delta^*)$ and $\pi'^{-1}(\Delta^*)$ are isomorphic over~$\Delta^*$.
A \emph{degeneration of~$K3$ surfaces} is a degeneration whose fibers above~$\Delta^*$ are~$K3$ surfaces.
It is \emph{Kulikov} if it is semistable and~$\sX$ has trivial canonical bundle.

\begin{thm}[Kulikov~\cite{kulikov}, Persson--Pinkham~\cite{perssonpinkham}]
\label{th:kulikov}
Let $\pi:\sX \to \Delta$ be a semistable degeneration of~$K3$ surfaces.
If the irreducible components of $\sX_0=\pi^{-1}(0)$ are algebraic, then~$\pi$ admits a Kulikov model.
\end{thm}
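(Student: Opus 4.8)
The plan is to reduce the assertion to a statement about the canonical bundle of~$\sX$, and then to force that bundle to become trivial by a finite sequence of bimeromorphic modifications of~$\pi$ over~$\Delta^*$, following Kulikov and Persson--Pinkham.

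First I would analyse the relative dualizing sheaf $\omega_{\sX/\Delta}$. Since~$\pi$ is semistable, $\sX_0$ is reduced, and since the fibers $\sX_t$ for $t\neq 0$ are~$K3$ surfaces, $\omega_{\sX/\Delta}$ restricts trivially to each of them; it is therefore trivial on $\sX\setminus\sX_0$, hence of the form $\sO_\sX\big(\sum_i a_i V_i\big)$ with the~$V_i$ the irreducible components of~$\sX_0$ and $a_i\in\Z$. As $\sO_\sX(\sX_0)=\pi^*\sO_\Delta(0)$ is trivial and $\sX_0=\sum_i V_i$, one may normalise the~$a_i$ so that $a_i\geq 0$ and $\min_i a_i=0$, and then $\omega_{\sX/\Delta}$ is trivial exactly when all~$a_i=0$. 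Since $K_\Delta=0$, the degeneration~$\pi$ is Kulikov precisely in that case, so the task is to modify~$\pi$, without altering its restriction to~$\Delta^*$ up to isomorphism, until all~$a_i$ vanish.

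Next I would run an induction on the complexity invariant $\big(\#\{i:a_i>0\},\,\sum_i a_i\big)$ ordered lexicographically. Suppose some $a_i>0$. Applying adjunction to a component~$V$ with $a_i>0$ expresses~$\omega_V$ through the double curves of~$\sX_0$ lying on~$V$ and the restriction of $\sum_i a_i V_i$; together with the triple-point formula for semistable degenerations this pins down the possible local configurations of~$\sX_0$ around~$V$. For each of them one produces a bimeromorphic modification $\sX\dashrightarrow\sX'$ over~$\Delta^*$---a blow-down of a component, a flop of a $(-1,-1)$-curve, or one of the further modifications isolated by Kulikov and by Persson and Pinkham---which leaves the restriction of~$\pi$ to~$\Delta^*$ unchanged, preserves semistability and the~$K3$ generic fiber, and strictly decreases the complexity invariant. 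It is here that the hypothesis that the components of~$\sX_0$ be algebraic enters: it ensures that the curves being contracted or flopped, and their neighbourhoods, are algebraic, so that the required birational operations exist as morphisms of algebraic spaces and can be transported back to the analytic category. Since the complexity invariant is well-founded, finitely many such steps yield all~$a_i=0$, hence a Kulikov model.

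The main obstacle is the middle step: showing that \emph{every} configuration of~$\sX_0$ around a component with $a_i>0$ admits a modification which simultaneously preserves semistability and decreases the complexity. This is exactly where Kulikov's original argument was incomplete, and filling in the missing configurations is the bulk of the Persson--Pinkham paper. An alternative route, available with present-day tools, is to pass---by virtue of the algebraicity hypothesis---to a projective model over a small disk (or over $\Spec\,\C[[t]]$) and to run a relative $K_\sX$-minimal model program over~$\Delta$: since $K_\sX$ is numerically trivial on the generic fiber, every extremal contraction and flip takes place inside~$\sX_0$, and termination of threefold flips produces a minimal model on which~$K$ is $\pi$-nef, hence numerically $\pi$-trivial (a $\pi$-nef divisor supported on~$\sX_0$ being proportional to~$\sX_0$), so that~$K$ can be normalised to~$0$. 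In this variant the delicate point is the return trip: the minimal model so obtained has terminal, in general non-smooth and non-semistable, singularities, and one must show that a suitable crepant resolution followed by a further semistable reduction recovers a \emph{smooth} semistable model without reintroducing a nontrivial canonical class.
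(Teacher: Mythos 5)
This statement is not proved in the paper at all: Theorem~\ref{th:kulikov} is imported verbatim from the literature, and its proof lives in the cited papers of Kulikov and of Persson--Pinkham. So your attempt can only be measured against the published arguments, of which it is a faithful top-level outline: the identification $\omega_{\sX/\Delta}\cong\sO_\sX\big(\sum_i a_iV_i\big)$, the normalization $\min_i a_i=0$ using the triviality of $\sO_\sX(\sX_0)=\pi^*\sO_\Delta([0])$, and the reduction of the problem to killing the positive $a_i$ by modifications over $\Delta^*$ are all correct and standard. (One small point of care in the first step: a line bundle trivial on each fiber over $\Delta^*$ is trivial on $\sX\setminus\sX_0$, but to conclude it has the form $\sO_\sX\big(\sum a_iV_i\big)$ you should argue via a generator of the invertible sheaf $\pi_*\omega_{\sX/\Delta}$ over all of $\Delta$, since a nowhere-vanishing holomorphic function on the complement of a divisor need not extend meromorphically across it.)

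The genuine gap is that your argument stops exactly where the theorem begins. The claim that \emph{every} configuration of $\sX_0$ around a component with $a_i>0$ admits a modification --- a flop of a $(-1,-1)$-curve, a contraction of a component, or one of the further type~I/II operations --- which preserves smoothness and semistability, leaves $\pi^{-1}(\Delta^*)$ unchanged, and strictly decreases your complexity invariant, is the entire content of the result. You state this, acknowledge that it is ``exactly where Kulikov's original argument was incomplete'' and that verifying it ``is the bulk of the Persson--Pinkham paper'', and do not attempt it; so nothing is established beyond the formal reduction. The same holds for your MMP alternative: the delicate return trip you flag --- showing that the terminal relative minimal model admits a crepant resolution and a further semistable reduction yielding a \emph{smooth} semistable total space with trivial canonical bundle --- is where all the work in the modern proofs resides, and it is likewise left undone. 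Either route would be acceptable if carried through; since neither is, the correct course here is the one the paper itself takes, namely to cite the theorem.
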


Note that the hypothesis ``the irreducible components of $\sX_0$ are algebraic'' is preserved when passing from a semistable model to another one
(see \cite[Lemma~3.1.1]{persson}).

Let $(A_i)_{i\in I}$ denote the family of irreducible components of~$\sX_0$.
Let $D \subset \sX_0$ denote the singular locus of~$\sX_0$.
Let $\Gamma_0=I$, let $\Gamma_1$ denote the set of irreducible components of~$D$,
and let $\Gamma_2$ be the set of triple points, \emph{i.e.}, the singular locus of~$D$.
The three sets $\Gamma_0$, $\Gamma_1$, $\Gamma_2$ may be organized into a semi-simplicial set~$\Gamma$,
in such a way that $x$ is a face of $y$ if and only if $y \subset x$.
Such a semi-simplicial set~$\Gamma$ is often referred to as \emph{the dual graph of~$\sX_0$} in the literature.
We write $|\Gamma|$ for the geometric realization of~$\Gamma$.

For $i \in I$, let $A_i'$ denote the normalization of~$A_i$, let $\nu_i:A_i' \to \sX_0$ denote the canonical map,
and set $D_i = \nu_i^{-1}(D)$.

An \emph{anticanonical pair} is a pair $(S,C)$ consisting of a smooth proper surface~$S$ and
a reduced, connected, nodal, singular curve~$C$ on~$S$, such that $\omega_S \simeq \sO_S(-C)$.
The curve~$C$ is either an irreducible nodal rational curve with a unique singular point, or a union of smooth rational curves forming a polygon.
If $\pi:\sX \to \Delta$ is a Kulikov degeneration of~$K3$ surfaces such that the irreducible components of $\sX_0$ are algebraic, then one of the following three possibilities must occur:
\begin{enumerate}
\item[(I)] $\sX_0$ is a smooth~$K3$ surface;
\smallskip
\item[(II)] $\sX_0$ is a chain of elliptic ruled surfaces, meeting along disjoint sections, with rational surfaces on either end;
\smallskip
\item[(III)] $\sX_0$ is a union of rational surfaces, $(A'_i,D_i)$ is an anticanonical pair for every $i\in I$,
and~$|\Gamma|$ is homeomorphic to the ($2$\nobreakdash-dimensional) sphere
\end{enumerate}
(see \cite[p.~26--29]{sagsoverview},
where on page~27, line~5, one should twist the \v{C}ech complex with orientation line bundles
in case~$\sX_0$ is not a simple normal crossings divisor).
We say that~$\pi$ is a degeneration of type~I, II, or~III accordingly.
Finally, a type~III degeneration of~$K3$ surfaces is \emph{in minus-one-form} if for every $i\in I$, every smooth (resp. singular) irreducible component of~$D_i$
has self-intersection~$-1$ (resp.~$1$) on~$A'_i$.

\begin{thm}[Miranda--Morrison~\cite{mirandamorrison}]
\label{th:mirandamorrison}
Any type~III degeneration of~$K3$ surfaces admits a type~III model which is in minus-one-form.
\end{thm}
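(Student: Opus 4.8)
The plan is to reach the minus-one-form from the given type~III degeneration by a finite sequence of elementary birational modifications of the total space, organised as a descent on a non-negative integer. A type~III degeneration already provides, for each~$i$, a rational surface~$A_i'$ with $(A_i',D_i)$ anticanonical, $D_i$ being a cycle of smooth rational curves (or one nodal rational curve), and $|\Gamma|$ a topological sphere. For a double curve~$C$, meaning an irreducible component of the singular locus of~$\sX_0$ lying on~$A_i$ and~$A_j$, write $a_i(C)=(C^2)_{A_i'}$ and $a_j(C)=(C^2)_{A_j'}$; since $C$ is a $1$\nobreakdash-simplex of~$\Gamma$ and $|\Gamma|\cong S^2$ is a closed surface, $C$ lies in exactly two triple points, so the triple point formula reads $a_i(C)+a_j(C)=-2$. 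The minus-one-form condition is precisely that $a_i(C)=-1$ for all such data, the value~$+1$ on a singular component of some~$D_i$ being the limiting case in which $D_i$ is a single nodal rational curve. For notational simplicity I would treat the simple normal crossings case, the general semistable case being handled identically with~$\Gamma$ regarded as a semi-simplicial set. Finally set $\delta=\sum_C|a_i(C)+1|$, a non-negative integer, well defined by $a_i(C)+a_j(C)=-2$, which vanishes exactly for a model in minus-one-form.

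The engine of the descent is a transfer modification along a double curve $C=A_i\cap A_j$. Given a $(-1)$\nobreakdash-curve $L\subset A_i'$ with $L\cdot D_i=1$, the intersection point lying on~$C$, one blows up~$L$ in the three-fold~$\sX$ and contracts the resulting exceptional component along its other ruling. Because $N_{L/\sX}\cong \sO_L(-1)^{\oplus 2}$, using that $L^2=-1$ on~$A_i'$ and $N_{A_i'/\sX}\cong\omega_{A_i'}$, that exceptional component is $\mathbf P^1\times\mathbf P^1$ and the second contraction produces a three-fold with trivial canonical bundle, namely a flop of~$\sX$ along~$L$. The net effect leaves~$\Gamma$ unchanged, moves~$L$ across~$C$ to the~$A_j'$ side, and replaces $(a_i(C),a_j(C))$ by $(a_i(C)+1,a_j(C)-1)$; in particular it decreases~$\delta$ by~$2$ whenever $a_i(C)\le-2$ and such an~$L$ is available. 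A second modification --- blowing up a triple point of~$\sX_0$, which subdivides the corresponding triangle of~$\Gamma$ and inserts a new rational-surface component --- is used to manufacture the curves~$L$ on components too minimal to carry them, for instance a component isomorphic to a minimal Hirzebruch surface, while keeping every intermediate model of type~III.

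One then runs the descent: while $\delta>0$, choose a double curve~$C$ with, say, $a_i(C)\le-2$, prepare the configuration by triple-point blow-ups until a suitable $(-1)$\nobreakdash-curve meeting~$C$ appears on the~$A_i'$ side, and apply a transfer along~$C$. What makes this succeed is that the surfaces~$A_i'$ are rational and anticanonical, so the required $(-1)$\nobreakdash-curves become available after finitely many preparatory moves, together with the topology of $|\Gamma|\cong S^2$, which lets one order the preparatory and transfer moves so that the corrections propagate in a controlled fashion rather than circulating indefinitely; the crude potential~$\delta$ then has to be refined to a finer one to make termination visible. The finitely many degenerate configurations --- the smallest dual complexes, the case of a single nodal~$D_i$, and the verification that no type~II stratum intrudes --- are disposed of by direct inspection.

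The main obstacle lies exactly in this last point. One must check that each modification preserves the entire type~III package, namely a reduced special fiber with local normal crossings, each component a rational surface carrying its anticanonical cycle, and trivial canonical bundle on the three-fold; one must compute its simultaneous effect on \emph{all} of the self-intersection numbers and not merely on the targeted double curve; and, hardest of all, one must organise the moves and choose a potential function so that the procedure provably terminates, since a move that improves one double curve will in general worsen its neighbours. Establishing, uniformly and at every stage, the existence of the auxiliary $(-1)$\nobreakdash-curves needed for a transfer on the anticanonical rational surfaces~$A_i'$ --- for which a relative minimal model program, or an analysis of the effective cone, seems the natural tool --- is the other delicate ingredient.
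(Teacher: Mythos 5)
First, a point of comparison: the paper does not prove this statement at all --- Theorem~\ref{th:mirandamorrison} is imported verbatim from Miranda and Morrison, and the citation \emph{is} the paper's proof. Your proposal is therefore really being measured against the original argument of \cite{mirandamorrison}, whose overall shape you have correctly guessed: one performs elementary modifications of type~I (the flop along an exceptional curve $L\subset A_i'$ meeting the double locus transversally in a single point of $C=A_i\cap A_j$, which indeed has $N_{L/\sX}\cong\sO_L(-1)^{\oplus 2}$ and whose flop replaces $(a_i(C),a_j(C))$ by $(a_i(C)+1,a_j(C)-1)$ while fixing~$\Gamma$), and one must then prove that suitable curves~$L$ always exist and that the process terminates. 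But these two points are not technical residue; they \emph{are} the theorem, and your proposal leaves both open --- you say yourself that the potential~$\delta$ ``has to be refined to a finer one to make termination visible'' and that the uniform existence of the auxiliary $(-1)$\nobreakdash-curves is ``the other delicate ingredient.'' As it stands this is a plan, not a proof.

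There is also a concrete error in the toolkit. The second move you propose --- blowing up a triple point of~$\sX_0$ so as to ``subdivide the corresponding triangle of~$\Gamma$ and insert a new rational-surface component'' --- is not an admissible operation on a Kulikov model. Blowing up a point of the threefold gives $K_{\sXtilde}=\pi^*K_{\sX}+2E$, so the canonical bundle is no longer trivial, and at a triple point one has $\pi^*\sX_0=\sXtilde_0+3E$, so the new special fiber acquires a component of multiplicity~$3$: both the Kulikov condition and semistability are destroyed. Any modification preserving $K_{\sX}\cong\sO_{\sX}$ is necessarily crepant, hence an isomorphism in codimension one, which rules out inserting components by point blow-ups; and a base change, which does insert components, changes the family over the punctured disk and so does not produce a \emph{model} in the sense required. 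Since your descent relies on this preparatory move to manufacture the $(-1)$\nobreakdash-curves needed for the transfers, the argument cannot be repaired merely by sharpening the potential function; the preparation has to be replaced by the genuinely delicate analysis of anticanonical pairs and of the triangulated sphere that occupies the bulk of \cite{mirandamorrison}.
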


To some extent, Theorem~\ref{th:mirandamorrison} reduces the study of~$K3$ surfaces over~$\C((t))$ with semistable reduction of type~III to that of non-negatively curved triangulations of the sphere.
This, in turn, quickly leads to unsolved problems in combinatorics
(see \cite[p.~22]{sagsoverview}, \cite{laza}).
It turns out that for the proof of Theorem~\ref{th:semiK3},
one can get by with a simple global combinatorial input, namely the well-known fact that the sphere cannot be tiled with hexagons only.

\begin{proof}[Proof of Theorem~\ref{th:semiK3}]
Given a degeneration
$\pi:\sX \to \Delta$, we shall
consider the complex of singular cohomology groups
\begin{align}
\label{eq:complexsepclosedanalytic}
\xymatrix{
H^1(X,\Q/\Z(1)) \ar[r] & H^2_A(\sX,\Q/\Z(1)) \ar[r] & \displaystyle\bigoplus_{i \in I} \frac{H^2(A_i,\Q/\Z(1))}{\CH_1(A_i)^\perp}
}
\end{align}
modelled on~\eqref{eq:complexsepclosed},
where $X = \sX \setminus \sX_0$ and $A=\sX_0$,
and where for any compact complex analytic manifold~$V$,
we denote by $\CH_1(V)^\perp$ the subgroup of $H^2(V,\Q/\Z(1))$ consisting of those classes whose restriction to $H^2(Z,\Q/\Z(1))$ vanishes for
all irreducible closed analytic subsets $Z \subset V$ of dimension~$1$.

\begin{prop}
\label{prop:checkkulikov}
Let $\pi:\sX \to \Delta$ be a Kulikov degeneration of~$K3$ surfaces. Assume the irreducible components of~$\sX_0$ are algebraic.
Assume, moreover, that if~$\pi$ is a degeneration of type~III, then it is in minus-one-form.
Then the complex~\eqref{eq:complexsepclosedanalytic} is exact.
\end{prop}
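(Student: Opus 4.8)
The plan is to prove the exactness of~\eqref{eq:complexsepclosedanalytic} at its middle term separately in each of the three cases~I, II, III of the Kulikov classification. Fix a prime number~$\ell$; it suffices to treat the $\ell$\nobreakdash-primary part of the complex. As a~$K3$ surface is simply connected we have $H^1(\bar X,\Z/\ell\Z)=0$, hence $\Phi=0$; moreover~$X$ is homotopy equivalent to a fibre bundle over~$\Delta^*$ with simply connected fibre, so that $H^1(X,\Ql/\Zl(1))=\Ql/\Zl$. Using these two facts together with proper base change and the deformation retraction $\sX\simeq\sX_0$, the formal analysis of Remark~\ref{rk:complexsepclosed}~(i)--(ii) carries over to the present analytic setting; since $\Phi=0$ and all multiplicities equal~$1$, the exactness of~\eqref{eq:complexsepclosedanalytic} is equivalent to the injectivity of the map~$\delta$ of the corresponding version of~\eqref{eq:complexsepclosedbis}, the map induced by the line bundles $\sO_\sX(A_j)|_{A_i}$.

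When~$\pi$ is of type~I we have $\sX_0=A$ smooth and $\sO_\sX(A)|_A=\sO_A$, since the normal bundle of a fibre is trivial; hence the natural map $H^2_A(\sX,\Ql/\Zl(1))\to H^2(\sX,\Ql/\Zl(1))$ vanishes and, as in Example~\ref{ex:firstex}~(i), both arrows of~\eqref{eq:complexsepclosedanalytic} are then as degenerate as possible, so the complex is exact. In types~II and~III every irreducible component~$A_i$ of~$\sX_0$ is a rational surface or an elliptically ruled surface; in particular $H^2(A_i,\sO_{A_i})=0$ and $H^2(A_i,\Z)$ is torsion-free, so that $\CH_1(A_i)\otimeshat\Zl=H^2(A_i,\Zl(1))$ and the cup-product form on the latter group is unimodular. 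It follows that $\CH_1(A_i)^\perp=0$ and, using that~$\sX_0$ is a principal divisor on~$\sX$ (so that $\sum_j\sO_\sX(A_j)|_{A_i}=\sO_{A_i}$ for every~$i$), that the injectivity of~$\delta$ amounts to the assertion that any map $\lambda\colon I\to\Ql/\Zl$ satisfying
\begin{align*}
\sum_{j}[A_i\cap A_j]\cdot(\lambda_j-\lambda_i)=0 \ \text{ in }\ \NS(A_i)\otimes\Ql/\Zl\qquad\text{for every }i\in I
\end{align*}
is constant on~$I$. (If some~$A_i$ is not normal, one first replaces it by its normalization throughout.)

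For type~II the dual graph~$\Gamma$ is a chain, and the curves $A_i\cap A_{i+1}$ are sections of the ruled surfaces involved, hence have primitive class in the relevant Néron--Severi lattice; starting from an end of the chain, the relation $[A_1\cap A_2]\cdot(\lambda_2-\lambda_1)=0$ then forces $\lambda_1=\lambda_2$, and one propagates the equality along~$\Gamma$. For type~III the degeneration is, by hypothesis, in minus-one-form, so that each~$A_i$ is an anticanonical rational surface whose anticanonical cycle~$D_i$ is a polygon of $(-1)$\nobreakdash-curves; then $D_i^2$ equals the valence~$r_i$ of~$i$ in~$\Gamma$, so $K_{A_i}^2=r_i$. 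Since $\operatorname{rank}\,\NS(A_i)=10-r_i$ while the intersection matrix of the $r_i$ components of~$D_i$ has rank at least $r_i-2$, this gives $3\leq r_i\leq 6$. Pairing the relation at~$i$ with the successive components of~$D_i$ shows that the numbers $\mu_l=\lambda_{j_l}-\lambda_i$, where $j_1,\dots,j_{r_i}$ are the neighbours of~$i$ read in cyclic order around the link of~$i$, satisfy $\mu_{l+1}=\mu_l-\mu_{l-1}$; all solutions of this recurrence are $6$\nobreakdash-periodic, so when $r_i=5$ they vanish and~$\lambda$ is constant on $\{i\}$ together with its neighbours. The remaining vertices, of valence~$3$, $4$ and~$6$, are treated by means of the Néron--Severi relation itself---which is stronger than its numerical consequences---together with the identity $\sum_{i\in I}(6-r_i)=12$, i.e.\ the classical fact that the sphere admits no tiling by hexagons alone.

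The main obstacle is precisely this last step. At a valence-$6$ vertex the local conditions leave a two-parameter family of admissible values for $(\lambda_j-\lambda_i)_j$, so~$\lambda$ cannot be shown constant vertex by vertex; instead one must keep track of the divisibility of the classes $[A_i\cap A_j]$ inside $\NS(A_i)$ at the vertices of valence~$3$ and~$4$, propagate the equalities already obtained at valence-$5$ vertices through the valence-$6$ part of~$\Gamma$ using the $6$\nobreakdash-periodicity, and appeal to the Euler-characteristic identity above to ensure that enough low-valence vertices occur, and are suitably distributed over~$S^2$, for the argument to close.
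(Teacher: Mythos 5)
Your reduction of the problem to showing that any $\lambda\in(\Q/\Z)^I$ with $\delta(\lambda)=0$ is constant, and your treatment of type~I, match the paper; but both remaining cases contain genuine gaps. In type~II you start the induction at an end of the chain by asserting that the double curve there has primitive class in the N\'eron--Severi lattice. The end components are \emph{rational} surfaces on which the double curve is an anticanonical divisor, and anticanonical classes of rational surfaces need not be primitive: for $\P^2$ the class is $3H$, for $\P^1\times\P^1$ it is $2(H_1+H_2)$. So the relation $[C_0]\otimes(\lambda_1-\lambda_0)=0$ in $\NS(A_0)\otimes\Q/\Z$ only bounds the order of $\lambda_1-\lambda_0$. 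The paper gets around this by showing, via the triple point formula and the fact that $(K^2)\in\{8,9\}$ for a minimal rational surface, that at least one of the two end components is non-minimal, and then pairing with an exceptional curve~$E$ there, for which $(C_0\cdot E)=(-K\cdot E)=1$. As written, your induction has no valid starting point.

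The type~III gap is more serious. Pairing the relation at a vertex~$i$ with the components of~$D_i$ gives, as you say, the recurrence $\mu_{l+1}=\mu_l-\mu_{l-1}$, whose solutions are $6$\nobreakdash-periodic; but this forces $\mu\equiv 0$ only when $n_i=5$ (for $n_i=3$ or~$4$ one gets merely torsion constraints such as $2\mu_l=0$, and for $n_i=6$ nothing at all). A type~III degeneration need not possess any vertex of valence~$5$: the tetrahedral triangulation of the sphere, with four vertices all of valence~$3$, satisfies $\sum_i(6-n_i)=12$ and does occur (degenerations of quartics into four planes, put into minus-one-form). Your induction may therefore never start, and the step you yourself flag as ``the main obstacle''---treating the valence $3$, $4$, $6$ vertices and propagating through the hexagonal part of~$\Gamma$---is precisely the content of the proof and is not carried out. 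The paper supplies the two missing ingredients: Lemma~\ref{lem:anticanonical}, which via Looijenga's classification of anticanonical pairs of length~$\leq 5$ produces, at \emph{every} vertex with $n_i<6$, exceptional curves dual to the individual components of~$D_i$, so that pairing with them kills all the $\mu_C$ at once for $n_i=3,4,5$ alike; and Lemmas~\ref{lem:consonantcriterion} and~\ref{lem:K3propagates}, which use the triple points to propagate consonance from any one such vertex to all of~$I$, including the valence\nobreakdash-$6$ vertices. Without these two inputs the Euler identity $\sum_i(6-n_i)=12$ alone cannot close the argument.
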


Let us postpone the proof of Proposition~\ref{prop:checkkulikov}, and instead deduce Theorem~\ref{th:semiK3} from it.
To this end, we first remark that the homology group of~\eqref{eq:complexsepclosedanalytic} is invariant under blow-ups with smooth centers.

\begin{lem}
\label{lem:blowupinvariance}
Let $\pi:\sX \to \Delta$ be a degeneration.
Let $\tilde\pi:\sXtilde \to \Delta$ be the degeneration obtained by blowing up~$\sX$ along a smooth complex analytic closed subspace of~$\sX_0$.
The homology groups of the complexes~\eqref{eq:complexsepclosedanalytic} associated with~$\pi$ and with~$\tilde\pi$ are isomorphic.
\end{lem}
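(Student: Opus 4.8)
The plan is to compare the complexes~\eqref{eq:complexsepclosedanalytic} attached to~$\pi$ and to~$\tilde\pi$ via the pull\nobreakdash-back along the blow\nobreakdash-up morphism $\sigma\colon\sXtilde\to\sX$, and to check that it induces an isomorphism on homology.  Write $Z\subset\sX_0$ for the smooth centre; we may assume $Z$ is connected (blowing up its components one at a time otherwise) and of codimension~$\geq 2$ in~$\sX$, the codimension~$1$ case being trivial since then~$\sigma$ is an isomorphism.  Let $E\subset\sXtilde$ be the exceptional divisor and $p\colon E\to Z$ the projection; as $\dim\sX=3$, the centre~$Z$ is a point or a smooth curve, so~$E$ is~$\P^2$ or a $\P^1$\nobreakdash-bundle over a smooth curve.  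The irreducible components of the special fibre~$\sXtilde_0$ are the strict transforms $\tilde A_i$ ($i\in I$) of the components $A_i$ of~$\sX_0$, together with~$E$.  The crucial elementary observations are that $X=\sX\setminus\sX_0=\sXtilde\setminus\sXtilde_0$ is unchanged and that~$\sigma$ restricts to the identity on~$X$.

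First I would reformulate the assertion.  Using the localization exact sequences, the identifications $H^j(\sX,\Q/\Z(1))=H^j(\sX_0,\Q/\Z(1))$ and $H^j(\sXtilde,\Q/\Z(1))=H^j(\sXtilde_0,\Q/\Z(1))$ (proper base change over the contractible base~$\Delta$), and the fact that, as in the algebraic case, the second map of~\eqref{eq:complexsepclosedanalytic} factors through $H^2(\sX_0,\Q/\Z(1))$, one identifies the homology of~\eqref{eq:complexsepclosedanalytic} for~$\pi$ with the subgroup $\Im(g_\pi)\cap\Ker(r_\pi)$ of $H^2(\sX_0,\Q/\Z(1))$, where $g_\pi\colon H^2_{\sX_0}(\sX,\Q/\Z(1))\to H^2(\sX_0,\Q/\Z(1))$ is the ``forget supports'' map and $r_\pi$ is the restriction map to $\bigoplus_{i\in I}H^2(A_i,\Q/\Z(1))/\CH_1(A_i)^\perp$; exactness of the localization sequence also gives $\Im(g_\pi)=\Ker\big(H^2(\sX_0,\Q/\Z(1))\to H^2(X,\Q/\Z(1))\big)$, and likewise for~$\tilde\pi$.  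By the blow\nobreakdash-up formula in singular cohomology --- which holds with $\Q/\Z$\nobreakdash-coefficients by d\'evissage from the cases of~$\Z$ and~$\Q$ --- the map $\sigma^*\colon H^2(\sX_0,\Q/\Z(1))\to H^2(\sXtilde_0,\Q/\Z(1))$ is split injective, with complement the ``exceptional summand'' $\mathcal E$, the image of the Gysin map $H^0(E,\Q/\Z)\to H^2(\sXtilde,\Q/\Z(1))$.  Since~$\sigma$ is the identity on~$X$ and the cycle class of~$E$ restricts to zero on $\sXtilde\setminus E$, one deduces $\Im(g_{\tilde\pi})=\sigma^*\Im(g_\pi)\oplus\mathcal E$ and $(\sigma^*)^{-1}\Im(g_{\tilde\pi})=\Im(g_\pi)$; and a restriction\nobreakdash-to\nobreakdash-curves argument --- every irreducible curve on a component of~$\sXtilde_0$ is carried by~$\sigma$ onto an irreducible curve, or a point, lying on some~$A_i$, and a birational morphism between proper curves induces an isomorphism on $H^2(-,\Q/\Z(1))$ --- gives $(\sigma^*)^{-1}\Ker(r_{\tilde\pi})=\Ker(r_\pi)$.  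It follows that~$\sigma^*$ induces an injection on homology, and that it remains only to prove the inclusion $\Im(g_{\tilde\pi})\cap\Ker(r_{\tilde\pi})\subset\sigma^*H^2(\sX_0,\Q/\Z(1))$, i.e.\ that the $\mathcal E$\nobreakdash-component of any class in $\Im(g_{\tilde\pi})\cap\Ker(r_{\tilde\pi})$ vanishes.

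This last step is the one I expect to be the main obstacle, since it requires controlling how the exceptional divisor interacts with the subgroups $\CH_1(\tilde A_j)^\perp$.  The key point is that~$E$, being~$\P^2$ or a $\P^1$\nobreakdash-bundle over a curve, is a rational surface: hence $H^2(E,\sO_E)=0$, the cycle class map $\Pic(E)\to H^2(E,\Z(1))$ is surjective, and, the cup\nobreakdash-product pairing $H^2(E,\Q/\Z(1))\times H^2(E,\Z(1))\to\Q/\Z$ being perfect, one gets $\CH_1(E)^\perp=0$ (as in the proof of Theorem~\ref{th:H2alg-ksepclosed}).  Consequently any $x\in\Ker(r_{\tilde\pi})$ satisfies $x|_E=0$.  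Now write $x=\sigma^*\beta+\gamma$ with $\beta\in H^2(\sX_0,\Q/\Z(1))$ and $\gamma\in\mathcal E$; then $\sigma^*\beta|_E=p^*(\beta|_Z)$ lies in the ``horizontal'' summand $p^*H^2(Z,\Q/\Z(1))$ of the projective\nobreakdash-bundle decomposition of $H^2(E,\Q/\Z(1))$, whereas the restriction of~$\mathcal E$ to~$E$ (equal to cup\nobreakdash-product with $c_1(\sO_E(-1))$ on $H^0(E,\Q/\Z)$, hence injective) has image the complementary ``vertical'' line $\Q/\Z\cdot c_1(\sO_E(1))$ --- because $c_1(\sO_E(-1))$ has degree~$-1$ on a line of a fibre of~$p$.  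Therefore $x|_E=0$ forces $\gamma=0$.  In particular $x=\sigma^*\beta$, and then $\beta\in\Im(g_\pi)$ by the above and $\beta\in\Ker(r_\pi)$ since $\sigma^*\beta\in\Ker(r_{\tilde\pi})$; hence $\Im(g_{\tilde\pi})\cap\Ker(r_{\tilde\pi})=\sigma^*\big(\Im(g_\pi)\cap\Ker(r_\pi)\big)$, which, together with the injectivity of~$\sigma^*$ on $H^2(\sX_0,\Q/\Z(1))$, yields the desired isomorphism of homology groups.
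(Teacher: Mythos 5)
Your argument is correct and is in substance the paper's own proof: the paper runs the same diagram chase at the level of $H^2_A(\sX,\Q/\Z(1))$, using the splitting $H^2_{\tilde A}(\sXtilde,\Q/\Z(1))=H^2_A(\sX,\Q/\Z(1))\oplus\Q/\Z$ given by $c_1(\sO_{\sXtilde}(E))$, the birational lifting of curves to identify the kernels of the restriction maps, and the computation $\deg\big(\sO_{\sXtilde}(E)|_{\tilde Z}\big)=-1$ on a line~$\tilde Z$ in a fibre of $E\to L$ to kill the exceptional summand. Two small corrections: a $\P^1$\nobreakdash-bundle over a curve of positive genus is ruled but not rational (what you actually use, namely that $H^2(E,\Z(1))$ is generated by algebraic classes so that $\CH_1(E)^\perp=0$, still holds); and your reduction to $\dim\sX=3$ is an unnecessary loss of generality, since restricting to a single fibre line of $E\to Z$ (rather than to all of~$E$) already forces the exceptional component to vanish in any relative dimension, which is how the paper argues.
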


\begin{proof}
Since any irreducible closed analytic subset of~$A$ of dimension~$1$ is contained in~$A_i$ for some~$i$,
the natural map $H^2(A,\Q/\Z(1))/\CH_1(A)^\perp \to \bigoplus_{i \in I} H^2(A_i,\Q/\Z(1))/\CH_1(A_i)^\perp$ is injective.
As a consequence, the complex~\eqref{eq:complexsepclosedanalytic} has the same homology group as the complex
\begin{align}
\label{eq:complexsepclosedanalyticbis}
\xymatrix{
H^1(X,\Q/\Z(1)) \ar[r] & H^2_A(\sX,\Q/\Z(1)) \ar[r] & \displaystyle\frac{H^2(A,\Q/\Z(1))}{\CH_1(A)^\perp}\rlap{\text{.}}
}
\end{align}
There is a natural morphism from the complex~\eqref{eq:complexsepclosedanalyticbis} to the analogous complex for the degeneration~$\tilde\pi$.
A diagram chase now implies the lemma, thanks to the following remarks.
First, as~$\sX$ is a manifold, the exceptional divisor $E \subset \sXtilde_0$ is a projective bundle over the center of the blow-up $L \subset \sX_0$.
Therefore, for any irreducible closed analytic subset $Z \subset A$ of dimension~$1$, there exists an irreducible closed analytic subset $\tilde Z \subset \tilde A$
of dimension~$1$ which maps birationally to~$Z$.  It follows that the pull-back map
\begin{align}
\label{eq:injh2ch1}
\frac{H^2(A,\Q/\Z(1))}{\CH_1(A)^\perp} \longrightarrow \frac{H^2(\tilde A,\Q/\Z(1))}{\CH_1(\tilde A)^\perp}
\end{align}
is injective.  Secondly, the first Chern class of~$\sO_{\sXtilde}(E)$ and the pull-back map induce a canonical decomposition
\begin{align}
H^2_{\tilde A}(\sXtilde,\Q/\Z(1)) =
H^2_A(\sX,\Q/\Z(1)) \oplus \Q/\Z\rlap{\text{,}}
\end{align}
and
if $\tilde Z \subset E$ denotes a line contained in a fiber of the projective bundle $E \to L$,
then $\deg\left(\sO_{\sXtilde}(E)|_{\tilde Z}\right)=-1$.
\end{proof}

Using Lemma~\ref{lem:blowupinvariance}, we now check that Proposition~\ref{prop:checkkulikov}
implies the validity of Theorem~\ref{th:semiK3} for~$K3$ surfaces defined over the function field of a complex curve.

\begin{lem}
\label{lem:k3biratinv}
Assume Proposition~\ref{prop:checkkulikov} holds.
Let $f:Y \to B$ be a morphism between smooth complex proper algebraic varieties, where~$B$ is a curve and the generic fiber of~$f$ is a~$K3$ surface.
Let $b \in B$.
Let $K \simeq \C((t))$ denote the completion of~$\C(B)$ at~$b$,
and let $Y_{\hat\eta} = Y \times_B \Spec(K)$.
If the fiber~$Y_b$ is a reduced divisor with local normal crossings,
then the group $A_0(Y_{\hat\eta})$ is divisible.
\end{lem}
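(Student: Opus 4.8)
The plan is to realise $X:=Y_{\hat\eta}$ as the generic fibre of an explicit regular proper flat model over $R=\C[[t]]$, to apply Theorem~\ref{th:criterionsepclosed} to this model, and to check the vanishing of the resulting obstruction group by comparison with the analytic situation, where Proposition~\ref{prop:checkkulikov} is available. After replacing $Y$ by a projective model of its function field over $B$ with smooth total space and reduced local normal crossings fibre over $b$ (such a model exists by Hironaka and alters neither $X$ nor the hypothesis on the fibre), one may assume $Y$ is projective over $\C$. Identifying $R$ with the completion of $\mathcal{O}_{B,b}$, put $\sX=Y\times_B\Spec R$; then $\sX$ is an irreducible regular projective flat $R$-scheme (regular because $Y$ is regular and $\mathcal{O}_{B,b}\to R$ is a regular morphism) with generic fibre $X$ and special fibre $A=Y_b$. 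Since $X$ is a $K3$ surface, $H^1(\bar X,\Z/\ell\Z)=0$ for every prime $\ell$, so by Theorem~\ref{th:criterionsepclosed} and Remark~\ref{rk:complexsepclosed}~(iii) it suffices to show, for each $\ell$, that the homology of the complex~\eqref{eq:complexsepclosed} attached to $\sX$ vanishes: this gives $A_0(X)\otimeshat\Zl=0$, and since $\{A_0(X)/\ell^nA_0(X)\}_n$ is a surjective inverse system, the vanishing of its limit forces $A_0(X)/\ell A_0(X)=0$; letting $\ell$ vary yields the divisibility of $A_0(X)$.

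To prove this vanishing I would pass to the analytic side. Shrinking a disk $\Delta$ around $b$ so that every fibre of $f$ above $\Delta\setminus\{b\}$ is a $K3$ surface, the preimage $\sX^{\mathrm{an}}=f^{-1}(\Delta)$, regarded as a complex manifold, maps to $\Delta$ by a semistable degeneration $\pi\colon\sX^{\mathrm{an}}\to\Delta$ of $K3$ surfaces whose special fibre $A^{\mathrm{an}}$ has algebraic irreducible components. By Theorem~\ref{th:kulikov}, $\pi$ admits a Kulikov model, and applying Theorem~\ref{th:mirandamorrison} when this model is of type~III, we obtain a Kulikov model $\pi''\colon\sX''\to\Delta$ which, if of type~III, is in minus-one-form. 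The degenerations $\pi$ and $\pi''$ are related by a finite sequence of blow-ups and blow-downs with smooth centres contained in the special fibres: this is how the modifications in the proofs of Theorems~\ref{th:kulikov} and~\ref{th:mirandamorrison} are performed, and it also follows from weak factorisation applied to the bimeromorphic map $\sX^{\mathrm{an}}\dashrightarrow\sX''$ over $\Delta$, which is an isomorphism over $\Delta\setminus\{0\}$. Hence, by repeated use of Lemma~\ref{lem:blowupinvariance}, the homology of the analytic complex~\eqref{eq:complexsepclosedanalytic} is the same for $\pi$ as for $\pi''$, and it vanishes for $\pi''$ by Proposition~\ref{prop:checkkulikov}.

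It remains to identify the algebraic complex~\eqref{eq:complexsepclosed} attached to $\sX$ with the $\ell$-primary summand of the analytic complex~\eqref{eq:complexsepclosedanalytic} attached to $\pi$. Write $\Lambda=\Z/\ell^n\Z$. Artin's comparison theorem gives $H^i_\et(A_i,\Lambda(1))\cong H^i(A_i^{\mathrm{an}},\Lambda(1))$; proper base change over the henselian ring $R$, together with the deformation retraction of $\sX^{\mathrm{an}}$ onto $A^{\mathrm{an}}$, identifies $H^i_\et(\sX,\Lambda(1))$ with $H^i(\sX^{\mathrm{an}},\Lambda(1))$; since cohomology with supports depends only on a neighbourhood of the support, Artin's theorem applied to the finite-type $\C$-scheme $Y$ yields $H^i_{\et,A}(\sX,\Lambda(1))\cong H^i_{A^{\mathrm{an}}}(\sX^{\mathrm{an}},\Lambda(1))$; and the localisation exact sequences then give $H^i_\et(X,\Lambda(1))\cong H^i(\sX^{\mathrm{an}}\setminus A^{\mathrm{an}},\Lambda(1))$, which is the cohomology group on the left of~\eqref{eq:complexsepclosedanalytic}. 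Taking the direct limit over $n$ furnishes the corresponding comparisons with $\Ql/\Zl(1)$-coefficients. Since every $1$-dimensional closed analytic subset of the projective surface $A_i$ is algebraic, the two descriptions of $\CH_1(A_i)^\perp$ coincide, and all the maps occurring in the two complexes (boundary, restriction and quotient maps) correspond under these isomorphisms by naturality. Hence the complex~\eqref{eq:complexsepclosed} attached to $\sX$ is isomorphic to the $\ell$-primary summand of~\eqref{eq:complexsepclosedanalytic}, whose homology vanishes by the preceding paragraph; this completes the argument.

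The step I expect to demand the most care is this final comparison: assembling the \'etale/singular comparison isomorphisms --- in particular for the cohomology with supports $H^{\bullet}_A$ and for the cohomology of the generic fibre $X$ over $\C((t))$ --- into an isomorphism of complexes compatible with every boundary, restriction and quotient map, and with both descriptions of $\CH_1(A_i)^\perp$. A secondary delicate point is the verification that the modifications produced by Kulikov, Persson--Pinkham and Miranda--Morrison are indeed compositions of blow-ups and blow-downs with smooth centres in the special fibres, so that Lemma~\ref{lem:blowupinvariance} may be invoked.
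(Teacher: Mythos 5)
Your overall strategy is the same as the paper's: reduce to the exactness of the complex~\eqref{eq:complexsepclosed} for the model $Y\times_B\Spec(\widehat{\sO_{B,b}})$ via Theorem~\ref{th:criterionsepclosed} and Remark~\ref{rk:complexsepclosed}~(iii), identify that complex with the analytic complex~\eqref{eq:complexsepclosedanalytic}, replace the given degeneration by a Kulikov model in minus-one-form using Theorems~\ref{th:kulikov} and~\ref{th:mirandamorrison}, and transport exactness back through Lemma~\ref{lem:blowupinvariance}. The comparison of the \'etale and singular complexes that you spell out at length is asserted without comment in the paper, and your sketch of it (Artin comparison for the proper $A_i$, proper base change plus the retraction onto the special fibre for $\sX$, excision for $H^\bullet_A$, localization for $X$, and the agreement of $H^{\leq 1}$ of $\Z$ and $\widehat{\Z}$ with finite coefficients) is sound.

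The one step that does not go through as written is your justification that $\pi$ and the Kulikov model $\pi''$ differ by a chain of blow-ups and blow-downs with smooth centres. The weak factorization theorem you invoke, \cite[Theorem~0.3.1]{akmw}, is a statement about bimeromorphic maps between \emph{compact} complex analytic manifolds, whereas $\sX^{\mathrm{an}}=f^{-1}(\Delta)$ and $\sX''$ are open. This is precisely the difficulty the paper's proof is engineered to avoid: one glues $\sX'$ to $Y\setminus Y_b$ along $\sX'\setminus\sX'_0=\sX\setminus\sX_0$ to produce a \emph{compact} manifold $Y'$, applies weak factorization to the pair of compact manifolds $Y$ and $Y'$ (whose bimeromorphic map is an isomorphism outside the fibres over $b$, so all centres lie in those fibres), and only then uses Lemma~\ref{lem:blowupinvariance}. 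Your fallback remark that the Kulikov and Miranda--Morrison constructions are themselves performed by such modifications is not a substitute for a proof: those constructions involve flops and other elementary modifications whose decomposition into smooth blow-ups and blow-downs of the \emph{total space} is exactly what needs to be checked, and you acknowledge you have not done so. Replacing your direct appeal to weak factorization over $\Delta$ by the gluing trick closes the gap; note that this is also where the hypothesis that $Y$ itself is proper (not merely $f$) is genuinely used. A secondary, minor point: your preliminary replacement of $Y$ by a projective model is claimed to preserve the reduced local normal crossings fibre over $b$ without argument; since in the application to Theorem~\ref{th:semiK3} one may take $Y$ projective from the outset, this can be arranged, but as stated it is not automatic.
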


\begin{proof}
Let $\Delta \subset B$ be a small disk around~$b$.
Denote by $\pi:\sX \to \Delta$ the restriction of~$f$ to $f^{-1}(\Delta)$.
By Theorem~\ref{th:kulikov} and Theorem~\ref{th:mirandamorrison}, there exists a
model $\pi':\sX'\to \Delta$ of~$\pi$ satisfying the following properties: $\pi'$ is Kulikov,
the irreducible components of~$\sX'_0$ are algebraic, and if~$\pi'$ is of type~III, then it is in minus-one-form.
Accordingly, by Proposition~\ref{prop:checkkulikov}, the complex~\eqref{eq:complexsepclosedanalytic} associated with~$\pi'$ is exact.
Let $Y'$ denote the complex analytic manifold obtained by gluing~$\sX'$ with $Y \setminus Y_b$ along $\sX' \setminus \sX'_0 = \sX \setminus \sX_0$.
By the weak factorization theorem for bimeromorphic maps between compact complex analytic manifolds \cite[Theorem~0.3.1]{akmw} applied to~$Y$ and~$Y'$,
and by Lemma~\ref{lem:blowupinvariance}, the exactness of the complex~\eqref{eq:complexsepclosedanalytic} associated with~$\pi'$
implies the exactness of the corresponding complex for~$\pi$.
The latter coincides, term by term, with the complex~\eqref{eq:complexsepclosed}
associated with the regular model $Y \times_B \Spec(\widehat{\sO_{B,b}})$ of $Y_{\hat\eta}$.
Applying Theorem~\ref{th:criterionsepclosed} now concludes the proof
(see Remark~\ref{rk:complexsepclosed}~(iii)).
\end{proof}

We finally deduce Theorem~\ref{th:semiK3} from Proposition~\ref{prop:checkkulikov} in full generality.
Let~$X$ be a semistable $K3$ surface over $\C((t))$.
Let $\sX$ be a proper regular semistable model of~$X$ over~$\C[[t]]$.
By a standard spreading out argument (see \cite[Proposition~5.1.2]{mustatanicaise}),
one can find a smooth complex algebraic curve~$B$, a smooth complex algebraic variety~$Y$,
a point $b \in B$ and a proper flat morphism $f:Y \to B$ such that the
special fiber of~$\sX$ is isomorphic to~$Y_b$
and such that the generic fiber of~$f$ is a~$K3$ surface.
After compactifying and resolving singularities, one may assume that~$B$ is proper.
Let us fix a $\C$\nobreakdash-linear isomorphism between the completion of~$\sO_{B,b}$ and $\C[[t]]$.
Let $Y_{\hat\eta}=Y \times_B \Spec(\C((t)))$.
By Lemma~\ref{lem:k3biratinv}, the group~$A_0(Y_{\hat\eta})$ is divisible.
It follows, by Remark~\ref{rk:complexsepclosed}~(iv), that $A_0(X)$ is divisible as well, thus completing the proof.

It remains to establish Proposition~\ref{prop:checkkulikov}.  Let $\pi:\sX \to \Delta$ be as in its statement.
According to Remark~\ref{rk:complexsepclosed}~(ii) (which also applies in this context), the exactness of~\eqref{eq:complexsepclosedanalytic} is equivalent to
that of the complex
\begin{align}
\xymatrix@C=2.5em{
\Q/\Z \ar[r] & (\Q/\Z)^I \ar[r]^(.355)\delta & \displaystyle\bigoplus_{i \in I} \frac{H^2(A_i,\Q/\Z(1))}{\CH_1(A_i)^\perp}\rlap{\text{,}}
}
\end{align}
where the first map is the diagonal inclusion and where~$\delta$ is as described in Remark~\ref{rk:complexsepclosed}.
We shall denote by $\delta_i : (\Q/\Z)^I \to
H^2(A_i,\Q/\Z(1))/\CH_1(A_i)^\perp$
the composition of~$\delta$ with the projection onto the $i$th summand.

Let us fix $\lambda = (\lambda_i)_{i \in I} \in (\Q/\Z)^I$ such that $\delta(\lambda)=0$, and show that all $\lambda_i$'s are equal.
If~$A$ is irreducible, there is nothing to prove.  Otherwise~$\pi$ is of type~II or of type~III.

Suppose~$\pi$ is a degeneration of type~II,
and number the irreducible components of the special fiber $I=\{0,\dots,n\}$ in such a way that $A_i \cap A_j \neq \emptyset$ if and only if $|i-j|\leq 1$.
Thus $A_0$ and $A_n$ are rational surfaces, and~$A_i$, for $i \in \{1,\dots,n-1\}$, is an elliptic ruled surface, \emph{i.e.}, a smooth surface ruled over an elliptic curve.

Let $C_i = A_i \cap A_{i+1}$ for $i \in \{0,\dots,n-1\}$.
As the canonical bundle of~$\sX$ is trivial, the adjunction formula implies that $C_{i-1}+C_i$ is an anticanonical divisor on~$A_i$
for any $i \in \{1,\dots,n-1\}$,
and that~$C_0$ (resp.~$C_{n-1}$) is an anticanonical divisor on~$A_0$ (resp.~on~$A_n$).
On the other hand, the self-intersection number~$(K_{A_i}^2)$ vanishes,
by \cite[Chapter~V, Corollary~2.11]{hartshorne}, and the curves~$C_{i-1}$ and~$C_i$ are disjoint.  Therefore $(C_{i-1}^2)_{A_i}+(C_i^2)_{A_i}=0$
for any $i \in \{1,\dots,n-1\}$.
(The subscript indicates that the intersection number is computed on~$A_i$.)
Now, by the triple point formula \cite[Corollary~2.4.2]{persson},
we have $(C_{i-1}^2)_{A_i}+(C_{i-1}^2)_{A_{i-1}}=0$ for all $i \in \{1,\dots,n\}$;
hence, in the end, we see that $(C_i^2)_{A_i}$ does not depend on~$i$, and in particular
\begin{align}
\label{eq:ka0kan}
(K_{A_0}^2)_{A_0}=(C_0^2)_{A_0}=(C_{n-1}^2)_{A_{n-1}}=-(C_{n-1}^2)_{A_n}=-(K_{A_n}^2)_{A_n}\rlap{\text{.}}
\end{align}
The self-intersection number of the canonical divisor of a (smooth, proper) minimal rational surface being either~$8$ or~$9$,
we deduce from~\eqref{eq:ka0kan} that the two surfaces~$A_0$ and~$A_n$ cannot both be minimal.

After possibly renumbering the~$A_i$'s, we may assume that~$A_0$ is not minimal.
Let $E \subset A_0$ be an exceptional curve.
The hypothesis that $\delta_0(\lambda)=0$ implies that $(C_0 \cdot E)_{A_0} \otimes (\lambda_1-\lambda_0)$ vanishes as an element of $\Q/\Z$.
On the other hand, since~$C_0$ is an anticanonical curve on~$A_0$ and~$E$ is exceptional, we have $(C_0\cdot E)_{A_0}=1$.  Hence $\lambda_0=\lambda_1$.
For $i \in \{1,\dots,n-1\}$, the hypothesis that $\delta_i(\lambda)=0$ amounts to the equality
\begin{align}
(C_{i-1}\cdot F)_{A_i} \otimes (\lambda_{i-1} - \lambda_i) + (C_i \cdot F)_{A_i} \otimes (\lambda_{i+1}-\lambda_i) = 0 \in \Q/\Z
\end{align}
for any divisor~$F$ on~$A_i$.
Letting~$F$ be a fiber of the ruling, we conclude that if~$\lambda_i=\lambda_{i-1}$, then $\lambda_{i+1}=\lambda_i$.
Thus, by induction, all $\lambda_i$'s are equal.

Suppose now~$\pi$ is a degeneration of type~III, in minus-one-form.
For $i\in I$, the surface~$A_i$ is rational and $(A_i',D_i)$ forms an anticanonical pair.
Let~$\gamma_i$ denote the set of irreducible components of~$D_i$.

\begin{defn}
Let $i \in I$.
We say that $j \in I$ is
a \emph{neighbour of~$i$} if $i\neq j$ and $A_i \cap A_j \neq\emptyset$.
We say that $C,C'\in \gamma_i$ are \emph{adjacent} if $C\neq C'$ and $C \cap C' \neq \emptyset$.
We say that~$i$ is \emph{consonant} if $\lambda_i=\lambda_j$ for every neighbour~$j$ of~$i$.
\end{defn}

For $i \in I$ and $C \in \gamma_i$,
we let $\mu_C=\lambda_j-\lambda_i$
if there exists a (necessarily unique) $j \in I \setminus \{i\}$ such that $\nu_i(C) \subset A_i \cap A_j$. Otherwise we let $\mu_C=0$.
Note that the hypothesis $\delta_i(\lambda)=0$ implies
\begin{align}
\label{eq:hypIII}
\sum_{C \in \gamma_i} (C\cdot F)_{A_i'} \otimes \mu_C = 0 \in \Q/\Z
\end{align}
for any divisor~$F$ on~$A_i'$.

\begin{lem}
\label{lem:consonantcriterion}
Let $i \in I$.  If there exist adjacent $C,C' \in \gamma_i$ such that $\mu_C=\mu_{C'}=0$, then~$i$ is consonant.
\end{lem}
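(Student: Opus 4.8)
The plan is to convert the vanishing of $\delta_i(\lambda)$, as encoded in~\eqref{eq:hypIII}, into a linear recurrence for the elements $\mu_C\in\Q/\Z$ ($C\in\gamma_i$) that runs around the anticanonical cycle $D_i$ on $A_i'$, and then to propagate the two given vanishings all the way around that cycle. First I would dispose of the small cases: if $\gamma_i$ has a single element, then $D_i$ is irreducible, no two distinct adjacent curves exist, and the hypothesis of the lemma is vacuous; if $\gamma_i$ has exactly two elements, the hypothesis already asserts that every $\mu_C$ vanishes. In both cases there is nothing left to prove once one unwinds the definition of consonance (carried out in the last step below). So I may assume $n:=\#\gamma_i\geq 3$. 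Since $(A_i',D_i)$ is an anticanonical pair, $D_i$ is then a polygon of $n$ smooth rational curves; I would write $\gamma_i=\{C_1,\dots,C_n\}$ with indices read cyclically modulo $n$, arranged so that for distinct $s,t$ the curves $C_s$ and $C_t$ meet if and only if $s-t\equiv\pm 1\pmod n$, in which case they meet transversally at a single point, and arranged so that $C_1,C_2$ are a pair of adjacent curves as in the hypothesis, i.e.\ $\mu_{C_1}=\mu_{C_2}=0$. Because $\pi$ is in minus-one-form, $(C_s^2)_{A_i'}=-1$ for every $s$.

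Next I would substitute $F=C_s$ into~\eqref{eq:hypIII}. Using the intersection numbers just recorded, the sum collapses, for each $s$, to
\begin{align*}
-\mu_{C_s}+\mu_{C_{s-1}}+\mu_{C_{s+1}}=0\in\Q/\Z,
\end{align*}
that is, $\mu_{C_{s+1}}=\mu_{C_s}-\mu_{C_{s-1}}$. Starting from the base case $\mu_{C_1}=\mu_{C_2}=0$, a one-line induction on $s$ then yields $\mu_{C_s}=0$ for every $s\in\{1,\dots,n\}$; the recurrence closes up consistently around the cycle precisely because all the terms vanish.

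Finally I would translate this back into the statement of the lemma. For any neighbour $j$ of $i$, the intersection $A_i\cap A_j$ is a curve contained in the double locus $D$, so the $1$-dimensional scheme $\nu_i^{-1}(A_i\cap A_j)$ contains some component $C\in\gamma_i$, whence $\lambda_j-\lambda_i=\mu_C=0$; thus $\lambda_j=\lambda_i$ for every neighbour $j$ of $i$, i.e.\ $i$ is consonant. I do not expect any real obstacle here: the only points requiring a little care are the reduction to $n\geq 3$ and the description of the intersection matrix of $D_i$, which is exactly the polygon picture of an anticanonical pair in minus-one-form. (In fact the value of $(C_s^2)$ is irrelevant to the recurrence, so one could even dispense with the minus-one-form hypothesis at this step; keeping it merely makes the computation transparent.)
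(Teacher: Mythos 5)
Your proof is correct and follows essentially the same route as the paper's: both order the components of the polygon $D_i$ cyclically, plug $F=C_j$ into~\eqref{eq:hypIII} to obtain the relation $\mu_{C_{j+1}}=\mu_{C_j}-\mu_{C_{j-1}}$ (whose diagonal term is indeed irrelevant, as you note), and propagate the two given vanishings around the cycle by induction. Your explicit treatment of the cases $\#\gamma_i\leq 2$ and of the translation from ``$\mu_C=0$ for all $C$'' back to consonance merely spells out what the paper leaves implicit.
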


\begin{proof}
As the curve~$D_i$ is a (necessarily reducible) polygon,
we may write $\gamma_i=\{C_1,\dots,C_n\}$ for some $n \geq 2$, in such a way that $C_i$ and~$C_j$ are adjacent if and only if $j-i=\pm 1 \mathrm{\ mod\ } n$.
Assume $\mu_{C_1}=\mu_{C_2}=0$.
Applying~\eqref{eq:hypIII} successively with $F=C_j$ for all $j \in \{2,\dots,n-1\}$,
we find that $\mu_{C_j}=0$ for all $j \in \{1,\dots,n\}$.  In other words~$i$ is consonant.
\end{proof}

Our goal is to prove that all $i \in I$ are consonant.
Lemma~\ref{lem:K3propagates} below reduces this to showing the existence of at least one consonant $i \in I$.

\begin{lem}
\label{lem:K3propagates}
Let $i \in I$.  If~$i$ is consonant, then so is every neighbour of~$i$.
\end{lem}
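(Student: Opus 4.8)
The plan is to show that consonance propagates between neighbours by a local analysis on a single anticanonical pair, using the combinatorial structure of a type~III degeneration. Let $i \in I$ be consonant and let $j$ be a neighbour of~$i$; I must show $j$ is consonant. First I would record the geometric picture: since $\pi$ is of type~III in minus-one-form, $(A_i',D_i)$ and $(A_j',D_j)$ are anticanonical pairs with $D_i,D_j$ polygons, and the curve $\nu_j^{-1}(A_i\cap A_j)$ is one of the edges of the polygon $D_j$, say $C_0 \in \gamma_j$. Because $i$ is consonant we have $\lambda_k=\lambda_i$ for every neighbour $k$ of $i$; in particular for the double curves on $A_i'$ the corresponding $\mu$'s all vanish. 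The key point is that a neighbour $k$ of $j$ (other than $i$) which is \emph{also} a neighbour of $i$ automatically satisfies $\lambda_k=\lambda_i=\lambda_j$ once we know $\lambda_j=\lambda_i$; so the real content is to establish $\lambda_j=\lambda_i$ and then to handle neighbours of $j$ that are ``far'' from $i$.

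The mechanism for transferring information is the triple point formula together with relation~\eqref{eq:hypIII}. Here is the structure I would use. Since $i$ is consonant, $\mu_C=0$ for every $C \in \gamma_i$; in particular the two edges of the polygon $D_i$ adjacent to the edge $C_0' := \nu_i^{-1}(A_i \cap A_j)$ have $\mu=0$, and each of these edges corresponds to a triple point of $\sX_0$ lying on $A_i \cap A_j$. At such a triple point, three components $A_i$, $A_j$, $A_\ell$ meet, and the triple point formula relates the self-intersections of the three double curves through it; combined with minus-one-form this pins down combinatorial data along the polygon $D_j$. The neighbours of $i$ through those triple points are components $A_\ell$ with $\lambda_\ell=\lambda_i$. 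Now applying~\eqref{eq:hypIII} on $A_j'$ with $F$ running over the edges of $D_j$: the edge $C_0$ contributes $\mu_{C_0}=\lambda_i-\lambda_j$, and the edges of $D_j$ neighbouring $C_0$ correspond to triple points on $A_i\cap A_j$, hence to neighbours of $i$ as well, so their $\mu$'s also equal $\lambda_i-\lambda_j$. Feeding this into the polygon relation for $D_j$ (exactly as in the proof of Lemma~\ref{lem:consonantcriterion}, but now with a common value rather than~$0$) forces $\mu_C=\lambda_i-\lambda_j$ for \emph{every} $C\in\gamma_j$. Pairing~\eqref{eq:hypIII} with $F=-K_{A_j'}=D_j$ then gives $\big(\sum_{C\in\gamma_j}(C\cdot D_j)_{A_j'}\big)\otimes(\lambda_i-\lambda_j)=0$, i.e. $(D_j^2)_{A_j'}\otimes(\lambda_i-\lambda_j)=0$ in $\Q/\Z$. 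Since $(A_j',D_j)$ is an anticanonical pair on a \emph{rational} surface, $(D_j^2)_{A_j'}=(K_{A_j'}^2)$, and for a rational surface $(K^2)$ takes a controlled value; the minus-one-form hypothesis in fact lets one compute $(D_j^2)_{A_j'}$ edgewise and the relevant integer is $\pm 1$ (each smooth edge contributes $-1$, each singular edge $+1$, to the self-intersection through the triple-point bookkeeping), so $(D_j^2)_{A_j'}=\pm 1$ and hence $\lambda_i=\lambda_j$. Once $\lambda_j=\lambda_i$, the equality $\mu_C=\lambda_i-\lambda_j=0$ for all $C\in\gamma_j$ says precisely that $\lambda_k=\lambda_j$ for every neighbour $k$ of $j$, i.e. $j$ is consonant.

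The main obstacle I anticipate is the precise sign/value bookkeeping for $(D_j^2)_{A_j'}$ and the correct statement of the triple point formula in this analytic (not necessarily simple-normal-crossings) setting: one must be careful that $D_j$ is the pullback to the normalization $A_j'$ of the double locus, that the triple points of $\sX_0$ correspond bijectively to the nodes of the polygon $D_j$, and that minus-one-form is being applied to the components of $D_j$ on $A_j'$ exactly as in the definition given before Theorem~\ref{th:mirandamorrison}. The reference \cite[Corollary~2.4.2]{persson} for the triple point formula and the anticanonical-pair description of case~(III) should supply everything needed; the only genuinely delicate check is that the integer $(D_j^2)_{A_j'}$ coming out of this computation is always nonzero (indeed a unit), which is what makes ``divisibility of $\lambda_i-\lambda_j$ by it'' force $\lambda_i=\lambda_j$. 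A subtlety worth isolating separately is the degenerate case where the polygon $D_j$ has very few edges (a ``polygon'' that is an irreducible nodal rational curve or a bigon), where~\eqref{eq:hypIII} must be interpreted with the appropriate multiplicities; this parallels the $n\ge 2$ convention already used in Lemma~\ref{lem:consonantcriterion} and should be handled the same way.
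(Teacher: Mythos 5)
Your write-up contains the correct mechanism, but it is wrapped around two genuine errors and one vacuous detour, and the errors are load-bearing in the logical structure you propose. First, the step you identify as ``the real content'' --- establishing $\lambda_j=\lambda_i$ --- is immediate from the definitions: $j$ is a neighbour of~$i$ and~$i$ is consonant, so $\lambda_i=\lambda_j$ by the very definition of consonance. There is nothing to prove there. Second, the two claims you use to prove it are both false. The propagation claim (``exactly as in Lemma~\ref{lem:consonantcriterion}, but now with a common value rather than~$0$'') does not work: relation~\eqref{eq:hypIII} with $F=C_a$ gives the recursion $\mu_{C_{a-1}}-\mu_{C_a}+\mu_{C_{a+1}}=0$, and starting from two adjacent values $c,c$ this produces the period-six pattern $c,c,0,-c,-c,0,\dots$, not the constant sequence $c$. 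Likewise $(D_j^2)_{A_j'}$ is not $\pm1$: in minus-one-form a polygon with $n$ edges has $(D_j^2)=\sum(C_a^2)+2\sum_{a<b}(C_a\cdot C_b)=-n+2n=n$ (and $=1$ for the irreducible nodal case), so pairing with $F=D_j$ only shows that $n(\lambda_i-\lambda_j)=0$ in $\Q/\Z$, which is not enough. Both errors happen to be harmless here only because the quantity $\lambda_i-\lambda_j$ you are manipulating is already zero; as written, your argument would not survive scrutiny.

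Once you delete the detour, what remains is the paper's proof, and it is essentially contained in your second paragraph: choose $C_0\in\gamma_j$ with $\nu_j(C_0)\subset A_i\cap A_j$, so $\mu_{C_0}=\lambda_i-\lambda_j=0$; for $C'\in\gamma_j$ adjacent to~$C_0$, the point $\nu_j(C_0\cap C')$ is a triple point lying on~$A_i$, so either $\nu_j(C')$ lies in the singular locus of~$A_j$ (and $\mu_{C'}=0$ by definition) or $\nu_j(C')\subset A_j\cap A_k$ with $k=i$ or~$k$ a neighbour of~$i$, whence $\lambda_k=\lambda_i=\lambda_j$ and $\mu_{C'}=0$; then Lemma~\ref{lem:consonantcriterion} applied to the two adjacent edges $C_0,C'$ gives consonance of~$j$. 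You should also handle explicitly the degenerate case where no such $C'$ exists (equivalently, $i$ is the only neighbour of~$j$), in which case~$j$ is consonant for trivial reasons. No triple point formula and no computation of $(D_j^2)$ are needed.
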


\begin{proof}
Let $j\in I$ be a neighbour of~$i$.
Choose a $C \in \gamma_j$ such that $\nu_j(C) \subset A_i \cap A_j$.
If~$i$ is the only neighbour of~$j$, then~$j$ is consonant.  Otherwise, we may further choose
a $C' \in \gamma_j$ adjacent to~$C$.
As~$i$ is consonant, we have $\mu_C=0$. In order to show that~$j$ is consonant,
it suffices to check that $\mu_{C'}=0$,
by Lemma~\ref{lem:consonantcriterion}.
Now if $\nu_j(C')$ is contained in the singular locus of~$A_j$, then $\mu_{C'}=0$ by definition.
Otherwise, there exists a unique neighbour~$k$ of~$j$ such that $\nu_j(C') \subset A_j \cap A_k$.
We then have $\nu_j(C \cap C') \subset A_i \cap A_k$, so that either $k=i$, or~$k$ is a neighbour of~$i$.
As~$i$ is consonant, in both cases we obtain $\lambda_k=\lambda_i$.  Hence $\lambda_j=\lambda_k$, or in other words $\mu_{C'}=0$,
which completes the proof.
\end{proof}

The following lemma summarizes well-known facts on anticanonical pairs which we shall need to
establish the existence of a consonant $i\in I$.

\begin{lem}
\label{lem:anticanonical}
Let $(S,C)$ be an anticanonical pair.
Let $C_1,\dots,C_n$ denote the irreducible components of~$C$.
Assume~$S$ is a rational surface.
If $n \geq 2$, assume $(C_i^2)=-1$ for all~$i$; if $n=1$, assume
$(C_1^2)=1$.
Then $n \leq 6$.  If moreover $n<6$, then for every $i\in\{1,\dots,n\}$, there exists an exceptional curve $E \subset S$ such that $(C_i\cdot E)=1$ and $(C_j\cdot E)=0$ for all~$j\neq i$.
\end{lem}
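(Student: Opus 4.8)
The plan is to identify $S$ as a weak del Pezzo surface and then play its intersection form against the anticanonical cycle $C$.

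\emph{Step 1 (degree and the weak del Pezzo property).} First I would record that, since $\omega_S\simeq\sO_S(-C)$, one has $-K_S\sim C\geq 0$, and that using the combinatorial shape of $C$ (a polygon of $n$ smooth rational curves if $n\geq 2$, a single nodal rational curve if $n=1$)
\[
K_S^2=(C^2)=\sum_i(C_i^2)+2\sum_{i<j}(C_i\cdot C_j)=-n+2n=n\rlap{\text{,}}
\]
the middle sum being equal to $n$ when $n\geq 2$, while $K_S^2=(C_1^2)=1=n$ directly when $n=1$. The class $-K_S$ is moreover nef: for an irreducible curve $\Gamma$ not among the $C_j$ one has $(-K_S\cdot\Gamma)=(C\cdot\Gamma)\geq 0$, while a direct computation gives $(-K_S\cdot C_j)=(C\cdot C_j)=1$ for every~$j$. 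As $(-K_S)^2=n>0$, the pair $(S,-K_S)$ is then a weak del Pezzo surface of degree~$n$.

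\emph{Step 2 (the bound $n\leq 6$).} Since $S$ is a projective rational surface, the intersection form on $\NS(S)\otimes\Q$ has signature $(1,\rho-1)$, so the Gram matrix of $[C_1],\dots,[C_n]$, representing the pull-back of this form along $\Q^n\to\NS(S)\otimes\Q$, has at most one positive eigenvalue. For $n\geq 3$ this Gram matrix is the circulant matrix with $-1$ on the diagonal and $1$ in the two cyclically adjacent positions, whose eigenvalues are $-1+2\cos(2\pi k/n)$ for $k=0,\dots,n-1$. If $n\geq 7$, then $k\in\{0,1,n-1\}$ all give positive eigenvalues, hence at least three of them, a contradiction. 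Since the cases $n\leq 2$ are trivial, this gives $n\leq 6$.

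\emph{Step 3 (the exceptional curve when $n<6$).} For $n=1$, any exceptional curve on $S$ will do, and one exists since $K_S^2=1\notin\{8,9\}$ forces $S$ to be non-minimal; adjunction then gives $(C_1\cdot E)=(-K_S\cdot E)=1$. Now suppose $n\in\{2,3,4,5\}$ and fix~$i$. Here I would invoke the standard description of $(-1)$-classes on a weak del Pezzo surface of degree $\leq 5$ (the relevant count depending only on the Picard lattice) to produce a $(-1)$-class $\xi$ with $(\xi\cdot C_i)=1$ which is not the class of any $C_j$: this is possible because there are at least three $(-1)$-classes with $(\,\cdot\, C_i)=1$ when $n\geq 3$ and at least one when $n=2$, whereas among the $[C_j]$ at most the two classes $[C_{i-1}],[C_{i+1}]$ (and none, when $n=2$) have intersection $1$ with $C_i$. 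It then remains to upgrade $\xi$ to an actual exceptional curve. By Riemann--Roch and the nefness and bigness of $-K_S$, the class $\xi$ is effective; write an effective representative as $\Gamma_0+\xi'$, where $\Gamma_0$ is the unique irreducible component with $(-K_S\cdot\Gamma_0)>0$ (necessarily appearing with multiplicity $1$ and with $(-K_S\cdot\Gamma_0)=1$), and $\xi'\geq 0$ is $(-K_S)$-trivial. No $C_j$ occurs in $\xi'$, since $(-K_S\cdot C_j)=1$, so $(C_j\cdot\xi')\geq 0$ for all~$j$; as $\sum_j(C_j\cdot\xi')=(-K_S\cdot\xi')=0$, each term vanishes, and hence $(\Gamma_0\cdot C_j)=(\xi\cdot C_j)$ for all~$j$, in particular $(\Gamma_0\cdot C_i)=1$. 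If $\Gamma_0$ were some $C_j$, then from $\xi=[C_j]+\xi'$ and $(C_j\cdot\xi')=0$ one would get $(\xi')^2=\xi^2+1=0$, whence $\xi'=0$ by negative-definiteness of $(-K_S)^\perp$, contradicting $\xi\neq[C_j]$; thus $\Gamma_0$ differs from every $C_j$, so $(\Gamma_0\cdot C_j)\geq 0$, and these intersections sum to $(-K_S\cdot\Gamma_0)=1$, forcing $(\Gamma_0\cdot C_i)=1$ and $(\Gamma_0\cdot C_j)=0$ for $j\neq i$. Finally, since $(-K_S)^2=n\geq 2$, the Hodge index theorem gives $n\,(\Gamma_0^2)\leq(\Gamma_0\cdot(-K_S))^2=1$, so $(\Gamma_0^2)\leq 0$; together with the adjunction relation $(\Gamma_0^2)=2p_a(\Gamma_0)-1$ this forces $(\Gamma_0^2)=-1$ and $p_a(\Gamma_0)=0$, i.e. $E:=\Gamma_0$ is an exceptional curve with the required intersection numbers.

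\emph{Main obstacle.} The only genuinely non-formal input is the lower bound on the number of $(-1)$-classes meeting a fixed $(-1)$-curve on a (weak) del Pezzo surface of degree $\leq 5$; everything else is bookkeeping with nefness, the Hodge index theorem and adjunction. One could also simply quote both assertions from the literature on anticanonical (Looijenga) pairs.
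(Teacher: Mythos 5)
Your argument is correct, and it diverges from the paper's in an instructive way in its second half. For the bound $n\leq 6$ both proofs are at bottom the same Hodge-index argument: the paper exhibits the single test vector $F=C_1+2C_2+2C_3-C_4-2C_5-2C_6$, which satisfies $(F^2)>0$ and $(C\cdot F)=0$ while $(C^2)>0$, whereas you diagonalize the whole circulant Gram matrix and count positive eigenvalues; these are two ways of saying that the cycle lattice of a polygon of $(-1)$\nobreakdash-curves cannot sit inside a lattice of signature $(1,\rho-1)$ once $n\geq 7$. For the second assertion the paper simply quotes the classification of rational surfaces carrying an anticanonical cycle of length at most~$5$ (Looijenga, Miranda--Morrison, Friedman), while you essentially re-prove what is needed: you identify $(S,-K_S)$ as a weak del Pezzo surface of degree~$n$, choose a $(-1)$\nobreakdash-class $\xi$ with $(\xi\cdot C_i)=1$ distinct from every $[C_j]$, show it is effective by Riemann--Roch, and extract from an effective representative its unique $(-K_S)$\nobreakdash-positive component $\Gamma_0$; your bookkeeping with nefness, the Hodge index theorem and adjunction correctly identifies $\Gamma_0$ as the desired exceptional curve, and the verification that $\Gamma_0$ is not one of the $C_j$ (via negative definiteness of $(-K_S)^{\perp}$) is the right way to close that loophole. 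The only input you leave unproved is the count of $(-1)$\nobreakdash-classes meeting a fixed one in the lattice $\Z^{1,9-n}$ --- three in degree~$5$, five in degree~$4$, ten in degree~$3$, twenty-seven in degree~$2$ --- which is the classical incidence structure of the $10$, $16$, $27$, $56$ lines and depends only on the fact that a weak del Pezzo surface of degree $\leq 7$ is a blowup of $\P^2$ with the standard Picard lattice. So your external input is more elementary than, but plays exactly the same role as, the paper's citation of the classification of anticanonical pairs; your route has the advantage of making visible where the hypothesis $n<6$ enters (in degree~$5$ the pentagon uses up two of the only three available $(-1)$\nobreakdash-classes meeting $C_i$, and in degree~$6$ the count can be exhausted entirely).
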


\begin{proof}
After renumbering the~$C_i$'s we may assume that $C_i \cap C_{i+1}\neq\emptyset$ for all~$i$.
Suppose that $n>6$ and let $F=C_1+2C_2+2C_3-C_4-2C_5-2C_6$. Then $(D^2)>0$, $(F^2)>0$ and $(D\cdot F)=0$, which contradicts the Hodge index theorem.
The second assertion relies on the classification of rational surfaces endowed with an anticanonical cycle of length at most~$5$
\cite[Theorem~(1.1)]{looijenga}, see \cite[Lemma~(11.5)]{mirandamorrison}, \cite[p.~107--108]{friedman}.
\end{proof}

Let~$n_i$ denote the number of irreducible components of~$D_i$.
Since~$|\Gamma|$ is homeomorphic to the sphere, Euler's formula may be written as
\begin{align}
\label{eq:euler}
\sum_{i \in I} \big(6 - n_i\big)=12
\end{align}
(see \cite[p.~21]{sagsoverview}).
On the other hand, as~$\sX_0$ is in minus-one-form,
Lemma~\ref{lem:anticanonical} implies that~$n_i \leq 6$ for all~$i$.
We conclude that $n_i<6$ for at least one~$i$. Let us fix such an $i\in I$.
For each $C \in \gamma_i$, there exists,
according to Lemma~\ref{lem:anticanonical},
an exceptional curve $E \subset A'_i$ such that $(C\cdot E)=1$ and $(C'\cdot E)=0$ for
all $C'\in\gamma_i\setminus\{C\}$.
Applying~\eqref{eq:hypIII} with $F=E$, we deduce that $\mu_C=0$.
Thus $\mu_C=0$ for all $C \in \gamma_i$, which means that~$i$ is consonant.
By Lemma~\ref{lem:K3propagates}, it follows that every element of~$I$ is consonant, in other words all $\lambda_i$'s are equal,
and the proofs of Proposition~\ref{prop:checkkulikov} and of Theorem~\ref{th:semiK3} are complete.
\end{proof}

\section{Homologically trivial zero-cycles need not be divisible}
\label{sec:counterexample}

We finally provide a counterexample to the injectivity of the cycle class
map $$\CH_0(X) \otimeshat \Zl \to H^{2d}(X, \Zl(d))$$
when~$X$ is a smooth projective surface over~$\C((t))$ or over the maximal unramified extension of a $p$\nobreakdash-adic field.
Thus, Theorem~\ref{th:H2alg-ksepclosed} does not extend to surfaces with a possibly nontrivial transcendental quotient of $H^2(\bar X,\Ql(1))$,
despite the case of semistable~$K3$ surfaces
treated in \textsection\ref{sec:semistableK3} and despite the fact that over $p$\nobreakdash-adic fields, Theorem~\ref{th:kfinite} applies to surfaces of arbitrary geometric genus.

\begin{thm}
\label{th:counterexample}
There exists a simply connected smooth projective surface~$X$ over $\C((t))$, with semistable reduction, such that $A_0(X)/2A_0(X)=\Z/2\Z$.

Similarly, for infinitely many prime numbers~$p$, there exists a
simply connected smooth projective surface~$X$ over the maximal unramified extension of a $p$\nobreakdash-adic field, with semistable reduction,
such that $A_0(X)/2A_0(X)=\Z/2\Z$.
\end{thm}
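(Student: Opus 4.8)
The plan is to reduce to the criterion of \textsection\ref{sec:criterion} and to feed it a carefully chosen semistable degeneration. Suppose we can construct a regular proper flat scheme~$\sX$ over an excellent henselian discrete valuation ring~$R$ with separably closed residue field~$k$ in which~$2$ is invertible --- with $R=\C[[t]]$ for the first assertion and $R$ the ring of integers of~$\Q_p^\nr$ for the second --- such that the generic fibre~$X$ is a simply connected smooth projective surface and the special fibre~$A$ is reduced, with exactly two smooth irreducible components $A_1$ and~$A_2$ meeting transversally along a smooth irreducible curve~$D$ (so that~$X$ has semistable reduction). Then $H^1(\bar X,\Z/2\Z)=0$, and Example~\ref{ex:firstex}~(iii) (a formal consequence of Theorem~\ref{th:criterionsepclosed} and Remark~\ref{rk:complexsepclosed}) asserts that $A_0(X)/2A_0(X)$ is~$0$ or~$\Z/2\Z$, and is equal to~$\Z/2\Z$ precisely when the intersection number $(C\cdot D)$, computed on~$A_i$, is even for every $i\in\{1,2\}$ and every irreducible curve $C\subset A_i$. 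So everything reduces to exhibiting one such degeneration in which~$D$ meets every curve on~$A_1$ and on~$A_2$ in an even number of points --- a requirement that, term for term, does not involve the characteristic of~$k$, so that a single construction can serve in both cases.

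To force this parity, arrange that each~$A_i$ carries a relatively minimal genus one fibration $A_i\to\P^1$ and that~$D$ is a smooth fibre of it. Then~$D$ is a smooth irreducible curve with $(D^2)_{A_i}=0$, and the intersection number of~$D$ with an irreducible curve $C\subset A_i$ equals~$0$ if~$C$ is fibral and the degree of $C$ over~$\P^1$ otherwise; it is therefore even as soon as the fibration has even index, i.e.\ admits a multisection of degree~$2$ but no section. Genus one fibrations of this type --- geometric avatars of classes of order~$2$ in the Tate--Shafarevich group of the associated Jacobian fibration --- are exactly what Ogg--Shafarevich theory provides (together with the equality of period and index over the function field of a curve over an algebraically closed field). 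Moreover $(D^2)_{A_1}+(D^2)_{A_2}=0$ and $\sO_{A_i}(D)|_D\cong\sO_D$, so $A=A_1\cup_D A_2$ satisfies the first\nobreakdash-order condition necessary for the existence of a smoothing; here one invokes the construction due to Persson, which supplies such a glued surface, with components of exactly the above shape, admitting a smoothing to a smooth projective surface~$X$ that is simply connected --- even though the~$A_i$ themselves are not, the loops around their multiple fibres being killed by the monodromy of the degeneration. Algebraising the resulting one\nobreakdash-parameter smoothing over~$\C[[t]]$, as in the spreading\nobreakdash-out step in the proof of Theorem~\ref{th:semiK3}, and applying the first paragraph, proves the first assertion.

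For the second assertion one reduces the construction modulo~$p$. All the data involved --- the Jacobian fibration, the order\nobreakdash-$2$ Tate--Shafarevich class, Persson's gluing and its smoothing --- are of finite type over~$\Z$; hence, for every prime~$p$ outside an exceptional set containing~$2$ (the criterion of \textsection\ref{sec:criterion} requires~$2$ invertible in~$k$) and the finitely many primes of bad reduction of the auxiliary data, the whole package descends to a regular proper flat scheme over the ring of integers of~$\Q_p^\nr$ with special fibre of the same combinatorial type. This leaves infinitely many~$p$. For such a~$p$ one checks that the generic fibre~$X$ over~$\Q_p^\nr$ remains simply connected --- by the specialization morphism for \'etale fundamental groups, or, more cheaply, through $H^1(\bar X,\Z/2\Z)=0$, which is all that Example~\ref{ex:firstex}~(iii) really uses --- and that~$2$ is invertible in~$\bar\F_p$, which holds since $p\neq 2$. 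The parity computation is purely numerical and transfers verbatim, so $A_0(X)/2A_0(X)=\Z/2\Z$.

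The main obstacle is the construction of the glued surface $A_1\cup_D A_2$ together with the verification of its two decisive properties: that it is genuinely smoothable --- beyond the necessary first\nobreakdash-order condition, smoothability requires the vanishing of an obstruction in a suitable~$H^2$, which is precisely what Persson's explicit construction is designed to guarantee --- and that its smoothing is simply connected. By contrast, producing components with genus one fibrations of even index out of Ogg--Shafarevich theory is routine once the framework is in place, and the descent to mixed characteristic is a formal spreading\nobreakdash-out, the one delicate point there being the control of the \'etale fundamental group of~$X$ in characteristic~$p$.
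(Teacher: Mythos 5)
Your first paragraph is exactly the paper's reduction (Theorem~\ref{th:criterionsepclosed} via Example~\ref{ex:firstex}~(iii)), and the idea of forcing the parity through a genus-one fibration of even index on the components is the right one. But the two places where you defer to a black box are genuine gaps. First, the construction of the degeneration: Persson's construction, as used in the paper, does \emph{not} supply ``components of exactly the above shape''. It starts from a single surface~$V$ carrying a genus-one pencil~$f$ with a multiple fibre $f^{-1}(0)=2D$ of multiplicity~$2$, takes the double cover of~$V$ branched along a nearby smooth fibre $f^{-1}(t)\sim 2D$, and lets $t\to 0$; the semistable limit is two copies of~$V$ glued along the \emph{half-fibre} $D=f^{-1}(0)_\red$, whose normal bundle in each copy is $2$\nobreakdash-torsion rather than trivial. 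In that configuration $(C\cdot D)=\tfrac12\,(C\cdot f^{-1}(0))$, so evenness of $(C\cdot D)$ requires the fibration to have index divisible by~$4$, not~$2$; this is why Proposition~\ref{prop:existsalphabeta} uses Ogg--Shafarevich theory to produce a torsor class of order~$4$ whose local invariant at~$0$ has order exactly~$2$ (creating the multiplicity-$2$ fibre) and vanishes elsewhere. Your alternative configuration --- $D$ a smooth \emph{reduced} fibre and index~$2$ --- is not produced by Persson's construction, and you give no smoothing of it: as you yourself note, $d$\nobreakdash-semistability is only a necessary condition, and exhibiting an actual regular total space is the substance of the proof. The simple connectedness of~$X$ is likewise asserted rather than proved; in the paper it follows from Lemma~\ref{lem:simplyconnected} applied to the induced genus-one pencil on~$X$, and the components are in fact themselves simply connected.

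Second, the mixed-characteristic case is not a formal spreading-out. The scheme and the combinatorial type of its special fibre do descend to almost all residue characteristics, but the decisive hypothesis --- that \emph{every} curve on the reduction of~$A_i$ meets~$D$ evenly, equivalently that the genus-one fibration keeps large index after reduction --- is not preserved by specialization: the N\'eron--Severi group can grow modulo~$v$, new multisections of small degree can appear, and the order of the corresponding Brauer (or Tate--Shafarevich) class can drop. This is precisely why the theorem claims only \emph{infinitely many} primes~$p$ rather than all but finitely many, and why the paper needs Lemma~\ref{lem:existsalpha}: the order-$4$ Brauer class on the elliptic~$K3$ surface must be chosen so that its reduction keeps order~$4$ at infinitely many places, which is proved by showing that the contrary would force the reduced~$K3$ surface to have Picard number~$22$ at almost all places, a known impossibility. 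Your assertion that ``the parity computation is purely numerical and transfers verbatim'' is therefore false as stated: it has to be verified against all curves on the special fibre in characteristic~$p$, including those that do not lift to characteristic~$0$.
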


For any~$X$ as in the statement of Theorem~\ref{th:counterexample}, the kernels of the cycle class maps $\CH_0(X) \otimeshat \Z_2 \to H^4(X,\Z_2(2))$
and $\CH_0(X)/2\CH_0(X) \to H^4(X,\Z/2\Z)$
both have order~$2$
(see Remark~\ref{rk:complexsepclosed}~(iii)).

The surfaces constructed in Theorem~\ref{th:counterexample} have Kodaira dimension~$1$
and geometric genus~$3$.  They degenerate
into the union of two simply connected
surfaces of Kodaira dimension~$1$ and geometric genus~$1$, which meet transversally along an elliptic curve, with a $2$\nobreakdash-torsion normal bundle.
The nonzero element of $A_0(X)/2A_0(X)$ is represented by the difference of any two rational points of~$X$ which
specialize to distinct irreducible components of the special fiber.

\subsection{An elliptic surface}

We start by constructing the irreducible components of the desired special fiber.

\begin{prop}
\label{prop:existsalphabeta}
Let~$k$ be an algebraically closed field of characteristic~$0$.
There exists a simply connected smooth projective surface~$V$ over~$k$, endowed with a pencil $f:V \to \P^1_k$ of curves of genus~$1$, such that
\begin{enumerate}
\item the fiber $f^{-1}(0)$ has multiplicity~$2$ and its underlying reduced scheme is smooth;
\item all other fibers have multiplicity~$1$;
\item any divisor on~$V$ has degree divisible by~$4$ on the fibers of~$f$;
\item the total space of the Jacobian fibration of~$f$ is a~$K3$ surface.
\end{enumerate}
\end{prop}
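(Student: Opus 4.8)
The plan is to obtain $V$ as a logarithmic transform of a suitable elliptic $K3$ surface, whose Jacobian fibration will then be the surface required by~(4), and to read off~(3) from the Ogg--Shafarevich description of such transforms together with Tsen's theorem.

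\textbf{Step 1: the Jacobian surface and the torsor.} First I would fix an elliptic $K3$ surface $g\colon J\to\P^1_k$ with a section; a suitable explicit one is provided by Persson's construction, but for the abstract argument any elliptic $K3$ surface will do, the only input needed being that the group $\mathrm{WC}$ of everywhere-locally-trivial torsors of the generic fiber $J_\eta$ over $K:=k(\P^1)$ is nonzero and has nonzero $2$-torsion. This follows from the Ogg--Shafarevich identification of $\mathrm{WC}$ with $\Br(J)$, together with $\Br(J)\cong(\Q/\Z)^{22-\rho}$, where $\rho\leq 20$ is the Picard number of $J$. I would then take for $V$ the relatively minimal elliptic surface over $\P^1_k$ attached to a torsor $\tau\in H^1(K,J_\eta)$ chosen so that: (i) the local invariant of $\tau$ at $0$ is one of the order-$2$ classes cutting out a fiber of multiplicity $2$ with smooth reduction --- that is, $V$ is the logarithmic transform of order $2$ of $J$ at the smooth fiber $J_0$, performed with a $2$-torsion transformation parameter, so that $V$ remains projective; (ii) all other local invariants of $\tau$ vanish; and (iii) $2\tau$ equals a prescribed nonzero $2$-torsion element of $\mathrm{WC}$. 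Condition (iii) can be arranged after (i)--(ii): the torsors satisfying (i)--(ii) form a coset of $\mathrm{WC}$, and since $\mathrm{WC}$ is divisible, $2\tau$ ranges over all of $\mathrm{WC}$ as $\tau$ ranges over that coset. With such a choice, $\tau$ has order exactly $4$ in $H^1(K,J_\eta)$.

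\textbf{Step 2: properties (1), (2), (4) and simple connectedness.} Being the minimal elliptic model of a smooth genus-$1$ curve over the function field of a projective curve, $V$ is smooth, projective and relatively minimal. By construction $f\colon V\to\P^1_k$ has a single multiple fiber, over $0$, of multiplicity $2$, with smooth (elliptic) reduced part $E_0$: this is~(1) and~(2). The Jacobian fibration of $f$ --- the relatively minimal elliptic surface over $\P^1_k$ whose generic fiber is the Jacobian of that of $f$, namely $J_\eta$ --- is thus $J$ itself, a $K3$ surface: this is~(4). For simple connectedness I would invoke the standard computation of the fundamental group of an elliptic surface: $\pi_1(V)$ surjects onto the orbifold fundamental group of $\P^1$ carrying a single cone point of order $2$, which is trivial, and the kernel is generated by the images of the vanishing cycles of the singular fibers of $f$; these are the singular fibers of $J$, and since $J$ is a $K3$ surface their vanishing cycles already generate $H_1$ of a smooth fiber, so the kernel vanishes and $\pi_1(V)=1$.

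\textbf{Step 3: property (3).} For any divisor $D$ on $V$ one has $\deg(D|_F)=D\cdot F=2\,(D\cdot E_0)$, where $F\sim 2E_0$ is the class of a general fiber; hence~(3) amounts to divisibility by $4$ of the multisection index of $f$, that is, of the positive generator of the subgroup $\{D\cdot F:D\in\NS(V)\}$ of $\Z$. Closing up and restricting divisors identifies this index with the index of the genus-$1$ curve $\tau$ over $K$. As $k$ is algebraically closed, $\Br(K)=0$ by Tsen's theorem, so period and index coincide for $\tau$: a Galois-invariant divisor class on $\tau$ of degree equal to its period descends to a line bundle on $\tau$ (the obstruction lying in $\Br(K)$), and that line bundle, having positive degree on a genus-$1$ curve with trivial canonical class, is effective by Riemann--Roch. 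Therefore the multisection index equals the period of $\tau$, i.e.\ the order of $\tau$, which is $4$, and every divisor on $V$ has fiber-degree in $4\Z$: this is~(3).

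\textbf{Expected main obstacle.} The substance of the argument lies in Step~1: exhibiting the Jacobian $K3$ surface together with a torsor whose local invariants produce exactly one multiple fiber, of multiplicity $2$ and smooth, while $2\tau$ is a nonzero $2$-torsion class of $\mathrm{WC}$ --- in geometric language, carrying out the logarithmic transformation projectively and with prescribed Tate--Shafarevich defect. This is exactly where Ogg--Shafarevich theory and Persson's construction are needed; once $V$ has been produced, properties~(1)--(4) follow in a largely formal way.
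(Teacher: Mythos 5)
Your proposal is correct and follows essentially the same route as the paper: realize $V$ as the minimal elliptic model of a torsor under the generic fiber of an elliptic $K3$ surface, chosen via the Ogg--Shafarevich exact sequence to have order $4$ globally, local order $2$ at a place of good reduction (producing the multiplicity-$2$ fiber with smooth reduction), and trivial local invariants elsewhere; your ``coset plus divisibility of $\mathrm{WC}$'' selection of $\tau$ is the same manoeuvre as the paper's $\gamma=\alpha+\beta$. The only differences are in the auxiliary justifications --- the paper cites Gurjar--Shastri for simple connectedness and Lang--Tate (period divides index) for~(3), where you sketch the orbifold-$\pi_1$ computation and prove period equals index via Tsen --- and these are interchangeable standard facts.
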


By ``pencil of curves of genus~$1$'' we mean a morphism whose generic fiber is a geometrically irreducible curve of genus~$1$.
Before proving Proposition~\ref{prop:existsalphabeta}, let us recall a sufficient condition for
an elliptic surface to be simply connected.

\newcommand{\citeGS}{\cite[\textsection2, Theorem~1]{gurjarshastri}}
\begin{lem}[\citeGS]
\label{lem:simplyconnected}
Let~$V$ be a smooth projective surface
endowed with a pencil $f:V \to \P^1_k$ of curves of genus~$1$,
over an algebraically closed field~$k$ of characteristic~$0$.
Assume that~$f$ has at most one multiple fiber, and
at least one fiber whose underlying reduced scheme is not smooth.
Then~$V$ is simply connected.
\end{lem}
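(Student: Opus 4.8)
The plan is to prove that $\pi_1(V)$ vanishes; by the Lefschetz principle it suffices to treat $k=\C$, and the statement over an arbitrary field of characteristic~$0$ (in particular $\pi_1^{\et}(V)=1$) then follows by the usual comparison. Replacing $V$ by its relatively minimal model over $\P^1$ changes neither $\pi_1(V)$ nor the relevant hypotheses, so I assume $f$ relatively minimal. Let $\Sigma\subset\P^1$ be the finite set of points over which $f$ has a singular or a multiple fibre, put $U=\P^1\setminus\Sigma$ and $V_U=f^{-1}(U)$. Over $U$ the morphism $f$ is smooth and proper, hence a locally trivial $C^\infty$ bundle with fibre a real $2$-torus $F$; since $U$ is a non-compact curve one has $\pi_2(U)=0$, so the homotopy sequence of this bundle is a short exact sequence $1\to\pi_1(F)\to\pi_1(V_U)\to\pi_1(U)\to 1$ with $\pi_1(F)\cong\Z^2$ and $\pi_1(U)$ free on loops $\mu_p$ ($p\in\Sigma$) subject only to $\prod_p\mu_p=1$, the outer action of $\pi_1(U)$ on $\pi_1(F)$ being the monodromy.

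To recover $\pi_1(V)$ I would apply van Kampen's theorem, gluing back the pieces $N_p=f^{-1}(\Delta_p)$ over small disks $\Delta_p$ around the points of $\Sigma$: $\pi_1(V)$ is the quotient of $\pi_1(V_U)$ by the normal subgroup generated by the images of $\ker\!\big(\pi_1(N_p^*)\to\pi_1(N_p)\big)$, where $N_p^*=N_p\setminus f^{-1}(p)$ is a $2$-torus bundle over a punctured disk. The local input needed, case by case in Kodaira's list, is that $N_p$ deformation retracts onto the reduced fibre $f^{-1}(p)_{\red}$, so that $\pi_1(N_p)=\pi_1\!\big(f^{-1}(p)_{\red}\big)$; this group is trivial for a fibre of additive type (its reduced fibre being a simply connected configuration of rational curves, or a cuspidal cubic), is infinite cyclic for a fibre of type $I_b$ with $b\ge 1$, and is $\Z^2$ for a multiple fibre $mF'$ with $F'$ smooth. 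Accordingly, filling in the fibre over $p$ has the following effect on $\pi_1(V)$: an additive fibre kills the whole image of $\pi_1(F)$ together with $\mu_p$; an $I_b$ fibre kills only the vanishing cycle $\delta_p\in\pi_1(F)$ and forces $\mu_p$ to lie in the image of $\pi_1(F)$; and the smooth multiple fibre of multiplicity $m$ makes $\pi_1(F)$ map onto an index-$m$ sublattice of $\pi_1(N_p)$ and imposes $\mu_p^{m}\in\pi_1(F)$.

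Assembling these relations, $\pi_1(V)$ is generated by the $\mu_p$ and the image of $\pi_1(F)$, and the relations coming from the base alone exhibit the quotient of $\pi_1(V)$ by the image of $\pi_1(F)$ as the orbifold fundamental group of $\P^1$ carrying a single cone point of order $m$ at the unique multiple fibre (and no other orbifold structure) — which is trivial, since a sphere with at most one cone point is simply connected as an orbifold. Hence it is enough to show that the image of $\pi_1(F)$ in $\pi_1(V)$ vanishes: granting that, every $\mu_p$ attached to a singular fibre dies, and then $\prod_p\mu_p=1$ kills the remaining loop around the multiple fibre, so $\pi_1(V)=1$. To kill the image of $\pi_1(F)$ one invokes the hypothesis that some fibre has non-smooth reduction. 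If such a fibre is of additive type, the relation above kills $\pi_1(F)$ outright. If it is of type $I_b$, one must show that the vanishing cycles of all the type-$I_b$ fibres together span $\Z^2$; here the bound of one multiple fibre is essential, because the monodromy at a smooth multiple fibre is trivial on $H_1(F)$ while a product of powers of Dehn twists whose axes all lie in a fixed rank-one sublattice can never be the identity in $SL_2(\Z)$, so a proper span would be a monodromy-invariant sublattice, which — after Stein-factorising the associated fibrewise covering of $V$ over $\P^1$ and using that a connected cover of $\P^1$ branched at one point is trivial — forces a contradiction. \textbf{The main difficulty} is exactly this last circle of ideas: carrying out the local computation of $\pi_1(N_p^*)\to\pi_1(N_p)$ for each Kodaira fibre type together with the bookkeeping of the resulting presentation, and showing that a single non-smooth fibre, combined with the closing-up of the monodromy around $\P^1$ and the hypothesis of at most one multiple fibre, suffices to trivialise the image of the generic fibre's fundamental group.
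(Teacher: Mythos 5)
The paper offers no proof of this lemma: it is quoted verbatim from Gurjar--Shastri (the bracketed citation \emph{is} the proof), so there is no argument in the text to measure yours against. Your outline is essentially the standard proof of the cited result: van Kampen applied to the decomposition of $V$ into the smooth locus of $f$ and tubular neighbourhoods of the bad fibres, identification of $\pi_1(V)$ modulo the image of $\pi_1(F)$ with the orbifold fundamental group of $\P^1$ with cone points at the multiple fibres, and Kodaira's local analysis to kill the image of $\pi_1(F)$. In spirit you are reconstructing the reference rather than diverging from the paper.

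There is, however, one step that is genuinely wrong as written: the claim that passing to the relatively minimal model ``changes neither $\pi_1(V)$ nor the relevant hypotheses.'' Contracting a $(-1)$-curve contained in a fibre can turn a fibre with non-smooth reduction into a smooth one, so the hypothesis ``at least one fibre whose underlying reduced scheme is not smooth'' is \emph{not} preserved. Worse, this is exactly the point where the statement, read literally, fails: let $V$ be the blow-up of $E\times\P^1_\C$ ($E$ an elliptic curve) at one point, fibred over $\P^1_\C$ by the second projection. The generic fibre has genus one, there are no multiple fibres, and the fibre through the centre of the blow-up is a reduced nodal union of $E$ and the exceptional line --- yet $\pi_1(V)\cong\Z^2$. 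The theorem of Gurjar--Shastri concerns relatively minimal elliptic fibrations (as are both fibrations to which the paper applies the lemma), and a correct proof must \emph{use} relative minimality at this step rather than dispose of it; your reduction silently converts a false statement into a true one. Separately, the part you yourself flag as the main difficulty --- killing the image of $\pi_1(F)$ when every singular fibre is of type $\I_b$ --- is where the real work lies: your monodromy argument only shows that the vanishing cycles generate a finite-index subgroup of $\Z^2$, hence that $\pi_1(V)$ is finite abelian, and a further input (the canonical bundle formula, or the classification of such fibrations) is still needed to conclude $H_1(V,\Z)=0$. Note also that a multiple fibre over $\C$ may be of type ${}_m\I_b$ with $b\ge 1$, a case missing from your local list.
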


\begin{proof}[Proof of Proposition~\ref{prop:existsalphabeta}]
We shall construct~$V$ using Ogg--Shafarevich theory.  Let~$E$ be a~$K3$ surface over~$k$ with a pencil of elliptic curves $p:E\to \P^1_k$
(a section of~$p$ is understood to be chosen).
After a change of coordinates, we may assume that $p^{-1}(0)$ is smooth.
Let~$E_\eta$ denote the generic fiber of~$p$.
The smooth locus~$\sE$ of~$p$
identifies with the N\'eron model of~$E_\eta$
and thus
 has a natural group scheme structure over~$\P^1_k$.
Let $K=k(\P^1)$.
It is well known that the Leray spectral sequence for the inclusion of the generic point of~$\P^1_k$ gives rise to an exact sequence of \'etale cohomology groups
\begin{align}
\label{eq:oggshaf}
\xymatrix{
0 \ar[r] & H^1(\P^1_k,\sE) \ar[r] & H^1(K,E_\eta) \ar[r] & \displaystyle\bigoplus_{m \in \vphantom{\P^1_k}\smash{{\P^1_k}^{(1)}}} H^1(K_m,E_\eta) \ar[r] & 0\rlap{\text{,}}
}
\end{align}
where $\vphantom{\P^1_k}\smash{{\P^1_k}^{(1)}}$ denotes the set of codimension~$1$ points of~$\P^1_k$ and~$K_m$ stands for the completion of~$K$ at~$m$
(see \cite[Proposition~5.4.3 and Corollary~5.4.6]{cossecdolgacev}).
Moreover, there are isomorphisms
\begin{align}
\label{eq:h1br}
H^1(\P^1_k,\sE) = \Br(E) \simeq (\Q/\Z)^{b_2-\rho}\rlap{\text{,}}
\end{align}
where $b_2=22$ and~$\rho$ denotes the Picard number of~$E$
(see~\emph{op.\ cit.}, Theorem~5.4.3, noting that $H^3(E,\Zl)$ is torsion-free since~$E$ is a simply connected surface).

As~$E$ is a~$K3$ surface, we have $b_2-\rho>0$, so that $H^1(\P^1_k,\sE)$ contains
elements of any order, by~\eqref{eq:h1br}.
We fix an $\alpha \in H^1(\P^1_k,\sE)$ of order~$4$.

Recall that for any closed point $m \in \P^1_k$ of good reduction for~$E_\eta$, there is an isomorphism $H^1(K_m,E_\eta)\simeq(\Q/\Z)^2$
(see~\emph{op.\ cit.}, Theorem~5.4.1).
On the other hand, according to~\eqref{eq:h1br},
the group $H^1(\P^1_k,\sE)$ is divisible.
In view of~\eqref{eq:oggshaf},
it follows that there exists an element $\beta \in H^1(K,E_\eta)$ of order~$2$, whose image in
$H^1(K_0,E_\eta)$ has order~$2$, and whose image in
$H^1(K_m,E_\eta)$ vanishes for all closed points $m \in \P^1_k \setminus \{0\}$.

Let $\gamma=\alpha+\beta$.  We have now constructed a class $\gamma \in H^1(K,E_\eta)$ of order~$4$, whose image in
$H^1(K_0,E_\eta)$ has order~$2$, and whose image in $H^1(K_m,E_\eta)$ vanishes for all $m\neq 0$.
It~remains to be checked that the minimal proper regular model $f:V\to \P^1_k$ of the torsor~$V_\eta$ under~$E_\eta$ classified by~$\gamma$
satisfies the conclusion of Proposition~\ref{prop:existsalphabeta}.
Condition~(4) holds by construction, and~(1) follows from the fact that the image of~$\gamma$ in $H^1(K_0,E_\eta)$ has order~$2$
while $p^{-1}(0)$ is smooth (see~\emph{op.\ cit.}, Theorem~5.3.1 and Proposition~5.4.2).
Condition~(2) is equivalent to the vanishing of the image of~$\gamma$ in $H^1(K_m,E_\eta)$ for $m \neq 0$.
The genus~$1$ curve~$V_\eta$ has period~$4$, therefore its index is a multiple of~$4$ (see \cite[Proposition~5]{langtate};
it is in fact equal to~$4$, by \cite[Corollary~3]{ogg}), which implies~(3).
Finally we need to prove that~$V$ is simply connected.
As~$E$ is simply connected,
the morphism~$p$ cannot be smooth, and hence~$f$ has at least one fiber
whose underlying reduced scheme is not smooth, by \cite[Theorem~5.3.1]{cossecdolgacev}.
Thus Lemma~\ref{lem:simplyconnected} applies.
\end{proof}

\subsection{Persson's construction in mixed characteristic}
\label{subsec:persson}

Let~$V$ be the surface given by Proposition~\ref{prop:existsalphabeta}.
As~$V$ is simply connected and the class of a fiber of~$f$ in $\Pic(V)$ is divisible by~$2$,
we may consider, for a general $t \in \P^1(k) \setminus \{0\}$, the double cover of~$V$ branched along~$f^{-1}(t)$.
This is a smooth projective surface over~$k$.  Persson~\cite[Appendix~1, II]{persson} shows that when one lets~$t$ specialize to~$0$, this double cover degenerates,
with a regular total space, to the union of two copies of~$V$ glued along the elliptic curve $f^{-1}(0)_\red$.
In~order to construct the surface~$S$ of Theorem~\ref{th:counterexample} over the maximal unramified extension of a $p$\nobreakdash-adic field, we shall need
to adapt Persson's construction to the mixed characteristic setting.

Let~$t$ denote the coordinate of~$\P^1_\Z$, so that $\A^1_\Z=\Spec(\Z[t])$.
Assume we are given a discrete valuation ring~$R$ of characteristic~$0$ and residue
characteristic~$\neq 2$, a uniformizer~$\pi$ of~$R$, an irreducible regular
scheme~$\sV$ and a proper and flat morphism $\sV \to \P^1_R$
such that the divisor of the rational function~$g$ on~$\sV$ obtained by pulling back~$(t-\pi)/t$ may be written as
$D-2D'$ where~$D$ is a regular scheme and~$D'$ is smooth over~$R$.
We then consider $\sO_{\sV} \oplus \sO_{\sV}(-D')$
as an $\sO_{\sV}$\nobreakdash-algebra
with product $(a\oplus b)(c \oplus d)=(ac + gbd)\oplus (ad+bc)$,
and let~$\sX$ denote the corresponding finite flat $\sV$\nobreakdash-scheme of degree~$2$.
The following lemma summarizes the properties of~$\sX$.  Its proof is elementary and is left to the reader.

\begin{lem}
\label{lem:constrprop}
The scheme $\sX$ is irreducible, regular, and it is proper and flat over~$R$.
Letting~$k$ and~$K$ respectively denote the residue field and the quotient field of~$R$,
its fibers are described as follows.
\begin{enumerate}
\item The generic fiber $X = \sX \otimes_R K$ is a double cover of $V = \sV \otimes_R K$ branched only along the fiber of $V \to \P^1_K$ above $\pi\in\P^1(K)$.
It fits into a commutative square
\begin{align*}
\xymatrix@R=1.5em{
X \ar[d] \ar[r] & V \ar[d] \\
\P^1_K \ar[r] & \P^1_K
}
\end{align*}
in which the map $\P^1_K \to \P^1_K$ is a double cover branched along $\{0,\pi\}$.
The left-hand side vertical map is smooth above~$0$
and the square is cartesian above $\P^1_K \setminus \{0\}$.

\smallskip\addvspace{.2em}
\item The special fiber $A=\sX \otimes_R k$ is the scheme obtained by gluing two copies of $\sV \otimes_R k$ along the closed subscheme $D' \otimes_R k$
(see~\cite[\textsection1.1]{anantharaman}).
If $\sV \otimes_R k$ is smooth, then~$A$ has simple normal crossings.
\end{enumerate}
\end{lem}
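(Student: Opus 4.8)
The plan is to recognize~$\sX$ as the classical degree-two cyclic cover of~$\sV$ associated with the line bundle~$\sO_\sV(D')$ and with the section of~$\sO_\sV(2D')$ cut out by~$g$, whose zero divisor is the reduced, regular divisor~$D$; what makes all of the arguments below run smoothly is that~$2$ is invertible in~$R$, hence in~$\sO_\sV$, as the residue characteristic of~$R$ is~$\neq2$. As every assertion of the lemma is local on~$\sV$, I would work in an affine open $\Spec B\subseteq\sV$ on which $\sO_\sV(-D')$ is free with generator~$h$, and write (in the local rings) $g=ufh^{-2}$ for a unit~$u$, where~$f$ is a local equation of~$D$; such an expression exists because $\operatorname{div}(g)=D-2D'$. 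Writing~$z$ for the element that the generator~$h$ determines in the cyclic-cover algebra, one finds $z^2=gh^2=uf$, so that $\sX=\Spec\bigl(B[z]/(z^2-uf)\bigr)$ near~$D$, whereas away from~$D$ one has $g=vh^{-2}$ with~$v$ a unit --- in particular near~$D'$, where the local equation~$h$ absorbs the order-two pole of~$g$ --- and $\sX=\Spec\bigl(B[z]/(z^2-v)\bigr)$. In all cases $\sX\to\sV$ is finite and locally free of rank~$2$, hence finite flat, and it is \'etale away from~$D$; as~$\sV$ is regular, $\sX$ is regular away from~$D$.

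Along~$D$ I would invoke the hypothesis that~$D$ be a \emph{regular} scheme, not merely a Cartier divisor: at every point of~$D$ the element~$f$ is then part of a regular system of parameters of~$\sO_\sV$, so that in $B[z]/(z^2-uf)$ one has $f=u^{-1}z^2\in(z)$ and the maximal ideal over that point is generated by~$z$ together with the remaining parameters, whence~$\sX$ is regular there too; at a generic point of~$D$ this is just the remark that adjoining to a discrete valuation ring a square root of a uniformizer times a unit yields again a discrete valuation ring. Hence~$\sX$ is regular. For irreducibility, $\operatorname{div}(g)=D-2D'$ involves~$D$ with coefficient~$1$ and $D\neq\emptyset$ (it is the preimage of the nonzero divisor $\{t=\pi\}$ of~$\P^1_R$ under the surjection $\sV\to\P^1_R$), so~$g$ is not a square in the function field of~$\sV$; the generic fiber of $\sX\to\sV$ is therefore the spectrum of a field, and~$\sX$, being finite flat over the integral scheme~$\sV$ and generically integral, is integral. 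Finally $\sX\to\sV$ is flat, $\sV\to\P^1_R$ is flat by hypothesis and $\P^1_R\to\Spec R$ is flat, so~$\sX$ is flat over~$R$; likewise $\sX\to\sV$ is finite, $\sV\to\P^1_R$ is proper and $\P^1_R\to\Spec R$ is proper, so~$\sX$ is proper over~$R$.

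It remains to describe the fibers. Base-changing the construction to~$K$, one sees that $X=\sX\otimes_R K$ is the degree-two cover of $V=\sV\otimes_R K$ defined by $z^2=g|_V$, with branch locus the zero scheme of~$g|_V$ seen as a section of $\sO_V(2D'_K)$, namely $\operatorname{div}(g|_V)+2D'_K=D_K$, which is the fiber of $V\to\P^1_K$ above~$\pi$. For the commutative square, let $c\colon\P^1_K\to\P^1_K$ be the degree-two cover branched over the two points $\{0,\pi\}$ --- realized, say, as the smooth projective model of the rational curve $z^2=(t-\pi)/t$, which has a $K$-point over $t=\infty$ and so is again a~$\P^1_K$. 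Over $\P^1_K\smallsetminus\{0\}$ the function~$g$ is regular and the divisor~$D'_K$ is absent, so there~$X$ coincides with the fiber product of~$V$ and~$\P^1_K$ over~$c$; over the point~$0$, the local identity $g=vh^{-2}$ above shows that $X\to V$, and hence $X\to\P^1_K$, is \'etale --- in particular smooth --- near the relevant fiber, so the square fails to be cartesian there. For the special fiber, I would reduce the construction modulo a uniformizer~$\pi$: since $(t-\pi)/t\equiv1\pmod{\pi}$, $g$ reduces to a section of $\sO_{\sV_k}(2D'_k)$ vanishing (to order two) exactly along $D'_k=D'\otimes_R k$, so that locally $A=\sX\otimes_R k=\Spec\bigl(\sO_{\sV_k}[z]/(z^2-h^2)\bigr)=\Spec\bigl(\sO_{\sV_k}[z]/((z-h)(z+h))\bigr)$; since~$2$ is invertible, the local rings appearing here are of the form $\sO_{\sV_k}\times_{\sO_{D'_k}}\sO_{\sV_k}$, so globally~$A$ is the gluing of two copies of $\sV\otimes_R k$ along the Cartier divisor $D'\otimes_R k$ in the sense of \cite[\textsection1.1]{anantharaman}; when $\sV\otimes_R k$ is smooth, these are two smooth components meeting transversally along a smooth divisor, and so~$A$ has simple normal crossings.

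The lemma is genuinely elementary, so I do not expect a real obstacle. The two points that call for some care are the regularity of~$\sX$ along the branch divisor~$D$, where one must really use that~$D$ is regular (and not merely Cartier) to conclude that $z^2=uf$ defines a regular local ring, and the local analysis of~$X$ over the point $0\in\P^1_K$, where the map $X\to V$ becomes \'etale near the fiber over~$0$ because~$g$ has a pole of even order (namely~$2$) along~$D'_K$; the rest is routine bookkeeping with the cyclic-cover formalism.
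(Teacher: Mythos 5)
The paper itself gives no proof of this lemma (it is ``left to the reader''), so there is no official argument to compare against; your route via the explicit cyclic-cover presentation $z^2=uf$ is surely the intended one. Your treatment of regularity (using that $D$ is regular, so $f\notin\mathfrak m^2$ and the maximal ideal upstairs is generated by $z$ and the remaining parameters), of irreducibility, flatness and properness, and of the special fiber --- where the exact identity $\bar g=1$ is what forces $z^2-\bar h^2=(z-\bar h)(z+\bar h)$ rather than a twisted form --- is correct.

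There is, however, a genuine gap at the one point of part (1) that is not routine: the assertion that the left-hand vertical map $X\to\P^1_K$ (call the covering line $C$) is smooth above~$0$. Your justification --- $X\to V$ is \'etale near the fiber over~$0$, ``and hence $X\to\P^1_K$ is \'etale, in particular smooth'' --- cannot be right: a surface is never \'etale over a curve, and, more to the point, smoothness of $X\to C$ does not follow from \'etaleness of $X\to V$ by composing with $V\to\P^1_K$, because $V\to\P^1_K$ is precisely \emph{not} smooth over~$0$ (its fiber there is the multiple fiber $2D'_K$), so the composite $X\to V\to\P^1_K$ still has a non-reduced fiber over~$0$. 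What actually makes the statement true is the matching between the order-$2$ ramification of $C\to\P^1_K$ at~$0$ and the multiplicity~$2$ of the fiber of $V\to\P^1_K$ there: near the ramification point one has $t=(\text{unit})\cdot w^2$ on~$C$ and $t=(\text{unit})\cdot h^2$ on~$X$ (pulled back through the \'etale map $X\to V$), so $w$ pulls back to $(\text{unit})\cdot h$. This both defines the map $X\to C$ over the fiber above~$0$ --- you only construct it over $\P^1_K\setminus\{0\}$, where the square is cartesian, and a rational map from a surface to~$\P^1$ does not extend for free --- and shows that the fiber of $X\to C$ over the ramification point is the reduced smooth curve $\{h=0\}$, whence smoothness. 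Relatedly, your parenthetical ``away from~$D$ \dots\ in particular near~$D'$'' is false on the special fiber: $D\otimes_Rk$ and $D'\otimes_Rk$ have the same support (both lie over $t=0\in\P^1_k$, and indeed $D\otimes_Rk=2\,D'\otimes_Rk$ as divisors on $\sV\otimes_Rk$), so at points of $D'\otimes_Rk$ both $f$ and~$h$ vanish. This slip does not invalidate your regularity or special-fiber computations, which are carried out correctly at such points, but together with the smoothness issue it suggests you are picturing $D$ and~$D'$ as disjoint, which they are only on the generic fiber.
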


\subsection{Proof of Theorem~\ref{th:counterexample}}

We are now in a position to prove the first statement of Theorem~\ref{th:counterexample}.
Let $R=\C[[t]]$ and $K=\C((t))$.  Let $f:V\to \P^1_\C$ be the surface given by Proposition~\ref{prop:existsalphabeta}
applied to $k=\C$.
Let~$\sX$ denote the $R$\nobreakdash-scheme associated in \textsection\ref{subsec:persson}
to $\sV = V \otimes_\C R$ and to the morphism $\sV \to \P^1_R$ deduced from~$f$ by base change.
According to Lemma~\ref{lem:constrprop}, the generic fiber $X=\sX\otimes_R K$ is a smooth projective surface over~$K$.
By Lemma~\ref{lem:simplyconnected} and
Lemma~\ref{lem:constrprop}~(1), it is simply connected.
As~$\sX$ is regular, the quotient of~$A_0(X)$ by its maximal divisible subgroup can be read off of $A=\sX \otimes_R k$, thanks to Theorem~\ref{th:criterionsepclosed}.
By Lemma~\ref{lem:constrprop}~(2), the variety~$A$ is reduced and has two irreducible components, which are both isomorphic to~$V$ and which meet
transversally along $f^{-1}(0)_\red$.
Moreover, the properties of~$V$ imply that the intersection number $(C\cdot f^{-1}(0)_\red)$ is even for any curve~$C$ lying on~$V$.
We are thus in the situation considered in Example~\ref{ex:firstex}~(iii); we conclude that $A_0(X)/2A_0(X)=\Z/2\Z$.

Let us turn to the second part of Theorem~\ref{th:counterexample}.
Proposition~\ref{prop:existsalphabeta} applied to $k=\bar \Q$ yields a simply connected smooth projective surface~$V$ defined over some number field $F\subset\bar\Q$, and a pencil $f:V \to \P^1_F$ of curves of genus~$1$
satisfying the properties (1)--(4) which appear in its statement.
We may extend~$V$ and~$f$ to a scheme~$\sV$ and a proper and flat morphism $f_\sO:\sV \to \P^1_\sO$
over the ring of $S$\nobreakdash-integers~$\sO$ of~$F$
for a large enough finite set~$S$
of places of~$F$,
and apply the construction of~\textsection\ref{subsec:persson} to the morphism deduced from $f_\sO$
by an extension of scalars from~$\sO$ to the localization of~$\sO$ at any maximal ideal.
This produces, for all but finitely many places~$v$ of~$F$, a smooth projective surface~$X$ over~$F$ and a regular proper model of $X \otimes_F F_v^\nr$ over~$\sO_v^\nr$
(where~$F_v^\nr$ denotes the maximal unramified extension of the completion of~$F$ at~$v$ and~$\sO_v^\nr$ is its ring of integers, with residue field $\bar \F_v$)
whose special fiber is the union of two copies of $\sV \otimes_\sO \bar \F_v$ glued along a smooth elliptic curve.
According to Lemma~\ref{lem:simplyconnected} and to Lemma~\ref{lem:constrprop}~(1), the surface~$X$ is simply connected.

In order to conclude as before by an application of Theorem~\ref{th:criterionsepclosed} and Example~\ref{ex:firstex}~(iii),
we must ensure that property~(3) of Proposition~\ref{prop:existsalphabeta} still holds for the pencil
\begin{align}
\label{eq:pencilmodv}
f_{\bar \F_v}:\sV \otimes_{\sO} \bar \F_v \to \P^1_{\bar \F_v}
\end{align}
obtained by reducing~$f_\sO$ modulo~$v$.
To this end we need to incorporate into the proof of Proposition~\ref{prop:existsalphabeta} some control over the reduction of the $2$\nobreakdash-torsion
classes in the Brauer group of the total space of the Jacobian fibration of~$f$.

Let $p:E\to \P^1_F$ denote the elliptic~$K3$ surface chosen at the beginning of the proof of Proposition~\ref{prop:existsalphabeta}.
If~$v$ is a place of~$F$ of good reduction for~$E$, we let $E_{\bar\F_v}$ denote the surface over $\bar \F_v$ obtained by reducing~$E$.

\begin{lem}
\label{lem:existsalpha}
There exists an $\alpha \in \Br(E \otimes_F \bar \Q)$ of order~$4$ whose image in $\Br(E_{\bar\F_v})$ has order~$4$ for infinitely many places~$v$ of~$F$.
\end{lem}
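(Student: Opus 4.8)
The plan is to replace the surface~$E$ from the proof of Proposition~\ref{prop:existsalphabeta} by a more convenient one and then reduce Lemma~\ref{lem:existsalpha} to a statement about Néron--Severi ranks of reductions, which I settle by a Chebotarev argument.

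Since that proof only used that~$E$ is \emph{some} elliptic K3 surface over~$F$ (with a chosen section and with $p^{-1}(0)$ smooth), I am free to take for~$E$ a Kummer surface $\mathrm{Km}(E_1\times E_2)$ attached to two non-isogenous non-CM elliptic curves $E_1,E_2$, equipped with one of its obvious elliptic pencils with section; after enlarging~$F$ I may also arrange that this pencil has a smooth fibre above an $F$-rational point of~$\P^1$ and that $\mathrm{Gal}(\bar F/F)$ acts trivially on $\NS(\bar E)$, where $\bar E=E\otimes_F\bar\Q$. The geometric Picard number $\rho=\rho(\bar E)$ is then~$18$, in particular \emph{even}. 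Put $\ell=2$, write $L=H^2(\bar E,\Zl(1))$, a free $\Zl$-module of rank~$22$ with its cup-product form and its Galois action, and $N=\NS(\bar E)\otimes\Zl\subset L$, a \emph{saturated} sublattice of rank~$\rho$. By the Kummer sequence, $\Pic=\NS$ for K3 surfaces and torsion-freeness of $H^\ast$ of a K3 surface, $\Br(\bar E)[\ell^\infty]\cong H^2(\bar E,\Ql/\Zl(1))/(N\otimes\Ql/\Zl)\cong(\Ql/\Zl)^{22-\rho}$, which is nonzero, so a class~$\alpha$ of order~$4$ exists. For a finite place~$v$ of~$F$ of good reduction with residue characteristic~$\neq\ell$, smooth proper base change identifies~$L$ with $H^2(E_{\bar\F_v},\Zl(1))$ compatibly with cup products, cycle classes and the specialization $\NS(\bar E)\hookrightarrow\NS(E_{\bar\F_v})$; the same Kummer-sequence description then shows that $\Br(\bar E)[\ell^\infty]\to\Br(E_{\bar\F_v})[\ell^\infty]$ is an isomorphism as soon as $\NS(E_{\bar\F_v})\otimes\Zl$ still has rank~$\rho$, because it then equals the saturation of $N\otimes\Ql$ in~$L$, that is, equals~$N$. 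So it suffices to produce infinitely many~$v$ with $\rho(E_{\bar\F_v})=\rho$: any order-$4$ class~$\alpha$ then survives with order~$4$ at all of them.

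To find such~$v$, set $W=(L\otimes\Ql)/(N\otimes\Ql)$, the $(22-\rho)$-dimensional transcendental quotient, a Galois representation pure of weight~$0$ and unramified outside a finite set. As $\NS(\bar E)$ is now fixed by Galois, its specialization consists of divisor classes defined over the residue field of~$v$, so $N\otimes\Ql$ is fixed pointwise by $\mathrm{Frob}_v$ and $\mathrm{Frob}_v$ acts on~$W$. If $\NS(E_{\bar\F_v})\otimes\Ql$ strictly contained $N\otimes\Ql$, any class in the difference, being represented by a divisor over a finite field, would be fixed by a power of $\mathrm{Frob}_v$ and would map to a nonzero such fixed vector of~$W$, so $\mathrm{Frob}_v$ would have a root-of-unity eigenvalue on~$W$. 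Conversely, by Deligne's Weil bounds the eigenvalues of $\mathrm{Frob}_v$ on~$W$ are algebraic of degree $\leq 22-\rho$ with all archimedean absolute values equal to~$1$, so any root-of-unity eigenvalue has order bounded by an absolute constant~$N_0$. It therefore suffices to find infinitely many~$v$ for which $\mathrm{Frob}_v$ avoids the closed subset $Z=\bigcup_\zeta\{g:\det(g-\zeta)=0\}$ of $\mathrm{GL}(W)$, the union running over the \emph{finitely many} roots of unity~$\zeta$ of order $\leq N_0$. Let~$G$ be the (compact $\ell$-adic) image of Galois in $\mathrm{GL}(W)$ and $\bar G$ its Zariski closure. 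By the Mumford--Tate conjecture in this case (a consequence of Serre's theorem for~$E_1$ and~$E_2$; in general one would invoke André--Tankeev for K3 surfaces), together with the fact that the transcendental Hodge structure carries no $(1,1)$-class, $\bar G$ is positive-dimensional and has no nonzero fixed vector in~$W$; since moreover $1\in\bar G$ has no eigenvalue $\zeta\neq1$, each $\{g\in\bar G:\det(g-\zeta)=0\}$ is a \emph{proper} closed subvariety of~$\bar G$, so $Z\cap G$ is nowhere dense in~$G$. As the Frobenius elements are dense in~$G$ (Chebotarev), infinitely many of them avoid~$Z$, and for the corresponding places~$v$ (of residue characteristic $\neq\ell$) we get $\rho(E_{\bar\F_v})=\rho$. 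One may alternatively obtain the same infinitude of places by quoting Charles's theorem on Picard numbers of reductions of K3 surfaces, valid here since~$\rho$ is even.

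The main obstacle is precisely this input that the transcendental monodromy is infinite — equivalently, that the Picard number of the reduction fails to jump at infinitely many places — which I supply via the Mumford--Tate conjecture (or Charles's theorem). It is genuinely needed: when $\rho(\bar E)$ is \emph{odd} the Picard number jumps at every place of good reduction, a non-supersingular K3 surface over a finite field having even geometric Picard number; this is why~$E$ must be chosen with even geometric Picard number, and — since an order-$2$ Brauer class would survive only to order~$2$ — it is ultimately the reason Proposition~\ref{prop:existsalphabeta} asks for degrees divisible by~$4$, hence for a Brauer class of order~$4$. A minor additional point is the elementary existence of a Kummer surface $\mathrm{Km}(E_1\times E_2)$ over a number field with a smooth pencil fibre over a rational point.
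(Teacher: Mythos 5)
Your argument is essentially viable, but it takes a genuinely different and considerably heavier route than the paper's, and it proves a slightly modified statement. The paper's proof never controls the Picard number of the reductions: since $\Br(E\otimes_F\bar\Q)$ is divisible, it suffices to find a single class $\alpha'$ of order~$2$ surviving at infinitely many~$v$ (any $\alpha$ with $2\alpha=\alpha'$ then works); if no such $\alpha'$ existed then, ${}_{2}\Br(E\otimes_F\bar\Q)$ being finite and the specialization map ${}_{2}\Br(E\otimes_F\bar\Q)\to{}_{2}\Br(E_{\bar\F_v})$ being surjective (it is a quotient of the specialization isomorphism $H^2(E\otimes_F\bar\Q,\Z/2\Z)\isoto H^2(E_{\bar\F_v},\Z/2\Z)$), one would get ${}_{2}\Br(E_{\bar\F_v})=0$, i.e.\ $\rho(E_{\bar\F_v})=22$, for almost all~$v$ --- impossible by Bogomolov--Zarhin. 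This works for an \emph{arbitrary} elliptic $K3$ surface~$E$ and needs only the crude fact that the reduction is non-supersingular infinitely often. You instead fix a special~$E$ (a Kummer surface with $\rho=18$) and prove the much stronger fact that $\rho(E_{\bar\F_v})=\rho$ infinitely often, which indeed yields an isomorphism on $2$-primary Brauer groups at those places and hence survival of \emph{every} order-$4$ class. This is legitimate for the application, since the choice of~$E$ in the proof of Proposition~\ref{prop:existsalphabeta} was free; but note that you are then proving the lemma only for your chosen~$E$ rather than for the one already fixed there, and your method genuinely requires the parity hypothesis, whereas the paper's does not.

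Two further points. The deep input you invoke (Charles's theorem, or the Mumford--Tate conjecture for $\mathrm{Km}(E_1\times E_2)$ via Serre's open image theorem) is far stronger than anything the paper uses; that is the price of the quantitative approach. More importantly, there is a real soft spot in your Chebotarev step: for $\zeta=1$, the properness of $\{g\in\bar G:\det(g-1)=0\}$ in~$\bar G$ does \emph{not} follow from the absence of nonzero $\bar G$-invariant vectors in~$W$ --- the adjoint representation of $\mathrm{SL}_2$ has no invariants although every element fixes a nonzero vector. One must actually exhibit an element of the image of Galois with no eigenvalue~$1$ on $W\cong H^1(\bar E_1,\Ql)\otimes H^1(\bar E_2,\Ql)(1)$, which the open image theorem for the non-isogenous pair $(E_1,E_2)$ does provide (eigenvalues $a_ib_j/c$ with $a_1a_2=b_1b_2=c$ can all be made $\neq 1$); alternatively your fallback citation of Charles's theorem closes the gap. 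As written, however, that step is not justified.
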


\begin{proof}
As the group $\Br(E \otimes_F \bar \Q)$ is divisible, it suffices to exhibit an $\alpha' \in \Br(E \otimes_F \bar \Q)$ of order~$2$ whose image in $\Br(E_{\bar\F_v})$ is nonzero
for infinitely many places~$v$.  If such an~$\alpha'$ did not exist, the specialization map $\tors{2}\Br(E \otimes_F \bar \Q) \to \tors{2}\Br(E_{\bar\F_v})$ between the corresponding $2$\nobreakdash-torsion subgroups
would have to vanish identically
for all but finitely many places~$v$,
since $\tors{2}\Br(E\otimes_F\bar \Q)$ is finite.  Now this map is surjective as it is a quotient of the specialization isomorphism
$H^2(E \otimes_F \bar \Q,\Z/2\Z) \isoto H^2(E_{\bar \F_v},\Z/2\Z)$.  Therefore we would have $\tors{2}\Br(E_{\bar\F_v})=0$ for all but finitely many places~$v$.
As~$E_{\bar\F_v}$ is a~$K3$ surface, this, in turn, would imply that the Picard number of $E_{\bar\F_v}$~is equal to~$22$
for all but finitely many places~$v$ of~$F$ (see~\eqref{eq:h1br}, which is valid in positive characteristic as well when restricted to the prime to~$p$ torsion).
But this is well known to be impossible (see \cite[Theorem~0.1]{bogomolovzarhin} for a stronger result).
\end{proof}

Recall that the surface $V \otimes_F \bar \Q$ was obtained, in the proof
of Proposition~\ref{prop:existsalphabeta}, as the minimal proper regular model, over~$\P^1_{\bar \Q}$, of the torsor under~$E_\eta$ classified
by a certain element $\gamma=\alpha+\beta$ in the corresponding Galois cohomology group, where~$\alpha$ was an arbitrary class of order~$4$ in~$\Br(E \otimes_F \bar \Q)$.
Let us now run the proof of Proposition~\ref{prop:existsalphabeta}
with the class~$\alpha$ given by Lemma~\ref{lem:existsalpha}
instead of an arbitrary~$\alpha$ of order~$4$.
As~$\beta$ has order~$2$, the conclusion of Lemma~\ref{lem:existsalpha} ensures that the reduction of~$\gamma$ modulo~$v$ has order~$4$ for infinitely many places~$v$ of~$F$.
In other words, for the surface~$V$ obtained by this procedure, the pencil~\eqref{eq:pencilmodv}
does satisfy property~(3) of Proposition~\ref{prop:existsalphabeta} for infinitely many places~$v$. Thus the proof of Theorem~\ref{th:counterexample} is complete.

\begin{rmk}
The degeneration $\sX\to \Spec(\C[[t]])$ constructed in the proof
of Theorem~\ref{th:counterexample} is a counterexample to \cite[Proposition~2.5.7]{persson}.
\end{rmk}

\subsection{A new example over a \texorpdfstring{$p$\nobreakdash-adic}{𝑝-adic} field}

Let~$X$ be the surface given by Theorem~\ref{th:counterexample} over the maximal unramified extension~$K$ of a $p$\nobreakdash-adic field,
for some~$p>2$.
There exist a $p$\nobreakdash-adic field~$K_0$ contained in~$K$, a surface~$X_0$ over~$K_0$,
and a zero-cycle~$z_0$ on~$X_0$,
such that $X=X_0 \otimes_{K_0}K$ and the image of~$z_0$
in $A_0(X)/2A_0(X)$ is nonzero.
We can moreover assume that $X_0(K_0)\neq\emptyset$,
since $X(K)\neq\emptyset$.
The cycle class of~$z_0$ belongs
to the kernel of the natural map $H^4(X_0,\Z/2\Z) \to H^4(X,\Z/2\Z)$.
After replacing~$K_0$ with a larger $p$\nobreakdash-adic field contained in~$K$,
we may assume that the cycle class of~$z_0$ itself vanishes,
in view of the fact that
$H^4(X,\Z/2\Z)=\varinjlim H^4(X_0 \otimes_{K_0}K_1,\Z/2\Z)$
where the direct limit ranges over all finite subextensions~$K_1/K_0$ of~$K/K_0$
(see \cite[Chapter~III, Lemma~1.16]{milne}).

We thus obtain an example of a smooth projective surface~$X_0$ defined over a $p$\nobreakdash-adic field~$K_0$,
with~$X_0(K_0)\neq\emptyset$,
such that
the cycle class map
\begin{align}
\CH_0(X_0)/n\CH_0(X_0) \to H^4(X_0,\Z/n\Z(2))
\end{align}
fails to be injective for some integer~$n>1$.
Another example of such a surface was given by Parimala and Suresh~\cite[\textsection8]{parimalasuresh}.
Contrary to the example of \emph{loc.\ cit.},
the surface~$X_0$ is simply connected and the kernel of the cycle class map remains
nontrivial over the maximal unramified extension of~$K_0$.

\begin{rmk}
\label{rk:nonuniruledai}
On the other hand, the defect of injectivity of the cycle class map with \emph{integral} coefficients does not descend from~$X$ to~$X_0$.
In fact, it follows from Theorem~\ref{th:kfinite} that the cycle class map $\CH_0(X_0) \otimeshat \Zl \to H^4(X_0,\Zl(2))$ is injective
for any $\ell \neq p$.  To verify this claim, recall that~$X_0$ admits a proper regular model whose special fiber has simple normal crossings
and is the union of two copies of a surface~$V_0$ carrying a pencil~$f_0$ of curves of genus~$1$; moreover,
the total space of the Jacobian fibration of~$f$ is a~$K3$ surface~$E_0$.
The Tate conjecture holds for~$E_0$ (see~\cite{artinsd}).
With the help of the correspondences between~$E_0$ and~$V_0$
defined in \cite[Proof of Prop.~4]{blochkaslieberman},
one deduces that it also holds for~$V_0$. Thus all of
the hypotheses of Theorem~\ref{th:kfinite} are satisfied.
\end{rmk}

\appendix\newpage
\section{The structure of algebraic \texorpdfstring{$1$}{1}-cycles for regular schemes over a strictly henselian discrete valuation ring\texorpdfstring{\\\medskip}{ (}by Spencer Bloch\texorpdfstring{}{)}}
\smallskip

\label{appendix}

This appendix gives a simpler proof (without use of blowups) of Theorem 1.16 in \cite{saitosato} which is used in the paper of H\'el\`ene Esnault and Olivier Wittenberg. I am indebted to them for suggestions and corrections and also for showing me drafts of their paper and providing me with references \cite{GLL} and \cite{saitosato}.

Let $T=\Spec(\Lambda)$, where $\Lambda$ is a strictly henselian excellent discrete valuation ring with quotient field $K$ of characteristic $0$ and separably closed residue field $k= \Lambda/\pi\Lambda$. Let $\sX$ be a regular scheme, flat and projective over $T$ with fibre dimension $d$. Let 
\eq{}{A:= \sX\times_T \Spec k
} 
be the closed fibre. 
Let $\CH_1(\sX)$ be the Chow group of algebraic $1$\nobreakdash-cycles on $\sX$.
Let $F:= \bigoplus_i \Z\cdot A_i$ be the free abelian group on the irreducible components~$A_i$ of~$A$. We have a map \cite[\textsection20.1]{fulton}
\eq{2}{\deg: \CH_1(\sX) \to F^\vee = \text{Hom}(F,\Z);\quad z \mapsto \{A_i \mapsto \deg(z\cdot A_i)\}.
}
Define
\eq{}{\CH_1(\sX)^0 := \ker(\deg) \subset \CH_1(\sX). 
}

\begin{thm}[\cite{saitosato}, Theorem 1.16] \label{app:thm1} Let notation be as above, and suppose given $n\ge 2$ with $1/n\in k$. Then the cycle map yields an isomorphism
\eq{}{\CH_1(\sX)/n\CH_1(\sX) \cong H^{2d}_{\text{\'et}}(\sX, \mu_n^{\otimes d}).
}
\end{thm}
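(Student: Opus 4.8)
The plan is to prove the statement by induction on the fibre dimension~$d$, using generic hyperplane sections to cut down to the case $d=1$, where the cycle map is the one coming from the Kummer sequence. The geometric engine of the reduction is a Bertini-type theorem over the discrete valuation ring~$\Lambda$, supplied by the moving lemma of Gabber--Liu--Lorenzini \cite{GLL}; this is what replaces the blowups of \cite{saitosato}. It will be convenient to isolate, as a first ingredient, a lemma to the effect that for a sufficiently ample linear system on a fixed projective embedding $\sX\hookrightarrow\P^N_T$, the intersection of $\sX$ with $d-1$ general members is irreducible, regular, proper and flat over~$T$ of fibre dimension~$1$, and may moreover be arranged to contain (resp.\ to avoid) any prescribed finite family of integral curves (resp.\ of closed points not on those curves) --- this is where \cite{GLL} is used.

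\emph{Base cases.} If $d=0$ then $\sX$ is finite flat over~$T$, hence the spectrum of a discrete valuation ring, and both groups are canonically $\Z/n\Z$. If $d=1$ then $\sX$ is regular of dimension~$2$, so $\CH_1(\sX)=\CH^1(\sX)=\Pic(\sX)$ and $H^{2d}_{\et}(\sX,\mu_n^{\otimes d})=H^2_{\et}(\sX,\mu_n)$; the Kummer sequence identifies the cycle map with the injection $\Pic(\sX)/n\Pic(\sX)\hookrightarrow H^2_{\et}(\sX,\mu_n)$ whose cokernel is $\tors{n}{\Br(\sX)}$, so it remains to see that $\tors{n}{\Br(\sX)}=0$. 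By the proper base change theorem $H^2_{\et}(\sX,\mu_n)=H^2_{\et}(A,\mu_n)=H^2_{\et}(A_\red,\mu_n)$, and since $A_\red$ is a proper curve over the separably closed field~$k$ its normalization computes $H^2$, so $H^2_{\et}(A,\mu_n)\cong(\Z/n\Z)^I$ with one factor per irreducible component of~$A$. Under this identification the map out of $\Pic(\sX)/n\Pic(\sX)$ sends a line bundle to the tuple of degrees of its restrictions to the components of~$A$, and producing for each component~$A_i$ a curve on~$\sX$ meeting~$A$ transversally in one point of~$A_i$ (a section, or a small multisection along multiple components, of $\sX\to T$; here one uses the Bertini lemma above) shows this map is onto. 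Hence $\tors{n}{\Br(\sX)}=0$. (Alternatively this case follows from Tsen's theorem together with the injectivity of the Brauer group of the regular scheme~$\sX$ into that of its function field.)

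\emph{Inductive step, $d\ge 2$.} Fix $\sX\hookrightarrow\P^N_T$ and a general hyperplane section $\sY=\sX\cap H$, which by the Bertini lemma is irreducible, regular, proper and flat over~$T$ of fibre dimension $d-1$, and may be taken to contain any prescribed finite union of integral curves on~$\sX$. The complement $\sX\setminus\sY$ is affine over~$T$ of relative dimension~$d$; since~$\Lambda$ has cohomological dimension~$\le 1$ for $\ell\mid n$, the relative form of Artin's affine vanishing theorem gives $\mathrm{cd}_\ell(\sX\setminus\sY)\le d+1$. The Gysin map $H^{2d-2}_{\et}(\sY,\mu_n^{\otimes(d-1)})\to H^{2d}_{\et}(\sX,\mu_n^{\otimes d})$ is therefore surjective (as $2d>d+1$), and is injective when $d\ge 3$ (as then $2d-1>d+1$). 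Compatibility of the Gysin map with cycle maps, the induction hypothesis $\CH_1(\sY)/n\CH_1(\sY)\isoto H^{2d-2}_{\et}(\sY,\mu_n^{\otimes(d-1)})$, and the fact that every class in $\CH_1(\sX)$ is represented by a cycle lying on some such~$\sY$ (hence is a push-forward from~$\sY$) together yield surjectivity of the cycle map for~$\sX$ for all $d\ge 2$, and injectivity for $d\ge 3$. Thus the induction closes once the case $d=2$ is settled.

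\emph{The case $d=2$, and the main obstacle.} When $d=2$, $\sX$ is a regular threefold proper over~$T$ and the Gysin map $H^2_{\et}(\sY,\mu_n)\to H^4_{\et}(\sX,\mu_n^{\otimes 2})$ from the surface~$\sY$ need not be injective, so the injectivity of $\CH_1(\sX)/n\CH_1(\sX)\to H^4_{\et}(\sX,\mu_n^{\otimes 2})$ requires a genuinely new argument rather than a single restriction to a hyperplane section. Here I would argue by moving: given a $1$-cycle~$z$ on~$\sX$ whose class in $H^4_{\et}(\sX,\mu_n^{\otimes 2})$ vanishes, deform~$z$ within its rational equivalence class and place it on varying surfaces $\sY\subset\sX$, then compare the localization and Gysin sequences of the pair $(\sX,\sY)$ with those of their special fibres --- invoking the case $d=1$ for~$\sY$ and keeping track of the degree homomorphism $\deg\colon\CH_1(\sX)\to F^\vee$ and of the subgroup $\CH_1(\sX)^0$ --- so as to conclude that the class of~$z$ on a suitable~$\sY$ already lies in $n\CH_1(\sY)$, whence $z\in n\CH_1(\sX)$. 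This $d=2$ injectivity is, I expect, where the real work lies --- equivalently, that the cycle map for $1$-cycles on a regular threefold proper over a strictly henselian discrete valuation ring with separably closed residue field is injective modulo~$n$ --- along with the need to carry out the Bertini and moving statements of \cite{GLL} over~$\Lambda$ while retaining regularity, irreducibility, flatness and sufficient control of the special fibre. Granting these, the theorem follows by assembling the base cases, the surjectivity, and this injectivity.
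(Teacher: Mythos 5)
There is a genuine gap, and it sits exactly where you suspect it does: the injectivity of the cycle map for $d\ge 2$. Your base case $d=1$ and your surjectivity argument (producing, for each component $A_i$, a transversal curve through a point of $A_i$ not on the other components) are essentially the paper's, and your cohomological-dimension argument plausibly gives surjectivity of the Gysin map for $d\ge2$ and injectivity for $d\ge3$. But the $d=2$ injectivity is only gestured at, and since your induction hypothesis for fibre dimension $d=3$ already requires the full isomorphism in fibre dimension $2$, the entire inductive chain rests on the missing case. The sketch you give for $d=2$ does not close as stated: from the vanishing of the class of $z$ in $H^4_{\et}(\sX,\mu_n^{\otimes 2})$ you cannot deduce the vanishing of its class in $H^2_{\et}(\sY,\mu_n)$ precisely because the Gysin map is not injective there, so "the class of $z$ on a suitable $\sY$ already lies in $n\CH_1(\sY)$" is not something you can extract from the hypothesis by restriction. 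A secondary issue is your Bertini input: a regular hypersurface section of $\sX$ containing a prescribed integral curve need not exist when that curve has singular points of large embedding dimension, so even the geometric set-up of the induction is not secured.

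The paper's proof avoids all of this by never inducting on $d$. The key observation is that, by proper base change and the structure of the reduced special fibre, $H^{2d}_{\et}(\sX,\mu_n^{\otimes d})\cong\Hom(F,\Z/n\Z)$ where $F=\bigoplus_i\Z\cdot A_i$; hence the theorem is equivalent to the surjectivity of $\deg:\CH_1(\sX)/n\to\Hom(F,\Z/n\Z)$ together with the divisibility of $\CH_1(\sX)^0=\Ker(\deg)$ prime to $\mathrm{char}(k)$ --- the relevant invariant of a $1$\nobreakdash-cycle is the tuple of intersection degrees $Z\cdot A_i$, not its cohomology class on an ambient surface. Divisibility is then proved for all $d$ at once: after moving $Z$ off $A$ and away from the double locus (this is where \cite{GLL} enters), one normalizes its support, embeds the normalization $\coprod S_\mu$ in $\P^N_T$, and cuts a complete intersection $\sC\subset\sX\times_T\P^N_T$ through $\coprod S_\mu$ that is regular of relative dimension $1$ over $T$ and meets the strata of $A_\red\times_k\P^N_k$ transversally --- the passage to the normalization inside the larger ambient space is exactly what makes this regular curve through the cycle exist. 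The components of the special fibre of $\sC$ are the pullbacks $f^*A_i$, so the projection formula gives $S\cdot f^*A_i=Z\cdot A_i=0$, the case $d=1$ applied to $\sC$ shows $S$ is divisible, and $Z=f_*S$ is divisible as well. If you want to salvage your plan, the step to supply is precisely this reduction of the degree-zero condition along a relative curve through the cycle, in place of any injectivity statement for Gysin maps.
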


\begin{lem}With notation as above, $\CH_1(\sX)^0$ is divisible prime to the characteristic of the residue field $k$. 
\end{lem}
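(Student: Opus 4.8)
\emph{Overall plan.}  Fix an integer $n\ge 2$ invertible in $k$; the goal is to show that $\CH_1(\sX)^0$ is divisible by $n$.  Since $F^\vee$ is torsion free it suffices to prove that every class in $\CH_1(\sX)^0$ already lies in $n\CH_1(\sX)$: indeed, if $z=nz''$ with $z\in\CH_1(\sX)^0$ and $z''\in\CH_1(\sX)$, then $n\deg(z'')=\deg(z)=0$ forces $\deg(z'')=0$.  The plan is to treat the case of relative dimension $d=1$ first and then reduce the general case to it.

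\emph{The case $d=1$.}  Here $\sX$ is a regular surface proper over $T$, so $\CH_1(\sX)=\Pic(\sX)$, and $\CH_1(\sX)^0$ consists of the line bundles whose degree on every irreducible component of the closed fibre $A$ vanishes.  After the harmless reduction to the case where the generic fibre of $\sX$ is geometrically connected (so that $f_*\sO_{\sX}=\sO_T$, as $\sX$ is regular hence normal), the vanishing $\Pic(T)=\Br(T)=0$—valid because $\Lambda$ is strictly henselian—identifies $\Pic(\sX)$ with $\mathrm{Pic}_{\sX/T}(T)$ and $\CH_1(\sX)^0$ with $\Pic^0_{\sX/T}(T)$, where $\Pic^0_{\sX/T}$ is the identity component of the relative Picard scheme.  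As deformations of line bundles on a curve are unobstructed (the obstruction lies in an $H^2$ of a one-dimensional scheme), $G:=\Pic^0_{\sX/T}$ is a smooth commutative group scheme of finite type over $T$.  I would then invoke two standard facts about such a $G$ over a henselian discrete valuation ring: first, the reduction map $G(\Lambda)\to G(k)$ is surjective and its kernel is uniquely divisible by $n$ (on the formal group completing $G$ along the origin of its special fibre, multiplication by $n$ has invertible linearisation, hence is an automorphism); second, $G(k)=\Pic^0_{A/k}(k)$ is the group of rational points of a smooth connected commutative algebraic group over the separably closed field $k$, on which $[n]$ is a surjective étale isogeny, so that $G(k)$ is divisible by $n$.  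Combining the two shows that $\CH_1(\sX)^0=G(\Lambda)$ is divisible by $n$.

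\emph{Reducing $d\ge 2$ to $d=1$.}  Let $z\in\CH_1(\sX)^0$.  By the moving lemma \cite{GLL}, \cite[\textsection20.1]{fulton} I would replace $z$ by a rationally equivalent cycle $z'=\sum_j a_jC_j$, with the $C_j$ integral curves, meeting $A$ properly; since $\dim A=d=\dim\sX-1$, this forces every $C_j$ to dominate $T$.  Using the Bertini-type theorems over discrete valuation rings of \cite{GLL} I would then choose a closed subscheme $\sY\subseteq\sX$, regular, flat and projective over $T$ of relative dimension $1$, with geometrically connected generic fibre, containing $\bigcup_j C_j$, and such that each irreducible component of $\sY\times_T k$ has the form $\sY\cap A_i$.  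For such a component $\Gamma=\sY\cap A_i$, compatibility of intersection products under the closed immersion $\sY\hookrightarrow\sX$ and the fact that $A_i$ is a Cartier divisor on the regular scheme $\sX$ give $(z'\cdot\Gamma)_{\sY}=(z'\cdot A_i)_{\sX}$ as a zero-cycle class; and
\[
\deg\bigl((z'\cdot\Gamma)_{\sY}\bigr)=\deg\bigl((z'\cdot A_i)_{\sX}\bigr)=0
\]
because $z'$ is rationally equivalent on $\sX$ to $z\in\CH_1(\sX)^0$.  Hence the class of $z'$ in $\CH_1(\sY)=\Pic(\sY)$ lies in $\CH_1(\sY)^0$; by the case $d=1$ applied to $\sY$ it equals $n\xi$ for some $\xi\in\CH_1(\sY)$, and pushing forward along $\sY\hookrightarrow\sX$ gives $z=n\cdot\iota_{\sY*}\xi\in n\CH_1(\sX)$.

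\emph{The main obstacle.}  The real work is the construction just used: producing a regular relative curve $\sY$ through the (a priori singular) support of a horizontal representative of $z$, with closed fibre compatible with that of $\sX$.  This is exactly what the moving and Bertini lemmas of \cite{GLL} supply, and it is what lets the argument dispense with any blow-up of $\sX$.  The remaining points—the reduction to geometrically connected generic fibres, and the irreducibility of the $\sY\cap A_i$ (which holds since each $A_i$, being integral over the separably closed field $k$, is geometrically irreducible)—are routine, and I would handle them without comment.
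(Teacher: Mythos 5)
The reduction from $d\ge 2$ to $d=1$ has a genuine gap at its central step: the regular relative curve $\sY\subseteq\sX$ containing $\bigcup_j C_j$ need not exist. After moving, the integral curves $C_j$ are horizontal but can still be badly singular, and a point $p\in C_j$ at which the Zariski tangent space of $C_j$ has dimension $\ge 3$ cannot lie on any regular $2$\nobreakdash-dimensional closed subscheme $\sY\subseteq\sX$, since the tangent space of $C_j$ at $p$ would be contained in that of $\sY$, forcing the latter to have dimension $\ge 3>\dim\sY$. Neither the moving lemma of \cite{GLL} (which moves cycles off closed subsets of small dimension --- in the paper it is used to make the $Z_\mu$ avoid the double locus $A_i\cap A_j$ --- but does not improve the singularities of their support) nor any Bertini theorem with assigned base locus can produce a regular complete intersection through a singular base locus of this kind. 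This is exactly the obstacle the paper's proof is built to circumvent: one normalizes the $Z_\mu$ (the normalizations $S_\mu$ are regular and finite over $T$ by excellence), embeds $\coprod S_\mu$ into $\P^N_T$, and performs the Bertini construction inside $\sX\times_T\P^N_T$, where the base locus $\coprod S_\mu$ \emph{is} a regular closed subscheme; the resulting regular relative curve $\sC$ maps to $\sX$ by the projection $f$, and the projection formula $S\cdot f^*A_i=f_*S\cdot A_i=Z\cdot A_i$ transports the degree-zero condition to $\sC$ and the divisibility back to $\sX$ via $Z=f_*S$. Your plan also glosses over the irreducibility of the pieces $\sY\cap A_i$ of the special fibre of the curve (needed so that degree zero against $A_i$ really gives degree zero on each irreducible component of that special fibre); this is not automatic for a complete intersection subject to base conditions, and the paper arranges it via Jouanolou's Bertini irreducibility theorem, after first reducing to the case where $A_\red$ has simple normal crossings by Gabber--de Jong.

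Your case $d=1$ is correct in substance but takes a different and heavier route than the paper: you invoke representability and smoothness of $\Pic^0_{\sX/T}$, surjectivity of reduction for smooth group schemes over a henselian base with uniquely $n$\nobreakdash-divisible kernel, and divisibility of $\Pic^0_{A/k}(k)$. The paper instead observes that the Kummer sequence embeds $\Pic(\sX)/n\Pic(\sX)$ into $H^2(\sX,\mu_n)\cong H^2(A,\mu_n)\cong\Hom(F,\Z/n\Z)$, the composite being the mod~$n$ degree map, so a class of degree zero on every $A_i$ dies in $\Pic(\sX)/n\Pic(\sX)$. If you repair the $d\ge 2$ step as above, either version of the $d=1$ case will do, but the Kummer argument avoids all questions about the relative Picard functor.
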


\begin{proof}[Proof of theorem]
Let $p=\text{char}(k)$. Since $k$ is assumed to be separably closed, given $i$, we can find $x\in A_i$ a $k$-point not lying in any other $A_j$ (\cite[ 17.15.10(iii))]{ega44}. Since $\sX$ is regular, we can find an irreducible closed subscheme $\sV \subset \sX$ of dimension $1$ meeting $A_i$ transversally at $x$. Since $\Lambda$ is strictly henselian, $\sV$ is local, meeting $A$ only at $x$. We have $\sV\cdot A_i = 1$ and $\sV\cdot A_j=0,\ j\neq i$. It follows that $\deg: \CH_1(\sX)/n\CH_1(\sX) \surj \text{Hom}(F,\Z/n\Z)$, and assuming the lemma, this map is an isomorphism. On the other hand, by base change, writing $A=\bigcup_i A_i$ we have
\eq{6b}{H^{2d}_{\text{\'et}}(\sX, \mu_n^{\otimes d})\cong H^{2d}_{\text{\'et}}(A, \mu_n^{\otimes d}) \cong \text{Hom}(F, \Z/n\Z).
}
The assertion of the theorem follows.
\end{proof}

\begin{proof}[Proof of Lemma]
By an application of the Gabber--de Jong theorem on alterations \cite[Theorem~1.4]{Illusie}
and a trace argument, we may
assume that the reduced special fiber $A_{\rm red}$ is a simple normal crossings
divisor on $\sX$.

Suppose first the fibre dimension $d=1$ so $\sX$ is a curve over $T$. In this case, $\CH_1(\sX) = \text{Pic}(\sX)$ and the assertion of the lemma follows from the Kummer sequence
\eq{6}{\text{Pic}(\sX) \xrightarrow{n}  \text{Pic}(\sX) \to H^2(\sX, \mu_n)
}
together with \eqref{6b} above. 

Now take $d\ge 2$. Let $Z=\sum c_\mu Z_\mu$ be a $1$-cycle on $\sX$ such that the $Z_\mu$ are integral schemes and $\deg(Z\cdot A_i)=0$ for all $i$.
By an elementary moving lemma, we can assume none of the $Z_\mu$ are supported on $A$.  Indeed, assuming $Z_\mu \subset A$, we take intersections of hypersurface sections to construct an irreducible closed subscheme $\sY \subset \sX$ of dimension $2$ such that $Z_\mu \subset \sY\cap A$ and $\sY$ meets the irreducible components of $A$ transversally at the generic point of $Z_\mu$. In particular, $\sY$ is normal at the generic point of $Z_\mu$ so we can find a function $g$ in the function field of $\sY$ which has multiplicity $1$ along $Z_\mu$ and is a unit at every other component of $\sY\cap A$. The cycle $-(g)_\sY+Z_\mu$ is then rationally equivalent to $Z_\mu$ and has no component on $A$.  

A more serious moving lemma (\cite{GLL}, Theorem~2.3) permits us to assume that the $Z_\mu$ do not meet the
higher strata of the fibre $A$, i.e.\ for all $\mu$ and all $i\neq j$ we have $Z_\mu\cap A_i\cap A_j = \emptyset$.

Let $S_\mu \to Z_\mu$ be the normalization. Because $\Lambda$ is excellent,  $S_{\mu}$ is finite over $T$ \cite[Prop.~(7.8.6)]{ega42}.
Thus for $N\gg 0$ we can find a closed immersion $\xi: \coprod S_\mu \inj \P^N_T$ of schemes over $T$. In this way, we can build a diagram
\eq{7a}{\begin{CD} \coprod  S_\mu @>\iota >\inj > \sX\times_{T}\P^N_{T} \\
@VVV  @V pr_\sX VV \\
\coprod Z_\mu @>>> \sX \\
@VVV @VVV \\
T@= T. 
\end{CD}
}
The map $\iota$ is a closed immersion of regular schemes. Let $I \subset \sO_{\sX\times_T \P^N_T}$ be the corresponding ideal. Fix a very ample line bundle $\sO_{\sX\times_T \P^N_T}(1)$.
We denote by $\sO_{A_{{\rm red}} \times_k \P^N_k}(1)$ its restriction to $A_{{\rm red}} \times_k \P^N_k$.

We consider complete intersections 
$$\sC=\sC(\sigma_1,\dotsc,\sigma_{d+N-1}) \subset \sX\times_T \P^N_T$$ 
defined by sections $\sigma_i \in \Gamma(\sX\times_T \P^N_T, I\sO(M))$ for $M\gg 0$.  

\begin{lem} \label{app:lem_curve}
There are sections $ \sigma_1,\dotsc,\sigma_{d+N-1}$ such that $\sC$ is regular of relative dimension~$1$ over $T$, and meets all faces of  the normal crossings divisor $A_{\text{red}}\times_k \P^N_k$ transversally.
\end{lem}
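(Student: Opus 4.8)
The plan is to produce the $\sigma_i$ by a Bertini-type argument with prescribed base locus, applied separately over the generic fibre and along each stratum of the special fibre. Write $P=\sX\times_T\P^N_T$; it is regular, projective and flat over $T$ of relative dimension $n:=d+N$, and $W:=\coprod_\mu S_\mu\subset P$ is a regular closed subscheme, finite over $T$, with ideal sheaf $I$. By the moving lemma already invoked, $W$ meets the special fibre only in finitely many points, all lying in the smooth locus of $A_\red\times_k\P^N_k$, while its generic fibre $W_K$ is a regular $0$\nobreakdash-dimensional scheme. One first fixes $M\gg 0$ so that $I\sO(M)$ is globally generated and so that, for the generic fibre $P_K$ and for each of the finitely many faces $(A_{i_1}\cap\dots\cap A_{i_r})\times_k\P^N_k$ of $A_\red\times_k\P^N_k$, the restriction maps from $\Gamma(P,I\sO(M))$ are surjective onto the relevant conormal bundles and jet bundles, the latter being sufficiently positive; a single $M$ works for all strata, by Serre vanishing.

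One then imposes on the tuple $(\sigma_1,\dots,\sigma_{n-1})\in\Gamma(P,I\sO(M))^{n-1}$ a finite list of genericity conditions, each valid away from a proper closed subset of the parameter space $\A^r_\Lambda$ (an affine space over $\Lambda$), by the standard dimension count for the associated incidence variety. \emph{(i)}~The generic fibre $\sC_K$ is smooth over $K$; moreover, since $\dim W_K=0$, the conormal images of the $\sigma_i$ span a rank-$(n-1)$ subbundle at every point of $W_K$, so that $\sC_K$ is smooth along $W_K$ too. \emph{(ii)}~For every $i\in I$ and every point $x$ of $\sC$ lying in the interior of $A_i$ and not on $W$, the differentials of the $\sigma_i$, together with a local equation of $A_i$, are linearly independent in the cotangent space of $P$ at $x$; then $\sC$ is regular of dimension~$2$ at $x$ and crosses $A_i\times\P^N_k$ transversally there. \emph{(iii)}~For every pair $i\neq j$ in $I$ and every point $x$ of $\sC$ lying on $A_i\cap A_j$, on no further component, and not on $W$, local equations of $A_i$ and $A_j$ restrict to a regular system of parameters of $\sO_{\sC,x}$; then $\sC$ is again regular of dimension~$2$ at $x$ and meets the face $(A_i\cap A_j)\times\P^N_k$ transversally. \emph{(iv)}~$\sC_k$ is disjoint from every triple face $(A_i\cap A_j\cap A_l)\times\P^N_k$. \emph{(v)}~The conormal images of the $\sigma_i$ span a rank-$(n-1)$ subbundle of $(I/I^2)(M)$ at every point of $W$ — which holds generically as $\dim W=1$ — so that $\sC$ is regular of dimension~$2$ along $W$, which it contains as a Cartier divisor.

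Granting a general $(\sigma_1,\dots,\sigma_{n-1})$ satisfying (i)--(v), the conclusion follows. Since $\sC$ is cut out of the $(n+1)$\nobreakdash-dimensional scheme $P$ by $n-1$ divisors, every component of $\sC$ has dimension $\geq 2$; but (ii)--(iv) force $\dim\sC_k\leq 1$, so $\sC$ has no component inside the special fibre, hence is equidimensional of dimension~$2$ and flat over $T$, with fibres of dimension~$1$. Regularity of $\sC$ at a point of $\sC_k$ is given by~(ii) in the interior of a component, by~(iii) at a double point, and by~(v) along~$W$; transversality to all faces of $A_\red\times_k\P^N_k$ is the content of (ii), (iii) and~(iv); and $\sC\supseteq W$ because the $\sigma_i$ lie in $I\sO(M)$. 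It remains to exhibit a single $\Lambda$\nobreakdash-point of $\A^r_\Lambda$ outside the bad loci: the generic-fibre conditions cut out a proper closed subset $\Sigma_K\subsetneq\A^r_K$ and the special-fibre conditions a proper closed subset $\Sigma_k\subsetneq\A^r_k$; one picks a $k$\nobreakdash-point off $\Sigma_k$ (possible since $k$ is infinite), lifts it arbitrarily to~$\Lambda$, and observes that the $\pi$\nobreakdash-adic coset of all such lifts is Zariski dense in $\A^r_K$ (as $\Lambda$ is infinite), hence meets the complement of $\Sigma_K$.

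The main difficulty is condition~(iii): one must ensure that $\sC$ remains \emph{regular} — not merely that its fibres over $T$ are well behaved — precisely at the finitely many points where its special fibre crosses the codimension-one strata $A_i\cap A_j$ of the normal crossings divisor, and that this crossing is transversal and compatible with the morphism to~$T$. Away from the double locus one is carrying out ordinary Bertini over $K$ or over a single smooth variety over~$k$; it is the interaction of the successive hypersurface cuts with the double locus that forces the incidence-variety count to be run stratum by stratum, and it is there that the simple normal crossings hypothesis on $A_\red$ is used (together with the fact that the earlier moving lemma has pushed the $Z_\mu$ off the higher strata, so that $W$ is disjoint from them). The remaining ingredients — the choice of $M$, the exclusion of vertical components, and the descent from $K$\nobreakdash- and $k$\nobreakdash-genericity to a single $\Lambda$\nobreakdash-point — are routine.
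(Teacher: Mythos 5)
Your overall strategy --- Bertini with the prescribed base locus $W=\coprod_\mu S_\mu$, imposed stratum by stratum and then descended from generic conditions over $K$ and over $k$ to a single $\Lambda$\nobreakdash-point --- is essentially the paper's, and your conditions (ii)--(iv) are fine (your condition (i) is in fact redundant: every closed point of $\sC$ lies in the special fibre, so regularity there already forces $\sC$, hence $\sC_K$, to be regular). The genuine gap is at the finitely many points $q_{\mu,i}\in W\cap(A_\red\times_k\P^N_k)$, which is precisely the crux of the lemma. Your (ii) and (iii) explicitly exclude points of $W$, and your (v) only constrains the images of the $\sigma_j$ in the conormal bundle $I/I^2$ of $W$ in $P$; that yields regularity of $\sC$ along $W$ but says nothing about the position of the $2$\nobreakdash-plane $T_{q}\sC$ relative to the hyperplane $T_{q}(A_\red\times_k\P^N_k)$. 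Since $S_\mu\to T$ need not be \'etale, $W$ may be tangent to the special fibre at $q=q_{\mu,i}$ (the paper notes explicitly that ``these intersections may not be transverse''); in that case $T_qW\subset T_q(A_\red\times_k\P^N_k)$, and a tuple satisfying (v) can still produce $T_q\sC\subset T_q(A_\red\times_k\P^N_k)$, so that the special fibre of $\sC$ is singular at $q$ and the transversality asserted in the lemma fails there. You attribute transversality to (ii)--(iv), which do not apply at these points, so this case is simply not covered.

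The repair is the one the paper carries out: since $W$ is regular of dimension $1$ and meets the special fibre in regular points, the tangent space at $q_{\mu,i}$ of the (possibly non-reduced) scheme $W\cap(A_\red\times_k\P^N_k)$ has dimension $\leq 1$; hence, setting $\bar I=\Im\big(I\to\sO_{A_\red\times_k\P^N_k}\big)$, for $M\gg0$ one can choose the restrictions $\bar\sigma_j\in\Gamma(\bar I(M))$ with linearly independent differentials at $q_{\mu,i}$, so that the curve $\bar\sC=V(\bar\sigma_1,\dots,\bar\sigma_{d+N-1})$ \emph{inside the special fibre} is smooth of dimension $1$ at $q_{\mu,i}$. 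This single condition gives both the regularity of $\sO_{\sC,q_{\mu,i}}$ and the transversality to the face through $q_{\mu,i}$, since the local equation of $A_{i,\red}$ then becomes a regular parameter of $\sO_{\sC,q_{\mu,i}}$; producing such sections is exactly the content of the paper's footnote (Bertini on the blow-up of $\bar I$, together with the requirement of nonzero image in $\bar I_q/(\bar I_q\cap\mathfrak m_q^2)$). Note finally that you locate the main difficulty at the double locus, but the moving lemma has already pushed $W$ off the higher strata, so your (iii)--(iv) are ordinary Bertini on smooth faces; the delicate interaction is between the base locus and the special fibre, not between the successive cuts and the double locus.
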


\begin{proof}
By assumption, the $S_{\mu}$ are regular of dimension $1$ and they meet the $(d+N)$-dimensional reduced closed fibre $A_{\text{red}}\times_k \P^N_k$ at a finite set of regular points $q_{\mu,i}$. Because $T$ is henselian, there is one connected component $S_{\mu,i}$ for each closed point $q_{\mu,i}$. The residue fields of the $q_{\mu,i}$ may be inseparable over $k$. These intersections may not be transverse, but the tangent spaces
\eq{8b}{t_{S_{\mu,i}\cap (A_{\text{red}}\times_k \P^N_k),q_{\mu,i}}\subset t_{A_{\text{red}}\times_k \P^N_k,q_{\mu,i}}
}
have dimension $\le 1$.  Set  $\bar I ={\rm Im} ( I \to \sO_{A_{{\rm red}}\times_k \P^N_k}).$ 
 For $M\gg 0$ we can, by general position arguments, arrange the following conditions to hold:
 \begin{itemize}
 \item[(i)]  The restriction
$$\mkern60mu\Gamma(I\otimes_{\sO_{\sX\times_T \P^N_K}} \sO_{\sX\times_T \P^N_K}(M)) \to 
\Gamma(\bar I\otimes_{\sO_{A_{{\rm red}} \times_k \P^N_k}}  \sO_{A_{{\rm red}} \times_k \P^N_k}(M))$$
 is surjective.
\item[(ii)] There exist ${\sigma}_1,\dotsc, {\sigma}_{d+N-1}\in \Gamma(I\otimes_{\sO_{\sX\times_T \P^N_K}} \sO_{\sX\times_T \P^N_K}(M)) $
such that  the subscheme $\bar{\sC}\subset A_{{\rm red}}\times_k \P^N_k  $ defined by 
$(\bar{\sigma}_1, \ldots, \bar{\sigma}_{d+N-1}) $ meets all the faces of the normal crossings divisor $A_{\text{red}}\times_k \P^N_k$ transversally\footnote{The referee inquires about the proof of (ii). To simplify we change notation and take~$W$ to be a smooth, projective variety over an infinite field $k$. Let~$\sO_W(1)$ be an ample line bundle and $J\subset \sO_W$ be
an ideal such that $\sO_W/J$ is supported on a finite set $\{q_i\}$ of closed points. The assertion becomes that for $N\gg 0$ there exists a non-empty open set $U$ in the affine space of sections $\Gamma(W, J(N))$ such that for $\sigma\in U$ the zero set $\sV(\sigma) \subset W$ is smooth away from the support of $\sO_W/J$ and further, at any point $q\in \{q_i\}$ with maximal ideal $\mathfrak m \subset \sO_{W,q}$ such that $J_q/(J_q\cap \mathfrak m^2) \neq (0)$, we have that the image of $\sigma$ in $J_q/(J_q\cap \mathfrak m^2)$ is non-zero. 
Let $\pi: W' \to W$ be the blowup of $J$, and let $\sL$ be the tautological line bundle for $W'$. For $N\gg 0$, its twist $\sL\otimes \pi^*\sO_W(N)$ is very ample on $W'$ and moreover $\Gamma(W, J(N))\cong \Gamma(W', \sL\otimes \pi^*\sO_W(N))$. The classical Bertini theorem now says that for a non-empty Zariski-open set $U'$ of sections~$s$, the zero subscheme $\sV(s) \subset W'$ is smooth away from the exceptional divisor (which may be singular).  Clearly, for $N\gg 0$ there is another non-empty open $U''$ of sections with non-zero image in $J_q/(J_q\cap \mathfrak m^2)$ for all $q\in \{q_i\}$. Then we take $\sigma_1\in U'\cap U''\neq \emptyset$. Repeating this construction yields the necessary $\sigma_i$.}. Then one has an inclusion of tangent spaces
\[ t_{S_{\mu,i}\cap (A_{\text{red}}\times \P^N_k),q_{\mu,i}} \subset  t_{\overline{\sC},q_{\mu,i}}, \]
where the lefthand side is as in \eqref{8b}, and  $\dim t_{\overline{\sC},q_{\mu,i}} = 1$.
\item[(iii)] The intersections of $\overline{\sC}\cap (A_{i,\text{red}}\times \P^N_k)$ with the top dimensional strata are irreducible
(they are smooth by \cite{kleimanaltman}, Theorem~7
and connected by \cite{jouanolou}, Theorem~6.10 and \cite{ega43},  Proposition~15.5.9).
\end{itemize}
The subscheme  $\sC\subset \sX\times_T \P^N_T$ defined by 
$(\sigma_1, \ldots, \sigma_{d+N-1})$ then satisfies the conditions of Lemma~\ref{app:lem_curve}.
\end{proof}
Let $f: \sC \to \sX$ denote the projection $\sX\times_T \P^n_T \to \P^n_T$ restricted to $\sC$. 
By construction, the irreducible components $B_i$ of the special fibre $B\subset \sC$ are the pullbacks $B_i=f^*A_i$. It follows that the cycle $S:= \sum c_\mu S_\mu$ satisfies
\eq{}{S\cdot B_i = S\cdot f^*A_i = f_*S\cdot A_i = Z\cdot A_i=0.
}
We conclude from the case $d=1$ that the cycle $S$ on $\sC$ is divisible prime to $\text{char}(k)$, and hence $Z=f_*S$ is divisible as well.
\end{proof}

\begin{rmk}
\label{app:rmk}
The statement of Theorem~\ref{app:thm1} also holds when the residue
field~$k$ is finite.
This is \cite[Theorem 1.16]{saitosato} if $A_{\rm red}$ has normal crossings,
and the general case follows as above by an application of the Gabber--de
Jong theorem.
When~$k$ is finite, the arguments of Theorem~\ref{app:thm1} may be
used instead of Step~1 and Step~3 of {\em loc. cit.}, \textsection8, thus leading to a simpler proof
of \cite[Theorem 1.16]{saitosato}.
\end{rmk}

\newpage
\bibliographystyle{amsalpha}
\bibliography{zcl}
\end{document}